\DeclareMathOperator{\im}{im}
\DeclareMathOperator{\supp}{supp}
\DeclareMathOperator{\dom}{dom}
\newcommand{\norm}[1]{\lVert#1\rVert}
\newcommand{\fhi}{\varphi}
\renewcommand{\o}{\circ}
\renewcommand{\epsilon}{\varepsilon}
\newcommand{\FF}{\mathcal{F}}
\newcommand{\GG}{\mathcal{G}}
\newcommand{\MM}{\mathcal{M}}
\newcommand{\HH}{\mathcal{H}}
\newcommand{\VV}{\mathcal{V}}
\newcommand{\UU}{\mathcal{U}}
\newcommand{\OO}{\mathcal{O}}
\newcommand{\WW}{\mathcal{W}}
\newcommand{\NN}{\mathcal{N}}
\newcommand{\Z}{\mathbb{Z}}
\newcommand{\C}{\mathbb{C}}
\newcommand{\E}{\mathbb{E}}
\newcommand{\R}{\mathbb{R}}
\newcommand{\N}{\mathbb{N}}
\newtheorem{theorem}{Theorem}[section]
\newtheorem{lemma}[theorem]{Lemma}
\newtheorem{cor}[theorem]{Corollary}
\newtheorem{prop}[theorem]{Proposition}
\newtheorem{claim}{Claim}
\newtheorem{problem}{Problem}
\theoremstyle{definition}
\newtheorem{defn}[theorem]{Definition}
\newtheorem*{rem}{Remark}
\newtheorem*{rems}{Remarks}
\newtheorem{example}[theorem]{Example}
\numberwithin{equation}{section}
\title[equicontinuous foliated spaces]{equicontinuous foliated spaces}
\begin{document}

\author[Jes\'us A. \'Alvarez L\'opez]{J. A. \'Alvarez L\'opez*}
\address{$^*$Departamento de Xeometr\'\i a e Topolox\'\i a \\
Facultade de Matem\'aticas \\
Universidade de Santiago de Compostela \\
Campus Universitario Sur \\
15706 Santiago de Compostela \\Spain }
\thanks{*Research of the first author supported by DGICYT Grant
PB95-0850} 
\email{jesus.alvarez@usc.es}
\author[Alberto Candel]{A. Candel$^\dagger$}
\address{$^\dagger$ Department of Mathematics \\ CSUN \\ Northridge, CA
91330 \\ U.S.A.}

\thanks{$^\dagger$Research of the second author supported by NSF Grants}
\email{alberto.candel@csun.edu}



\maketitle

\tableofcontents

\section*{Introduction}

The theme of this paper originates in the study of the generic
geometry of leaves of a foliated space. In \cite{jes}, we analyze the
problem of when all (or almost all) the leaves of a foliated space are
quasi-isometric. In this paper, a dynamical condition on a foliated
space guaranteeing that all the leaves without holonomy are
quasi-isometric will be discussed. The condition is on the structure
of the action of the holonomy pseudogroup, and is called
equicontinuity. That such condition may imply that all leaves without
holonomy are quasi-isometric comes from the structure theorem of
Riemannian foliations. These foliations are the models of
equicontinuous foliated spaces, and P.~Molino's work \cite{Mol88}
describing their structure has as a consequence that the holonomy
covers of all the leaves are quasi-isometric via a
diffeomorphism. Indeed, given a compact, connected manifold $M$
endowed with a Riemannian foliation, Molino shows that there is a
fiber bundle over $M$ with compact fiber (the transverse orthonormal
frame bundle), and with a foliation transverse to the fibers whose
leaves are the holonomy covers of the leaves of $M$. Furthermore,
there is a group of automorphisms of this bundle which permutes the
leaves.

The concept of Riemannian foliation is easily formulated by saying
that the holonomy pseudogroup is a pseudogroup of local isometries of
a Riemannian manifold.  This generalizes to equicontinuous
pseudogroups of local transformations of topological spaces. As such,
the concept of equicontinuity appears in R.~Sacksteder's paper
\cite{Sak}. The parallelism between Riemannian foliations and
equicontinuous pseudogroups is developed by E.~Ghys in \cite[Appendix
E]{Mol88}, see also M.~Kellum's paper \cite{kellum} for this connection.

Thus, in certain measure, what is done in this paper may be seen as a
generalization of that aspect of Molino's theory pointed out above.
First we show that all leaves without holonomy have the same coarse
quasi-isometry type. For general equicontinuous foliated spaces, it
seems not to be possible to obtain this result, the main obstruction
being the very general structure of the transverse models. This
obstruction can be overcome by imposing certain regularity to the
transverse structure. The proof of this result requires certain amount
of work on pseudogroups and on the geometric structure of their
orbits, and on how to pass from coarse quasi-isometries between orbits
to leaves.

When the foliated space is smooth, then it is possible to introduce a
metric tensor on the leaves that varies continuously from leaf to
leaf. In this case the above results can be improved by using
quasi-isometries via diffeomorphisms between leaves. Moreover, for
general equicontinuous foliated spaces, it can be shown that the
universal covers of all leaves are quasi-isometric to each other via
diffeomorphisms. These results are obtained with the help of
normal bundle theory.

From our study of pseudogroups, it also follows that equicontinuous
foliated spaces (with some mild conditions) satisfy two other typical
properties of Riemannian foliations. First, the leaf closures are
homogeneous spaces and form a partition. Second, the holonomy
pseudogroup is indeed given by local isometries with respect to some
``local metric,'' and has a closure in certain sense. The existence of
this closure of the holonomy pseudogroup is an important ingredient of
our topological description of Riemannian foliations with dense leaves
given in \cite{Alv-Candel:Riemannian}.

The main results of this paper and \cite{Alv-Candel:Riemannian} were
conjectured and greatly justified by E.~Ghys \cite[Appendix~E]{Mol88}.
The concept of equicontinuity for foliations was also studied by
M.~Kellum \cite{kellum} and C.~Tarquini \cite{Tarquini}.  Kellum dealt
with the more restrictive setting of transversely quasi-isometric
foliations, and Tarquini showed that equicontinuous transversely
conformal foliations are Riemannian, which also follows from our main
result of \cite{Alv-Candel:Riemannian} in the case of dense leaves.

\medskip

{\bf Acknowledgments.} The work of the first author was supported by
DGICYT Grant PB95-0850, and that of the second by NSF Grants No.
DMS-9973086 and DMS-0049077. The authors were at the Universidade de
Santiago de Compostela and at CSUN while this paper was
being written, and thank these institutions for their support.
We also thank F.~Alcalde Cuesta for helpful conversations.
%
%

\section{Foliated spaces}\label{fol.space}

This section collects and develops some information on foliated spaces
which will be used later.  General references on foliated spaces are
\cite{Hector-Hirsch}, \cite{MS}, \cite{CC2K}.

The definition of the concept of foliated space (or lamination)
requires that of smooth function. Let $Z$ be a Polish space (
{\it i.e.}, a completely metrizable separable space) and let $U$ be an open
set in $\R^n\times Z$. A map $f:U\to \R^p$ is smooth (of class $C^k$)
at the point $(x_0,z_0)$ if there is a neighborhood of $(x_0,z_0)$ in
$U$ of the form $D\times Y$ (where $D$ is an open ball in $\R^n$ and
$Y$ is an open subset of $Z$) such that $f$ is continuous on $D\times
Y$ and the partial derivatives up to order $\le k$ of all coordinate
functions $f_i$ of $f$ exist and are continuous at all points
$(x,z)\in D\times Y$.

A locally compact Polish space $X$ 
is said to have the structure of a
\textit{foliated space} (of class $C^k$) if there is a collection of
charts (flow boxes) $(U_i,\phi_i)$, where $\{U_i\}$ is a covering of $X$
by open sets, the maps $\phi_i$ are homeomorphisms $\phi_i:U_i\to
B_i\times Z_i$, for Polish spaces $Z_i$ and open balls $B_i$ of finite
radius in $\R^n$, such that the coordinate changes are of the form 
$$
\phi_i\circ \phi_j^{-1}(x_j,z_j) =
(x_i(x_j,z_j),z_i(z_j))\;,
$$ 
where $x_i:\phi_j(U_i\cap U_j) \to \R^n$ is
of class $C^k$ and $z_i$ is continuous. The sets
$\phi_i^{-1}(B_i\times \{z\})$ are called plaques.

Since the foliated space $X$ is locally compact, given an open cover
$\UU=\{U_i,\phi_i\}$ by flow boxes of $X$, it is always possible to
find a locally finite covering $\{V_\alpha,\fhi_\alpha\}$ by flow
boxes which is a refinement of $\UU$ in the sense that each $V_\alpha$
has compact closure on some $U_{i(\alpha)}$ and the corresponding
chart $\phi_{i(\alpha)}$ extends $\fhi_\alpha$. Such cover
$\{V_\alpha\}$ is called regular \cite{Hector-Hirsch}, \cite{CC2K}.

A synonym of the term foliated space is lamination, which is
sometimes reserved for a foliated space which is embedded as a closed
subset of a manifold.  It may also be convenient to denote the
foliated space by $(X,\FF)$, with $\FF$ referring to the particular
foliated structure on $X$.

The foliated structure of a space $X$ induces a locally euclidean
topology on $X$, the basic open sets being the open subsets of the
plaques, which is finer than the original. The connected components of
$X$ in this topology are called leaves. The smooth structure implies
that each leaf is a connected manifold of dimension $n$ and of class
$C^k$. If $x$ is a point of $X$, the leaf which contains $x$ will
usually be denoted by $L_x$.

Concepts of manifold theory readily extend to foliated spaces.  In
particular, if $\FF$ is at least of class $C^1$, there is a continuous
vector bundle $TX$ over $X$ whose fiber at each point $x$ of $X$ is
the tangent space of the manifold $L_x$ at $x$.

Some very basic properties of foliated spaces will now be listed. They
easily follow from the definition and standard techniques of manifold
theory extended to ``manifolds with parameters.'' A basic observation
is the following \cite{MS}, which is immediate from the
paracompactness of Polish spaces and the local structure of foliated
spaces.

\begin{prop}\label{partun}
Every open cover of a foliated space of class $C^k$ admits a
subordinate partition of unity of class $C^k$.
\end{prop}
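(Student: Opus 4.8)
The plan is to reduce the global statement to the flow-box model and then transport the classical manifold construction of bump functions along the leaf directions, using only continuity in the transverse direction. Starting from an arbitrary open cover of $X$, I would first pass to a regular refinement $\{V_\alpha,\fhi_\alpha\}$ by flow boxes, which exists by the discussion preceding the statement: since $X$ is locally compact and Polish (hence paracompact and metrizable), such a locally finite refinement with $\fhi_\alpha : V_\alpha \to B_\alpha \times Z_\alpha$ can be arranged so that each $\ol{V_\alpha}$ is compact and contained in a chart of the original cover. Local finiteness is the property that makes the eventual sum well behaved, and subordination to the given cover follows once the refinement is chosen, since $\supp\rho_\alpha \subset V_\alpha$ will be contained in a member of the original cover.

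Next I would produce, inside each flow box, a nonnegative $C^k$ bump function. Identifying $V_\alpha$ with $B_\alpha \times Z_\alpha$ via $\fhi_\alpha$, I take a function of product form $\mu_\alpha(x,z) = \lambda_\alpha(x)\,\theta_\alpha(z)$, where $\lambda_\alpha : B_\alpha \to [0,1]$ is an ordinary $C^k$ bump on the Euclidean ball with compact support, and $\theta_\alpha : Z_\alpha \to [0,1]$ is a continuous bump with support in $Z_\alpha$; the latter exists because $Z_\alpha$ is metrizable, hence normal, so Urysohn's lemma applies. Because the $x$-derivatives of $\mu_\alpha$ up to order $k$ are $(\partial^\gamma \lambda_\alpha)(x)\,\theta_\alpha(z)$ and these are jointly continuous in $(x,z)$, the function $\mu_\alpha$ is $C^k$ in the foliated sense of the definition above. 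Extending $\mu_\alpha$ by zero outside $V_\alpha$ keeps it $C^k$, since it vanishes on a neighborhood of the frontier of its support.

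To force the sum to be strictly positive, I would invoke the shrinking lemma for the locally finite cover $\{V_\alpha\}$ (valid as $X$ is normal and paracompact): choose open sets $V'_\alpha$ with $\ol{V'_\alpha} \subset V_\alpha$ still covering $X$, and arrange $\mu_\alpha$ to be positive on $\ol{V'_\alpha}$. Then $S = \sum_\alpha \mu_\alpha$ is, by local finiteness, a locally finite sum of $C^k$ functions, hence itself $C^k$, and $S > 0$ everywhere. Setting $\rho_\alpha = \mu_\alpha / S$ yields nonnegative $C^k$ functions with $\supp\rho_\alpha \subset V_\alpha$ and $\sum_\alpha \rho_\alpha \equiv 1$, the required partition of unity subordinate to the original cover.

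The main obstacle, and the only place where foliated spaces differ from ordinary manifolds, is the transverse direction: one cannot ask for transverse smoothness, only transverse continuity, so the construction must never differentiate in the $Z$-variable. The product form $\lambda_\alpha(x)\,\theta_\alpha(z)$ is precisely what isolates the leafwise derivatives from the transverse factor, so that the joint continuity required by the definition of class $C^k$ is preserved both under differentiation in $x$ and under the locally finite summation. Verifying that this product structure is compatible with the coordinate changes $(x_i(x_j,z_j),z_i(z_j))$ --- which are $C^k$ in $x$ and continuous in $z$ --- is routine, since each $\mu_\alpha$ lives in a single flow box and is extended by zero.
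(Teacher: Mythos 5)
Your proof is correct and is exactly the argument the paper has in mind: the paper states this proposition without proof, remarking only that it is ``immediate from the paracompactness of Polish spaces and the local structure of foliated spaces'' (citing Moore--Schochet), i.e.\ the standard manifold construction carried out with parameters. Your product-form bumps $\lambda_\alpha(x)\,\theta_\alpha(z)$, with a leafwise $C^k$ bump and a transverse Urysohn factor, together with a regular locally finite refinement, a shrinking, and normalization by the sum, is precisely that standard construction spelled out, and all steps (extension by zero, local finiteness, quotient by a positive $C^k$ sum) are sound in the paper's definition of class $C^k$.
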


The important consequence is that a foliated space of class $C^k$,
with $k\ge1$, always admits a metric tensor;  {\it i.e.}, and inner
product on $TX$ inducing a Riemannian metric on each leaf, and varying
continuously from leaf to leaf. The induced distance on a leaf $L$
will be denoted by $d_L$. If $X$ is compact, then each leaf is a
complete Riemannian manifold of bounded geometry. (This may not be the
case in general.)

Another consequence of the existence of partitions of unity is the
following. The proof is like that for manifolds~\cite{CC2K}.

\begin{prop}
  If $X$ is a compact foliated space of class $C^k$, $k\ge 1$, then
  there is a $C^k$ embedding $\varphi $ of $X$ in the separable real
  Hilbert space $\E$. Moreover, a given metric tensor along the leaves
  can be extended to a metric tensor on $\E$.
\end{prop}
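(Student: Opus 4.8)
The plan is to imitate the classical proof of the Whitney embedding theorem for compact manifolds: patch local charts together using a partition of unity, with the additional task of accommodating the transverse Polish factors inside a Hilbert space. First I would use compactness to extract a finite regular cover $\{(V_\alpha,\fhi_\alpha)\}_{\alpha=1}^m$ by flow boxes, writing $\fhi_\alpha=(x_\alpha,z_\alpha)\colon V_\alpha\to B_\alpha\times Z_\alpha$ with $x_\alpha$ of class $C^k$ and $z_\alpha$ continuous, and with each $\ol{V_\alpha}$ compact. By Proposition~\ref{partun} I would choose a $C^k$ partition of unity $\{\lambda_\alpha\}$ subordinate to $\{V_\alpha\}$, so that $\lambda_\alpha x_\alpha$ and $\lambda_\alpha$ extend by $0$ to $C^k$ functions on all of $X$. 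To handle the transverse directions, I would note that each $Z_\alpha$, being Polish, is separable and metrizable, so the compact set $z_\alpha(\ol{V_\alpha})$ admits a topological embedding $h_\alpha$ into the Hilbert cube, and hence into $\ell^2$.

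Next I would assemble the map
\[
\fhi=\bigl((\lambda_\alpha x_\alpha)_\alpha,\ (\lambda_\alpha)_\alpha,\ (\lambda_\alpha\, h_\alpha\o z_\alpha)_\alpha\bigr),
\]
a map from $X$ into the finite direct sum $(\R^n)^m\oplus\R^m\oplus(\ell^2)^m$, which is a separable real Hilbert space to be identified with $\E$. Each block is continuous on $X$ and of class $C^k$ along the leaves (the transverse blocks are in fact leafwise locally constant, since $z_\alpha$ is constant on plaques), so $\fhi$ is a $C^k$ map of foliated spaces.

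I would then verify that $\fhi$ is an injective leafwise immersion. For injectivity, equality of the $(\lambda_\alpha)$ blocks forces any two points $p,q$ with $\fhi(p)=\fhi(q)$ to lie in a common $V_\alpha$ with $\lambda_\alpha(p)=\lambda_\alpha(q)>0$; dividing the remaining blocks by this common value gives $x_\alpha(p)=x_\alpha(q)$ and $h_\alpha(z_\alpha(p))=h_\alpha(z_\alpha(q))$, whence $z_\alpha(p)=z_\alpha(q)$ because $h_\alpha$ is injective, and finally $p=q$ because $\fhi_\alpha$ is a chart. For the immersion property, suppose $v\in T_pL_p$ lies in the kernel of $d\fhi_p$. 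The $(\lambda_\alpha)$ blocks give $d\lambda_\alpha(v)=0$ for all $\alpha$, and then the tangential blocks reduce to $\lambda_\alpha(p)\,dx_\alpha(v)=0$; choosing $\alpha$ with $\lambda_\alpha(p)>0$ and using that $x_\alpha$ restricts to a $C^k$ chart on each plaque (so $dx_\alpha$ is injective on $T_pL_p$) yields $v=0$. Since $X$ is compact, the continuous injection $\fhi$ is a homeomorphism onto its image, so $\fhi$ is the desired $C^k$ embedding.

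Finally, for the extension of a given metric tensor $g$ on $TX$, I would transport $g$ by the immersion $d\fhi$ to a continuous field of inner products on the sub-bundle $d\fhi(TX)$ over the compact set $\fhi(X)$, and complete it to an inner product on all of $T\E\cong\E$ by using the standard inner product $\langle\,,\,\rangle_0$ on the $\langle\,,\,\rangle_0$-orthogonal complement, the two pieces declared orthogonal. Writing this as $G=\langle A\,\cdot\,,\,\cdot\,\rangle_0$ for a continuous field of positive, boundedly invertible self-adjoint operators $A$ over $\fhi(X)$, I would extend $A$ to all of $\E$ by the convex interpolation $A_v=\rho(v)\,\widetilde A_v+(1-\rho(v))\,I$, where $\widetilde A$ is any continuous extension and $\rho$ is a cutoff equal to $1$ near the compact set $\fhi(X)$ and $0$ far away; convexity preserves positivity, so $G$ is a metric tensor on $\E$ with $\fhi^*G=g$. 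The genuinely new point compared with the manifold case, and the step I expect to require the most care, is the transverse direction: one must embed the arbitrary Polish transversals into Hilbert space and check that the assembled map remains injective and a leafwise immersion, whereas the extension of the metric tensor is routine once the embedding is in hand.
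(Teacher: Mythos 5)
Your proposal is correct and takes essentially the same approach as the paper, which for the embedding simply defers to the manifold-case partition-of-unity argument (``the proof is like that for manifolds''~\cite{CC2K}) and for the metric extension gives a short sketch based on the convexity/contractibility of the space of inner products together with $X$ being closed in $\E$. Your Hilbert-cube embedding of the transverse factors, the Whitney-style map $\bigl((\lambda_\alpha x_\alpha)_\alpha,(\lambda_\alpha)_\alpha,(\lambda_\alpha h_\alpha\o z_\alpha)_\alpha\bigr)$, and the operator-valued Tietze-plus-convex-interpolation argument are exactly the details that the paper leaves to the reference, so there is no gap and no genuinely different route.
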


A given metric tensor along the leaves of $X$ admits an extension to a
metric tensor on $\E$. This is so because a metric tensor may be
viewed as a section of some bundle over $\E$ with contractible fibers,
and  $X$ being closed in $\E$ implies that a section over $X$ extends
to one over $\E$. It may only be continuous, of course, but that
is sufficient  to define length.

If $X$ is compact, then any two metric tensors on the leaves of $X$
are quasi-isometric, and so, for the purposes of this paper, the
metric tensor induced from the standard metric of $\E$ will be taken
by default.

This embedding of a foliated space $X$ in $\E$ also gives rise to a
normal vector bundle, the fibers of which are isomorphic to $\E$. This
structure permits to formalize concepts and results like ``local
projection of leaves onto leaves'' or ``Reeb's stability.'' This
structure will be described presently.

Let $X$ be a compact foliated space, of class $C^k$ with $k\ge 1$,
embedded in the Hilbert space $\E$. The restriction of the embedding
to each leaf is not an embedding, but only an injective immersion. The
smoothness of $X$ being at least $C^1$ implies that the map which
assigns to a point $x\in X$ the subspace $T_xX$ of $\E$ is continuous
(as a map of $X$ into the space of $n$-dimensional subspaces of
$\E$). It follows that if $F$ is a subspace complementary to one
$T_xX$ in $\E$, then it is also complementary to $T_yX$ for $y$ close
to $x$.

The key point is that each tangent space $T_xX$ is a finite
dimensional subspace of $\E$, hence is closed and has an orthogonal
complement.  If $i:L\to \E$ is the inclusion of a leaf of $X$ in $\E$,
then there are charts about $x$ in $L$ and $\E$ such that the
corresponding local representation of $i$ is of the form
$y\mapsto(y,0)$, provided by the implicit function theorem (flat
coordinates). In these flat coordinates, $T_yX^\perp = \{y\}\times \E$
for $y$ in this plaque containing $x$, and by continuity, the affine
subspaces $y+T_yX^\perp$ meet nearby leaves transversely. Since $X$ is
compact, it follows that this holds for all $y$ in a neighborhood of
radius $r$ about $x$ in $X$, the radius $r$ being independent of the
point $x$.

\begin{theorem}\label{quasibundle}
Let $X$ be a compact foliated space embedded in $\E$ as above, of
class $C^2$.
Let $i:L\to \E$ denote the inclusion of a leaf $L$ in
$X\subset \E$. Then there is a vector bundle $p:N\to L$ and a
neighborhood $W$ of the zero section of $N$ such that the following
hold:
\begin{itemize}
\item[(1)] The map $i:L\to \E$ extends to a local diffeomorphism
$\varphi:W\to \E$;
\item[(2)] there is a foliated space $Y\subset W$, of the same
dimension as 
$X$, having $L$ as a leaf and transverse to the fibers of $N$; and
\item[(3)] as foliated spaces, $Y=\varphi^{-1}(X\cap \varphi(W))$ and
the restriction of $p$ to each leaf of $Y$ is a local diffeomorphism
into $L$.
\end{itemize}
\end{theorem}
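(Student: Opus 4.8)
The plan is to realize $N$ as the normal bundle of $L$ in $\E$ and $\varphi$ as the associated affine exponential map, adapting the tubular neighborhood theorem to the foliated, infinite-dimensional setting. Concretely, I would take the fiber of $N$ over a point $x\in L$ to be the orthogonal complement $T_xX^\perp$, so that $N=\{(x,v):x\in L,\ v\in T_xX^\perp\}$. Since $X$ is of class $C^2$, the tangent bundle $TX$ is of class $C^1$, and hence $x\mapsto T_xX^\perp$ is a $C^1$ field of closed subspaces, making $N$ a $C^1$ vector bundle whose fibers are isomorphic to $\E$; this is precisely where the $C^2$ hypothesis is used. I then define $\varphi\colon N\to\E$ by $\varphi(x,v)=i(x)+v$, using the affine structure of $\E$. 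By construction $\varphi(x,0)=i(x)$, so $\varphi$ restricts to $i$ on the zero section.

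The first main step is to show that $\varphi$ is a local diffeomorphism on a neighborhood of the zero section. At a point $(x,0)$ the tangent space to $N$ splits as $T_xX\oplus T_xX^\perp$, and one checks that the differential $d\varphi_{(x,0)}$ is the canonical identification of this direct sum with $\E$, hence an isomorphism. The inverse function theorem in Hilbert space then yields a local diffeomorphism near each $(x,0)$. To obtain a neighborhood of uniform size I would invoke the compactness of $X$ exactly as in the paragraph preceding the statement: there is a radius $r>0$, independent of $x$, such that $\varphi$ is a local diffeomorphism on $W=\{(x,v)\in N:\|v\|<r\}$ and the affine slices $i(x)+T_xX^\perp$ meet nearby leaves transversely. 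This establishes~(1).

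Next I would define $Y=\varphi^{-1}(X\cap\varphi(W))$ and transport the foliated structure of $X$ through the local diffeomorphism $\varphi$. Since $\varphi$ is a local $C^1$ diffeomorphism, each flow box of $X$ meeting $\varphi(W)$ pulls back to a flow box of $Y$, so $Y$ is a foliated space of the same dimension $n$ as $X$, its leaves being the connected components of the preimages of leaves of $X$; this gives~(3) and the first assertion of~(2). Because $\varphi$ restricted to the zero section is the injective immersion $i$ onto $L\subset X$, the zero section is a leaf of $Y$, identified with $L$ via $p$ (equivalently via $\varphi=i$); thus $L$ is a leaf of $Y$. Transversality of $Y$ to the fibers of $N$ is then the pullback under $\varphi$ of the transversality of the leaves of $X$ to the affine normal slices, completing~(2). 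The remaining claim that the restriction of $p$ to each leaf of $Y$ is a local diffeomorphism into $L$ follows formally: each such leaf has dimension $n=\dim L$ and is transverse to the fibers of $p$, so the differential of $p$ restricted to the leaf is everywhere invertible.

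The step I expect to be the main obstacle is the uniform local-diffeomorphism statement: verifying that the radius $r$ can be chosen independently of $x$ along a possibly noncompact leaf $L$, which requires the compactness of $X$ to control both the inverse function theorem estimates and the $C^1$ variation of the normal spaces $T_xX^\perp$ inside the infinite-dimensional target $\E$. Once this uniformity is secured, the passage to the foliated structure on $Y$ and the transversality and projection statements are largely formal consequences of $\varphi$ being a local diffeomorphism.
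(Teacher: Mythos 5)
Your setup coincides with the paper's: the orthogonal normal bundle over $L$, the affine map $\varphi(x,v)=i(x)+v$, and compactness of $X$ to make $d\varphi$ invertible on a uniform neighborhood of the zero section. Part (1) is indeed fine, since on the Hilbert manifold $N(L,\epsilon)$ the inverse function theorem applies pointwise, so invertibility of the differential already makes $\varphi$ a local diffeomorphism there. The genuine gap is your closing claim that, once this uniformity is secured, the remaining statements are ``largely formal consequences of $\varphi$ being a local diffeomorphism.'' They are not, and the problematic step is precisely ``the zero section is a leaf of $Y$.'' A local diffeomorphism is only locally injective, with no lower bound on the size of its injectivity neighborhoods, whereas the leaf $L$ of a compact foliated space can accumulate on itself (dense leaves are the central case of this paper), so $\varphi^{-1}(L)\cap W$ contains points $(y,v)$ with $v\neq0$ accumulating on the zero section. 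If you build flow boxes of $Y$ by inverting $\varphi$ on an arbitrary injectivity neighborhood $O$ of a point $(z,0)$, the plaque of $Y$ through $(z,0)$ is $\varphi|_O^{-1}(Q)$, where $Q$ is the component through $i(z)$ of $P\cap\varphi(O)$ and $P$ is the plaque of $X$ through $i(z)$; points of $P$ whose zero-section lift falls outside $O$ but which re-enter the tube $\varphi(O)$ then lift into this plaque \emph{off} the zero section. Nothing in ``local diffeomorphism'' rules this out, so a priori the leaf of $Y$ containing the zero section could be strictly larger than the zero section, and claims (2)--(3) do not follow formally.

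This is exactly the point the paper flags (``This does not mean that $\lambda$ is locally a homeomorphism'') and then settles by proving a strictly stronger, uniform statement: there exist $\epsilon_0,r>0$ such that $\lambda$ is a diffeomorphism --- in particular injective --- of $N(B(x,r),\epsilon_0)$ into $\E$ for \emph{every} $x\in X$. The proof is by contradiction: failure of uniform injectivity produces distinct pairs $(x_n,v_n)$, $(y_n,w_n)$ with equal images, $\norm{v_n},\norm{w_n}\to0$ and $x_n,y_n\to x$, and flat coordinates together with continuity of the normal spaces allow these pairs to be moved into the plaque of $x$, contradicting the leafwise inverse-function-theorem statement of Lemma~\ref{loc.dif}. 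Injectivity at this uniform plaque scale is what makes the lifted structure work: the preimage of a plaque of $L$ inside $N(B(z,r),\epsilon_0)$ is exactly the zero section over it, so the zero section is a union of plaques of $Y$, hence open and closed in its leaf, hence a leaf; the same uniformity is also what later feeds Proposition~\ref{nor.1}. Your argument could be repaired either by proving this uniform injectivity (the paper's route) or by a careful nested-injectivity-neighborhood and path-lifting argument, but as written you have mislocated the difficulty: the uniform radius on which $\varphi$ is a local diffeomorphism is the easy part, and the step you dismiss as formal is where the real content of the theorem lies.
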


Let $N(X)=\{ (x,v)\mid v\perp T_xX\} \subset X\times \E$. Then the
exponential map $\lambda:N(X)\to \E$ is $\lambda(x,v)= x+v$. Let
$N(X,\epsilon)$ denote the set of pairs $(x,v)\in N(X)$ with
$\norm{v}<\epsilon$. The normal bundle $N(L)$ to each leaf $L$ is
contained in $N(X)$.

The differential $\lambda_*$ at each point $(x,0)\in N(X)$ is the
identity (under the canonical identification $T_{(x,0)}N(X) =
T_xX\times T_xX^\perp$). Therefore, by continuity of $\lambda_*$ on
$N(X)$ and compactness of $X$, there exists $\epsilon>0$ such that
$\lambda_*$ is an isomorphism at $(x,v)$ for $x\in X$ and
$\norm{v}<\epsilon$.  This does not mean that $\lambda$ is locally a
homeomorphism. What it means is the following:

\begin{lemma} \label{loc.dif}
For each point $x\in X$ there exists a neighborhood
$U_x$ in the leaf through $x$ such that $\lambda$ is a diffeomorphism
of $N(U_x,\epsilon)$ into $\E$.
\end{lemma}

It will be shown that there exists $\epsilon >0$ and $r>0$ such that
$\lambda$ is a diffeomorphism of $N(B(x,r),\epsilon)$ into $\E$, for
every $x\in X$.  Otherwise, by compactness of $X$, there exist
sequences $(x_n,v_n)\ne (y_n,w_n)$ in $N(X)$, with
$\norm{v_n},\norm{w_n}\to 0$, $x_n,y_n\to x$, and such that
$\lambda(x_n,v_n)\ne \lambda (y_n,w_n)$ for each $n$.
Working on a local flow box around $x$ in $X$, choosing flat
coordinates around the plaque through $x$, and taking into account
that the normal subspace to points of $X$ varies continuously, it is
then obvious that it is possible to find new sequences $(x'_n,v'_n)$
and $(y'_n,w'_n)$ as above with the added property that all the points
$x'_n$ and $y'_n$ are in the same plaque as $x$. This contradicts
Lemma~\ref{loc.dif}.

The vector bundle $N(X)$ has a metric on the fibers so that the
continuous map $\phi(x,v) = \lambda(x,v) - x$ is a linear isometry on
each fiber.  By compactness of $X$, there
 exists $\epsilon_0>0$ such that for each leaf $L$ of $X$ the
restricted map $\lambda:N(L,\epsilon_0) \to \E$ is a local
diffeomorphism. Moreover, there exists $r>0$ such that for every leaf
$L$ and every point $x\in L$, the map $\lambda:
N(B(x,r),\epsilon_0)\to \E$ is a diffeomorphism.

It follows that if $L$ is a leaf of $X$, then the neighborhood
$N(L,\epsilon_0)$ contains a foliated space $Y$, which is lifted from
$X$ via the local diffeomorphism $\lambda$.  This completes the proof
of Theorem~\ref{quasibundle}.

\begin{rem}
  The hypothesis that $X$ be of class $C^2$ is required so that the
  normal bundle and exponential map be of class $C^1$. On the other
  hand, a manifold of class $C^1$ admits a compatible structure of
  class $C^r$, for any $1\le r\le \infty$. The proof
  (see~\cite[Chapter 2]{hirsch}) is based on approximation results
  that can be adapted to `manifolds with parameters', that is, to
  foliated spaces. Since it is out of place to do this here, the $C^2$
  hypothesis is kept here and in Theorems~\ref{t:equicontinuous
    foliated spaces},~\ref{t:equicontinuous quasi-effective foliated
    spaces}.
\end{rem}

\begin{rem}
  Let $x\in L$, and let $T$ be a transversal through $x$, which may be
  taken to lie in the affine subspace $x+ T_x X^\perp$ of $\E$. If
  $\GG(x)$ denotes the group of germs of local homeomorphisms of $T$
  which fix $x$, then there is the germinal holonomy representation
  $$\pi_1(L,x)\to \GG(x).$$
  The construction above shows that this
  representation is equivalent to that of $L$ as a leaf of the
  foliated space $Y$ of $N(L,\epsilon_0)$.
\end{rem}

The following two propositions complement Theorem~\ref{quasibundle}.

\begin{prop}\label{nor.1}
  Let $L$ be a leaf of the compact foliated space of
  Theorem~\ref{quasibundle}. Then there
  exists $\epsilon>0$ and an $\epsilon$-disc bundle $W\to L$
  which carries a foliated space $Y$ and there exists a constant $K>0$
  such that the projection $p:S\to L$ of each leaf $S$ of $Y$ into
  $L$ is a local diffeomorphism \upn{(}of class $C^1$\upn{)} of
  distortion in the interval $[1/K,K]$.
\end{prop}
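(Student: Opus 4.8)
The plan is to establish the distortion bound pointwise and then use compactness of $X$ to make the constant $K$ uniform across all leaves. Keeping the notation of Theorem~\ref{quasibundle}, let $W=N(L,\epsilon_0)$ with the associated local diffeomorphism $\lambda\colon W\to\E$, and let $S$ be a leaf of the lifted foliated space $Y=\lambda^{-1}(X\cap\lambda(W))$. By construction the Riemannian metric on $S$ is pulled back by $\lambda$ from the leaf $\lambda(S)$ of $X$, so that $\norm{u}_S=\norm{d\lambda_s(u)}_\E$ for $u\in T_sS$, whereas $L$ carries the metric induced from $\E$, so $\norm{dp(u)}_L=\norm{dp(u)}_\E$. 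Splitting $T_sW=T_xX\oplus T_xX^\perp$ into its horizontal and vertical parts, where $x=p(s)$, the transversality of $S$ to the fibers provided by part~(3) of Theorem~\ref{quasibundle} lets me write $T_sS$ as the graph $\xi\mapsto(\xi,A_s\xi)$ of a linear map $A_s\colon T_xX\to T_xX^\perp$, with $dp(\xi,A_s\xi)=\xi$. The statement then reduces to two uniform estimates: that $d\lambda_s$ is uniformly bi-Lipschitz, and that $\norm{A_s}$ is uniformly bounded.

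First I would record the bi-Lipschitz bound. As noted before Lemma~\ref{loc.dif}, $\lambda_*$ is the identity along the zero section and an isomorphism on $N(X,\epsilon)$; since $X$ is compact and $\lambda$ is $C^1$, after shrinking $\epsilon_0$ there are constants $0<c_1\le c_2$ with $c_1\norm{u}\le\norm{d\lambda_s(u)}_\E\le c_2\norm{u}$ for every $s\in N(X,\epsilon_0)$ and every $u\in T_sW$. I would also use the elementary fact that $\lambda$ is an affine isometry on each fiber, so that $d\lambda_s$ carries the vertical subspace $\{0\}\times T_xX^\perp$ isometrically onto $T_xX^\perp\subset\E$.

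The main work, and the step I expect to be the real obstacle, is the uniform bound $\norm{A_s}\le C$, which is equivalent to a uniform lower bound on the angle between $T_sS$ and the vertical subspace. Since $\lambda$ maps $S$ locally diffeomorphically onto a leaf of $X$, one has $d\lambda_s(T_sS)=T_{\lambda(s)}X$. By uniform continuity of the tangent distribution $y\mapsto T_yX$ on the compact space $X$, the angle between $T_{\lambda(s)}X$ and $T_xX$ tends to $0$ as $\epsilon_0\to0$, uniformly in $x$ and independently of the leaf; hence, shrinking $\epsilon_0$ once more, $T_{\lambda(s)}X$ meets $T_xX^\perp$ at an angle bounded below by some $\theta_0>0$. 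Pulling this back through the bi-Lipschitz isomorphism $d\lambda_s$, which sends the vertical exactly onto $T_xX^\perp$, keeps the angle between $T_sS$ and the vertical bounded below, and such a lower bound is precisely a bound $\norm{A_s}\le C$. The delicate point is that this argument must be uniform over all the (generally noncompact) leaves $L$ and $S$ at once; this is exactly what compactness of $X$ supplies, since all the quantities involved are controlled by the continuity of $y\mapsto T_yX$ on $X$ and by finitely many flow boxes.

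Finally I would assemble the estimate. For $u=(\xi,A_s\xi)\in T_sS$ the direct-sum norm obeys $\norm{\xi}\le\norm{u}_W\le\sqrt{1+C^2}\,\norm{\xi}$, and $dp(u)=\xi$. Combining with the bi-Lipschitz bounds gives $\norm{dp(u)}_L=\norm{\xi}\le c_1^{-1}\norm{u}_S$ and $\norm{u}_S\le c_2\sqrt{1+C^2}\,\norm{\xi}=c_2\sqrt{1+C^2}\,\norm{dp(u)}_L$, so that $\norm{dp(u)}_L$ lies between $\norm{u}_S/K$ and $K\norm{u}_S$ with $K=\max\{c_1^{-1},\,c_2\sqrt{1+C^2}\}$. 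As $c_1,c_2,C$ and $\epsilon_0$ are all uniform in the leaf, $K$ is a single constant, and $p\colon S\to L$ is a $C^1$ local diffeomorphism of distortion in $[1/K,K]$, as required.
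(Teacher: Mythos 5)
Your proof is correct, but it follows a genuinely different route from the paper's. The paper disposes of Proposition~\ref{nor.1} in a few lines of chart language: since $X$ is compact it admits a \emph{finite regular} cover by flow boxes, the projection $p$ of a leaf $S$ of $Y$ onto $L$ is locally a composition of finitely many holonomy projections between plaques of these flow boxes, and finiteness plus regularity of the cover give a single global bound on the distortion of all such plaque-to-plaque projections. You instead work directly in the Hilbert space: you represent $T_sS$ as a graph $\xi\mapsto(\xi,A_s\xi)$ over the horizontal space, reduce the claim to a uniform bi-Lipschitz bound for $d\lambda$ on $N(X,\epsilon_0)$ plus a uniform bound $\norm{A_s}\le C$, and obtain the latter from uniform continuity of $y\mapsto T_yX$ on the compact $X$ together with the fact that $d\lambda_s$ carries the vertical isometrically onto $T_xX^\perp$. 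What your argument buys is an explicit constant $K=\max\{c_1^{-1},c_2\sqrt{1+C^2}\}$ and a clear identification of the sole source of uniformity (continuity of the tangent distribution over compact $X$), independent of any choice of charts; what the paper's argument buys is brevity and consistency with how Theorem~\ref{quasibundle} itself was proved. One point in your write-up deserves care: the uniform bi-Lipschitz bound for $d\lambda_s$ over all of $N(X,\epsilon_0)$ does not follow from compactness alone, since the fibers of $N(X,\epsilon_0)$ are infinite-dimensional balls and hence not compact; one must also use that $\lambda(x,v)=x+v$ is affine in $v$, so that the variation of $d\lambda$ and of the tangent spaces $T_{(x,v)}N(X)$ is controlled linearly in $\norm{v}<\epsilon_0$ by data that is continuous over the compact $X$. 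This is, however, exactly the level of assertion the paper itself makes in the discussion preceding Lemma~\ref{loc.dif}, so it is a presentational caveat rather than a gap.
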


The disk bundle $W\to L$ and foliated space $Y\subset W$ with the
stated properties have been constructed in the previous proof. Because
the space $X$ is compact, it admits a finite regular cover by flow
boxes. The projection $Y\to L$ amounts to finitely many projections
between plaques of these flow boxes.  As the cover is finite and
regular, there are global bounds for the distortion of these
projections within flow boxes. The claim made in the proposition
follows from these observations.

A similar useful fact is the following.

\begin{prop}
Let $L$ be as in Proposition~\ref{nor.1}. Given $R>0$, there
exists $\delta>0$ such that, if $x\in L$ and $y$ is a point of $W$
with $p(y)=x$, and such that the distance in the fiber of $W$
between $x$ and $y$ is $<\delta$, then the ball $B(y,R)$ in the leaf
of $Y$ through $y$ is contained in $W$.
\end{prop}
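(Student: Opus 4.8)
The plan is to track the transverse (fiber) coordinate of $y$ as it is transported along the leaf $S$ of $Y$ through $y$, and to show that if $y$ starts close enough to the zero section $L$, then this coordinate cannot reach the boundary of $W$ before one has travelled distance $R$ along $S$. Write $h(z)$ for the distance from a point $z\in W$ to $L$ measured in the fiber of $W$ through $z$, so that $W=\{h<\epsilon\}$ and $L=\{h=0\}$ (the zero section, which is a leaf of $Y$ by Theorem~\ref{quasibundle}). The hypothesis is $h(y)<\delta$, and the goal is to produce $\delta$ so that $h<\epsilon$ along every leaf geodesic from $y$ of length $\le R$; since $W$ is the open disc bundle, this is exactly the statement that $B(y,R)\subset W$ and that the geodesics extend to the full radius.

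First I would reduce the problem to a uniform bound on plaque chains. Since $X$ is compact, it has a finite regular cover by flow boxes, each leaf is complete of bounded geometry, and Proposition~\ref{nor.1} gives that $p\colon S\to L$ is a local diffeomorphism of distortion in $[1/K,K]$. Hence a leaf geodesic issuing from $y$ of length $\le R$ projects under $p$ to a path in $L$ of length $\le KR$, whose image lies in the ball $B(p(y),KR)$ of the compact space $X$. By bounded geometry together with finiteness and regularity of the cover, such a ball meets at most $N$ plaques, where $N=N(R)$ depends only on $R$ and $X$; lifting through the local diffeomorphism $\lambda$ (equivalently, since $p$ has distortion bounded below, counting in $S$ directly), the corresponding portion of $S$ is covered by a plaque chain of length at most $N$.

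Next I would track $h$ along such a chain by holonomy. Because $L$ is a leaf of $Y$ and coincides with the zero section, the distinguished point $h=0$ of each transversal lies on $L$, and the transverse coordinate changes $z_i(z_j)$ between consecutive flow boxes are local homeomorphisms of the transversals fixing this point. As the cover is finite, there are only finitely many such transition maps; composing at most $N$ of them produces only finitely many maps, each continuous and fixing $h=0$. By uniform continuity of this finite family at the zero section, given $\eta\in(0,\epsilon)$ there is $\delta>0$ such that every such composition carries $\{h<\delta\}$ into $\{h<\eta\}$. Thus, as long as the geodesic from $y$ stays in $W$ and $h(y)<\delta$, the transported coordinate satisfies $h<\eta<\epsilon$ after at most $N$ plaques.

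Finally I would close with an open–closed argument. Let $T\le R$ be the supremum of lengths $t$ for which the geodesic $\gamma$ from $y$ stays in $W$. On $[0,T)$ the geodesic traverses at most $N$ plaques, so by the previous step $h\circ\gamma<\eta<\epsilon$ there; by continuity $h(\gamma(T))\le\eta<\epsilon$, so $\gamma(T)$ lies in the interior of $W$, where the foliated structure of $Y$ is defined and the geodesic extends past $T$. Hence $T=R$, every leaf geodesic from $y$ of length $\le R$ remains in $W$, and $B(y,R)\subset W$ as required. The delicate point is the third step: one must prevent any dependence of the height estimate on the (possibly unbounded) total holonomy of $L$. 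The resolution is that only a bounded number $N=N(R)$ of transition maps—drawn from a finite family, each fixing the zero section—ever act, so that plain continuity of finitely many compositions suffices and no equicontinuity hypothesis is needed.
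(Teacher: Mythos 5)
Your proof is correct and follows essentially the same route as the paper's: the paper's (much terser) argument rests on exactly the same three ingredients, namely a finite regular cover by flow boxes, the uniform continuity of the transverse coordinate changes, and the fact that only a bounded number of coordinate changes are needed to run through a ball of radius $R$ because plaques have a definite size. Your plaque-counting bound $N(R)$, the holonomy tracking of the fiber coordinate past finitely many compositions fixing the zero section, and the final open--closed argument are precisely the details the paper leaves implicit.
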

\begin{proof}
The proof of this fact uses again a finite regular cover by flow
boxes. The local coordinate changes in the transverse direction are
uniformly continuous maps, and only a finite number of coordinate
changes are required to run through a ball of radius $R$, because
plaques have a definite size.
\end{proof}

\begin{rem}
  The construction of Theorem~\ref{quasibundle} and the discussion
  that follows it (especially Proposition~\ref{nor.1}), extend to a
  covering $\pi:L'\to L$ of a leaf $L$, simply by considering the
  normal bundle of the immersion $L'\to L\hookrightarrow \E$. The
  output is a disc bundle $W\to L'$ containing a foliated space $Y$
  transverse to the fibers and having $L'$ as a leaf. The projection
  of leaves of $Y$ into $L$ is a local diffeomorphism of bounded
  distortion.
\end{rem}

Continuing with the notation so far introduced, let $L'\to L$ be a
covering of a leaf $L$ of $X$, let $x\in L'$ and let $D$ be a compact
manifold with boundary. Let $y\in D$ and let $f:D\to L'$ be a
continuous map with $f(y)=x$. The following is a Reeb stability type
of result.
  
\begin{prop}\label{reeb}
Suppose that the induced homomorphism $\pi_1(D,y) \to \pi_1(L',x)$
takes $\pi_1(D,x)$ into the kernel of the holonomy representation of
$L'$ \upn{(}as a leaf of the foliated space $Y$\upn{)}. Then there is
a transversal $Z\subset Y$ through $x$ and a smooth map $F$ from the
product foliation $D\times Z$ into $X$ such that $F|D\times \{x\}=f$ and
$F(D\times \{y\})$ is contained in the leaf through $y$, for all $y\in Z$.
\end{prop}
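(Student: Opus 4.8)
The plan is to realize $F$ as the composition of the given map $f$ with a transverse "spreading" of the product foliation, built from the local diffeomorphism $\lambda$ of Theorem~\ref{quasibundle} together with the holonomy hypothesis. First I would set up the transversal: take $Z\subset Y$ to be a small transversal through $x$ contained in the affine slice $x+T_xX^\perp$, so that $Z$ parametrizes the nearby leaves of $Y$ (equivalently, by the second remark, the germinal holonomy data of $L'$). The key geometric fact I will use is the one recorded after Lemma~\ref{loc.dif} and in Proposition~\ref{nor.1}: there is an $\epsilon_0$-disc bundle $W\to L'$ carrying the foliated space $Y$, on which $\lambda$ restricts to a local diffeomorphism whose leaves project to $L$ with bounded distortion. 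Thus locally, over any plaque-sized piece of $L'$, the foliation $Y$ is a trivial product and one can slide a point of $L'$ off to a nearby leaf along the transverse coordinate given by $Z$.

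Next I would carry out the construction over $D$ by the standard holonomy-lifting argument. For each $z\in Z$, I want a leafwise-continuous map $F_z:D\to X$ with $F_x=f$ and $F_z(y)$ lying in the leaf of $Y$ determined by $z$. Cover $D$ by finitely many compact sets $D_1,\dots,D_m$ each mapping under $f$ into a single flow box of the disc bundle $W$ (possible by compactness of $D$ and the Lebesgue number lemma applied to $f^{-1}$ of the flow-box cover). Over each $D_k$, use the local product structure of $Y$ to transport the map $f|D_k$ to the nearby leaf indexed by $z$; concretely, compose $f|D_k$ with the local section of $Y$ through the point of $Z$ at parameter $z$, expressed in flat coordinates via $\lambda$. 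This defines $F$ piece by piece, and smoothness along $D\times Z$ follows because $\lambda$ and the flow-box coordinate changes are smooth in the plaque directions and continuous transversally, matching the definition of smoothness for maps of foliated spaces.

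The main obstacle, and the reason the hypothesis on $\pi_1$ is needed, is \emph{consistency of the local transports on overlaps} $D_k\cap D_l$: a priori the two definitions of $F_z$ differ by the holonomy of the loop one gets by pushing a point around in $D$, and for $F$ to be well defined these holonomy germs must act trivially on the transversal $Z$. This is exactly what the assumption that $\pi_1(D,y)\to\pi_1(L',x)$ lands in the kernel of the holonomy representation guarantees. I would formalize this by choosing a triangulation (or a good cover) of $D$, lifting $f$ along the $1$-skeleton first and checking that the obstruction to extending over each $2$-cell is the holonomy of the image of the boundary loop, which vanishes by hypothesis; since $D$ is a manifold with boundary, higher cells impose no further obstruction once the transverse germ is trivial, because the transverse models are determined by germinal holonomy alone. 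Shrinking $Z$ if necessary so that all the finitely many flow boxes and their coordinate changes remain defined over the whole range of transverse parameters then yields the required $F$ on $D\times Z$ with $F|D\times\{x\}=f$ and each $F(D\times\{y\})$ contained in a single leaf.
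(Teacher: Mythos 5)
The paper states Proposition~\ref{reeb} without proof, presenting it only as a ``Reeb stability type of result,'' so there is no internal argument to compare yours against; what you have written is the standard generalized Reeb stability proof, correctly adapted to the normal-bundle picture of Theorem~\ref{quasibundle}, Proposition~\ref{nor.1} and the remark extending them to coverings $L'\to L$. Your three ingredients are exactly the right ones: a finite cover of the compact set $D$ by pieces that $f$ sends into flow boxes of the disc bundle over $L'$, transport of $f$ to nearby leaves through the local product structure of $Y$, and the hypothesis on $\pi_1(D,y)\to\pi_1(L',x)$ to make the transport path-independent, with finiteness of the cover allowing a uniform shrinking of $Z$ so that all transports stay inside $W$. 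One small imprecision in your obstruction-theoretic formalization: the hypothesis is what closes up the lift across the edges of the $1$-skeleton not lying in a spanning tree, i.e.\ across a generating set of loops of $\pi_1(D,y)$; the extension over a $2$-cell is then automatic, because the boundary of a $2$-cell is null-homotopic in $D$ and germinal holonomy transport depends only on the homotopy class of a path rel endpoints, so that particular obstruction vanishes without invoking the hypothesis at all. This misattribution does not affect correctness. Note also that since $f$ is only assumed continuous, the ``smoothness'' of $F$ asserted in the statement can only mean smoothness in the leafwise sense the paper uses for foliated spaces, which is how you interpret it.
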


If $f:D\hookrightarrow L$ is an embedding, $Z$ can be chosen so that
$F$ embeds $D\times Z$ in $Y$.  In particular, if $\pi_1(L',x) \to
\pi_1(L)$ has image contained in the kernel of the holonomy
representation of $L$ as a leaf of $X$, then, given $x\in L'$ and
$R>0$ arbitrarily large, there exists $\delta>0$ such that if $y$ is a
point in the fiber $p^{-1}(x)$ at distance $<\delta$ from $x$, then
the ball $B(y,KR)$ contains $p^{-1}(B(x,R))\cap Y$, and the component
of this last set which contains $y$, contains the ball $B(y, R/K)$.

Moreover, the absence of holonomy permits to choose $\delta$ so that
there is a transversal $Z$ in the fiber through $x$ such that the
union of the leaves of $p^{-1}(B(x,R))\cap Y$ through points of $Z$ is
parametrized as a product $B(x,R)\times Z$, and the induced metric on
the leaves is at bounded distance from that of $B(x,R)$.

\section{Pseudogroups of local transformations}

A {\em pseudogroup of local transformations\/} of a topological space
$Z$ is a collection $\HH$ of homeomorphisms between open subsets of
$Z$ that contains the identity on $Z$ and is closed under composition
(wherever defined), inversion, restriction and combination of
maps. Such pseudogroup $\HH$ is {\em generated\/} by a set
$E\subset\HH$ if every element of $\HH$ can be obtained from $E$ by
using the above pseudogroup operations; to simplify arguments, the
sets of generators to be considered will be assumed to be symmetric
($h^{-1}\in E$ if $h\in E$).

The {\em orbit\/} of an element $x\in Z$ is the set $\HH(x)$ of
elements $h(x)$, for all $h\in\HH$ whose domain contains $x$. These
orbits are the equivalence classes of an equivalence relation on $Z$.
Note that an arbitrary equivalence relation $R\subset Z\times Z$ is
defined by a pseudogroup on $Z$ if and only if $R$ is a union of sets
$R_i$, $i\in I$, such that the restriction to each $R_i$ of both
factor projections $Z\times Z\to Z$ are homeomorphisms onto open
subsets. Indeed, take the sets $R_i$ to be the graphs of all local
transformations in the pseudogroup. Moreover $R$ is defined by a
countably generated pseudogroup on $Z$ if and only if $R$ is a
countable union of sets $R_i$ satisfying the above condition. This
follows because a countable set of local transformations of $Z$ gives
rise to a countable family of composites with maximal domain.

The set of germs of all transformations in the pseudogroup $\HH$ at
all points of their domains, endowed with the \'etale topology, is a
topological groupoid, product and inversion being induced by
composition and inversion of maps, respectively. Thus, for each $x\in
Z$, the set of germs at $x$ of transformations $h\in\HH$ with
$x\in\dom h$ and $h(x)=x$ is a group called the group of germs at
$x$. If $x,y\in Z$ are in the same $\HH$-orbit, then the groups of
germs at $x$ and $y$ are isomorphic: an isomorphism is given by
conjugation with the germ at $x$ of any transformation $g\in\HH$ with
$x\in\dom g$ and $g(x)=y$.  The group of germs of an orbit is
therefore well defined, up to isomorphisms, as the group of germs at
any point of that orbit. In particular, a distinguished type of orbits
are those with trivial group of germs.

Pseudogroups of local transformations must be thought of as natural
generalizations of group actions on topological spaces (each group
action generates a pseudogroup). But the main example to keep in mind
is the holonomy pseudogroup of a foliated space $(X,\FF)$ associated
to a regular covering by flow boxes $(U_i,\phi_i)$, whose construction
is now recalled. If $\phi_i:U_i\to B_i \times Z_i$ for Polish spaces 
$Z_i$ and open balls $B_i$ of finite radius in
$\R^n$, 
%
%
%
%
let $p_i:U_i\to Z_i$ denote the composite of $\phi_i$ with the
factor projection $\R^n \times Z_i\to Z_i$; the fibers of these $p_i$
are the plaques.  If $U_i$ meets $U_j$, let $Z_{i,j}=p_i(U_i\cap
U_j)$, and regularity of the cover permits to define a homeomorphism
$h_{i,j}:Z_{i,j}\to Z_{j,i}$ such that $p_j=h_{i,j}\circ p_i$ on
$U_i\cap U_j$ \cite{CC2K, Hector-Hirsch}.
(If the covering by flow boxes is not regular, one can define
generators of a pseudogroup via local sections of the projections
$p_i$.)
Such a collection $(U_i,p_i,h_{i,j})$ is called a defining cocycle
for $\FF$ \cite{Haefliger85,Haefliger88}. These $h_{i,j}$ generate a
pseudogroup $\HH$ of local transformations of $Z=\bigsqcup_iZ_i$,
which is called the \textit{holonomy pseudogroup} of $(X,\FF)$ (with
respect to the covering $(U_i,\phi_i)$).

There is a canonical bijection between the set of leaves and the set
of $\HH$-orbits, which is given by $L\mapsto\HH(p_i(x))$ if $x\in
L\cap U_i$. Each $Z_i$ can be considered as a local transversal of
$\FF$ via $\phi_i$ and the identification $Z_i\equiv\{0\}\times
Z_i\subset B_i\times Z_i$. It may be assumed that all of these local
transversals are disjoint from
each other, and thus that $Z$ is embedded
in $X$ as a complete transversal. Each $\HH$-orbit injects into the
corresponding leaf in this way.

The holonomy groups of the leaves can be defined as the groups of
germs of the corresponding orbits. Thus leaves with trivial holonomy
group correspond to orbits with trivial group of germs. Moreover, with
the same arguments of \cite{Hector-Hirsch}, it follows that, for a
general pseudogroup $\HH$ of local transformations of a topological
space $Z$, if $\HH$ has a countable set of generators, then the union
of orbits with trivial group of germs is a residual subset of $Z$; in
particular, this union is dense in $Z$ if $Z$ is a Polish space.

It is well known that all defining cocycles of a foliated space induce
holonomy pseudogroups that are equivalent in the sense given by the
following definition; thus the relevant properties of pseudogroups of
local transformations of a topological space are those that are
invariant by these equivalences.

\begin{defn}[{Haefliger \cite{Haefliger85,Haefliger88}}]
Let $\HH,\HH'$ be pseudogroups of local transformations of topological
spaces $Z,Z'$, respectively. An {\em \'etale morphism\/}
$\Phi:\HH\to\HH'$ is a maximal collection $\Phi$ of homeomorphisms of
open subsets of $Z$ to open subsets of $Z'$ such that:
\begin{itemize}

\item If $\phi\in\Phi$, $h\in\HH$ and $h'\in\HH'$, then
$h'\circ\phi\circ h\in\Phi$;

\item the sources of elements of $\Phi$ form a covering of $Z$;
and

\item if $\phi,\psi\in\Phi$, then $\psi\circ\phi^{-1}\in\HH'$.

\end{itemize}
An \'etale morphism $\Phi:\HH\to\HH'$ is called an {\em equivalence\/}
if the collection $\Phi^{-1}=\{\phi^{-1}\ |\ \phi\in\Phi\}$ is also an
\'etale morphism $\HH'\to\HH$, which is called the {\em inverse\/} of $\Phi$.
The {\em composite\/} of two \'etale morphisms $\Phi:\HH\to\HH'$ and
$\Psi:\HH'\to\HH''$ is the collection $\Psi\circ\Phi=\{\psi\circ\phi\ |\
\phi\in\Phi,\ \psi\in\Psi\}$, which is an \'etale morphism $\HH\to\HH''$.
Finally, an \'etale morphism $\Phi:\HH\to\HH'$ is {\em generated\/} by a subset
$\Phi_0\subset\Phi$ if all the elements of $\Phi$ can be obtained by
restriction and combination of composites $h'\circ\phi\circ h$ with
$h\in\HH$, $\phi\in\Phi_0$ and $h'\in\HH'$.
\end{defn}
 
An \'etale morphism $\Phi:\HH\to\HH'$ clearly induces a continuous map
between the corresponding spaces of orbits, $\bar\Phi:Z/\HH\to
Z'/\HH'$, which is a homeomorphism if $\Phi$ is an equivalence.

A basic example of a pseudogroup equivalence is the following. Let $\HH$ be a
pseudogroup of local transformations of a space $Z$, let
$U\subset Z$ be an open subset that meets every
$\HH$-orbit, and let $\GG$ denote the restriction of $\HH$ to $U$. Then
the inclusion map $U\hookrightarrow Z$ generates an equivalence
$\GG\to\HH$. In fact, this example can be used to describe any pseudogroup
equivalence in the following way. Let $\HH,\HH'$ be pseudogroups
of local transformations of spaces $Z,Z'$, and $\Phi:\HH\to\HH'$ an
equivalence. Then let
$\HH''$ be the pseudogroup of local transformations of $Z''=Z\sqcup Z'$ generated
by $\HH\cup\HH'\cup\Phi$. Then the inclusions of $Z,Z'$ in $Z''$ generate
equivalences $\Psi_1:\HH\to\HH''$ and $\Psi_2:\HH'\to\HH''$ so that
$\Phi=\Psi_2^{-1}\circ\Psi_1$.

For pseudogroups of local transformations of locally compact spaces, the
following result characterizes the existence of relatively compact open subsets
that meet all orbits.

\begin{lemma}\label{l:compact space of orbits}
Let $\HH$ be a pseudogroup of local transformations of a locally compact $Z$.
The orbit space $Z/\HH$ is compact if and only if there exists a
relatively compact open subset that meets every $\HH$-orbit.
\end{lemma}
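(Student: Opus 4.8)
The plan is to prove both implications using the quotient projection $\pi:Z\to Z/\HH$, whose crucial property is that it is an \emph{open} map. To see this, recall that for an open set $W\subset Z$ its $\HH$-saturation is $\HH(W)=\bigcup_{h\in\HH}h(\dom h\cap W)$, and that each $h$ is a homeomorphism between open subsets of $Z$; hence $\HH(W)$ is a union of open sets and so is open. Since $\pi^{-1}(\pi(W))=\HH(W)$, the set $\pi(W)$ is open in the quotient topology. I also record the elementary observation that a subset $U\subset Z$ meets every $\HH$-orbit if and only if $\pi(U)=Z/\HH$, since $\pi(U)$ is precisely the set of orbits intersecting $U$.

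For the implication from a relatively compact transversal to compactness of the orbit space, suppose $U$ is a relatively compact open subset meeting every orbit, so that $\pi(U)=Z/\HH$. Then $\overline{U}$ is compact, so its continuous image $\pi(\overline{U})$ is compact; but $\pi(\overline{U})\supseteq\pi(U)=Z/\HH$, and the reverse inclusion is trivial, whence $Z/\HH=\pi(\overline{U})$ is compact.

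For the converse, assume $Z/\HH$ is compact. Using local compactness of $Z$, I would choose for each point $x\in Z$ a relatively compact open neighborhood $V_x$. Since $\pi$ is open, $\{\pi(V_x)\}_{x\in Z}$ is an open cover of $Z/\HH$, so by compactness there are finitely many points $x_1,\dots,x_k$ with $Z/\HH=\bigcup_{j=1}^k\pi(V_{x_j})$. Setting $U=\bigcup_{j=1}^k V_{x_j}$ then yields an open set that is relatively compact, being a finite union of relatively compact sets, and that satisfies $\pi(U)=\bigcup_j\pi(V_{x_j})=Z/\HH$; that is, $U$ meets every $\HH$-orbit.

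The only genuinely non-formal step is the openness of $\pi$, which is where I would be most careful: it rests on the identification $\pi^{-1}(\pi(W))=\HH(W)$ together with the fact that $\HH$-saturations of open sets are open. Once this is established, both directions reduce to the standard interplay between open surjections, compactness, and finite subcovers, so I expect no serious obstacle beyond keeping the saturation argument clean.
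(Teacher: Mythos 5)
Your proof is correct and follows essentially the same route as the paper's: the forward direction uses that $Z/\HH$ is the continuous image of the compact set $\overline{U}$, and the converse extracts a finite subcover from the images of relatively compact neighborhoods. The only difference is that you explicitly verify openness of the quotient map via the saturation identity $\pi^{-1}(\pi(W))=\HH(W)$, a point the paper uses implicitly when it calls $\{Q_x\}$ an open covering; making it explicit is a sound refinement, not a different argument.
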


\begin{proof}
If an open subset $U\subset Z$ meets every $\HH$-orbit, then the restriction $U\to
Z/\HH$ of the quotient map $Z\to Z/\HH$ is onto.  Thus $Z/\HH$ is
compact because it is a continuous image of the compact space
$\overline{U}$.

Assume that $Z/\HH$ is compact. Since $Z$ is locally compact, each
$x\in Z$ has relatively compact open neighborhood $U_x$. Let $Q_x$
denote the image of $U_x$ by the quotient map $Z\to Z/\HH$. Since
$Z/\HH$ is compact, its open covering $\{ Q_x\mid x\in Z\}$ has a
finite sub-covering $Q_{x_1},\dots,Q_{x_m}$.  Then the open set
$U=U_{x_1}\cup\dots\cup U_{x_n}$ of $Z$ is relatively compact and
meets all orbits of $\HH$.
\end{proof}

Examples of pseudogroups with compact space of orbits are the holonomy
pseudogroups of compact foliated spaces, as orbit and leaf spaces can be
identified. But such pseudogroups satisfy a stronger compactness
condition that is defined as follows.

\begin{defn}[{Haefliger \cite{Haefliger85}}]\label{d:compactly generated}
Let $\HH$ be a pseudogroup of local transformations of a locally
compact space $Z$.  Then $\HH$ is {\em compactly generated\/} if there
is a relatively compact open set $U$ in $Z$ meeting each orbit of
$\HH$, and such that the restriction $\GG$ of $\HH$ to $U$ is
generated by a finite symmetric collection $E\subset\GG$ so that each
$g\in E$ is the restriction of an element $\bar g$ of $\HH$ defined on
some neighborhood of the closure of the source of $g$.
\end{defn}

It was observed in \cite{Haefliger85} that this notion is invariant by
equivalences and that the relatively compact open set $U$ meeting each
orbit can be chosen arbitrarily. If $E$ satisfies the conditions of
Definition~\ref{d:compactly generated}, it will be called a {\em
system of compact generation\/} of $\HH$ on $U$.

\section{Coarse quasi-isometries}

The concept of coarse quasi-isometry was introduced by M.~Gromov in
\cite{Gromov93}; it is also called {\em rough isometry\/} in the
context of potential theory \cite{Kanai85}.  A {\em net\/} in a metric
space $M$, with metric $d$, is a subset $A\subset M$ such that
$d(x,A)<C$ for some $C>0$ and all $x\in M$; the term $C$-net is also
used.  A {\em coarse quasi-isometry\/} between $M$ and another
metric space $M'$ is the choice of a bi-Lipschitz bijection between
nets of $M$ and $M'$; in this case, $M$ and $M'$ are said to have the
same {\em coarse quasi-isometry type\/} or to be {\em coarsely
quasi-isometric\/}. This definition involves two constants that will
be called {\em coarse distortions\/}: one for the nets and another one
for the bi-Lipschitz equivalence. A collection of coarse
quasi-isometries is said to be {\em uniform\/} when the same coarse
distortions are valid for all of them.

Recall that the {\em Hausdorff distance\/} between subspaces $X,Y$ of
some metric space with metric $d$, is defined as
$$
d_H(X,Y)=\max\left\{\sup_{x\in X}d(x,Y),\sup_{y\in Y}d(y,X)\right\}\;.
$$
Now let $M,M'$ be arbitrary metric spaces with metrics $d,d'$.  The
{\em Gromov-Hausdorff distance\/} (also called {\em abstract Hausdorff
  distance\/}) between $M,M'$, denoted by $d_{GH}(M,M')$, is the
infimum of the Hausdorff distances $d_H(M,M')$ over all metrics on
$M\sqcup M'$ that restrict to $d,d'$ on $M,M'$. Note that
$d_{GH}(M,M')$ may be equal to $\infty$. If $d_{GH}(M,M')<\infty$,
then $M,M'$ are called {\em Hausdorff equivalent\/}. On the other
hand, the metric spaces $M,M'$ are called {\em Lipschitz equivalent\/}
when there exists a bi-Lipschitz bijection $M\to M'$. Then $M,M'$ are
coarsely quasi-isometric if and only if there are some metric spaces
$N,N'$ such that the pairs $M,N$ and $M',N'$ are Hausdorff equivalent,
and $N,N'$ are Lipschitz equivalent.

There is also a categorical description of coarse quasi-isometries.
Two maps $f,g:M\to M'$ are called {\em parallel\/} \cite{Gromov93}, or
{\em bornotopic\/} \cite{Roe93} or {\em uniformly close\/}
\cite{Block-Weinberger} if there is some $R>0$ such that
$d'(f(x),g(x))<R$ for all $x\in M$.  A map $f:M\to M'$ is said to be
{\em large scale Lipschitz\/} \cite{Gromov93} if there are constants
$\lambda,c>0$ such that
$$
d'(f(x),f(y))\le\lambda\,d(x,y)+c
$$
for all $x,y\in M$; note that $f$ need not be continuous. Then
coarse quasi-isometries can be considered as isomorphisms in the
category of metric spaces and parallel classes of large scale
Lipschitz maps.

The above description of coarse quasi-isometry is similar to the
definition of another type of ``coarse'' equivalence. A map $f:M\to
M'$ is called {\em effectively proper\/} \cite{Block-Weinberger} if
for all $r>0$ there is some $s>0$ so that
$$
d'(f(x),f(y))<r\Longrightarrow d(x,y)<s
$$
for all $x,y\in M$. The map $f$ is called {\em uniformly
  bornologous\/} \cite{Roe93} or {\em \upn{(}coarsely\upn{)}
  Lipschitz\/} \cite{Block-Weinberger} if for all $r>0$ there is some
$s>0$ so that
$$
d(x,y)<r\Longrightarrow d'(f(x),f(y))<s
$$
for all $x,y\in M$. Then two metric spaces are called {\em
  uniformly close\/} \cite{Block-Weinberger} if there is an
isomorphism between them in the category of metric spaces and
uniformly close classes of effectively proper coarsely Lipschitz maps.
Note that every large scale Lipschitz map is coarsely Lipschitz, and
it is also effectively proper if it has a large scale Lipschitz
inverse up to the uniform closeness of maps. Therefore coarsely
quasi-isometric metric spaces are uniformly close.

Uniform closeness of metric spaces is a slight modification of the
concept of {\em bornotopy equivalence\/} introduced in \cite{Roe93},
which is an isomorphism in the category of proper metric spaces and
bornotopy classes of effectively proper uniformly bornologous Borel
maps. Here, a metric space is called {\em proper\/} when its closed
bounded subsets are compact. Thus bornotopy equivalence is the same as
uniform closeness for all spaces that will be considered in this
paper.

\section{Coarse quasi-isometry type of orbits}

Let $\HH$ be a pseudogroup of local transformations of a space $Z$,
and $E$ a symmetric set of generators of $\HH$. For each $h\in\HH$ and
each $x\in\dom h$, let $|h|_{E,x}$ be defined as follows. If $h$ is
the identity around $x$, set $|h|_{E,x}=0$. Otherwise, $|h|_{E,x}$ is
the minimal positive integer $k$ such that $h=h_k\circ\dots\circ h_1$
around $x$ for some $h_1,\dots,h_k\in E$. Let $R\subset Z\times Z$
denote the equivalence relation induced by $\HH$ (whose equivalence
classes are the orbits). Then, for $(x,y)\in R$, let
$$ 
d_E(x,y)=\min\{|h|_{E,x}\ |\ h\in\HH,\ x\in\dom
h,\ h(x)=y\}\;.  
$$ 
In this way, $E$ induces a map $d_E:R\to\N$ whose
restriction to each orbit is a metric.  This is a well known
construction of a metric on the orbits; especially, for group actions.

Unlike the case of group actions, for a pseudogroup $\HH$ of local
transformations of a space $Z$ with a symmetric set $E$ of generators,
the coarse quasi-isometry type of the induced metric $d_E$ on the
orbits may depend on the choice of $E$, even if $E$ is finite. This is
due to the fact that not only composition of maps is used to generate a
group action, but restriction and combination of maps are also used to
generate $\HH$. Moreover, an equivalence of pseudogroups may not
preserve the coarse quasi-isometry type of the orbits for any choice
of generators, as can be seen in the following simple example.

\begin{example}
Let $\HH$ be the pseudogroup on $\R$
generated by the action of $\Z$ by translations, and let $\GG$ be the
restriction of $\HH$ to some open interval $U\subset\R$. If $U$ is of
length $>1$, then it meets every $\HH$-orbit, and thus $\HH$ is
equivalent to $\GG$. But the $\HH$-orbits are infinite, while each
$\GG$-orbit is finite if $U$ is of bounded length. So, for the
metrics induced by any choice of symmetric families of generators of
$\HH,\GG$, the $\HH$-orbits have infinite diameter and the
$\GG$-orbits finite diameter, and thus cannot be coarsely
quasi-isometric.
\end{example}

In the measure theoretic setting, this problem is solved by
considering {\em Kakutani equivalences\/} \cite{HK1}, which are kind
of measure theoretic counterparts of \'etale equivalences with the
additional requirement that the coarse quasi-isometry type of the
orbits is preserved. In the present topological context, the above
problem will be addressed without adding more conditions to \'etale
equivalences. Instead, appropriate representatives of pseudogroups and
sets of generators will be chosen to determine a coarse quasi-isometry
type on the orbits. The choice of appropriate pseudogroup
representatives is easy, while the choice of appropriate generators is
rather delicate.

Let $\HH$ be a pseudogroup of local transformations of a locally
compact space $Z$ with compact orbit space. By Lemma~\ref{l:compact
  space of orbits}, there is a relatively compact open subset $U$ of
$Z$ that meets all $\HH$ orbits. If $\HH$ is indeed compactly
generated, the restriction $\GG$ of $\HH$ to $U$ is a representative
of $\HH$ whose orbits will be shown to have a canonical coarse
quasi-isometry type, which is determined by any symmetric set $E$ of
generators of $\GG$ satisfying certain conditions. The first condition
on $E$ is that it must be a system of compact generation of $\HH$ on
$U$.  But this is not enough because there are systems of compact
generation on the same open set inducing different coarse
quasi-isometry types in the same orbit; such an example will be given
in Section~\ref{sec:example}. So a second new condition is introduced
as follows.

\begin{defn}\label{d:recurrent finite symmetric family of generators}
A finite symmetric family $E$ of generators of a pseudogroup $\HH$ of
local transformations of a locally compact space $Z$ is said to be
{\em recurrent\/} if there exists a relatively compact open subset
$U\subset Z$ and some $R>0$ such that any $d_E$-ball of radius $R$ in
any $\HH$-orbit meets $U$; {\it i.e.}, for any $x\in Z$ there exists
$h\in\HH$ with $x\in\dom h$, $|h|_{E,x}<R$ and $h(x)\in U$.
\end{defn}

The role played by $U$ in Definition~\ref{d:recurrent finite symmetric
family of generators} can actually be played by any relatively compact
open subset that meets all orbits, as shown by the following result.

\begin{lemma}\label{l:recurrent finite symmetric family of generators}
Let $\HH$ be a pseudogroup of local transformations of a locally
compact space $Z$, and let $E$ be a recurrent finite symmetric family
of generators of $\HH$. If $V\subset Z$ is an open set that meets
every orbit, then there exists $S>0$ such that any $d_E$-ball of
radius $S$ in any $\HH$-orbit meets $V$.
\end{lemma}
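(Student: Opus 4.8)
The plan is to produce the constant $S$ as the sum of the recurrence radius $R$ (for the given witness $U$) and a uniform bound $T$ measuring how many $E$-steps suffice to travel from any point of $U$ into $V$. These two ingredients combine by concatenation of words: from an arbitrary $x$ one first reaches $U$ in fewer than $R$ steps by recurrence, and then reaches $V$ in fewer than $T$ steps.

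First I would fix a relatively compact open $U$ and an $R>0$ witnessing recurrence of $E$, so that for every $x\in Z$ there is $h\in\HH$ with $x\in\dom h$, $|h|_{E,x}<R$ and $h(x)\in U$. The core step is to establish a bound that is uniform over the compact set $\overline U$: there is $T>0$ such that for every $y\in\overline U$ there is $h\in\HH$ with $y\in\dom h$, $|h|_{E,y}<T$ and $h(y)\in V$. To see this, note that since $V$ meets the orbit $\HH(y)$, there is $h_y\in\HH$ with $y\in\dom h_y$ and $h_y(y)\in V$; writing $h_y$ as a word $g_k\circ\dots\circ g_1$ in $E$ valid near $y$, with $k=|h_y|_{E,y}$, this word defines a local transformation on an open neighborhood $W_y$ of $y$. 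Because $V$ is open and $g_k\circ\dots\circ g_1$ is continuous, after shrinking $W_y$ we may assume $h_y(x)\in V$ for all $x\in W_y$; and the same length-$k$ word shows $|h_y|_{E,x}\le k$ there. The sets $\{W_y\mid y\in\overline U\}$ cover the compact space $\overline U$, so there is a finite subcover $W_{y_1},\dots,W_{y_m}$, and $T=\max_j|h_{y_j}|_{E,y_j}+1$ does the job for every point of $U\subset\overline U$.

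Finally I would combine the two bounds. Given any $x\in Z$, recurrence gives $h$ with $|h|_{E,x}<R$ and $h(x)\in U$; the uniform bound applied to the point $h(x)$ gives $h'$ with $|h'|_{E,h(x)}<T$ and $h'(h(x))\in V$. Then $h'\circ h$ carries $x$ into $V$, and concatenating the two defining words yields $|h'\circ h|_{E,x}\le|h|_{E,x}+|h'|_{E,h(x)}<R+T$. Hence $S=R+T$ has the required property: the $d_E$-ball of radius $S$ about any point of any orbit meets $V$.

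The main obstacle is the uniform-bound step, and specifically the \emph{local stability of word length} under perturbation of the base point: the passage from ``$h_y(y)\in V$ with $|h_y|_{E,y}=k$'' to ``$h_y(x)\in V$ with $|h_y|_{E,x}\le k$ for all nearby $x$.'' This is precisely what makes compactness usable, and it relies on the fact that the generators in $E$ are homeomorphisms between open sets, so that a fixed word in $E$ is a genuine local transformation defined and continuous on an open neighborhood. Once this local-to-finite reduction is in place, the concatenation argument is routine.
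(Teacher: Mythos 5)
Your proof is correct and is essentially the paper's own argument: the paper likewise concatenates the recurrence bound $R$ with a uniform bound coming from a finite family $F\subset\HH$ of words in $E$ whose sources cover $\overline{U}$ and whose targets lie in $V$, concluding with $S=r+R$. The compactness argument you spell out (realizing each $h_y$ as a word on a shrunken neighborhood $W_y$ mapped into $V$, then taking a finite subcover of $\overline{U}$) is exactly the justification the paper leaves implicit for the existence of $F$.
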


\begin{proof}
By definition, there exist a relatively compact open subset $U\subset
Z$ and a positive number $R$ such that any $d_E$-ball of radius $R$ in
any $\HH$-orbit meets $U$.  Since $V$ also meets every orbit, there
exists a finite family $F\subset\HH$ such that:
\begin{itemize}
\item the sources of elements of $F$ cover the compact closure
$\overline{U}$;
\item the targets of elements of $F$ are contained in $V$; and
\item each element of $F$ is a composite of elements of $E$.
\end{itemize}
Let $r>0$ be an integer so that every $g\in F$ can be written as a
composition of at most $r$ elements of $E$.

Fix any $x\in Z$. On the one hand, there is some $h\in\HH$ with
$x\in\dom h$, $|h|_{E,x}<R$ and $h(x)\in U$. On the other hand, there
is some $g\in F$ whose domain contains $h(x)$. So $x\in\dom(gh)$,
$gh(x)\in V$, and $$ |gh|_{E,x}\leq|g|_{E,h(x)}+|h|_{E,x}\leq r+R\;.
$$ Thus the result follows with $S=r+R$.
\end{proof}

Let $\HH$ be a compactly generated pseudogroup of local
transformations of a locally compact space $Z$. A system of compact
generation of $\HH$ on a relatively compact open subset $U\subset Z$ that
meets every orbit is called {\em recurrent\/} if it is recurrent when
considered as finite symmetric set of generators of the restriction of
$\HH$ to $U$. An example of a non-recurrent system of compact
generation will be given in Section~\ref{sec:example}. On the other
hand, the existence of recurrent systems of compact generation will be
a consequence of the following result.

\begin{lemma}\label{l:recurrent systems of compact generation}
With the above notation, let $E$ be a system of compact generation of
$\HH$ on $U$. For each $g\in E$, fix an extension $\bar g\in\HH$ of $g$
with $\overline{\dom g}\subset\dom\bar g$. Suppose that every
$x\in\overline{U}$ has an open neighborhood $V_x$ in $Z$ such that 
$$
V_x\subset\dom\bar g_x\;,\quad V_x\cap U\subset\dom g_x\;,\quad
\bar g_x(V_x)\subset U
$$
for some $g_x\in E$. Then $E$ is recurrent.
\end{lemma}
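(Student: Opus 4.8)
The plan is to show that $E$, viewed as a symmetric set of generators of the restriction $\GG=\HH|_U$, is recurrent on $U$ in the sense of Definition~\ref{d:recurrent finite symmetric family of generators}. Concretely, I would produce an open set $U'$ that is relatively compact in $U$ (equivalently $\overline{U'}\subset U$ with $\overline{U'}$ compact) together with a radius $R$ such that every $d_E$-ball of radius $R$ in any $\GG$-orbit meets $U'$; I will in fact arrange that every point of $U$ is carried into $U'$ by a \emph{single} generator of $E$, so that $R=2$ works.

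First I would exploit the compactness of $\overline{U}$ to turn the pointwise hypothesis into a uniform one. For each $x\in\overline{U}$ the hypothesis supplies an open neighborhood $V_x$ and a generator $g_x\in E$ with $V_x\subset\dom\bar g_x$, $V_x\cap U\subset\dom g_x$ and $\bar g_x(V_x)\subset U$. Since $Z$ is locally compact Hausdorff, I can shrink each $V_x$ to an open neighborhood $V'_x$ of $x$ with $\overline{V'_x}$ compact and $\overline{V'_x}\subset V_x$. The family $\{V'_x\}_{x\in\overline{U}}$ covers the compact set $\overline{U}$, so finitely many $V'_{x_1},\dots,V'_{x_m}$ already cover $\overline{U}$; write $g_j=g_{x_j}$ and $\bar g_j=\bar g_{x_j}$.

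Next I would set $U'=\bigcup_{j=1}^m\bar g_j(V'_{x_j})$. Each $\bar g_j$ restricts to a homeomorphism on $\dom\bar g_j\supset\overline{V'_{x_j}}$, so $\bar g_j(\overline{V'_{x_j}})$ is compact and, by the hypothesis $\bar g_j(V_{x_j})\subset U$, contained in $U$; hence $\overline{\bar g_j(V'_{x_j})}\subset\bar g_j(\overline{V'_{x_j}})\subset U$. Taking the finite union gives an open $U'$ with $\overline{U'}$ compact and contained in $U$, so $U'$ is relatively compact in $U$. For recurrence, take any $x\in U\subset\overline{U}$ and choose $j$ with $x\in V'_{x_j}$. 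Then $x\in V'_{x_j}\cap U\subset V_{x_j}\cap U\subset\dom g_j$, and since $g_j$ is the restriction of $\bar g_j$ we get $g_j(x)=\bar g_j(x)\in\bar g_j(V'_{x_j})\subset U'$. Thus $|g_j|_{E,x}\le 1<2$ with $g_j(x)\in U'$, so every $d_E$-ball of radius $2$ in any $\GG$-orbit meets $U'$, which is exactly recurrence.

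The delicate point, and the step I expect to require the most care, is the compact containment $\overline{U'}\subset U$. It is not enough to use the neighborhoods $V_x$ directly: although $\bar g_x(V_x)\subset U$, the closures of these images can escape $U$, so $\bigcup_j\bar g_j(V_{x_j})$ need not be relatively compact \emph{in} $U$. Passing to the shrunken neighborhoods $V'_x$ with compact closure inside $V_x$ is precisely what confines the closures of the image sets to $U$, and the compactness of $\overline{U}$ is what reduces the pointwise data to a finite cover and hence to a uniform one-step bound. The remaining verifications are a direct unwinding of the definitions.
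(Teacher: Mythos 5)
Your proof is correct and follows essentially the same route as the paper's: extract a finite subcover of $\overline{U}$ from the neighborhoods $V_x$, take the union of the images under the maps $\bar g_{x_j}$ as the target set, and conclude that every point of $U$ reaches it in one generator step. The only difference is cosmetic: the paper simply assumes the closures of the image sets $\bar g_x(V_x)$ lie in $U$ ("can be assumed"), whereas you justify this by shrinking the domains $V_x$ to relatively compact $V'_x$ first — a worthwhile explicit verification of the step the paper glosses over.
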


\begin{proof}
For each $x\in\overline{U}$, let $W_x=\bar g_x(V_x)$; its closure can
be assumed to be contained in $U$. Compactness of $\overline{U}$
implies that $\overline{U}\subset V_{x_1}\cup\dots\cup V_{x_n}$, for
some finite set of points $x_1,\dots,x_n\in\overline{U}$. Let
$V_k=V_{x_k}$, $W_k=W_{x_k}$ and $g_k=g_{x_k}$ for $k=1,\dots,n$, so
$W=W_1\cup\dots\cup W_n$ is a relatively compact open set in $U$.
Moreover, each $y\in U$ belongs to some $V_k$; thus $y\in V_k\cap
U\subset\dom g_k$ and $g_k(y)\in W$, yielding
$d_E(y,W\cap\HH(y))\leq1$.
\end{proof}

\begin{cor}\label{c:recurrent systems of compact generation}
Let $\HH$ be a compactly generated pseudogroup of local
transformations of a locally compact space $Z$, and let $U$ be a
relatively compact open subset of $Z$ that meets all
$\HH$-orbits. Then there exists a recurrent system $E$ of compact
generation of $\HH$ on $U$ satisfying the following property. The
extension $\bar g\in\HH$ of each $g\in E$ with $\overline{\dom
g}\subset\dom\bar g$ can be chosen so that $\overline{E}=\{\bar g\ |\
g\in E\}$ is also a recurrent system of compact generation on some
relatively compact open subset $U'\subset Z$ with $\overline{U}\subset
U'$.
\end{cor}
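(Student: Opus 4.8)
The plan is to build the two families simultaneously, taking $\overline E$ to be a genuine system of compact generation on a slightly larger set $U'$ augmented by a fixed finite collection of ``return maps'' into $U$, and taking $E$ to be the restriction of $\overline E$ to $U$. First I would use local compactness of $Z$ to choose a relatively compact open $U'$ with $\overline U\subset U'$; since $U\subset U'$, the set $U'$ still meets every $\HH$-orbit. Because $\HH$ is compactly generated and, by Haefliger's remark following Definition~\ref{d:compactly generated}, the relatively compact transversal may be chosen arbitrarily, I would fix a system of compact generation $E_0$ of $\HH$ on $U'$, together with extensions $\bar g\in\HH$ of its elements satisfying $\overline{\dom g}\subset\dom\bar g$.

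Next I would manufacture the return maps. For each $x\in\overline{U'}$ the orbit $\HH(x)$ meets $U$, so there is $h\in\HH$ with $x\in\dom h$ and $h(x)\in U$; continuity and openness of $U$ give a relatively compact open neighborhood $V_x$ of $x$ with $\overline{V_x}\subset\dom h$ and $h(\overline{V_x})\subset U$. Compactness of $\overline{U'}$ yields finitely many $V_{x_1},\dots,V_{x_m}$ covering $\overline{U'}$, with return maps $h_1,\dots,h_m$. I would then let $\overline E$ be the symmetrization of $E_0$ together with the truncations $h_i|_{U'}$, and define $E=\{\,\bar g|_U : \bar g\in\overline E\,\}$, recording each $\bar g$ as the extension of its restriction $\bar g|_U$. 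Shrinking domains slightly arranges $\overline{\dom g}\subset\dom\bar g$ throughout, and since the $h_i$ extend over neighborhoods of the compact sets $\overline{V_{x_i}}$, both $E$ and $\overline E$ are systems of compact generation on $U$ and $U'$ respectively---provided each generates the appropriate restriction of $\HH$.

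Generation of $\HH|_{U'}$ by $\overline E$ is immediate, as $\overline E\supset E_0$. Recurrence of both families then follows by the argument of Lemma~\ref{l:recurrent systems of compact generation}: for $y\in U'$ (hence $y\in V_{x_i}$ for some $i$) the element $h_i|_{U'}\in\overline E$ satisfies $h_i(y)\in U$, so $d_{\overline E}(y,U)\le 1$ with witness the relatively compact set $U\subset U'$; and for $y\in U\subset\overline{U'}$ the restriction $h_i|_U\in E$ lands $y$ in the relatively compact set $\bigcup_i h_i(\overline{V_{x_i}}\cap U)\subset U$, again with $\overline E$-length $1$. Thus both recurrence conditions reduce to the single geometric fact that the finitely many $V_{x_i}$ cover $\overline{U'}\supset\overline U$ and return into $U$, and the witness for $\overline E$ may be taken to be $U$ itself, exactly as permitted by Lemma~\ref{l:recurrent finite symmetric family of generators}.

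\textbf{The main obstacle} is the one remaining generation statement: that the restrictions $E=\{\bar g|_U\}$ actually generate $\GG=\HH|_U$. This is not formal, since a word in $E_0$ representing an element of $\HH|_U$ typically has intermediate images leaving $U$, whereas elements of $E$ keep every trajectory inside $U$. Resolving it is precisely Haefliger's invariance of compact generation under the inclusion equivalence $\GG\to\HH|_{U'}$ generated by $U\hookrightarrow U'$: the return maps $h_i$ furnish the generators of this equivalence, and conjugating the successive factors of such a word by the appropriate return maps folds each excursion back into $U$, rewriting the element as a product of local transformations of $U$ built from $E$. Carrying out this folding while keeping the domain-closure inclusions $\overline{\dom g}\subset\dom\bar g$ compatible---simultaneously for $E$ on $U$ and for $\overline E$ on $U'$---is the delicate bookkeeping; once it is in place, the corollary follows from the recurrence computations above.
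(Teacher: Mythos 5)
Your overall architecture (augment the generators by finitely many ``return maps'' and get recurrence from Lemma~\ref{l:recurrent systems of compact generation}) is the same as the paper's, but you run the construction in the opposite direction, and the step you yourself flag as ``the main obstacle'' is not delicate bookkeeping: it is false as stated. The set $E=\{\bar g|_U\ |\ \bar g\in\overline E\}$ of restrictions to $U$ of a system of compact generation on $U'$, even after adjoining the truncated return maps, need not generate $\GG=\HH|_U$. Concretely, let $Z=A\sqcup B\sqcup C$ be three disjoint circles and let $\HH$ be generated by homeomorphisms $\alpha:A\to B$ and $\beta:B\to C$; take $U'=Z$ and $U=A\sqcup C$, which are relatively compact, open, satisfy $\overline{U}\subset U'$, and meet every orbit, and take $E_0=\{\alpha^{\pm1},\beta^{\pm1}\}$, a system of compact generation on $U'$. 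Admissible return maps in your sense are the identity of $A$ (for points of $A$), $\alpha^{-1}$ (for points of $B$), and the identity of $C$ (for points of $C$). Then every element of $\overline E$ other than the two identities has either its domain or its image contained in $B$, which is disjoint from $U$; hence every restriction $\bar g|_U$ is an identity map or the empty map, and the pseudogroup generated by $E$ has only singleton orbits. But $\GG$ contains the composite $\beta\circ\alpha:A\to C$, so its orbits are the pairs $\{x,\beta\alpha(x)\}$. Thus $E$ is not even a generating set of $\GG$, and nothing in your construction forces a choice of return maps that would repair this.

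The folding argument cannot close this gap. Conjugating the factors of a word $h=g_n\circ\dots\circ g_1$, $g_j\in\overline E$, by return maps $r_j$ does produce factors $r_j\circ g_j\circ r_{j-1}^{-1}$ whose source and target points lie in $U$, but these factors are words in $\overline E$ whose intermediate points still leave $U$; they are local transformations of $U$, not compositions of elements of $E$, so the rewriting is circular --- in the example above the folded factor is exactly $\beta\circ\alpha$, the map one is trying to generate. What Haefliger's remark after Definition~\ref{d:compactly generated} actually provides is the existence of a system of compact generation $G$ of $\HH$ on $U$ itself, whose elements are in general composites and conjugates, not restrictions, of generators on $U'$. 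The paper starts precisely there: it takes such a $G$, sets $E=G\cup F'\cup F'^{-1}$ where $F'$ consists of the return maps restricted to $U$ (so that $E$ generates $\GG$ for free, since $G\subset E$), and only then passes outward, defining $U'=\bigcup_{f\in F}\dom\bar f$ and $\overline E=\overline G\cup\overline F\cup\overline F^{-1}$. The outward direction is the harmless one: any germ of $\HH|_{U'}$ can be conjugated by the maps $\bar f$ (which send their whole domains into $U$) into a germ of $\GG$, hence into a product of germs of elements of $G$, and germs of elements of $G$ are germs of their extensions in $\overline G$. Your inward restriction admits no such argument, which is exactly what the counterexample detects; to fix the proof you would have to replace your $E$ by a system of compact generation on $U$ obtained from Haefliger's invariance, at which point you have reproduced the paper's proof.
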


\begin{proof}
Since $U$ meets every $\HH$-orbit and $\overline{U}$ is compact, there
exists a finite family $F\subset\HH$ satisfying the following properties:
\begin{itemize}

\item each $f\in F$ is the restriction of some $\bar f\in\HH$ whose
domain is relatively compact and contains $\overline{\dom f}$;

\item  each $\bar f$ is the restriction of some $\tilde
f\in\HH$ with $\overline{\dom\bar f}\subset\dom\tilde f$;

\item the sources of elements of $F$ cover $\overline{U}$; and

\item $\im\bar f\subset U$ for every $f\in F$.

\end{itemize}
For each $f\in F$, let $f'$ denote its restriction
$$U\cap\dom f\to f(U\cap\dom f)\; ,$$ let $F'=\{f'\ |\ f\in F\}$, and
set $F^{\prime-1}=\{f^{\prime-1}\ |\ f'\in F'\}$.

There exists a system $G$ of compact generation of $\HH$ on $U$, and
$E=G\cup F'\cup F^{\prime-1}$ is also a system of compact generation
of $\HH$ on $U$. Moreover, $E$ satisfies the condition of
Lemma~\ref{l:recurrent systems of compact generation} because
$$
\overline{U}\subset\bigcup_{f\in F}\dom f\;,
$$ 
and $\im\bar f\subset U$
for every $f\in F$. Thus $E$ is recurrent.

Now let
$$
U'=\bigcup_{f\in F}\dom\bar f\;,
$$
which is a relatively compact open subset of $Z$ containing
$\overline{U}$.  Let
$\overline{F}=\left\{\bar f\ |\ f\in F\right\}$ and
$\overline{F}^{-1}=\left\{\bar f^{-1}\ |\ f\in F\right\}$, which are
subsets of the restriction $\GG'$ of $\HH$ to $U'$. The extensions
$\bar g$ of the maps $g\in G$ satisfying $\overline{\dom
g}\subset\dom\bar g$ can obviously be chosen so that:
\begin{itemize}

\item each $\bar g$ has source and range in $U'$;

\item the set $\overline{G}=\{\bar g\ |\ g\in G\}$ is symmetric; and

\item each $\bar g$ is the restriction of some $\tilde g\in\HH$ with
$\overline{\dom\bar g}\subset\dom\tilde g$.

\end{itemize}
Then $\overline{E}=\overline{G}\cup\overline{F}\cup\overline{F}^{-1}$
is a finite symmetric subset of $\GG'$ which
generates $\GG'$ because $G$ generates $\GG$ and $\im\bar f\subset U$
for all $f\in F$. The above properties guarantee
that $\overline{E}$ is a system of compact generation of $\HH$ on
$U'$. Finally, $\overline{E}$ is recurrent by Lemma~\ref{l:recurrent
systems of compact generation} since $\im\bar f\subset U$ for all
$f\in F$.
\end{proof}

The following is the promised result that shows the invariance of the
coarse quasi-isometry type of the orbits by equivalences when
appropriate representatives of pseudogroups and generators are chosen.

\begin{theorem}\label{t:coarsely quasi-isometric orbits}
Let $\HH,\HH'$ be compactly generated pseudogroups of local
transformations of locally compact spaces $Z,Z'$, and let $U,U'$ be
relatively compact open subsets of $Z,Z'$ that meet all orbits of
$\HH,\HH'$, respectively. Let $\GG,\GG'$ denote the restrictions of
$\HH,\HH'$ to $U,U'$, and let $E,E'$ be recurrent symmetric systems 
of compact
generation of $\HH,\HH'$ on $U,U'$, respectively. Suppose that there
exists an equivalence $\HH\to\HH'$, and consider the induced
equivalence $\GG\to\GG'$ and homeomorphism $U/\GG\to U'/\GG'$. Then the
$\GG$-orbits with $d_E$ are uniformly coarsely quasi-isometric to the
corresponding $\GG'$-orbits with $d_{E'}$.
\end{theorem}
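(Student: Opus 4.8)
The plan is to use the categorical description of coarse quasi-isometry: for each pair of corresponding orbits I will produce large scale Lipschitz maps in both directions that are coarse inverses of one another, with all constants depending only on the fixed data $E,E',\Phi$ and hence uniform over the orbits. Fix an equivalence $\Phi\colon\GG\to\GG'$ induced by the given $\HH\to\HH'$. Its sources form an open cover of $U$, so by relative compactness of $U$ there is a finite family $\phi_1,\dots,\phi_m\in\Phi$ whose sources cover $\overline U$; by the shrinking lemma and closure of $\Phi$ under restriction I may assume each $\phi_i$ extends to some $\bar\phi_i\in\Phi$ with $\overline{\dom\phi_i}\subset\dom\bar\phi_i$. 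Choosing for each $x$ in a $\GG$-orbit $O\subset U$ the least index $i$ with $x\in\dom\phi_i$, set $f(x)=\phi_i(x)$; since $\Phi$ covers the orbit correspondence $U/\GG\to U'/\GG'$, this defines $f\colon O\to O'$ into the corresponding $\GG'$-orbit. Symmetrically, a finite subfamily of $\Phi^{-1}$ with sources covering $\overline{U'}$ defines $f'\colon O'\to O$.

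To see that $f$ is large scale Lipschitz it suffices, by chaining along a minimizing $E$-word and the triangle inequality for $d_{E'}$, to bound $d_{E'}(f(x),f(y))$ when $d_E(x,y)\le1$, i.e. $y=g(x)$ for some $g\in E$. Writing $f(x)=\phi_i(x)$ and $f(y)=\phi_j(y)$, the equivalence axioms give $\phi_j\circ g\in\Phi$ and hence $\psi:=\phi_j\circ g\circ\phi_i^{-1}\in\GG'$, with $f(y)=\psi(f(x))$. As $i,j$ range over $\{1,\dots,m\}$ and $g$ over the finite set $E$, these $\psi$ form a finite subset of $\GG'$, independent of the orbit. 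Thus everything reduces to the uniform estimate $\sup_{a\in\dom\psi}d_{E'}(a,\psi(a))<\infty$ for each such $\psi$: then $d_{E'}(f(x),f(y))\le C\,d_E(x,y)$ with $C$ the maximum of these finitely many suprema.

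The main obstacle is precisely this uniform bound on the word length $|\psi|_{E',a}$ as $a$ ranges over the non-closed open set $\dom\psi$: a priori it may blow up as $a$ approaches $\partial(\dom\psi)$, and the example promised in Section~\ref{sec:example} shows that for a non-recurrent system no such bound holds, so both compact generation and recurrence must enter. The function $a\mapsto|\psi|_{E',a}$ is upper semicontinuous, since a germ decomposition of $\psi$ into $E'$ valid at $a$ remains valid nearby; hence it is locally bounded, and by compactness of $\overline{\dom\psi}\subset\overline{U'}$ a blow-up could only occur along a sequence $a_k\to a_*\in\partial(\dom\psi)$. To control such boundary limits I would invoke Corollary~\ref{c:recurrent systems of compact generation}: the chosen extensions furnish a recurrent system of compact generation $\overline E'$ on a relatively compact $U''\supset\overline{U'}$, with respect to which $\psi$ does extend across $a_*$, so $|\psi|_{\overline E',\cdot}$ is bounded near $a_*$ by the interior argument. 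It then remains to convert a bounded $\overline E'$-word joining $a_k$ to $\psi(a_k)$, whose intermediate points may leave $U'$ but stay in $U''$, into an $E'$-word of comparable length staying in $U'$; this is exactly what recurrence of $E'$ supplies, via Lemma~\ref{l:recurrent finite symmetric family of generators}, by replacing each excursion out of $U'$ with a uniformly bounded detour back into it. Taking the maximum over the finite list of $\psi$ yields the required uniform constant, and this is the step I expect to be the most delicate.

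Finally, the same estimate applied in the other direction, using recurrence of $E$, shows that $f'$ is uniformly large scale Lipschitz. The composites $\eta:=\psi_j\circ\phi_i\in\GG$ occurring in $f'\circ f$ again form a finite orbit-independent list, and boundedness of $\sup_a|\eta|_{E,a}$ gives $d_E(x,f'(f(x)))\le C_0$ for all $x$, so $f'\circ f$ is parallel to $\id_O$; symmetrically $f\circ f'$ is parallel to $\id_{O'}$. Hence $f$ is an isomorphism in the category of metric spaces and parallel classes of large scale Lipschitz maps, i.e. a coarse quasi-isometry $O\to O'$; equivalently, $f(O)$ is a net in $O'$ and $f$ determines a bi-Lipschitz bijection between nets. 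Since every constant produced above depends only on $E,E',\Phi$ and the finitely many chosen $\phi_i$, their extensions and the composites, and not on the particular orbit, the resulting coarse quasi-isometries are uniform, as required.
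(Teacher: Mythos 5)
Your overall architecture differs from the paper's (the paper forms the pseudogroup $\HH''$ on $Z\sqcup Z'$ generated by $\HH\cup\HH'\cup\Phi$, and reduces everything to comparing two recurrent systems of compact generation of a \emph{single} pseudogroup on two open sets, via Lemma~\ref{l:coarsely quasi-isometric orbits} and Corollary~\ref{c:coarsely quasi-isometric orbits}), and your direct construction of coarse maps from finitely many elements of the equivalence could in principle work: you correctly isolate the crux, namely the uniform bound $\sup_{a\in\dom\psi}d_{E'}(a,\psi(a))<\infty$ for a finite orbit-independent list of $\psi\in\GG'$. The gap is that your resolution of this crux is circular. First, Corollary~\ref{c:recurrent systems of compact generation} does not say that the extensions of the \emph{given} $E'$ form a recurrent system of compact generation on some $U''\supset\overline{U'}$; it constructs \emph{some} system with that property, and relating $d_{E'}$ to the metric of that constructed system (two recurrent systems on the same $U'$) is itself an instance of the comparison being proven. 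Second, and fatally, the conversion of a bounded $\overline{E}'$-word joining $a_k$ to $\psi(a_k)$ into a bounded $E'$-word is \emph{not} supplied by Lemma~\ref{l:recurrent finite symmetric family of generators}: that lemma only says that $d_{E'}$-balls of bounded radius inside $\GG'$-orbits meet a given open set, and says nothing about the intermediate points of the word that lie in $U''\setminus U'$. The ``detour'' maps you would splice in at the returns to $U'$ are elements of $\GG'$ whose $E'$-length at points approaching $\partial U'$ is exactly the quantity whose finiteness is at stake --- and which genuinely blows up for non-recurrent systems, as the example of Section~\ref{sec:example} shows (there every word in the extensions $\tilde g_1,\tilde g_2$ fixes the endpoint $a$, while $d_E(x,V\cap\GG(x))\to\infty$ as $x\to a$). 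So the step you yourself call the most delicate is asserted, not proven.

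What is missing is the idea of using recurrence on the \emph{points} before taking limits, rather than trying to repair the \emph{words} afterwards: fix an open $V$ meeting all orbits with $\overline{V}\subset U'$; by Lemma~\ref{l:recurrent finite symmetric family of generators} move $a_k$ and $\psi(a_k)$ into $V$ by $E'$-words $\rho_k,\sigma_k$ of length at most a fixed $S$; since $E'$ is finite there are finitely many such words, so after passing to a subsequence $\rho_k=\rho$ and $\sigma_k=\sigma$ are fixed, and the conjugate $\tau=\sigma\circ\psi\circ\rho^{-1}$ lies in a finite family of elements of $\GG'$ whose domains have compact closure contained in $\overline{V}\subset U'$. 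On such a family your extension/upper-semicontinuity/compactness argument genuinely closes: $\tau$ extends across the limit point $\lim\rho(a_k)\in\overline{V}\subset U'$, its image there lies in $\overline{V}\subset U'$, so the germ is realized in $\GG'$ and has finite $E'$-length, giving $d_{E'}(a_k,\psi(a_k))\le 2S+M$ for large $k$ and the desired contradiction. This conjugation into a core compactly contained in $U'$ is precisely the content of the paper's Lemma~\ref{l:coarsely quasi-isometric orbits} (the finite family $F$ of composites $\psi^{-1}\circ g\circ\phi$ with domains and images in $V$), and without it your limit points land on $\partial U'$, where $E'$-germs have no finite length at all. A smaller, fixable slip: the sources of elements of $\Phi\colon\GG\to\GG'$ are open subsets of $U$, so they can cover neither the compact set $\overline{U}$ nor (by a finite subfamily) the non-compact set $U$; to obtain your finite family you must start from the equivalence $\HH\to\HH'$, whose sources cover $Z\supset\overline{U}$, and compose with finitely many elements of $\HH'$, chosen by a compactness argument, to land in $U'$.
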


In other words, Theorem~\ref{t:coarsely quasi-isometric orbits}
asserts that, for pseudogroups $\HH$ of local transformations of
locally compact spaces $Z$ with given sets of generators $E$,
the coarse quasi-isometry type of the orbits is uniformly invariant by
equivalences when the following conditions are satisfied. First, $\HH$
must be the restriction of a pseudogroup $\HH'$ acting on a larger
locally compact space where $Z$ is open, relatively compact and meets
all $\HH'$-orbits.  Second, $E$ must be a recurrent system of compact
generation of $\HH'$ on $Z$.

In order to prove Theorem~\ref{t:coarsely quasi-isometric orbits}, the
following preliminary results will be required.

\begin{lemma}\label{l:coarsely quasi-isometric orbits}
Let $\HH$ be a compactly generated pseudogroup of local
transformations of a locally compact space $Z$, let $U,U'$ be
relatively compact open subsets of $Z$ such that $U\cap U'$ meets all
$\HH$-orbits, and let $E,E'$ be recurrent systems of compact generation
of $\HH$ over $U,U'$, respectively. Then, for any open set $V$ that
meets all $\HH$-orbits and with $\overline{V}\subset U\cap U'$, there
exists some $C>0$ such that
$$
\frac{1}{C}\,d_{E'}(x,y)\le d_E(x,y)\le C\,d_{E'}(x,y)
$$
for all $x,y\in V$ lying in the same $\HH$-orbit.
\end{lemma}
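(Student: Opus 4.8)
The plan is to prove the two inequalities separately; since the hypotheses are symmetric under interchanging $(E,U)$ with $(E',U')$, it suffices to establish $d_E(x,y)\le C\,d_{E'}(x,y)$ for $x,y\in V$ in a common $\HH$-orbit. First I would record that both quantities are finite: because $U$ meets every $\HH$-orbit, the inclusion $U\hookrightarrow Z$ induces the bijection $U/\GG\to Z/\HH$, so two points of $U$ lie in the same $\HH$-orbit if and only if they lie in the same $\GG$-orbit; hence $d_E(x,y)<\infty$, and likewise $d_{E'}(x,y)<\infty$.

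The key device is to turn a $d_{E'}$-geodesic into a polygonal path whose vertices lie in $V$. Given $x,y\in V$ with $d_{E'}(x,y)=n$, write $x=z_0,\dots,z_n=y$ with $z_j\in U'$ and $z_{j+1}=g'_j(z_j)$, $g'_j\in E'$. Applying Lemma~\ref{l:recurrent finite symmetric family of generators} to the recurrent family $E'$ of generators of $\GG'=\HH|_{U'}$ and the open set $V$ (which is open in $U'$ and meets every $\GG'$-orbit), I obtain $S'>0$ and, for each $j$, a point $w_j\in V$ in the same orbit with $d_{E'}(z_j,w_j)\le S'$; I take $w_0=x$ and $w_n=y$. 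The triangle inequality then gives $d_{E'}(w_j,w_{j+1})\le 2S'+1=:m$ for all $j$, with every $w_j\in V$.

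This reduces matters to a \emph{local} comparison: I claim there is $C_0$ with $d_E(p,q)\le C_0$ whenever $p,q\in V$ lie in one orbit and $d_{E'}(p,q)\le m$; granting this, $d_E(x,y)\le\sum_j d_E(w_j,w_{j+1})\le n\,C_0=C_0\,d_{E'}(x,y)$. To prove the claim, note that $d_{E'}(p,q)\le m$ means $q=g'(p)$ for $g'$ a composite of at most $m$ elements of $E'$; as $E'$ is finite there are only finitely many such germs $g'\in\GG'$. Fix one. Since $\overline V\subset U\cap U'$ is compact and contained in the open set $U$, for each $p$ with $p,g'(p)\in\overline V$ the element $g'\in\HH$ maps a neighborhood of $p$ in $U$ into $U$, so its restriction there lies in $\GG=\HH|_U$ and hence is a composite of finitely many elements of $E$; thus $|g'|_{E,p}<\infty$ and $d_E(p,g'(p))\le|g'|_{E,p}$. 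Because word length is upper semicontinuous in the base point (an expression valid at $p$ persists on a neighborhood), $|g'|_{E,\cdot}$ is bounded on the compact set $\{p\in\overline V:g'(p)\in\overline V\}$ by a finite subcover argument; taking the maximum over the finitely many $g'$ yields $C_0$.

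Finally, the reverse inequality $d_{E'}(x,y)\le C_1\,d_E(x,y)$ follows by the identical argument with the roles of $E,U$ and $E',U'$ exchanged (now using recurrence of $E$ to push the vertices of a $d_E$-geodesic into $V$), and one sets $C=\max\{C_0,C_1\}$. The hard part is the local comparison: the obstruction is that a $d_{E'}$-geodesic between points of $V$ may leave $U$ entirely, so $d_E$ is a priori undefined along it; recurrence is exactly what lets me re-enter $V\subset U\cap U'$ at uniformly bounded cost, after which compactness of $\overline V$ and upper semicontinuity of word length convert the finitely many bounded moves into a single uniform constant.
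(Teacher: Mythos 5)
Your overall strategy is essentially the paper's own argument with the roles of $E$ and $E'$ transposed: use recurrence (Lemma~\ref{l:recurrent finite symmetric family of generators}) to move the intermediate points of a geodesic word into $V$ at uniformly bounded cost, reduce to a uniform bound on $d_E(p,q)$ for $p,q\in V$ at bounded $d_{E'}$-distance, and obtain that bound from finiteness of the set of short words together with upper semicontinuity of word length plus compactness. (The paper implements the reduction slightly differently, conjugating each letter $g_i$ of the geodesic word by restrictions $\phi_i$ to $V$ of short words so as to get a finite family $F$ of maps between subsets of $V$, but the mechanism is the same.) The reduction steps in your proposal are correct.

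There is, however, a genuine gap at the crucial compactness step. The set you bound over is really $K=\{p\in\overline{V}\cap\dom g'\ |\ g'(p)\in\overline{V}\}$, and this set need not be compact: the maximal domain $\dom g'$ of a word in $E'$ is merely open, and points of $K$ can accumulate at a point of $\overline{V}$ lying on the frontier of $\dom g'$, where $g'$ --- and hence $|g'|_{E,\cdot}$ --- is undefined. Upper semicontinuity on the non-compact $K$ gives no uniform bound (blow-up of word length near the frontier of a domain is exactly the phenomenon exploited in Section~\ref{sec:example}), so the finite subcover argument cannot be run as stated. The missing idea is the one hypothesis you never invoke, namely the defining property of a system of compact generation (Definition~\ref{d:compactly generated}): each $g\in E'$ is the restriction of some $\bar g\in\HH$ with $\overline{\dom g}\subset\dom\bar g$. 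Replacing each letter of $g'$ by its extension yields $\bar g'\in\HH$ whose domain contains $\overline{K}$ (limit points of $K$ propagate through the composite by continuity of the extensions), and every $x\in\overline{K}$ still satisfies $x,\bar g'(x)\in\overline{V}\subset U$, so that $|\bar g'|_{E,x}<\infty$; upper semicontinuity of $|\bar g'|_{E,\cdot}$ over the compact set $\overline{K}$, which sits inside the open set where this function is defined, now gives the uniform bound, and $|g'|_{E,p}\le|\bar g'|_{E,p}$ on $K$. This is precisely how the paper's proof handles the same step: each $f\in F$ is recorded as the restriction of some $\bar f$ with $\overline{\dom f}\subset\dom\bar f$, and the uniform constant $S$ bounds $|\bar f|_{E',\cdot}$ over the compact set $\overline{\dom f}$. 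With this repair, your argument becomes complete.
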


\begin{proof}
Let $\GG$ denote the restriction of $\HH$ to $U$.
By Lemma~\ref{l:recurrent finite symmetric family of generators}, there
exists some $R>0$ such that any $d_E$-ball of radius $R$ in any
$\GG$-orbit meets $V$. Let $\Phi\subset\GG$ denote the finite set of
restrictions of the form
$$ g:V\cap\dom g\to g(V\cap\dom g)\;, $$ where $g$ runs over the
composites of at most $R$ elements of $E$, wherever defined.  It is
noted that the images of elements of $\Phi$ cover $U$. Moreover, it
may be assumed that $R\ge2$, and thus that the identity map of
$V\cap\dom g$ belongs to $\Phi$ for all $g\in E$ with $V\cap\dom
g\neq\emptyset$.

Let $F$ denote the finite set of composites $\psi^{-1}\circ
g\circ\phi$, wherever defined, where $\phi,\psi\in\Phi$ and $g\in
E$. Observe that each $f\in F$ is the restriction of some $\bar
f\in\GG$ with $\overline{\dom f}\subset\dom\bar f$.  Furthermore, for
each $x\in\dom\bar f$, it holds that $\left|\bar
f\right|_{E',y}\leq\left|\bar f\right|_{E',x}$, for all $y$ in some
neighborhood of $x$ in $\dom\bar f$. Hence, since $F$ is finite and
the domain of each $f\in F$ is relatively compact in $U'$, there
exists an integer $S>0$ such that $|f|_{E',x}\leq S$ for all $f\in F$
and all $x\in\dom f$.

Let $x,y\in V$ be points in the same $\HH$-orbit.  If $x=y$, then
$d_E(x,y)=d_{E'}(x,y)=0$, and the statement holds trivially with any
$C>0$. If
$x\neq y$, then
$d_E(x,y)=k\ge1$. Let $g\in\GG$ be such that $x\in\dom g$, $g(x)=y$
and $|g|_{E,x}=k$. This element $g$ may be assumed to be of the form
$g=g_k\circ\dots\circ g_1$ for some $g_1,\dots,g_k\in E$. Then, for
$i=1,\dots,k-1$, there exists $\phi_i\in\Phi$ whose image contains
$x_i=g_i\circ\dots\circ g_1(x)$ and such that $z_i=\phi_i^{-1}(x_i)\in
V$. Such $g$ can be written as $g=f_k\circ\dots\circ f_1$ around $x$,
where $f_1,\dots,f_k$ are the elements of $F$ given by $$
f_1=\phi_1^{-1}\circ g_1\;,\quad f_k=g_k\circ\phi_{k-1}\;,\quad
f_i=\phi_i^{-1}\circ g_i\circ\phi_{i-1}\;, $$ for
$i=2,\dots,k-1$. Observe that $x\in\dom f_1$ and $z_i\in\dom f_{i+1}$
for all $i=1,\dots,k-1$. Therefore, $$
|g|_{E',x}\leq|f_k|_{E',z_{k-1}}+\dots+|f_2|_{E',z_1}+|f_1|_{E',x}\leq
kS\;, $$ yielding $$ d_{E'}(x,y)\leq S\,d_E(x,y)\;.  $$ Similarly, $$
d_E(x,y)\leq S'\,d_{E'}(x,y) $$ for some integer $S'>0$, and
result follows with
$C=\max\{S,S'\}$.
\end{proof}

\begin{cor}\label{c:coarsely quasi-isometric orbits}
Let $\HH$ be a compactly generated pseudogroup of local
transformations of a locally compact space $Z$ and let $U,U'$ be
relatively compact open subsets of $Z$ such that $U\cap U'$ meets all
$\HH$-orbits. Let $\GG,\GG'$ denote the restrictions of $\HH$ to
$U,U'$, and let $E,E'$ be recurrent systems of compact generation of
$\HH$ over $U,U'$, respectively. Then the $\GG$-orbits with $d_E$ are
uniformly coarsely quasi-isometric to the corresponding $\GG'$-orbits
with $d_{E'}$.
\end{cor}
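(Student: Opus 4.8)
The plan is to reduce the statement to Lemma~\ref{l:coarsely quasi-isometric orbits} by exhibiting, inside each pair of corresponding orbits, a single subset that is simultaneously a net for $d_E$ and for $d_{E'}$, and on which the identity map is the desired bi-Lipschitz bijection. First I would fix an auxiliary open set $V$ meeting every $\HH$-orbit with $\overline{V}\subset U\cap U'$. Such a $V$ exists: since $U$ is relatively compact and meets every orbit, Lemma~\ref{l:compact space of orbits} shows $Z/\HH$ is compact; covering $Z/\HH$ by the images of relatively compact open neighborhoods of points of $U\cap U'$ whose closures lie in $U\cap U'$, and extracting a finite subcover, the union $V$ of the chosen neighborhoods meets every orbit and satisfies $\overline{V}\subset U\cap U'$, hence is relatively compact.

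Next I would identify the common net. For $x\in V$ I claim $V\cap\GG(x)=V\cap\GG'(x)=V\cap\HH(x)=:N_x$. Indeed, the inclusions $U\hookrightarrow Z$ and $U'\hookrightarrow Z$ generate equivalences $\GG\to\HH$ and $\GG'\to\HH$, so the induced maps $U/\GG\to Z/\HH$ and $U'/\GG'\to Z/\HH$ are injective; this forces $\GG(x)=\HH(x)\cap U$ and $\GG'(x)=\HH(x)\cap U'$, and intersecting with $V\subset U\cap U'$ gives the three equalities. The set $N_x=V\cap\HH(x)$ is nonempty because $V$ meets every orbit, and it is the candidate net on both sides.

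Then I would invoke recurrence twice. Applying Lemma~\ref{l:recurrent finite symmetric family of generators} to the pseudogroup $\GG$ on $U$, to its recurrent generators $E$, and to $V$ (which meets every $\GG$-orbit) yields $S>0$ so that every $d_E$-ball of radius $S$ in every $\GG$-orbit meets $V$; since any such ball lies in one orbit, this exactly says $N_x$ is an $S$-net in $(\GG(x),d_E)$. The same argument with $\GG'$, $E'$ and $V$ produces $S'>0$ for which $N_x$ is an $S'$-net in $(\GG'(x),d_{E'})$. Finally, Lemma~\ref{l:coarsely quasi-isometric orbits} applied with this $V$ gives a single $C>0$ with $\tfrac1C\,d_{E'}(x,y)\le d_E(x,y)\le C\,d_{E'}(x,y)$ for all $x,y$ in a common orbit lying in $V$, so the identity map of $N_x$ is a $C$-bi-Lipschitz bijection from $(N_x,d_E)$ onto $(N_x,d_{E'})$. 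Assembling these, $\mathrm{id}\colon N_x\to N_x$ is a bi-Lipschitz bijection between the net $N_x$ of $(\GG(x),d_E)$ and the net $N_x$ of $(\GG'(x),d_{E'})$, i.e.\ a coarse quasi-isometry between the corresponding orbits; as $S$, $S'$ and $C$ are independent of $x$, the family is uniform.

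The hard part will not be the coarse-geometric assembly, which is bookkeeping once the three cited lemmas are in hand, but rather the two structural points underpinning it: that the naive identification $V\cap\GG(x)=V\cap\GG'(x)$ is legitimate (requiring the orbit-space injectivity coming from the inclusion equivalences), and that $N_x$ is genuinely a net on \emph{both} sides at once, which is precisely what recurrence of $E$ and of $E'$ — rather than mere compact generation — is there to guarantee. Since all three lemmas already deliver orbit-independent constants, the uniformity of the resulting coarse quasi-isometries is automatic.
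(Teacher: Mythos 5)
Your proposal is correct and follows essentially the same route as the paper's proof: choose an open $V$ meeting all orbits with $\overline{V}\subset U\cap U'$, use Lemma~\ref{l:recurrent finite symmetric family of generators} (recurrence of $E$ and $E'$) to see that $V$ cuts each pair of corresponding orbits in a common net, and conclude via the uniform bi-Lipschitz estimate of Lemma~\ref{l:coarsely quasi-isometric orbits}. The extra details you supply (the construction of $V$ from compactness of $Z/\HH$ and the identification $\GG(x)=\HH(x)\cap U$) are exactly the steps the paper leaves implicit.
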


\begin{proof}
There exists an open set $V$ meeting every $\HH$-orbit and  such that
$\overline{V}\subset U\cap U'$. By Lemma~\ref{l:recurrent finite
symmetric family of generators}, there also exist $R,R'>0$ such that any
$d_E$-ball of radius $R$ in any $\GG$-orbit meets $V$, and any
$d_{E'}$--ball of radius $R'$ in any $\GG'$-orbit  meets
$V$. That is, for every $\GG$-orbit $\OO$ and for every $\GG'$-orbit
$\OO'$, the intersection $\OO\cap V$ is an $R$-net in $\OO$ and
$\OO'\cap V$ is an $R'$-net in $\OO'$. So the result follows from
Lemma~\ref{l:coarsely quasi-isometric orbits}.
\end{proof}

\begin{proof}[Proof of Theorem~\ref{t:coarsely quasi-isometric orbits}]
Let $\Phi:\HH\to\HH'$ be an equivalence, and let $\HH''$ be the
pseudogroup of local transformations of $Z''=Z\sqcup Z'$ generated by
$\HH\cup\HH'\cup\Phi$, considered as sets of local transformations
of $Z''$ in the obvious way. Then $U$, $U'$ and $U''=U\sqcup U'$ are
relatively compact open subsets of $Z''$ that meet all
$\HH''$-orbits. The restrictions of $\HH''$ to $U$ and $U'$ are $\GG$
and $\GG'$, thus $E$ and $E'$ are recurrent systems of compact generation
of $\HH''$ on $U$ and $U'$, respectively.  Let $\GG''$ denote the
restriction of $\HH''$ to $U''$, and select a recurrent system $E''$
of compact generation of $\HH''$ on $U''$. By Corollary~\ref{c:coarsely
quasi-isometric orbits}, the $\GG''$-orbits with $d_{E''}$ are
uniformly coarsely quasi-isometric to the corresponding $\GG$-orbits
with $d_E$, and also to the corresponding $\GG'$-orbits with
$d_{E'}$.
\end{proof}

Let $\HH$ be a compactly generated pseudogroup of local
transformations of a locally compact space $Z$, $U$ a relatively
compact open subset of $Z$ that meets all $\HH$-orbits, and $\GG$ the
restriction of $\HH$ to $U$.  By considering the identity map on the
$\GG$-orbits and inclusions of systems of compact generation on $U$,
we get an inductive system of metric spaces. Note also that distances
between points in the same $\GG$-orbit do not increase by
considering larger systems of compact generation. By
Theorem~\ref{t:coarsely quasi-isometric orbits}, the corresponding
``inductive system of coarse quasi-isometry types'' has a limit, which
is uniformly reached just when a recurrent system of compact
generation is considered. The following consequence of
Lemma~\ref{l:coarsely quasi-isometric orbits} shows that it happens so
with the corresponding ``inductive system of Lipschitz types.''

\begin{cor}\label{c:recurrent}
With the above notation, let $E$ be a recurrent symmetric system of compact
generation of $\HH$ on $U$. Any other symmetric system $E'$ of compact
generation of $\HH$ on $U$ is recurrent
if and only if there exists some $C>0$ such that
\begin{equation}\label{e:C}
\frac{1}{C}\,d_{E'}(x,y)\le d_E(x,y)\le C\,d_{E'}(x,y)
\end{equation}
for all $x,y\in U$ lying in the same $\GG$-orbit.
\end{cor}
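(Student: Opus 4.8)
The plan is to prove the two implications of the equivalence separately. The reverse implication is a short deduction: assuming \eqref{e:C}, the lower bound there gives $d_{E'}\le C\,d_E$, and since $E$ is recurrent there are, by Definition~\ref{d:recurrent finite symmetric family of generators}, a relatively compact open $U_0\subset U$ and $R>0$ so that every $d_E$-ball of radius $R$ in a $\GG$-orbit meets $U_0$. Then every $d_{E'}$-ball of radius $CR$ about a point $x$ contains the $d_E$-ball of radius $R$ about $x$, which meets $U_0$; hence $E'$ is recurrent with the same $U_0$ and radius $CR$.

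The forward implication carries the real content. Here I would introduce the common coarsening $E''=E\cup E'$, which is again a finite symmetric system of compact generation of $\HH$ on $U$, and is recurrent because $E\subset E''$ forces $d_{E''}\le d_E$, so $d_{E''}$-balls are at least as large as the corresponding $d_E$-balls. Since $E\subset E''$ and $E'\subset E''$ we have $d_{E''}\le d_E$ and $d_{E''}\le d_{E'}$ on every orbit, so it suffices to prove $d_E\le C_1\,d_{E''}$ and $d_{E'}\le C_1'\,d_{E''}$ on all of $U$; combining these with the previous inequalities yields \eqref{e:C} with $C=\max\{C_1,C_1'\}$. By symmetry I treat only $d_E\le C_1\,d_{E''}$. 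First I would fix an open set $W$ meeting every $\GG$-orbit with $\overline W\subset U$; such $W$ exists by shrinking $U$ and covering the (compact, by Lemma~\ref{l:compact space of orbits}) orbit space exactly as in that proof. Applying Lemma~\ref{l:coarsely quasi-isometric orbits} with $U'=U$ and the two recurrent systems $E,E''$ produces a constant $C_0$ with $\frac1{C_0}d_{E''}\le d_E\le C_0\,d_{E''}$ for points of $W$ in a common orbit, and Lemma~\ref{l:recurrent finite symmetric family of generators} supplies radii $S,S''$ so that every $d_E$-ball of radius $S$ and every $d_{E''}$-ball of radius $S''$ meets $W$.

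The heart of the argument is to bootstrap this comparison from $W$ to all of $U$. Given $x\ne y$ in a common orbit, set $m=d_{E''}(x,y)$ and take an $E''$-geodesic $x=q_0,\dots,q_m=y$ in $U$. Using $d_{E''}$-recurrence I project each $q_j$ to $q_j^*\in W$ with $d_{E''}(q_j,q_j^*)<S''$, so consecutive projections satisfy $d_{E''}(q_{j-1}^*,q_j^*)<2S''+1=:D$; the comparison on $W$ then gives $d_E(q_{j-1}^*,q_j^*)\le C_0D$, whence $d_E(q_0^*,q_m^*)\le mC_0D$. For the endpoints I project $x$ to $x^*\in W$ with $d_E(x,x^*)<S$ and estimate $d_E(x,q_0^*)\le d_E(x,x^*)+d_E(x^*,q_0^*)\le S+C_0\,d_{E''}(x^*,q_0^*)\le S+C_0\bigl(d_{E''}(x^*,x)+d_{E''}(x,q_0^*)\bigr)$, which is at most $S+C_0(S+S'')=:A_0$ because $d_{E''}(x^*,x)\le d_E(x^*,x)<S$ and $d_{E''}(x,q_0^*)<S''$. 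The same bound holds at $y$, so $d_E(x,y)\le 2A_0+mC_0D\le(2A_0+C_0D)\,d_{E''}(x,y)$, giving $d_E\le C_1\,d_{E''}$ with $C_1=2A_0+C_0D$.

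The step I expect to be the main obstacle is precisely this upgrade from $W$ to $U$: Lemma~\ref{l:coarsely quasi-isometric orbits} only compares the metrics away from the boundary of $U$, and a naive attempt to control an arbitrary $x\in U$ through a nearby point of $W$ runs into a circular estimate in which the cross-terms $d_{E''}(x^*,x)$ and $d_E(x,q_0^*)$ feed into one another with a factor $\ge1$ and fail to close. Passing to the coarsening $E''$ is exactly what breaks this circularity, since $d_{E''}\le d_E$ lets me replace the dangerous cross-term $d_{E''}(x^*,x)$ by the controlled quantity $d_E(x^*,x)<S$; this device is what makes the endpoint estimate, and hence the comparison on all of $U$, go through.
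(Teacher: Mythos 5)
Your proposal is correct and takes essentially the same route as the paper's proof: the reverse direction by the same ball-inclusion argument, and the forward direction by passing to the recurrent union $E''=E\cup E'$ and comparing $d_E$ and $d_{E'}$ with $d_{E''}$ via Lemma~\ref{l:coarsely quasi-isometric orbits} on an interior open set together with the net radii from Lemma~\ref{l:recurrent finite symmetric family of generators}, with the inclusion-induced inequality $d_{E''}\le d_E$ taming the cross-terms exactly as in the paper. The only deviation is internal to the nested case: the paper simply projects the two endpoints into the interior set and applies the comparison once, whereas you chain projections along a full $E''$-geodesic, which is harmless extra work yielding the same kind of linear bound.
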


\begin{proof}
Fix any open set $V$ that meets all
$\GG$-orbits and with $\overline{V}\subset U$.

Suppose that $E'$ is recurrent. Then, by Lemma~\ref{l:recurrent finite
  symmetric family of generators}, there is some $R>0$ such that
$\OO\cap V$ is an $R$-net in $(\OO,d_{E'})$ for every $\GG$-orbit
$\OO$. To show that~\eqref{e:C} holds for some $C>0$, assume first
that $E'\subset E$. Hence the first inequality of~\eqref{e:C} holds
for any $C\ge1$. Take arbitrary points $x,y\in U$ lying in the same
$\GG$-orbit. We can assume that $x\neq y$, otherwise~\eqref{e:C} holds
trivially for any $C>0$.  There are points $x',y'\in V$ with
$d_{E'}(x,x'),d_{E'}(y,y')\le R$. So $d_E(x,x'),d_E(y,y')\le R$ as
well because $E'\subset E$. By Lemma~\ref{l:coarsely quasi-isometric
  orbits}, there is some $C'>0$, independent of $x',y'$, such that
$$
\frac{1}{C'}\,d_{E'}(x',y')\le d_E(x',y')\le C'\,d_{E'}(x',y')\;.
$$
Therefore
\begin{align*}
d_E(x,y)&\le d_E(x,x')+d_E(x',y')+d_E(y,y')\\
&\le d_E(x',y')+2R\\
&\le C'\,d_{E'}(x',y')+2R\\
&\le C'\,(d_{E'}(x',x)+d_{E'}(x,y)+d_{E'}(y,y'))+2R\\
&\le C'\,(d_{E'}(x,y)+2R)+2R\\
&\le (C'+2C'R+2R)\,d_{E'}(x,y)\\
\end{align*}
since $d_{E'}(x,y)\ge1$, yielding the second inequality of~\eqref{e:C}
with $C=C'+2C'R+2R$.

When $E'\not\subset E$, the union $E''=E\cup E'$ is a recurrent
symmetric system of compact generation of $\HH$ on $U$. We have shown
that there are some $C_1,C_2>0$ such that
\begin{gather*}
\frac{1}{C_1}\,d_{E''}(x,y)\le d_E(x,y)\le C_1\,d_{E''}(x,y)\;,\\
\frac{1}{C_2}\,d_{E''}(x,y)\le d_{E'}(x,y)\le C_2\,d_{E''}(x,y)
\end{gather*}
for all $x,y\in U$ lying in the same $\GG$-orbit, and
therefore~\eqref{e:C} holds with $C=C_1C_2$.

Now assume that~\eqref{e:C} holds for some $C>0$ and all $x,y\in U$
lying in the same $\GG$-orbit. By Lemma~\ref{l:recurrent finite
  symmetric family of generators}, there is some $R>0$ such that
$\OO\cap V$ is an $R$-net in $(\OO,d_E)$ for every $\GG$-orbit $\OO$.
Then it easily follows that $\OO\cap V$ is an $CR$-net in
$(\OO,d_{E'})$ for every $\GG$-orbit $\OO$, and thus $E'$ is recurrent
by Lemma~\ref{l:recurrent finite symmetric family of generators}.
\end{proof}

\section{F\"olner orbits}

The F\"olner condition will be used in the next section to distinguish
coarse quasi-isometry types of orbits. The property that F\"olner
orbits give rise to invariant measures will be needed also. This was
shown by S.~Goodman and J.~Plante~\cite{Goodman-Plante} for
pseudogroups acting on compact metric spaces, and is partially
improved in this section by using recurrent compact generation instead
of a compact space. For compact foliated spaces, it is well known that
F\"olner leaves induce invariant transverse probability measures. So
recurrence can be useful to show that compactly generated pseudogroups
behave like compact foliated spaces, which is in the spirit of a
famous project of A.~Haefliger \cite{Haefliger00}.

Let $M$ be a metric space with metric $d$. A {\em quasi-lattice\/}
$\Gamma$ of $M$ is a net of $M$ such that for every $r\ge0$ there is
some $K_r\ge0$ such that $\#(\Gamma\cap B(x,r))\le K_r$ for every
$x\in M$. Not every metric space has a quasi-lattice, but metric
spaces with bounded complexity in a reasonable sense do; see {\it
  e.g.} \cite{Block-Weinberger}) for examples. The metric space $M$ is
said to be of {\em coarse bounded geometry\/} if it has a
quasi-lattice.

For any $r>0$, the {\em $r$-boundary\/} of each subset $S\subset M$ is the set
$$
\partial_rS=\{x\in S\ |\ d(x,S)<r\ \text{and}\ d(x,M\setminus
S)<r\}\;.
$$
The notation $\partial^M_rS$ will be also used for $\partial_rS$.
Then $M$ is called {\em amenable\/} \cite{Block-Weinberger} if it has
a quasi-lattice $\Gamma$ and a sequence of finite subsets
$S_n\subset\Gamma$ such that
\begin{equation}\label{e:Folner}
\lim_{n\to\infty}\frac{\#\partial^\Gamma_rS_n}{\#S_n}=0
\end{equation} 
for each $r>0$. Such a sequence $S_n$ will be called a {\em F\"olner
sequence\/} in $\Gamma$. Since 
$$
\partial^\Gamma_rS\setminus S\subset\bigcup_{x\in
S\cap\partial^\Gamma_rS}(\Gamma\cap B(x,r))
$$
for every $S\subset\Gamma$, it follows that
\begin{equation}\label{e:partial r S setminus S}
\#(\partial_rS\setminus S)\le K_r\cdot\#(S\cap\partial_rS)
\end{equation}
if $\#(\Gamma\cap B(x,r))\le K_r$ for any $x\in\Gamma$. Therefore the
amenability condition~\eqref{e:Folner} is equivalent to
$$
\lim_{n\to\infty}\frac{\#(S_n\cap\partial^\Gamma_rS_n)}{\#S_n}=0
$$
for each $r>0$.

\begin{theorem}[Block-Weinberger \cite{Block-Weinberger}]\label{t:Folner}
  Let $M,M'$ be uniformly close metric spaces of coarse bounded
  geometry. Then $M$ is amenable if and only if so is $M'$.
\end{theorem}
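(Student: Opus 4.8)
The plan is to show directly that amenability transfers along the given coarse equivalence, so that by symmetry it suffices to prove that if $M$ is amenable then so is $M'$. The first move is to pass from the spaces to quasi-lattices. Coarse bounded geometry provides a quasi-lattice $\Gamma'\subset M'$, while the amenability of $M$ provides a quasi-lattice $\Gamma\subset M$ carrying a F\"olner sequence. Composing the given effectively proper, coarsely Lipschitz maps $M\to M'$ and $M'\to M$ (whose composites are uniformly close to the identities) with the net inclusions and with nearest-point retractions onto $\Gamma,\Gamma'$ yields maps $f\colon\Gamma\to\Gamma'$ and $g\colon\Gamma'\to\Gamma$ that are again effectively proper and coarsely Lipschitz, with $gf$ and $fg$ at bounded distance $D$ from the respective identities. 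Two uniform bounds will be used repeatedly: effective properness together with bounded geometry forces the fibers of $f$ and of $g$ to have cardinality at most some $K_0$ (a fiber has diameter bounded by the effective-properness constant for radius $1$, hence sits in a single ball, whose intersection with the quasi-lattice is uniformly finite), and bounded geometry bounds the number of lattice points in any ball of fixed radius.

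The heart of the argument is to push a F\"olner sequence $\{S_n\}$ in $\Gamma$ forward to a F\"olner sequence in $\Gamma'$. The naive choice $f(S_n)$ fails because $f$ need not be surjective: points of $\Gamma'$ lying near $f(S_n)$ but outside it create \emph{fake} boundary unrelated to the boundary of $S_n$. I would remedy this by thickening, setting
\[
S_n'=\{\,y'\in\Gamma'\ :\ d'(y',f(S_n))\le c\,\},\qquad c:=D+r+1,
\]
where $r$ is the boundary radius under consideration. The thickening swallows the holes, while the relation $fg\approx\mathrm{id}$ controls the new outer boundary. The cardinality comparison is then immediate: since $f(S_n)\subset S_n'$ and the fibers of $f$ have size at most $K_0$, we get $\#S_n'\ge\#f(S_n)\ge\#S_n/K_0$.

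The main step, and the one I expect to be the real obstacle, is to bound $\#\partial_r^{\Gamma'}S_n'$ by a constant multiple of $\#\partial_{\rho}^{\Gamma}S_n$ for a radius $\rho=\rho(r)$ independent of $n$. I would argue that $g$ carries $\partial_r^{\Gamma'}S_n'$ into $\partial_{\rho}^{\Gamma}S_n$. A point $y'\in\partial_r^{\Gamma'}S_n'$ satisfies $c-r<d'(y',f(S_n))\le c+r$; putting $x=g(y')$, the upper bound together with coarse Lipschitzness of $g$ and $gf\approx\mathrm{id}$ places $x$ within a uniform distance of $S_n$, while the lower bound $d'(y',f(S_n))>c-r=D+1>D$ forces $x\notin S_n$ (were $x\in S_n$, then $f(x)\in f(S_n)$ and $d'(y',f(x))=d'(y',fg(y'))\le D$ would contradict the lower bound). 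Thus $x$ lies in a bounded neighborhood of $S_n$ and in $\Gamma\setminus S_n$, i.e.\ $x\in\partial_{\rho}^{\Gamma}S_n$ for a suitable $\rho$ depending only on $r$ and the fixed structural constants. Since the fibers of $g$ have size at most $K_0$, this gives $\#\partial_r^{\Gamma'}S_n'\le K_0\,\#\partial_{\rho}^{\Gamma}S_n$, and combining with the cardinality estimate,
\[
\frac{\#\partial_r^{\Gamma'}S_n'}{\#S_n'}\le K_0^2\,\frac{\#\partial_{\rho}^{\Gamma}S_n}{\#S_n}\xrightarrow[n\to\infty]{}0
\]
by the F\"olner property of $\{S_n\}$ at radius $\rho$. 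Hence $\{S_n'\}$ is a F\"olner sequence in $\Gamma'$, so $M'$ is amenable, and symmetry finishes the proof. The delicate points are the simultaneous taming of the fake outer boundary through the thickening constant $c$ and the bookkeeping ensuring that $c$ and $\rho$ depend only on $r$ and the fixed coarse-Lipschitz and bounded-distance constants, never on $n$.
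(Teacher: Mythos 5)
Your proof is correct in substance, but it follows a genuinely different route from the one the paper uses: the paper does not argue directly at all, it cites Block--Weinberger's homological proof --- introduce uniformly finite homology $H^{\mathrm{uf}}_\bullet$, show that uniformly close spaces have isomorphic $H^{\mathrm{uf}}_\bullet$, and characterize amenability of a space of coarse bounded geometry by $H^{\mathrm{uf}}_0\neq0$. What you wrote is the direct combinatorial transfer that the paper only alludes to (``can be also proved directly without too much difficulty''): quasi-lattice maps $f,g$ with uniformly finite fibers, the thickened image $S_n'$ to absorb the fake boundary created by non-surjectivity of $f$, and the annulus estimate $c-r<d'(y',f(S_n))\le c+r$ combined with $fg\approx\mathrm{id}$ and $gf\approx\mathrm{id}$ to push $\partial^{\Gamma'}_rS_n'$ into $\partial^{\Gamma}_\rho S_n$ with uniformly bounded multiplicity. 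These steps all check out (note that you are implicitly using the Block--Weinberger convention that $\partial_rS$ need not be contained in $S$; that is what the paper intends as well, its displayed definition notwithstanding). Your approach is elementary and self-contained; the homological approach buys more --- an actual invariant $H^{\mathrm{uf}}_\bullet$ and the nonvanishing criterion, which Block--Weinberger need for their other applications --- but is not necessary for this statement.

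One defect should be repaired. Since your thickening constant $c=D+r+1$ depends on the radius $r$, so do the sets $S_n'$; thus for each $r$ you produce a sequence satisfying the F\"olner condition at that single scale, whereas amenability demands one sequence working for all $r$ simultaneously. Two standard fixes: (a) diagonalize --- writing $S_n'(r)$ for your sets, choose $n_k$ with $\#\partial^{\Gamma'}_kS'_{n_k}(k)/\#S'_{n_k}(k)<1/k$ and use that $\partial^{\Gamma'}_rT\subset\partial^{\Gamma'}_kT$ for $r\le k$; or (b) fix $c=D+1$ independently of $r$, in which case a boundary point $y'\in\partial^{\Gamma'}_rS_n'$ may now have $g(y')\in S_n$, but then the witness $z'\in\Gamma'\setminus S_n'$ with $d'(y',z')<r$ satisfies $g(z')\notin S_n$ (by your own argument) while $d(g(y'),g(z'))$ is uniformly bounded, so $g(y')$ still lands in $\partial^{\Gamma}_\rho S_n$ for a suitable $\rho=\rho(r)$. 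Either way the argument closes.
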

 
This result was proved in \cite{Block-Weinberger} in the following
way. First, the uniformly finite homology $H^{\text{\rm
    uf}}_\bullet(M)$ is introduced for any metric space $M$.  Second,
it is shown that, if two metric spaces $M,M'$ are uniformly close,
then $H^{\text{\rm uf}}_\bullet(M)\cong H^{\text{\rm
    uf}}_\bullet(M')$. Finally, it is shown that a metric space $M$ of
coarse bounded geometry is amenable if and only if $H^{\text{\rm
    uf}}_0(M)\neq0$, and Theorem~\ref{t:Folner} follows. In
particular, amenability is a coarse quasi-isometry invariant for
metric spaces of coarse bounded geometry, which can be also proved
directly without too much difficulty.

The following lemma will be useful in the in the proof of the
main result of this section.

\begin{lemma}\label{l:A cap S}
  Let $\Gamma$ be a quasi-lattice in some metric space, and let $A$ be
  a $C$-net in $\Gamma$ for some $C>1$.  Fix any $K>0$ such that every
  ball of radius $C$ in $\Gamma$ has at most $K$ points.  Then
$$
\#S\le\#\left(S\cap\partial^\Gamma_CS\right)+K\cdot\#(A\cap S)
$$
for any $S\subset\Gamma$.
\end{lemma}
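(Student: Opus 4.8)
The plan is to produce an explicit covering of $S$ by the set $S\cap\partial^\Gamma_CS$ together with a controlled number of $C$-balls centered at points of $A\cap S$, and then simply count. First I would fix an arbitrary $x\in S$ and invoke the hypothesis that $A$ is a $C$-net in $\Gamma$ to choose a point $a=a(x)\in A$ with $d(x,a)<C$. The whole argument then hinges on a dichotomy according to whether this nearby net point $a$ lies in $S$ or not.

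If $a\in S$, then $a\in A\cap S$ and $x\in\Gamma\cap B(a,C)$, so $x$ is accounted for by a ball around a point of $A\cap S$. If instead $a\notin S$, then $a\in\Gamma\setminus S$ (since $A\subset\Gamma$), so $d(x,\Gamma\setminus S)\le d(x,a)<C$; as trivially $d(x,S)=0<C$, this places $x$ in $\partial^\Gamma_CS$, and since $x\in S$ we have $x\in S\cap\partial^\Gamma_CS$. Letting $x$ range over all of $S$ yields the containment
$$
S\subset\left(S\cap\partial^\Gamma_CS\right)\cup\bigcup_{a\in A\cap S}\left(\Gamma\cap B(a,C)\right)\;.
$$

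It then remains only to count cardinalities. Bounding each term $\#(\Gamma\cap B(a,C))\le K$ by the choice of $K$ gives
$$
\#S\le\#\left(S\cap\partial^\Gamma_CS\right)+\sum_{a\in A\cap S}\#\left(\Gamma\cap B(a,C)\right)\le\#\left(S\cap\partial^\Gamma_CS\right)+K\cdot\#(A\cap S)\;,
$$
which is exactly the asserted inequality.

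This is a covering-and-counting argument with no genuine analytic obstacle. The one point demanding care is the case analysis: one must recognize that a net point $a$ close to a given $x\in S$ but lying \emph{outside} $S$ is precisely what forces $x$ into the coarse boundary $S\cap\partial^\Gamma_CS$, and that this is exactly the term appearing on the right-hand side. The quasi-lattice hypothesis enters only through the uniform bound $K$ on the number of points in any $C$-ball, which is what keeps the contribution of the balls proportional to $\#(A\cap S)$ rather than to the (possibly much larger) cardinality of $S$ itself.
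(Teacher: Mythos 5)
Your proof is correct and is essentially the paper's own argument, only written out in more detail: the paper likewise picks, for each $x\in S$, a net point $a\in A$ with $d(x,a)<C$, notes that $a\notin S$ forces $x\in S\cap\partial^\Gamma_CS$, derives the same covering of $S$, and counts using the bound $K$ on $C$-balls. The extra care you take with the dichotomy and with $d(x,\Gamma\setminus S)$ is exactly what the paper leaves implicit.
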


\begin{proof}
For every $x\in S$, there is some $a\in A$ so that $d(x,a)<C$, and thus
$x\in S\cap\partial^\Gamma_CS$ if $a\not\in S$. Therefore
$$
S\subset\left(S\cap\partial^\Gamma_CS\right)\cup
\bigcup_{a\in A\cap S}(\Gamma\cap B(a,C))\;,
$$
yielding the inequality of the statement.
\end{proof}

We are interested in the case of a metric space $M$ whose points are
the vertices of some connected graph, and where the distance between
two points is the minimum number of contiguous edges needed to join
them. In this case, besides the amenability condition of
\cite{Block-Weinberger}, $M$ may be also F\"olner in the usual graph
sense, which is defined as follows. The {\em boundary\/} $\partial S$
of any $S\subset M$ is the set of points $x\in S$ such that there is
some edge joining $x$ with some point in $M\setminus S$;  {\it i.e.},
$\partial S=S\cap\partial_2S$ with the notation
of~\cite{Block-Weinberger}. Then $M$ is {\em F\"olner\/} (as a graph)
when there is a sequence of finite subsets $S_n\subset M$ such that
$$
\lim_{n\to\infty}\frac{\#\partial S_n}{\# S_n}=0\;.
$$
Note that $M$ is a quasi-lattice in itself just when there is a
uniform upper bound $K$ on the number of edges that meet at every
vertex; indeed, $\#B(x,r)\le K^r$ for all $r\ge0$ when there is such a
$K$. In this case, since
$$
\partial_rS\subset\bigcup_{x\in\partial S}B(x,r)\;, 
$$
it follows that
$$
\#\partial_rS\le K^r\cdot\#\partial S
$$
for any $r>0$. Hence, when there is such a uniform upper bound $K$,
$M$ is amenable (as metric space) if and only if it is
F\"olner (in the graph sense).

Consider again a pseudogroup $\HH$ of local
transformations of a space $Z$ with the metric $d_E$ on the orbits
induced by a finite symmetric set $E$ of generators of $\HH$. Then we
get a graph by introducing an edge between two points $x,y\in Z$
whenever there is some $g\in E$ with $g(x)=y$. Thus $\#E$ is an upper
bound for the number of edges that meet at every vertex. Observe that
each orbit of $\HH$ is given by the vertices of a connected component
of this graph, and $d_E$ is the metric induced by this graph on its
connected components.  The following notation and terminology will be
used in this setting:
\begin{itemize}

\item Let $\partial^ES$ denote the boundary of any finite subset $S$
of an orbit with respect to the graph structure induced by $E$;

\item for $r>0$, let $\partial^E_rS$ denote the $r$-boundary of any
  finite subset $S$ of an orbit with respect to the metric $d_E$;
  
\item a F\"olner sequence of an orbit with the metric $d_E$ (or with
  the graph structure induced by $E$) will be called an {\em
    $E$-F\"olner sequence\/}; and
  
\item an orbit with an $E$-F\"olner sequence will be called {\em
    $E$-F\"olner\/} or {\em $E$-amenable\/}.

\end{itemize}

\begin{theorem}\label{t:Folner and recurrent compact generation}
  Let $\HH$ be a compactly generated pseudogroup of local
  transformations of a locally compact metric space $Z$, let $U$ be a
  relatively compact open subset of $Z$ that meets all $\HH$-orbits,
  and let $\GG$ be the restriction of $\HH$ to $U$. Consider the
  metric on the $\GG$-orbits induced by a recurrent symmetric system
  $E$ of compact generation of $\HH$ on $U$. If some $\GG$-orbit is
  $E$-F\"olner, then there is a non-trivial non-negative
  $\GG$-invariant Borel measure on $U$ of finite mass.
\end{theorem}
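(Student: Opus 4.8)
The plan is to follow the classical strategy of Goodman and Plante \cite{Goodman-Plante}, constructing the invariant measure as a weak-$*$ limit of normalized counting measures along a F\"olner sequence, and to use recurrence exactly at the point where compactness of the ambient space enters in the classical argument.

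First I would collect the bounded-geometry facts. Since $E$ is finite, every vertex of the orbit graph has degree at most $\#E$, so each $\GG$-orbit $\OO$ is a quasi-lattice in itself, with $\#B(x,r)\le\sum_{j=0}^{r}(\#E)^j$; in particular the graph and metric F\"olner conditions agree, and an $E$-F\"olner sequence $S_n\subset\OO$ satisfies $\#\partial^E_rS_n/\#S_n\to0$ for every $r>0$. Next I would fix a convenient auxiliary set. Because $U$ is relatively compact and meets every $\HH$-orbit, Lemma~\ref{l:compact space of orbits} makes $U/\GG$ compact; covering $U/\GG$ by the (open) images of small relatively compact neighborhoods and taking a finite subcover produces a relatively compact open $V$ with $\overline V\subset U$ that still meets every $\GG$-orbit. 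By Lemma~\ref{l:recurrent finite symmetric family of generators}, recurrence of $E$ then yields a constant $C>1$ such that $\OO\cap V$ is a $C$-net in each $\GG$-orbit $\OO$.

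Now I would set $\mu_n=\frac{1}{\#S_n}\sum_{x\in S_n}\delta_x$. These are probability measures on the compact set $\overline U$, so a subsequence converges weak-$*$ to a probability measure $\mu$ on $\overline U$, and the argument splits into two estimates. For invariance, fix $g\in E$ and $\varphi\in C_c(\im g)$; the function $\psi$ that equals $\varphi\circ g$ on $\dom g$ and $0$ elsewhere is continuous on $\overline U$, since its support $g^{-1}(\supp\varphi)$ is a compact subset of the open set $\dom g$. The difference $\int\varphi\,d\mu_n-\int\psi\,d\mu_n$ equals $1/\#S_n$ times a sum over the $y\in\im g$ for which exactly one of $y$ and $g^{-1}(y)$ belongs to $S_n$; each such $y$ comes from an edge of the orbit graph crossing the boundary of $S_n$, so there are at most $2\,\#\partial^ES_n$ of them and the difference is bounded by $2\,\norm{\varphi}_\infty\,\#\partial^ES_n/\#S_n\to0$. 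Passing to the limit gives $\mu|_{\im g}=g_*(\mu|_{\dom g})$ for all $g\in E$, hence $\GG$-invariance of $\mu$. For non-triviality, Lemma~\ref{l:A cap S} applied with $A=\OO\cap V$ gives $\#S_n\le\#(S_n\cap\partial^E_CS_n)+K\,\#(V\cap S_n)$, where $K$ bounds the cardinality of $C$-balls; dividing by $\#S_n$ and invoking the F\"olner condition forces $\liminf_n\mu_n(V)\ge1/K>0$, whence the portmanteau inequality for the closed set $\overline V$ gives $\mu(\overline V)\ge1/K$. Since $\overline V\subset U$, the restriction $\nu=\mu|_U$ is a non-negative Borel measure on $U$ with $0<1/K\le\nu(U)\le1$, and $\nu$ remains $\GG$-invariant because the domains and images of all $g\in E$ lie in $U$; this $\nu$ is the required measure.

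I expect the genuinely delicate step to be non-triviality rather than invariance. In the classical compact setting the weak-$*$ limit is automatically a probability measure on the whole space, but here mass could a priori escape to the boundary $\overline U\setminus U$, leaving $\mu|_U$ trivial; ruling this out is precisely what recurrence provides, through the counting inequality of Lemma~\ref{l:A cap S}, by pinning a definite proportion of each F\"olner set inside the relatively compact $V$ whose closure sits strictly inside $U$. The only other point demanding care is composing the test functions with the partially defined generators while preserving continuity on $\overline U$, which the compact-support observation above handles.
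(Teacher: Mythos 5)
Your proof is correct and follows essentially the same route as the paper's: both construct the measure as a weak-$*$ limit of averaging measures along the $E$-F\"olner sequence, derive $\GG$-invariance from the F\"olner condition (you prove the Goodman--Plante edge-counting estimate directly where the paper cites \cite{Goodman-Plante}), and obtain non-triviality from recurrence via Lemma~\ref{l:recurrent finite symmetric family of generators} and the counting inequality of Lemma~\ref{l:A cap S}. The only differences are cosmetic packaging: you work with probability measures on the compact set $\overline{U}$ and restrict to $U$ at the end, using the portmanteau inequality on $\overline{V}$, whereas the paper works directly in $C_0(U)'$ via Banach--Alaoglu and a test function equal to $1$ on the inner open set.
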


\begin{proof}
Let $S_n$ be an $E$-F\"olner sequence of some orbit $\OO$ of $\GG$.
As in \cite{Goodman-Plante}, a measure $\mu$ is
constructed on $U$ as a limit of averaging measures on the finite sets $S_n$.
Let $C_0(U)$ be the Banach space of continuous functions $f:U\to\R$ that
vanish at infinity, endowed with the supremum norm $\|\ \|$ given by
$$
\|f\|=\sup_{x\in U}|f(x)|
$$
 For each $n\in\N$, let $\mu_n:C_0(U)\to\R$ be defined by
$$
\mu_n(f)=\frac{1}{\#S_n}\sum_{x\in S_n}f(x)
$$
for $f\in C_0(U)$. Each $\mu_n$ is obviously linear and continuous;
{\it i.e.}, it is an element of the (algebraic-topological) dual space
$C_0(U)'$. Moreover it is easy to check that
$$
|\mu_n(f)|\le\|f\|
$$
for all $n\in\N$ and $f\in C_0(U)$. Therefore, by the Banach-Alaoglu
theorem, the set $\{\mu_n\ |\ n\in\N\}$ is relatively compact in
$C_0(U)'$ with the weak$^*$ topology; {\it i.e.}, the topology of
pointwise convergence. Then, by passing to a subsequence if necessary,
we can assume that the sequence $\mu_n$ converges pointwise to some
$\mu$ in $C_0(U)'$, which can be considered as a Borel measure of
finite mass on $U$ by the Riesz representation theorem. This $\mu$ is
non-negative since all the $\mu_n$ are probability measures. The
$\GG$-invariance of $\mu$ follows from the $E$-F\"olner condition of
the sequence $S_n$ since, as shown in \cite{Goodman-Plante},
$$
|\mu(f\circ g)-\mu(f)|\le2\,\|f\|\,\lim_n\frac{\#\partial^ES_n}{\#S_n}
$$
for all $g\in E$ and $f\in C_0(U)$ with $\supp f\subset\im g$.

Finally, we show that $\mu$ is not trivial. Take any open set $U'$
that meets all $\GG$-orbits and with $\overline{U'}\subset U$, and
consider any non-negative function $f\in C_0(U)$ with $f(x)=1$ for all
$x\in U'$. By Lemma~\ref{l:recurrent finite symmetric family of
  generators}, there is some $C>0$ such that $\OO\cap U'$ is a $C$-net
in $\OO$ with $d_E$. Then
$$
\mu(f)\ge\lim_n\frac{\#(S_n\cap U')}{\#S_n}
\ge(\#E)^{-C}\,\lim_n\frac{\#S_n-\#\left(S_n\cap\partial^E_CS_n\right)}{\#S_n}
=(\#E)^{-C}>0
$$
by Lemma~\ref{l:A cap S} and since the
sequence $S_n$ is $E$-F\"olner.
\end{proof}

\begin{rems}
  (i) In the proof of Theorem~\ref{t:Folner and recurrent compact
    generation}, the measure $\mu$ could be trivial if $E$ were not
  recurrent. This is different from the arguments of
  \cite{Goodman-Plante} because $U$ is not compact. For instance, for
  the pseudogroup on $\R$ generated by the translation $g(x)=x+1$, the
  sets $\{n,n+1,\dots,2n\}$ ($n\in\N$) form a $\{g,g^{-1}\}$-F\"olner
  sequence in an orbit, and the limit of corresponding averaging
  measures is trivial.  
\newline 
(ii) The statement of
  Theorem~\ref{t:Folner and recurrent compact generation} could be
  more general. As in \cite{Goodman-Plante}, the definitions and
  arguments could be modified to remove the condition that all sets of
  the F\"olner sequence lie in the same orbit: it would be enough to
  have what is called an {\em averaging sequence\/} in
  \cite{Goodman-Plante}. But our result is simpler and general enough
  for our purposes in the next section.
\end{rems}

\section{An example of non-recurrent compact generation}\label{sec:example}

In this section, we give an example showing that there exist
non-recurrent systems of compact generation, and that the coarse
quasi-isometry type of the orbits may depend on the system of compact
generation if recurrence is not considered.

Fix real numbers 
$$
a<a'<a''<b''<b'<b\;,
$$ 
and choose homeomorphisms $\phi:\R\to(a,b)$ and
$\tilde g_1:\R\to\R$ satisfying the following properties:
\begin{itemize}

\item $\phi(x)=x$ for all $x\in[a'',b'']$;

\item $\phi(x)>x$ for all $x\in(-\infty,a'')$;

\item $\phi(x)<x$ for all $x\in(b'',\infty)$;

\item $\phi(a)=a'$ and $\phi(b)=b'$;

\item $\tilde g_1(x)=x$ for all $x\in(-\infty,a]\cup[b,\infty)$;

\item $\tilde g_1(x)>x$ for all $x\in(a,b)$; and

\item $\tilde g_1(a'')<b''$.

\end{itemize}
Let $\HH$ be the pseudogroup of local transformations of $\R$
generated by $\phi$ and $\tilde g_1$. The bounded open interval
$U=(a,b)$ meets all $\HH$-orbits because $\phi(\R)=U$, and let $\GG$
denote the restriction of $\HH$ to $U$.

Now define a map $\tilde g_2:\R\to\R$ by setting
$$
\tilde g_2(x)=
\begin{cases}
\phi\circ\tilde g_1\circ\phi^{-1}(x)&\text{if $a<x<b$,}\\
x&\text{otherwise.}
\end{cases}
$$
Such a $\tilde g_2$ is a homeomorphism and satisfies the following properties:
\begin{itemize}

\item $\tilde g_2(x)=x$ for all $x\in(-\infty,a']\cup[b',\infty)$;

\item $\tilde g_2(x)>x$ for all $x\in(a',b')$; and

\item $\phi\circ\tilde g_1(x)=\tilde g_2\circ\phi(x)$ for all $x\in U$.

\end{itemize}

We now prove that $\GG$ is generated by the restrictions $\tilde
g_1,\tilde g_2:U\to U$, which will be denoted by $g_1,g_2$.  It is
enough to prove that the restriction $\phi:U\to\phi(U)$ is in the
pseudogroup $\GG'$ generated by $g_1,g_2$. Since $\tilde
g_1(a'')<b''$, the collection of sets $U_n=g_1^n(a'',b'')$, $n\in\Z$,
is an open covering of $U$, and thus it suffices to prove that each
restriction $\phi:U_n\to\phi(U_n)$ is in $\GG'$. But $\phi$ is the
identity on $(a'',b'')=g_1^{-n}(U_n)$, yielding
$$
g_1^{-n}=\phi\circ g_1^{-n}=g_2^{-n}\circ\phi
$$
on $U_n$. So $\phi=g_2^n\circ g_1^{-n}$ on $U_n$, which belongs 
to $\GG'$, as desired.

Since $\overline{\dom g_i}\subset\dom\tilde g_i$ for $i=1,2$, it follows that
$E=\{g_1,g_2,g_1^{-1},g_2^{-1}\}$ is a system of compact generation of
$\HH$ on $U$. For the open subset $V=(a',b)\subset U$, it will be shown that
\begin{equation}\label{e:recurrent}
\lim_{x\to a}d_E(x,V\cap\GG(x))\to\infty\;.
\end{equation}
Since $V$ meets every $\GG$-orbit, it follows from
Lemma~\ref{l:recurrent finite  symmetric family of generators}
and~\eqref{e:recurrent} that $E$ is not recurrent. 

To prove~\eqref{e:recurrent},
let
$$
\nu(x)=\min\{n\in\N\ |\ g_1^n(x)\in V\}
$$
for each $x\in U$. Clearly, 
$$
x\in V\Longleftrightarrow\nu(x)=0\;,\quad
x<y\Longrightarrow\nu(x)\ge\nu(y)\;,\quad\lim_{x\to a}\nu(x)=\infty\;.
$$
Take any $x\in U$ and some $h\in\GG$ with
$$
x\in\dom h\;,\quad h(x)\in V\;,\quad |h|_{E,x}=d_E(x,V\cap\GG(x))\;.
$$
For $n=|h|_{E,x}$, we have
$h=g_{i_n}^{\epsilon_n}\circ\dots\circ g_{i_1}^{\epsilon_1}$ around
$x$ for some 
$i_1,\dots,i_n\in\{1,2\}$ and
$\epsilon_1,\dots,\epsilon_n\in\{\pm1\}$. Let 
$x_k=g_{i_k}^{\epsilon_k}\circ\dots\circ g_{i_1}^{\epsilon_1}(x)$ for every
$k=1,\dots,n$. Since $x_k\not\in V$ for each $k<n$,
either $x_{k+1}\le x_k$ (yielding $\nu(x_{k+1})\ge\nu(x_k)$), or
$g_{i_{k+1}}^{\epsilon_{k+1}}=g_1$ (yielding
$\nu(x_{k+1})=\nu(x_k)-1$). Therefore  
$$
\nu(x)\le n=d_E(x,V\cap\GG(x))
$$ 
because $\nu(x_n)=0$,
and~\eqref{e:recurrent} follows.

Finally, let $F$ be a recurrent symmetric system of compact generation
of $\HH$ on $U$. We will show that no $\GG$-orbit with the metric
$d_E$ is coarsely quasi-isometric to itself with the metric $d_F$.
Suppose that this is not true for some $\GG$-orbit $\OO$. Since the
open interval $I=(a,a')$ meets every orbit and since $g_2$ is the
identity on $I$, there is some point $x_0\in\OO\cap I$ such that the
set
$$
X=\{g_1^{-m}(x_0)\ |\ m\in\N\}\subset\OO\cap I
$$
satisfies $\partial^EX=\{x_0\}$.
Hence $\OO$ is $E$-F\"olner; for instance, an $E$-F\"olner sequence for
$\OO$ is given by the sets
$$
S_n=\{g_1^{-m}(x_0)\ |\ 0\le m\le n\}\;.
$$
Then $\OO$ is also $F$-F\"olner by Theorem~\ref{t:Folner} since we
are assuming that the metrics $d_E,d_F$ on $\OO$ are coarsely
quasi-isometric. So, by Theorem~\ref{t:Folner and recurrent compact
  generation} and because $F$ is recurrent, there is a non-trivial
non-negative $\GG$-invariant Borel measure $\mu$ on $U$ of finite
mass. Fix any $t\in U$, and let $I_n=(a,g_1^n(t))$ for each $n\in\Z$.
Since $\mu$ is $\GG$-invariant and $g(I_n)=I_{n+1}$, all sets $I_n$
have the same $\mu$-measure, which is finite since $\mu$ has finite
mass. Then $\mu(I_{n+1}\setminus I_n)=0$ for all $n$, whence
$\mu(U)=0$ because $U=\bigcup_n(I_{n+1}\setminus I_n)$. This is a
contradiction because $\mu$ is non-trivial and non-negative.

\section{Quasi-local metric spaces}

The concept of equicontinuity can be defined for pseudogroups of local
transformations of uniform spaces, but we are mainly concerned with
the case of metric spaces in this paper. Nevertheless, it is enough to
consider only certain part of the local geometry of metric spaces,
which is extracted in the following definition. Moreover it is easier
to work with pseudogroups and their equivalences when all other
geometric information is removed from metric spaces.

\begin{defn}\label{d:quasi-local metric space}
Let $\{(Z_i,d_i)\}_{i\in I}$ be a family of
metric spaces such that $\{Z_i\}_{i\in I}$ is a covering of a set $Z$, each
intersection $Z_i\cap Z_j$ is open in $(Z_i,d_i)$ and $(Z_j,d_j)$, and for all
$\epsilon>0$ there is some
$\delta(\epsilon)>0$ so that the following property holds: for all
$i,j\in I$ and $z\in Z_i\cap Z_j$, there is
some open neighborhood $U_{i,j,z}$ of $z$ in $Z_i\cap Z_j$ (with
respect to the topology induced by $d_i$ and $d_j$) such that
\begin{equation}\label{e:quasi-local metric space}
d_i(x,y)<\delta(\epsilon)\Longrightarrow d_j(x,y)<\epsilon
\end{equation}
for all $\epsilon>0$ and all $x,y\in U_{i,j,z}$. Such a family will be
called a {\em cover of $Z$ by quasi-locally equal metric spaces\/}.
Two such families are called {\em quasi-locally equal\/} when their
union also is a cover of $Z$ by quasi-locally equal metric spaces.
This is an equivalence relation whose equivalence classes are called
{\em quasi-local metrics\/} on $Z$. For each quasi-local metric
${\mathfrak Q}$ on $Z$, the pair $(Z,{\mathfrak Q})$ is called a {\em
  quasi-local metric space\/}.
\end{defn}

Any quasi-local metric $\mathfrak Q$ on $Z$ induces a uniformity so
that, for any $\{(Z_i,d_i)\}_{i\in I}\in{\mathfrak Q}$, the families
$$
\UU_r=\left\{\left.\bigcup_{i\in I,\ x\in Z_i}B_i(x,r)\ \right|\ x\in Z\right\}\;,\quad
r>0\;,
$$
form a base of uniform covers, where $B_i(x,r)$ denotes the open
ball in $(Z_i,d_i)$ of center $x$ and radius $r$. The open balls of
all metric spaces $(Z_i,d_i)$ form a base of the corresponding
topology on $Z$. Any topological concept or property of $(Z,{\mathfrak
  Q})$ refers to this underlying topology.  Any quasi-local metric
space $(Z,{\mathfrak Q})$ is locally metrizable, and thus first
countable and completely regular. If $(Z,{\mathfrak Q})$ is Hausdorff
and paracompact, then it is metrizable \cite{Smirnov} and normal
\cite[Theorem~20.10]{Willard}, and every point finite open cover of
$(Z,{\mathfrak Q})$ is shrinkable \cite[Theorem~15.10]{Willard}.
Moreover $(Z,{\mathfrak Q})$ is a locally compact Polish space if and
only if it is Hausdorff, paracompact, separable and locally compact;
this is the type of quasi-local metric spaces that will be mainly
considered in this paper.

\begin{rem}
  A quasi-local metric is a ``local structure'' in the sense that it is
  determined by its ``restriction'' to the sets of any open covering.
  This property is specially useful to deal with pseudogroup
  equivalences, and is not satisfied by general uniformities. This is
  another reason to consider quasi-local metric spaces instead of
  general uniform spaces.
\end{rem}

If a quasi-local metric space $(Z,{\mathfrak Q})$ is paracompact, then
there is some $\{(Z_i,d_i)\}_{i\in I}\in\mathfrak Q$ so that the
covering $\{Z_i\}_{i\in I}$ is locally finite. In this case,
$\{(Z_i,d_i)\}_{i\in I}$ satisfies the following slightly stronger
condition.

\begin{lemma}\label{l:locally finite quasi-locally equal covers}
  Let $(Z,{\mathfrak Q})$ be a quasi-local metric space.  If
  $\{Z_i\}_{i\in I}$ is locally finite for some $\{(Z_i,d_i)\}_{i\in
    I}\in{\mathfrak Q}$, then there is some open neighborhood $U_x$ of
  each $x\in Z$ and some assignment $\epsilon\mapsto\delta(\epsilon)$
  such that~\eqref{e:quasi-local metric space} holds for all
  $\epsilon>0$, $i,j\in I$ and $y\in U_x\cap Z_i\cap Z_j$.
\end{lemma}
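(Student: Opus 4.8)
The plan is to fix a point $x\in Z$ and use local finiteness to cut the problem down to the finitely many indices $i$ with $x\in Z_i$, for which alone the expressions $d_i(x,\cdot)$ appearing in \eqref{e:quasi-local metric space} are defined. First I would invoke local finiteness of $\{Z_i\}_{i\in I}$ to choose an open neighborhood $N$ of $x$ meeting only finitely many of the sets $Z_i$. Every index with $x\in Z_i$ has $Z_i\cap N\ni x$, so the set $J=\{i\in I\ |\ x\in Z_i\}$ is finite, and only the finitely many ordered pairs $(i,j)\in J\times J$ are relevant to the conclusion.

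For each such pair I would apply Definition~\ref{d:quasi-local metric space} with $z=x$, which is legitimate because $x\in Z_i\cap Z_j$. This produces an open neighborhood $U_{i,j}$ of $x$ in $Z_i\cap Z_j$ on which the implication \eqref{e:quasi-local metric space} holds, for the assignment $\delta$ furnished by the definition (which is uniform in $i,j,z$); specializing the first of its two free points to $x$ turns it into $d_i(x,y)<\delta(\epsilon)\Rightarrow d_j(x,y)<\epsilon$ for every $y\in U_{i,j}$. Since each ball $B_i(x,\rho)$ is open in $Z$ and $U_{i,j}$ is open in $Z_i\cap Z_j$, I may choose $\rho_{i,j}>0$ with $B_i(x,\rho_{i,j})\cap Z_i\cap Z_j\subseteq U_{i,j}$, and then set
$$
U_x=N\cap\bigcap_{(i,j)\in J\times J}B_i(x,\rho_{i,j}),
$$
which is an open neighborhood of $x$ precisely because the intersection is finite.

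With this $U_x$ and this $\delta$ the verification is immediate: if $i,j\in J$ and $y\in U_x\cap Z_i\cap Z_j$, then $y\in B_i(x,\rho_{i,j})\cap Z_i\cap Z_j\subseteq U_{i,j}$, so \eqref{e:quasi-local metric space} holds for the pair $x,y$, as required.

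The step I expect to be the crux is the first one: it is local finiteness, and nothing else, that keeps $J$ finite and thereby makes $U_x$ a genuine neighborhood controlled by a single assignment $\delta$. Were $x$ allowed to lie in infinitely many $Z_i$, the radii $\rho_{i,j}$ could tend to $0$ and no common neighborhood would survive. If one reads the definition so that the neighborhoods $U_{i,j}$ are permitted to depend on $\epsilon$, the same reduction still works after observing that the identity map $(Z_i\cap Z_j,d_i)\to(Z_i\cap Z_j,d_j)$ is continuous at $x$; one then fixes $U_x$ as above and takes $\delta(\epsilon)$ to be the minimum, over the finite set $J\times J$, of the resulting moduli. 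No further care is needed, since the fact that the balls $B_i(x,\rho)$ are open in $Z$ lets me avoid any separate comparison of the $d_i$-topology with the topology of $Z$.
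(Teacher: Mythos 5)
Your proof is correct and follows essentially the same route as the paper: local finiteness reduces everything to the finitely many pairs $(i,j)$ with $x\in Z_i\cap Z_j$, and the paper simply takes $U_x=\bigcap_{i,j\in I,\ x\in Z_i\cap Z_j}U_{i,j,x}$ with the single assignment $\epsilon\mapsto\delta(\epsilon)$ furnished by Definition~\ref{d:quasi-local metric space}. Your additional step of shrinking each $U_{i,j,x}$ to a ball intersection $B_i(x,\rho_{i,j})\cap Z_i\cap Z_j$ (and intersecting with $N$) is harmless but unnecessary, since each $U_{i,j,x}$ is already open in $Z$.
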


\begin{proof}
With the notation of Definition~\ref{d:quasi-local metric space}, the set
$$
U_z=\bigcap_{i,j\in I,\ z\in Z_i\cap Z_j}U_{i,j,z}
$$
is an open neighborhood of each $z\in Z$ and satisfies the stated
property.
\end{proof}

\begin{example}
Any metric $d$ on a set $Z$ induces a unique quasi-local metric
$\mathfrak Q$ on $Z$ so that $\{(Z,d)\}\in{\mathfrak Q}$. It will be
shown in Section~\ref{sec:local isometries} that every Hausdorff
paracompact quasi-local metric space $(Z,{\mathfrak Q})$ is indeed induced
by some metric $d$ on $Z$ (Theorem~\ref{t:isometrization of
quasi-local metrics}), but the information of $(Z,d)$ contained
in $\mathfrak Q$ is just what is relevant for our study of equicontinuous
pseudogroups.
\end{example}

\begin{example}\label{ex:quasi-local metric on R2}
For each $t>0$, consider the metric $d_t$ on $\R^2$ defined by
$$
d_t(x,y)=\sqrt{t(x_1-y_1)^2+(x_2-y_2)^2}
$$
for $x=(x_1,x_2)$ and $y=(y_1,y_2)$. Then, for $T\subset\R_+$, the
family $\{(\R^2,d_t)\ |\ t\in T\}$ is a cover of $\R^2$ by quasi-locally equal metric
spaces if and only if $T$ is relatively compact in $\R_+$. Hence there are no
maximal covers by quasi-locally equal metric spaces in general.
\end{example}

\section{Equicontinuous pseudogroups}

This section develops the concept of equicontinuity for pseudogroups,
as suggested by E.~Ghys in \cite[Appendix~E]{Mol88}. To motivate our
definitions, consider a group $G$ of homeomorphisms of a space $Z$. On
the one hand, some uniformity is needed on $Z$ for the usual
definition of equicontinuity of $G$ (see, {\it e.g.}, A.~Weil~\cite{weil}).
But, on the other hand, equicontinuity of $G$ does not imply that each
map in $G$ is uniformly continuous; {\it i.e.}, these homeomorphisms
may not preserve the uniformity of $Z$.  This gives a difficulty when
trying to generalize equicontinuity to pseudogroups in a way
compatible with pseudogroup equivalences. More precisely, let
$\HH,\HH'$ be pseudogroups on spaces $Z,Z'$, and $\Phi:\HH\to\HH'$ an
equivalence.  Suppose that $\HH$ is equicontinuous in some reasonable
way, which should use some uniformity on $Z$. Then one has to use
$\Phi$ to construct a uniformity on $Z'$ so that $\HH'$ is
equicontinuous too.  The following is a standard way to do this kind
of construction: the uniformity of $Z$ must be restricted to domains
of homeomorphisms in $\Phi$, which are used to define uniformities on
the sets of some open covering of $Z'$, and then these local
uniformities must be combined to yield a uniformity on the whole of
$Z'$. Some conditions must be satisfied to achieve such a combination.
First, we need some type of uniformity that is determined by its
restriction to the sets of any open covering, which holds for
quasi-local metrics as indicated in the remark of
Definition~\ref{d:quasi-local metric space}. Secondly, these
uniformities on open sets of $Z'$ must be compatible on the overlaps,
which means that the local transformations of $\HH$ must preserve the
uniformity of $Z$; {\it i.e.}, they must be uniformly continuous!
Therefore the type of equicontinuity needed for pseudogroups seems to
be ``equi-uniform continuity;'' {\it i.e.}, the transformations of a
pseudogroup are not only required to be ``simultaneously'' continuous
at every point, but also required to be ``simultaneously'' uniformly
continuous. Moreover, surprisingly, there are some unsolved
difficulties to show that reasonable definitions of ``equicontinuity
at every point'' and ``equi-uniform continuity'' are equivalent for
compactly generated pseudogroups. So we define equicontinuity for
pseudogroups by requiring that the ``transformations with small
domain'' are ``simultaneously'' uniformly continuous. Indeed, what may
be understood as ``transformations with small domain'' gives rise to
two versions of equicontinuity. The first one is weaker and looks more
natural, but the second one fits our needs.

\begin{defn}\label{d:weakly equicontinuous}
Let $(Z,{\mathfrak Q})$ be a quasi-local metric space.
A pseudogroup $\HH$ of local homeomorphisms of $(Z,{\mathfrak Q})$ is
called {\em weakly equicontinuous\/} if, for some
$\{(Z_i,d_i)\}_{i\in I}\in{\mathfrak Q}$ and every
$\epsilon>0$, there is some
$\delta(\epsilon)>0$ so that the following property holds: for every
$h\in\HH$, $i,j\in I$ and $z\in Z_i\cap h^{-1}(Z_j\cap\im h)$, there is
some neighborhood $U_{h,i,j,z}$ of $z$ in $Z_i\cap h^{-1}(Z_j\cap\im
h)$ such that
\begin{equation}\label{e:equicontinuous}
d_i(x,y)<\delta(\epsilon)\Longrightarrow d_j(h(x),h(y))<\epsilon
\end{equation}
for all $\epsilon>0$ and $x,y\in U_{h,i,j,z}$.
\end{defn}

A pseudogroup $\HH$ acting on a space $Z$ will be called {\em weakly
equicontinuous\/} when it is weakly equicontinuous with respect to some
quasi-local metric inducing the topology of $Z$.

\begin{rems}
  (i) Note that weak equicontinuity is a local property on
  $(Z,{\mathfrak Q})$ to a large extent; the only global aspect is the
  assignment $\epsilon\mapsto\delta(\epsilon)$, which is valid for all
  possible $h,i,j,z$.  \newline (ii) The
  condition~\eqref{e:quasi-local metric space} of
  Definition~\ref{d:quasi-local metric space} is the particular case
  of~\eqref{e:equicontinuous} for $h$ equal to the identity map on
  $Z$. So the whole structure of quasi-local metrics is needed to
  define weakly equicontinuous pseudogroups.  \newline (iii) The
  condition of weak equicontinuity given in Definition~\ref{d:weakly
    equicontinuous} can be described as certain compatibility of $\HH$
  with $\mathfrak Q$: $\HH$ is equicontinuous on $(Z,{\mathfrak Q})$
  if and only if $\mathfrak Q$ can be realized as a combination of
  $h^*({\mathfrak Q}_{|\im h})$ for $h$ running through $\HH$, where the
  restrictions, pull-backs and combinations of quasi-local metrics are
  defined in an obvious way (when appropriate conditions are
  satisfied).  \newline (iv) In Definition~\ref{d:weakly
    equicontinuous}, the assignment $\epsilon\mapsto\delta(\epsilon)$
  depends on $\{(Z_i,d_i)\}_{i\in I}\in{\mathfrak Q}$, but this
  definition is of course independent of the choice of
  $\{(Z_i,d_i)\}_{i\in I}\in{\mathfrak Q}$; {\it i.e.}, any other
  choice of $\{(Z_i,d_i)\}_{i\in I}$ satisfies the definition with
  some other assignment $\epsilon\mapsto\delta(\epsilon)$.
\end{rems}

The following result shows that weak equicontinuity is a property of
equivalence classes of pseudogroups. The definition was worded in
precisely such way for this property to hold true; in fact, this is
rather evident by the above remarks~(i) and~(iii). 

\begin{lemma}\label{l:weakly equicontinuous}
Let $\HH,\HH'$ be equivalent pseudogroups.
Then $\HH$ is weakly equicontinuous if and only if 
$\HH'$ is equicontinuous.
\end{lemma}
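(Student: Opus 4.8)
The plan is to prove both implications symmetrically by transporting, across the equivalence, the quasi-local metric that witnesses equicontinuity. Fix an equivalence $\Phi\colon\HH\to\HH'$; since $\Phi$ is an equivalence, $\Phi^{-1}$ is also an \'etale morphism, so the images of the elements of $\Phi$ (the sources of $\Phi^{-1}$) cover $Z'$. Assuming $\HH$ is weakly equicontinuous, I would choose $\{(Z_i,d_i)\}_{i\in I}\in{\mathfrak Q}$ together with the uniform assignment $\epsilon\mapsto\delta(\epsilon)$ of Definition~\ref{d:weakly equicontinuous}. For each $\phi\in\Phi$ and $i\in I$, set $Z'_{\phi,i}=\phi(Z_i\cap\dom\phi)$, an open subset of $Z'$, and equip it with the pulled-back metric $d'_{\phi,i}(x',y')=d_i(\phi^{-1}(x'),\phi^{-1}(y'))$. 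Because each $\phi$ is a homeomorphism and each $d_i$ induces the topology of $Z_i$, the metric $d'_{\phi,i}$ induces the subspace topology on $Z'_{\phi,i}$, and the sets $Z'_{\phi,i}$ cover $Z'$.

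First I would check that $\{(Z'_{\phi,i},d'_{\phi,i})\}$ is a cover of $Z'$ by quasi-locally equal metric spaces, thereby defining a quasi-local metric ${\mathfrak Q}'$ on $Z'$ inducing its topology. The only nontrivial point is the compatibility~\eqref{e:quasi-local metric space} on an overlap $Z'_{\phi,i}\cap Z'_{\psi,j}$. The transition map there is $\psi^{-1}\circ\phi$, which lies in $\HH$ by the axioms for the \'etale morphism $\Phi^{-1}$ (for $\phi,\psi\in\Phi$ one has $\psi^{-1}\circ\phi\in\HH$). Applying weak equicontinuity of $\HH$ to $g=\psi^{-1}\circ\phi$ with indices $i,j$, and substituting $u=\phi^{-1}(x')$, $v=\phi^{-1}(y')$ so that $g(u)=\psi^{-1}(x')$ and $g(v)=\psi^{-1}(y')$, turns~\eqref{e:equicontinuous} for $g$ into exactly~\eqref{e:quasi-local metric space} for $d'_{\phi,i},d'_{\psi,j}$ with the same modulus $\delta$; the required neighborhood is obtained by intersecting $U_{g,i,j,\cdot}$ with the charts, and the reverse ordered pair is handled by $g^{-1}\in\HH$.

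Next I would verify that $\HH'$ is equicontinuous by checking Definition~\ref{d:weakly equicontinuous} for $\HH'$ relative to ${\mathfrak Q}'$ and the same assignment $\delta$. Given $h'\in\HH'$ and a point in $Z'_{\phi,i}\cap (h')^{-1}(Z'_{\psi,j}\cap\im h')$, the local representative of $h'$ in the charts $\phi,\psi$ is $\psi^{-1}\circ h'\circ\phi$; this belongs to $\HH$, since $h'\circ\phi\in\Phi$ by the first \'etale axiom and then $\psi^{-1}\circ(h'\circ\phi)\in\HH$ by the axioms for $\Phi^{-1}$. Weak equicontinuity of $\HH$ applied to this element, together with the fact that the pulled-back metrics are isometric to $d_i,d_j$, yields~\eqref{e:equicontinuous} for $h'$ with the same $\delta$ on a suitable neighborhood. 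Since $\delta$ was uniform over all of $\HH$, it is uniform over all the $h',\phi,\psi,i,j$ occurring here, so $\HH'$ is equicontinuous with respect to ${\mathfrak Q}'$. The converse implication follows by running the identical argument with the \'etale morphism $\Phi^{-1}\colon\HH'\to\HH$ in place of $\Phi$.

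The main obstacle is not any single estimate---each reduces, as above, to one application of weak equicontinuity of $\HH$ to a transition map $\psi^{-1}\circ\phi$ or a representative $\psi^{-1}\circ h'\circ\phi$---but the bookkeeping that keeps the single modulus $\delta(\epsilon)$ genuinely uniform after transport. Two points deserve care: that the constructed family really induces the original topology of $Z'$ (so that ${\mathfrak Q}'$ is an admissible quasi-local metric and equicontinuity of $\HH'$ refers to the correct topology), and that the local neighborhoods supplied by Definition~\ref{d:weakly equicontinuous} can be shrunk into the overlaps of charts without disturbing $\delta$. If paracompactness of $Z'$ is available, one may first pass to a locally finite subfamily via Lemma~\ref{l:locally finite quasi-locally equal covers} to streamline this bookkeeping, but it is not logically necessary, since the uniformity of $\delta$ is inherited directly from $\HH$.
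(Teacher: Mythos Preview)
Your proposal is correct and follows essentially the same route as the paper's proof: transport the quasi-local metric through the equivalence by pulling back the metrics $d_i$ along elements of $\Phi$, and then reduce the equicontinuity condition for $h'\in\HH'$ to the one for the local representative $\psi^{-1}\circ h'\circ\phi\in\HH$, keeping the same modulus $\delta(\epsilon)$ throughout. The only cosmetic difference is that the paper first chooses a covering $\{Z'_a\}$ of $Z'$ indexed so that each $Z'_a$ lies in a single $\im\phi_a$ with $\dom\phi_a\subset Z_{i_a}$, whereas you index directly by all pairs $(\phi,i)$; the verifications are otherwise identical.
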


\begin{proof}
Let $Z,Z'$ be the acted on by $\HH,\HH'$.
Assuming  that $\HH$ is weakly equicontinuous with respect to some
quasi-local metric $\mathfrak Q$ inducing the topology of $Z$,
we will show that so is $\HH'$. Thus there is some
$\{(Z_i,d_i)\}_{i\in I}\in{\mathfrak Q}$ and some assignment
$\epsilon\mapsto\delta(\epsilon)$ such that,
for all $h\in\HH$ and $i,j\in I$, the 
condition~\eqref{e:equicontinuous} holds on some neighborhood
$U_{h,i,j,z}$ of each $z\in Z_i\cap
h^{-1}(Z_j\cap\im h)$. 

Let $\Phi:\HH\to\HH'$ be a pseudogroup equivalence. There is an open
covering $\{Z'_a\}_{a\in A}$ of $Z'$ such that, for each $a\in A$, there
is some $\phi_a\in\Phi_0$ and some $i_a\in I$ with
$Z'_a\subset\im\phi_a$ and $\dom\phi_a\subset Z_{i_a}$. Let $d'_a$
denote the restriction to $Z'_a$ of the metric on
$\im\phi_a$ that corresponds via $\phi_a$ to the restriction of 
$d_{i_a}$ to $\dom\phi_a$. For 
$$
h'\in\HH'\;,\quad a,b\in A\;,\quad z'\in Z'_a\cap
{h'}^{-1}(Z'_b\cap\im h')\;,
$$
let $U'_{h',a,b,z'}=\phi_a(U_{h,i_a,i_b,z})$, where
$$
h=\phi_b^{-1}\circ h'\circ\phi_a\in\HH\;,\quad z=\phi_a^{-1}(z')\in
Z_{i_a}\cap h^{-1}(Z_{i_b}\cap\im h)\;.
$$

Now, given any $\epsilon>0$, suppose
$d'_a(x',y')<\delta(\epsilon)$ for
$x',y'\in U'_{h',a,b,z'}$. Then the
points $x=\phi_a^{-1}(x')$ and $y=\phi_a^{-1}(y')$ lie in
$U_{h,i_a,i_b,z}$ and satisfy $d_{i_a}(x,y)<\delta$, yielding
$d_{i_b}(h(x),h(y))<\epsilon$ by~\eqref{e:equicontinuous}, and thus
$d'_b(h'(x'),h'(y'))<\epsilon$. Therefore~\eqref{e:equicontinuous} is
satisfied by $\HH'$, $\{(Z'_a,d'_a)\}_{a\in A}$, the same assignment
$\epsilon\mapsto\delta(\epsilon)$, and the above choice of
neighborhoods $U'_{h',a,b,z'}$. It follows that $\{(Z'_a,d'_a)\}_{a\in
A}$ is a cover of $Z'$ by quasi-locally equal metric spaces (remark~(ii) of
Definition~\ref{d:weakly equicontinuous}), and $\HH'$ is weakly
equicontinuous with respect to the corresponding quasi-local metric,
which obviously induces the given topology of $Z'$.
\end{proof}

On paracompact spaces, the following slightly different
description of weak equicontinuity will be useful to understand the stronger
version.

\begin{lemma}\label{l:weakly equicontinuous'}
  Let $\HH$ a pseudogroups acting on a paracompact quasi-local metric
  space $(Z,{\mathfrak Q})$. Then $\HH$ is weakly equicontinuous if
  and only if there is some $\{(Z_i,d_i)\}_{i\in I}\in{\mathfrak Q}$
  and some symmetric subset $S\subset\HH$ such that any germ of any
  map in $\HH$ is the germ of some map in $S$ and, for every
  $\epsilon>0$, there is some $\delta(\epsilon)>0$ so
  that~\eqref{e:equicontinuous} holds for all $h\in S$, $i,j\in I$ and
  $x,y\in Z_i\cap h^{-1}(Z_j\cap\im h)$.
\end{lemma}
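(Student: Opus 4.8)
The plan is to prove the two implications separately; the reverse implication is a short germ-matching argument, and the forward one carries all the work. For the reverse implication, suppose $\{(Z_i,d_i)\}_{i\in I}\in\mathfrak Q$, a symmetric $S\subset\HH$, and a modulus $\epsilon\mapsto\delta(\epsilon)$ are given as in the statement. Let $h\in\HH$, $i,j\in I$ and $z\in Z_i\cap h^{-1}(Z_j\cap\im h)$. Since every germ of $\HH$ is the germ of a map in $S$, there is $g\in S$ agreeing with $h$ on an open neighborhood $N$ of $z$. Setting $U_{h,i,j,z}=N\cap Z_i\cap h^{-1}(Z_j\cap\im h)$, for all $x,y$ there one has $h(x)=g(x)$ and $h(y)=g(y)$, so the global estimate for $g$ yields \eqref{e:equicontinuous} for $h$ on $U_{h,i,j,z}$ with the same $\delta$. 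Hence $\HH$ is weakly equicontinuous with respect to $\{(Z_i,d_i)\}_{i\in I}$.

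For the forward implication, I would fix $\{(Z_i,d_i)\}_{i\in I}\in\mathfrak Q$ realizing weak equicontinuity, taken with $\{Z_i\}_{i\in I}$ locally finite by paracompactness (the observation preceding Lemma~\ref{l:locally finite quasi-locally equal covers}). Definition~\ref{d:weakly equicontinuous} supplies a global modulus $\epsilon\mapsto\delta_{\mathrm e}(\epsilon)$ with neighborhoods $U_{h,i,j,z}$, and Definition~\ref{d:quasi-local metric space} a global modulus $\epsilon\mapsto\delta_{\mathrm q}(\epsilon)$ for \eqref{e:quasi-local metric space}; by Lemma~\ref{l:locally finite quasi-locally equal covers} each $w\in Z$ has a neighborhood $U_w$ on which \eqref{e:quasi-local metric space} holds, with this same $\delta_{\mathrm q}$, for all index pairs at once. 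I would then build $S$ by restriction: for each $h\in\HH$ and $z\in\dom h$ choose $i_0,j_0$ with $z\in Z_{i_0}$ and $h(z)\in Z_{j_0}$, and an open $V\ni z$ so small that $V\subset Z_{i_0}\cap U_z\cap U_{h,i_0,j_0,z}$ and $h(V)\subset Z_{j_0}\cap U_{h(z)}$, and set $g=h|_V$. Taking $S$ to be all such $g$ together with their inverses makes $S$ symmetric (each inverse is a restriction of $h^{-1}\in\HH$, hence arises from the same recipe) and ensures that every germ of $\HH$ is a germ of an element of $S$, since $h$ is restricted about every point of its domain.

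It then remains to verify the global estimate for each such $g$. Given $i,j\in I$ and $x,y\in Z_i\cap g^{-1}(Z_j\cap\im g)$, one has $x,y\in V\subset Z_{i_0}\cap U_z$ and $g(x),g(y)\in h(V)\subset Z_{j_0}\cap U_{h(z)}$, so I would chain three estimates: the quasi-local comparison between $d_i$ and $d_{i_0}$ on $U_z$, then weak equicontinuity for the pair $(i_0,j_0)$ on $U_{h,i_0,j_0,z}$, then the quasi-local comparison between $d_{j_0}$ and $d_j$ on $U_{h(z)}$. As $\delta_{\mathrm e}$ and $\delta_{\mathrm q}$ are global (independent of the indices and of $h$), their composite $\delta(\epsilon)=\delta_{\mathrm q}(\delta_{\mathrm e}(\delta_{\mathrm q}(\epsilon)))$ is one modulus valid for every $g\in S$ and every pair $(i,j)$, which is the asserted condition.

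The only genuine obstacle is this uniformity: converting the pointwise, per-map, per-index local estimates of Definition~\ref{d:weakly equicontinuous} into a single \emph{global} implication for each map of a germ-generating family. What makes it go through is that both moduli are already global, and that Lemma~\ref{l:locally finite quasi-locally equal covers} (whence the appeal to local finiteness) furnishes, at each point, one neighborhood on which the comparisons $d_i\leftrightarrow d_{i_0}$ and $d_{j_0}\leftrightarrow d_j$ hold uniformly over all relevant index pairs; the remaining care is simply to shrink each domain $V$ so that the points in question, and their images, actually lie in the neighborhoods $U_z$ and $U_{h(z)}$ where that uniform comparison is available.
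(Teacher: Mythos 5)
Your reverse implication and the core of your forward implication are exactly the paper's own argument: the paper also takes $\{Z_i\}_{i\in I}$ locally finite, forms for each $h\in\HH$ and $z\in\dom h$ a neighborhood $U_{h,z}=U_z\cap\bigcap_{i,j}U_{h,i,j,z}$ (a finite intersection, by local finiteness), and lets $S$ consist of the resulting restrictions; your three-step chain with composite modulus $\delta_{\mathrm q}(\delta_{\mathrm e}(\delta_{\mathrm q}(\epsilon)))$ through a fixed pair $(i_0,j_0)$ is the intended verification. In fact you are \emph{more} careful than the paper on one point: you also shrink $V$ so that $h(V)\subset Z_{j_0}\cap U_{h(z)}$, which is genuinely needed for the comparison between $d_{j_0}$ and $d_j$ on the image side and which the paper's displayed $U_{h,z}$ does not ensure.

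The gap is in the symmetry of $S$. The parenthetical ``each inverse is a restriction of $h^{-1}\in\HH$, hence arises from the same recipe'' is not valid: the recipe applied to $h^{-1}$ at $h(z)$ produces \emph{some} neighborhood of $h(z)$, chosen subject to the constraints for $h^{-1}$, and there is no reason it equals $h(V)$; being a restriction of $h^{-1}$ is not the same as being one of the recipe's restrictions. Moreover, the global estimate \eqref{e:equicontinuous} is not inversion-invariant, so elements of $S_0^{-1}$ need not satisfy it at all. Concretely, take $Z=\R$ with the single chart $(\R,d)$, $d$ euclidean, and let $h$ be the identity on $(0,1)$ and $x\mapsto x-1$ on $(2,3)$: then $d(h(x),h(y))\le d(x,y)$ on the whole (disconnected) domain of $h$, so with the trivial choices $U_z=U_{h(z)}=\R$ and $U_{h,i_0,j_0,z}=\dom h$ your constraints allow $V=\dom h$, i.e.\ $g=h\in S_0$; but $g^{-1}$, defined on $(0,1)\cup(1,2)$, sends points just to the left and right of $1$ (arbitrarily $d$-close) to points at distance $>1$, so $g^{-1}\in S$ admits no modulus, and the asserted condition fails. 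The repair is cheap and keeps your architecture: when choosing $V$, impose simultaneously the constraints of your recipe for $h$ at $z$ and for $h^{-1}$ at $h(z)$ --- that is, also require $h(V)\subset U_{h^{-1},j_0,i_0,h(z)}$ (and $V\subset U_z\cap h^{-1}(U_{h(z)})$, which you essentially have). Then $h(V)$ is an admissible choice for $h^{-1}$ at $h(z)$, the same three-step chain applied to $h^{-1}$ gives the estimate for $g^{-1}$ with the same modulus, and $S=\{\text{all such }g\text{ and }g^{-1}\}$ is symmetric. (The paper's proof is silent on symmetry, so this correction is implicitly needed there as well.)
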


\begin{proof}
  The ``only if'' part follows because, with the notation of
  Lemma~\ref{l:locally finite quasi-locally equal covers} and
  Definition~\ref{d:weakly equicontinuous}, if
$$
U_{h,z}=U_z\cap\bigcap_{i,j\in I,\ z\in Z_i\cap Z_j}U_{h,i,j,z}
$$
for $h\in\HH$, $i,j\in I$ and $z\in\dom h$,
then~\eqref{e:equicontinuous} is satisfied by the family $S$ of
all possible restrictions $h:U_{h,z}\to h(U_{h,z})$.

Reciprocally, for all $h,i,j,z$ as in Definition~\ref{d:weakly equicontinuous}, take $U_{h,i,j,z}$ equal to some open neighborhood of $z$ in $Z_i\cap
h^{-1}(Z_j\cap\im h)$ where $h$ is equal some map in $S$. Then~\eqref{e:equicontinuous} obviously holds for all
$x,y\in U_{h,i,j,z}$.
\end{proof}

The stronger version of equicontinuity is defined by requiring that there is
a set $S$ as in Lemma~\ref{l:weakly equicontinuous'} that is also closed
under compositions, which is some kind
of a non-local condition .

\begin{defn}\label{d:strongly equicontinuous}
Let $\HH$ be a pseudogroup of local homeomorphisms
of a quasi-local metric space $(Z,{\mathfrak Q})$. Then $\HH$ is called
{\em strongly equicontinuous\/} if there exists some
$\{(Z_i,d_i)\}_{i\in I}\in{\mathfrak Q}$ and some symmetric set $S$ of
generators of $\HH$ that is closed under compositions such that, for
every $\epsilon>0$, there is some
$\delta(\epsilon)>0$ so that~\eqref{e:equicontinuous} holds for all
$h\in S$, $i,j\in I$ and $x,y\in Z_i\cap h^{-1}(Z_j\cap\im h)$.
\end{defn}

A pseudogroup $\HH$ acting on a space $Z$ will be called {\em strongly
equicontinuous\/} when it is strongly equicontinuous with respect to some
quasi-local metric inducing the topology of $Z$.

\begin{rems}
(i) A typical choice of $S$ in
Definition~\ref{d:strongly equicontinuous} is the set of all possible
composites of some symmetric set of generators. In fact, given any $S$
satisfying the condition of strong equicontinuity, it is obviously possible to
find a symmetric set of generators $E$ given by restrictions of
elements of $S$, and then the set of all composites of elements of $E$
also satisfies the condition of strong equicontinuity. 
\newline
(ii) If $h\in\HH$ satisfies~\eqref{e:equicontinuous} for all $i,j\in
I$ and $x,y\in Z_i\cap h^{-1}(Z_j\cap\im h)$, then so does its
restriction to any open set. Hence 
$S$ can be assumed to be also closed under restrictions to open sets. 
Nevertheless, Definition~\ref{d:strongly equicontinuous} is not
satisfactory with $S=\HH$ because then the basic test of
Example~\ref{ex:equicontinuous} below is not satisfied (see
Example~\ref{ex:S neq HH}). 
\newline
(iii) The definition of strong
equicontinuity is independent of the choice of $\{(Z_i,d_i)\}_{i\in
I}\in{\mathfrak Q}$. Hence it is possible to assume that $\{Z_i\}_{i\in
I}$ locally finite in Definition~\ref{d:strongly equicontinuous} when
$(Z,{\mathfrak Q})$ is paracompact.
\end{rems}

\begin{example}\label{ex:identity}
The pseudogroup $\HH$ generated by the identity map on any
quasi-local metric space $(Z,{\mathfrak Q})$ is obviously weakly
equicontinuous by the remark~(ii) of
Definition~\ref{d:weakly equicontinuous}. If $(Z,{\mathfrak Q})$ is
paracompact, then $\HH$ is also strongly equicontinuous by
Lemma~\ref{l:locally finite quasi-locally equal covers}; in fact, the
definition of strong equicontinuity is satisfied with $S$ equal to the
family of the identity maps on all finite intersections 
of the sets
$U_z$ given by  Lemma~\ref{l:locally finite quasi-locally equal covers}.
\end{example}

\begin{example}\label{ex:equicontinuous}
Recall that a group $G$ of homeomorphisms of a metric space $(Z,d)$ is
equicontinuous, or better ``equi-uniformly continuous,'' if for every $\epsilon>0$
there is some
$\delta(\epsilon)>0$ such that
$$
d(x,y)<\delta(\epsilon)\Longrightarrow d(g(x),g(y))<\epsilon
$$
for all $g\in G$ and $x,y\in Z$. The pseudogroup $\HH$ generated by such
a $G$ is strongly equicontinuous because
Definition~\ref{d:strongly equicontinuous} is satisfied with $S=G$ and
$\{(Z_i,d_i)\}_{i\in I}=\{(Z,d)\}$. Of course, if $(Z,d)$ is compact, $G$ is
``equicontinuous at every point'' if and only if it is ``equi-uniformly continuous.''
\end{example}

\begin{example}\label{ex:S neq HH}
The group of translations on $\R$ is strongly equicontinuous with
respect to the euclidean metric,
and thus generates a strongly equicontinuous pseudogroup
$\HH$ with respect to the corresponding quasi-local metric. The
following simple argument shows that some proper subset
$S\subset\HH$ must be taken to verify the definition of strong
equicontinuous. Let
$\{Z_i\}_{i\in I}$ be any open covering of $\R$, and $d_i$ a metric
on each $Z_i$ inducing its topology.
For any fixed index $i$, take real numbers $a<b$ such that 
$[a,b]\subset Z_i$.
Let $r>0$ so that
$0<r<\frac{b-a}{3}$. Let
$$
U=(a,a+r)\cup(a+r,a+2r)\;,\quad V=(a,a+r)\cup(b-r,b)\;,
$$
which are contained in $Z_i$, and let
$h:U\to V$ be the map in $\HH$ defined by
$$
h(x)=
\begin{cases}
x&\text{if $a<x<a+r$,}\\
x+b-a-2r&\text{if $a+r<x<a+2r$.}
\end{cases}
$$
The points
$$
x_n=a+\frac{n-1}{n}r\;,\quad y_n=a+\frac{n+1}{n}r
$$
satisfy the following properties: 
\begin{itemize}

\item $x_n,y_n\in U$; 

\item $d_i(x_n,y_n)\to0$ because
$x_n,y_n\to a$ in $Z_i$; and

\item $d_i(h(x_n),h(y_n))>C$ for some $C>0$ since 
$h(x_n)\to a$ and $h(y_n)\to b$ in $Z_i$.

\end{itemize}
Then $h$ does not
satisfy~\eqref{e:equicontinuous} for any $\{(Z_i,d_i)\}_{i\in I}$ as above.
\end{example}

Even though an apparently non-local condition was added to define
strong equicontinuity, the following result shows that this 
property is invariant by equivalences of pseudogroups
acting on locally compact Polish spaces. 

\begin{lemma}\label{l:strongly equicontinuous} If $\HH,\HH'$ are
  equivalent pseudogroups acting on locally compact Polish spaces.
  Then $\HH$ is strongly equicontinuous if and only if $\HH'$ strongly
  equicontinuous.
\end{lemma}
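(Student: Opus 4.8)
The plan is to prove only that strong equicontinuity of $\HH$ implies that of $\HH'$; the converse follows by symmetry, exchanging the roles of $\HH,\HH'$ and replacing the equivalence by its inverse. So assume $\HH$ is strongly equicontinuous, and fix a representative $\{(Z_i,d_i)\}_{i\in I}\in\mathfrak{Q}$ together with a symmetric set $S$ of generators of $\HH$, closed under composition, realizing Definition~\ref{d:strongly equicontinuous} with an assignment $\epsilon\mapsto\delta(\epsilon)$. Since $Z$ is locally compact and Polish, hence paracompact, the remarks following Definition~\ref{d:strongly equicontinuous} let me take $\{Z_i\}_{i\in I}$ locally finite and $S$ closed under restriction to open sets; being a composition- and restriction-closed generating set, $S$ is then \emph{germ-complete}, i.e.\ every germ of a map in $\HH$ is the germ of a map in $S$ (as in Lemma~\ref{l:weakly equicontinuous'}). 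The construction on $Z'$ follows the transfer scheme of Lemma~\ref{l:weakly equicontinuous}, but carried out with enough uniformity to yield a composition-closed generating set, and it is precisely here that local compactness is needed.

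Let $\Phi:\HH\to\HH'$ be the given equivalence. Using paracompactness and local compactness of $Z$ and $Z'$, I would choose a generating family $\Phi_0=\{\phi_a\}_{a\in A}$ of $\Phi$ whose domains $\{\dom\phi_a\}$ form a locally finite cover of $Z$ by relatively compact open sets with $\dom\phi_a\subset Z_{i_a}$ for some $i_a\in I$, and such that the images $Z'_a:=\im\phi_a$ also form a locally finite cover of $Z'$ by relatively compact open sets (the images can be made to cover $Z'$ since they are the sources of the inverse morphism $\Phi^{-1}:\HH'\to\HH$). Crucially, I would take all these charts \emph{small}: small enough that, through Lemma~\ref{l:locally finite quasi-locally equal covers}, the comparison \eqref{e:quasi-local metric space} holds with a uniform modulus across each whole overlap of charts, and small enough that every transition $\phi_b^{-1}\circ\phi_c\in\HH$ agrees on its entire (small, relatively compact) domain with a single element of $S$, which is possible because $S$ is germ-complete and restriction-closed. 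Transport the metrics isometrically by $d'_a:=(\phi_a)_*\bigl(d_{i_a}|_{\dom\phi_a}\bigr)$, so each $\phi_a:(\dom\phi_a,d_{i_a})\to(Z'_a,d'_a)$ is an isometry; that $\{(Z'_a,d'_a)\}_{a\in A}$ is a cover of $Z'$ by quasi-locally equal metric spaces inducing the topology of $Z'$ is exactly the computation in the proof of Lemma~\ref{l:weakly equicontinuous}. Finally let $S'$ be the set of all composites (wherever defined) of the basic maps $\phi_b\circ\sigma\circ\phi_a^{-1}$ with $a,b\in A$ and $\sigma\in S$; it is symmetric and closed under composition by construction. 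That $S'$ generates $\HH'$ follows by a reduced-word argument: in the pseudogroup on $Z\sqcup Z'$ generated by $\HH\cup\HH'\cup\Phi$, every $h'\in\HH'$ is, germ by germ, a composite of maps from $\{\phi_a^{\pm1}\}\cup S$, and collapsing each maximal run of $S$-maps between two chart moves to a single element of $S$ exhibits that germ as a germ of an element of $S'$.

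The main obstacle is to produce a \emph{single} assignment $\epsilon\mapsto\delta'(\epsilon)$ verifying \eqref{e:equicontinuous} for \emph{every} $g\in S'$ and every pair of measuring charts, with no dependence on the length of $g$ as a composite. Here the composition-closedness of $S$ together with the smallness arranged above does the work. Writing $g=\phi_{b_m}\circ\sigma_m\circ\phi_{c_m}^{-1}\circ\cdots\circ\phi_{b_1}\circ\sigma_1\circ\phi_{c_1}^{-1}$ and factoring out the outermost chart maps, one gets $g=\phi_{b_m}\circ s\circ\phi_{c_1}^{-1}$, where the bracketed middle $s=\sigma_m\circ(\phi_{c_m}^{-1}\circ\phi_{b_{m-1}})\circ\sigma_{m-1}\circ\cdots\circ(\phi_{c_2}^{-1}\circ\phi_{b_1})\circ\sigma_1$ is, by the smallness choice, a composite of elements of $S$ (each internal transition $\phi_{c_{k+1}}^{-1}\circ\phi_{b_k}$ being a genuine element of $S$), and hence is a \emph{single} element of $S$, controlled by the one uniform $\delta$. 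The isometries $\phi_{b_m},\phi_{c_1}^{-1}$ contribute nothing, so the only residual error comes from the at most two transitions on the $Z'$-side relating the two arbitrary measuring charts to the charts $\phi_{c_1}$ and $\phi_{b_m}$ appearing in $g$; each such transition is a comparison \eqref{e:quasi-local metric space} that, by smallness of the charts and compactness of their closures, holds uniformly on the whole overlap with some assignment $\delta_T$. Composing these three fixed layers yields a length-independent $\delta'$, schematically $\delta_T\circ\delta\circ\delta_T$, valid for all of $S'$.

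Consequently $\{(Z'_a,d'_a)\}_{a\in A}$ and $S'$ realize Definition~\ref{d:strongly equicontinuous} for $\HH'$, so $\HH'$ is strongly equicontinuous. The delicate point throughout — and the reason the weak-equicontinuity argument of Lemma~\ref{l:weakly equicontinuous} does not suffice — is the passage from the purely local, neighborhood-shrinking comparison \eqref{e:quasi-local metric space} to genuine equicontinuity on full overlaps: local compactness is what lets relatively compact charts and overlaps convert the former into the latter, and composition-closedness of $S$ is what prevents the unbounded word length from degrading the uniform modulus.
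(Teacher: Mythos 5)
Your overall architecture is the same as the paper's: choose a subfamily $\Phi_0\subset\Phi$ whose pairwise transitions $\phi^{-1}\circ\psi$ lie in $S$, transport the metrics $d_{i_a}$ to $Z'$ through its members, let $S'$ consist of the sandwiches $\phi\circ\sigma\circ\psi^{-1}$ with $\sigma\in S$, $\phi,\psi\in\Phi_0$, and use the composition-closedness of $S$ to collapse any word into a single sandwich, so that one modulus serves all of $S'$. The parts you carry out in detail are sound and parallel the paper: your collapsing identity $g=\phi_{b_m}\circ s\circ\phi_{c_1}^{-1}$ is exactly how the paper proves $S'$ closed under composition, and your modulus $\delta_T\circ\delta\circ\delta_T$ is a slightly clumsier version of the paper's observation that the conjugate $\phi_b^{-1}\circ h'\circ\phi_a$ of $h'\in S'$ by the measuring charts lies in $S$ itself (because $\psi^{-1}\circ\phi_a$ and $\phi_b^{-1}\circ\phi$ are transitions, hence in $S$), so that the original assignment $\delta$ works verbatim.

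The genuine gap is the existence of $\Phi_0$, which you dispose of in one clause: charts taken ``small enough that every transition $\phi_b^{-1}\circ\phi_c$ agrees on its entire domain with a single element of $S$, which is possible because $S$ is germ-complete and restriction-closed.'' This is precisely Claim~\ref{cl:equicontinuous} in the paper, and its proof is the bulk of the paper's argument; germ-completeness plus smallness does not deliver it. First, ``small enough'' is circular: whether a transition collapses is a property of a \emph{pair} of charts, and shrinking or subdividing any chart changes the collection of pairs, so the condition cannot be arranged chart by chart for an unordered locally finite family. Second, smallness is not even sufficient for a fixed pair: the transition's domain is $\dom\phi_c\cap\phi_c^{-1}(\im\phi_b)$, and germ-completeness provides agreement neighborhoods only at points of this set, with no uniformity near its frontier; one can easily build (already in the pseudogroup generated by two affine maps of $\R$, with $\Phi$ the identity equivalence) two charts with arbitrarily small domains whose transition has infinitely many components carrying different $S$-forms accumulating at a frontier point, hence is not in $S$. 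So the collapsing property is a genuine compatibility condition on the family that must be \emph{constructed}, and the paper constructs it by an induction using hypotheses your step never invokes: since $Z'$ is Polish, hence Lindel\"of, one enumerates $\phi_1,\phi_2,\dots\in\Phi$ with relatively compact domains and extensions $\tilde\phi_n\in\Phi$; at stage $n+1$ only finitely many charts exist, so the compact set $\overline{\dom\phi_{n+1}}$ can be covered by finitely many open sets on which the transitions with each existing chart restrict to elements of $S$, the new charts are the restrictions of $\tilde\phi_{n+1}$ to these sets, and symmetry of $S$ handles the reversed pairs. Without this induction (or an equivalent ordering device), your central claim is asserted rather than proved, and the rest of your argument has nothing to stand on.
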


\begin{proof}
Let $\Phi:\HH\to\HH'$ be a pseudogroup equivalence.
Let $Z,Z'$ be the locally compact Polish spaces acted on by $\HH,\HH'$.
Assuming that
$\HH$ is strongly equicontinuous with respect to some
quasi-local metric $\mathfrak Q$ inducing the topology of $Z$,
we will show that so is $\HH'$. Thus $\HH$
satisfies the condition of strong equicontinuity for some
$\{(Z_i,d_i)\}_{i\in I}\in{\mathfrak Q}$ and some symmetric set
$S$ of generators of $\HH$ that is closed under
compositions. By the remark~(ii) of
Definition~\ref{d:strongly equicontinuous}, we can assume that
$S$ is also closed under restrictions to open sets; so every
transformation of
$\HH$ is a combination of maps in $S$. 

\begin{claim}\label{cl:equicontinuous}
There is a subset $\Phi_0\subset\Phi$ such that:
\begin{itemize}

\item For each $\phi\in\Phi_0$, there is some $i\in I$ so that
$\dom\phi\subset Z_i$;

\item $Z'=\bigcup_{\phi\in\Phi_0}\im\phi$; and

\item $\phi^{-1}\circ\psi\in S$ for all
$\phi,\psi\in\Phi_0$.

\end{itemize}
\end{claim}

To prove Claim~\ref{cl:equicontinuous}, first note that, since $Z'$ is 
a locally compact Polish space and $\Phi$ an equivalence, there is a
sequence $\phi_1,\phi_2,\dots$ in $\Phi$ such that:
\begin{itemize}

\item The domain of each $\phi_n$ is
contained in some $Z_i$;

\item $Z'=\bigcup_n\im\phi_n$;

\item the domain of each $\phi_n$ is relatively compact in $Z$; and

\item every $\phi_n$ is a restriction of some $\tilde\phi_n\in\Phi$ with
$\overline{\dom\phi_n}\subset\dom\tilde\phi_n$.

\end{itemize}
Then an increasing sequence of finite subsets $\Phi_{0,n}\subset\Phi$
is defined by induction on $n$ so that $\phi^{-1}\circ\psi\in S$ for all
$\phi,\psi\in\Phi_{0,n}$ and
$$
\im\phi_1\cup\dots\cup\im\phi_n\subset\bigcup_{\phi\in\Phi_{0,n}}\im\phi\;.
$$
Let $\Phi_{0,0}=\emptyset$ to begin with, and assume that $\Phi_{0,n}$
is defined for some $n\ge0$ and satisfies the stated properties. To
define $\Phi_{0,n+1}$, first set
$\Phi'_{0,n}=\Phi_{0,n}\cup\left\{\tilde\phi_{n+1}\right\}$. There is
an open covering
$\UU_{n+1}$ of
$\dom\tilde\phi_{n+1}$ such that the restriction of
$\phi^{-1}\circ\tilde\phi_{n+1}$ to every $U\in\UU_{n+1}$ is in $S$ for
all $\phi\in\Phi'_{0,n}$, which follows since 
$\Phi$ is an equivalence,
$\Phi'_{0,n}$ is finite and every map in $\HH$ is a combination of
maps in $S$. Then the
compact set $\overline{\dom\phi_{n+1}}$ is covered by a finite subfamily
$\UU'_{n+1}\subset\UU_{n+1}$, and let $\Phi_{0,n+1}$ be the
union of $\Phi_{0,n}$ and the set of restrictions of
$\tilde\phi_{n+1}$ to all sets of $\UU'_{n+1}$. We get by induction
that $\phi^{-1}\circ\psi\in S$ for all
$\phi,\psi\in\Phi_{0,n+1}$ because $S$ is symmetric, and
$$
\im\phi_1\cup\dots\cup\im\phi_{n+1}\subset
\bigcup_{\phi\in\Phi_{0,n}}\im\phi\cup\im\phi_{n+1}\subset
\bigcup_{\phi\in\Phi_{0,n+1}}\im\phi\;.
$$
Therefore
Claim~\ref{cl:equicontinuous} follows with
$\Phi_0=\bigcup_n\Phi_{0,n}$.

Now, let $S'$ be the family of all possible
composites $\phi\circ h\circ\psi^{-1}$ for $h\in S$ and
$\phi,\psi\in\Phi_0$, which is symmetric and generates $\HH'$. Moreover
the following argument shows that
$S'$ is closed under compositions. Take
arbitrary elements $h'_1,h'_2\in S'$; thus $h'_1=\phi_1\circ
h_1\circ\psi_1^{-1}$ and
$h'_2=\phi_2\circ h_2\circ\psi_2^{-1}$
for $h_1,h_2\in S$ and 
$\phi_1,\psi_1,\phi_2,\psi_2\in\Phi_0$. Since
$\psi_1^{-1}\circ\phi_2\in S$ by Claim~\ref{cl:equicontinuous},
it follows that
$h_1\circ\psi_1^{-1}\circ\phi_2\circ h_2\in S$ because $S$ is closed
under compositions. So 
$$
h'_1\circ h'_2=\phi_1\circ h_1\circ\psi_1^{-1}\circ\phi_2\circ
h_2\circ\psi_2^{-1}\in S'\;.
$$ 

According to Claim~\ref{cl:equicontinuous}, take any open
covering $\{Z'_a\}_{a\in A}$ of $Z'$ such that, for each $a\in A$, there
is some $\phi_a\in\Phi_0$ and some $i_a\in I$ with
$Z'_a\subset\im\phi_a$ and $\dom\phi_a\subset Z_{i_a}$. Let $d'_a$
denote the restriction to $Z'_a$ of the metric on
$\im\phi_a$ that corresponds via $\phi_a$ to the restriction of $d_{i_a}$
to $\dom\phi_a$. Choose an assignment
$\epsilon\mapsto\delta(\epsilon)>0$ so that $S$ and
$\{(Z_i,d_i)\}_{i\in I}$ satisfy the condition of strong equicontinuity.
Let $h'$ be an arbitrary element of $S'$, which is equal to some
composite $\phi\circ h\circ\psi^{-1}$ for $h\in S$ and
$\phi,\psi\in\Phi_0$.  Let $a,b\in A$ and $x',y'\in Z'_a\cap\dom h'$
with $h'(x'),h'(y')\in Z'_b$, and let $g=\phi_b^{-1}\circ
h'\circ\phi_a$. Since $\psi^{-1}\circ\phi_a$ and
$\phi_b\circ\phi^{-1}$ are in $S$ by Claim~\ref{cl:equicontinuous}, it
follows that
$$
g=\phi_b^{-1}\circ\phi\circ
h\circ\psi^{-1}\circ\phi_a\in S
$$
because $S$ is closed under compositions.
The points $x=\phi_a^{-1}(x')$ and $y=\phi_a^{-1}(y')$ lie in
$Z_{i_a}\cap\dom g$, and
$g(x),g(y)\in Z_{i_b}$. Moreover, if $d'_a(x',y')<\delta(\epsilon)$,
then
$d_{i_a}(x,y)<\delta(\epsilon)$, yielding
$d_{i_b}(g(x),g(y))<\epsilon$ by~\eqref{e:equicontinuous}, and thus
$d'_b(h'(x'),h'(y'))<\epsilon$. It follows that $\{(Z'_a,d'_a)\}_{a\in
A}$ is a cover of $Z'$ by quasi-locally equal metric spaces (remark~(ii) of
Definition~\ref{d:weakly equicontinuous}), and $\HH'$
satisfies the condition of strong equicontinuity with $S'$ and
$\{(Z'_a,d'_a)\}_{a\in A}$. Hence $\HH'$ is strongly equicontinuity with
respect to the quasi-local metric ${\mathfrak Q}'$ represented by
$\{(Z'_a,d'_a)\}_{a\in A}$, which obviously induces the given topology
of $Z'$.
\end{proof}

\begin{problem}
Give mild conditions so that weak and strong equicontinuity
are equivalent. 
\end{problem}

\begin{problem}\label{prob:equicontinuous at every point}
  It is possible to give pseudogroup versions of weak and strong
  ``equicontinuity at every point'' as in Definitions~\ref{d:weakly
    equicontinuous} and~\ref{d:strongly equicontinuous}. Are they
  equivalent to our versions of equicontinuity \upn{(}in the uniform
  sense\upn{)} for compactly generated pseudogroups?
\end{problem}

For the purposes of this paper, the key property of strong
equicontinuity is the following.

\begin{prop}\label{p:equicontinuous}
Let $\HH$ be a compactly generated and strongly equicontinuous
pseudogroup acting on a locally compact Polish quasi-local metric space
$(Z,{\mathfrak Q})$, and let $U$ be any relatively compact open subset
of $(Z,{\mathfrak Q})$ that meets every $\HH$-orbit. Suppose that
$\{(Z_i,d_i)\}_{i\in I}\in{\mathfrak Q}$ satisfies the condition of
Definition~\ref{d:strongly equicontinuous}, $E$ is any system of compact
generation of $\HH$ on $U$, and $\bar g$ satisfies the condition of
Definition~\ref{d:compactly generated} for each $g\in E$. Let
$\{Z'_i\}_{i\in I}$ be any shrinking of
$\{Z_i\}_{i\in I}$. Then there is a finite family $\VV$ of open subsets
of $(Z,{\mathfrak Q})$ whose union contains $U$ and such that, for any
$V\in\VV$, $x\in U\cap V$, and $h\in\HH$ with $x\in\dom h$ and $h(x)\in
U$, the domain of $\tilde h=\bar g_n\circ\dots\circ\bar g_1$ contains
$V$ for any expression 
$h=g_n\circ\dots\circ g_1$ around
$x$ with $g_1,\dots,g_n\in E$, and moreover $V\subset
Z'_{i_0}$ and $\tilde h(V)\subset Z'_{i_1}$ for some
$i_0,i_1\in I$.
\end{prop}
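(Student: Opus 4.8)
The plan is to reduce everything to a relatively compact region, extract from strong equicontinuity a uniform ``germ radius'' on the finitely many extensions $\bar g$, and then run an induction along the factors of $h=g_n\circ\dots\circ g_1$ whose decisive feature is that the spread of the orbit of a nearby point is controlled by a \emph{single} application of the equicontinuity estimate to the whole partial composite, hence independently of the length $n$. Since $(Z,\mathfrak Q)$ is paracompact, by the remark following Definition~\ref{d:strongly equicontinuous} I may assume $\{Z_i\}_{i\in I}$ locally finite, and then Lemma~\ref{l:locally finite quasi-locally equal covers} supplies open neighborhoods $U_z$ and a single assignment $\epsilon\mapsto\delta(\epsilon)$ realizing both the quasi-local comparison~\eqref{e:quasi-local metric space} and the estimate~\eqref{e:equicontinuous} for the symmetric, composition-closed generating set $S$ of Definition~\ref{d:strongly equicontinuous}; by remark~(ii) there I take $S$ also closed under restriction. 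Let $\Omega$ be a relatively compact open neighborhood of the compact set $\overline U\cup\bigcup_{g\in E}\overline{\dom g}$; since $E$ is symmetric this also traps all ranges, so every orbit point $x_k$ occurring below lies in $\Omega$. Only finitely many $Z_i$ meet $\overline\Omega$, and over $\overline\Omega$ both $\{Z_i\}$ and the shrinking $\{Z'_i\}$ have positive Lebesgue numbers with respect to the (finitely many, quasi-locally equal) chart metrics $d_i$; all constants are chosen against this finite data, so the finitely many passages between comparable $d_i,d_j$ on overlaps cost only a fixed bounded distortion, which I absorb silently.

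Next I would produce the uniform germ radius. Each $\bar g\in\HH$ ($g\in E$, satisfying Definition~\ref{d:compactly generated}) is a combination of maps in $S$, so $\dom\bar g$ has an open cover on each piece of which $\bar g$ equals a single element of $S$; compactness of $\overline{\dom g}\subset\dom\bar g$ gives a finite subcover with a positive Lebesgue number, and finiteness of $E$ then yields one $\eta>0$ such that, for every $g\in E$ and every $w$ in the relevant compact part of $\dom\bar g$, the $\eta$-ball about $w$ lies in $\dom\bar g$ and there $\bar g$ coincides with a single element of $S$. This is exactly what lets a partial composite be recognized, on a neighborhood of fixed size, as one element of $S$.

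Now fix $\epsilon_0>0$ smaller than $\eta$ and than the Lebesgue numbers of $\{Z_i\}$ and of the shrinking $\{Z'_i\}$ over $\overline\Omega$ (and small enough that the $\epsilon_0$-neighborhood of the above compact set stays in $\Omega$), and set $\delta_0=\delta(\epsilon_0)$. I would let $\VV$ consist of open sets $V$ of $d$-diameter $<\delta_0$, each contained in a single $Z'_{i_0}$ and a single $U_z$. Given such a $V$, a point $x\in U\cap V$, a map $h$ with $h(x)\in U$, and an expression $h=g_n\circ\dots\circ g_1$ around $x$, write $P_k=\bar g_k\circ\dots\circ\bar g_1$ and $x_k=P_k(x)=g_k\circ\dots\circ g_1(x)$. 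The claim, proved by induction on $k$, is that for every $y\in V$ the value $P_k(y)$ is defined, $P_k$ agrees on $V$ with a single $\sigma_k\in S$, and $d(x_k,P_k(y))<\epsilon_0$. In the step, the inductive bound $d(x_{k-1},P_{k-1}(y))<\epsilon_0<\eta$ places $P_{k-1}(y)$ in the $\eta$-ball about $x_{k-1}\in\dom g_k$, where $\bar g_k$ equals one $t_k\in S$; hence $P_{k-1}(y)\in\dom\bar g_k$ (so $P_k(y)$ is defined) and $P_k=t_k\circ\sigma_{k-1}=:\sigma_k$ on $V$, with $\sigma_k\in S$ because $S$ is closed under composition. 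The decisive point is the renewed spread bound: since $\sigma_k\in S$, the \emph{single} estimate~\eqref{e:equicontinuous} applied to the original pair $x,y\in V\subset Z'_{i_0}$ gives $d(x_k,P_k(y))=d(\sigma_k(x),\sigma_k(y))<\epsilon_0$ from $d(x,y)<\delta_0=\delta(\epsilon_0)$, with no dependence on $k$ or $n$. I expect this to be the main obstacle: a naive factor-by-factor estimate would feed $\epsilon_0$ back as the new input and let the spread grow with $n$, so the argument genuinely needs the partial composites to remain single elements of $S$ (the role of $\eta$) together with closure of $S$ under composition.

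Finally I would read off the conclusions. The induction gives $V\subset\dom\tilde h$ with $\tilde h=P_n$, while $V\subset Z'_{i_0}$ by construction. Moreover $\tilde h(x)=h(x)\in U$ and $d(\tilde h(x),\tilde h(y))<\epsilon_0$ for all $y\in V$, so $\tilde h(V)$ lies in the $\epsilon_0$-ball about $h(x)\in\overline\Omega$, which is contained in a single $Z'_{i_1}$ by the choice of $\epsilon_0$ against the Lebesgue number of the shrinking. All these properties depend only on $V$, not on the particular $x$, $h$, or expression; since each point of $\overline U$ admits such a $V$, compactness of $\overline U$ yields a finite subfamily $\VV$ whose union contains $U$, as required. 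The remaining work is the routine bookkeeping of which charts $Z_i,Z_j$ host the points $x_k,P_k(y)$ and of the finitely many metric comparisons on overlaps, all absorbed into the constants fixed at the outset.
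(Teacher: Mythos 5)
Your central mechanism is the right one, and it is the same as the paper's: because $S$ is closed under composition, each partial composite of the extensions agrees near the orbit with a \emph{single} element of $S$, so its spread over $V$ is controlled by one application of~\eqref{e:equicontinuous} with the fixed input $\delta(\epsilon)$, independently of the word length $n$. The gap is in the chart bookkeeping, and it is not absorbable into constants: the estimate~\eqref{e:equicontinuous} is conditional on the two \emph{image} points lying in a common chart (it reads $x,y\in Z_i\cap h^{-1}(Z_j\cap\im h)$, so it bounds $d_j(h(x),h(y))$ only for those $j$ with both $h(x),h(y)\in Z_j$), and your induction never produces such a $j$. You assert $d(x_k,P_k(y))<\epsilon_0$ ``from $d(x,y)<\delta_0$,'' but without a chart containing both $x_k$ and $P_k(y)$ the estimate says nothing, and the next step of your induction (placing $P_k(y)$ in the $\eta$-ball about $x_k$ on which $\bar g_{k+1}$ is one element of $S$) needs exactly such a chart-metric bound as input; this is circular. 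The paper's proof devotes its main preparatory step to precisely this point: it refines $E$ (legitimately, since the statement for a refinement of $E$, with restricted extensions, implies it for $E$) so that $\dom\bar g_1\cup\im\bar g_2\subset Z'_i$ for every composable pair, condition~\eqref{e:dom bar g1 cup im bar g2}, and also $\bar g\in S$. Then the whole image $\tilde f(V)\subset\im\bar g_{n-1}$ automatically sits inside one shrunk chart $Z'_j$, the estimate can be applied there, and the domain extension follows from the in-chart collar inequality~\eqref{e:epsilon}, $d_j\left(Z'_j\cap\dom g_n,Z_j\setminus\dom\bar g_n\right)>\epsilon$, with no ball-covering argument at all. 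The final chart containments $V\subset Z'_{i_0}$, $\tilde h(V)\subset Z'_{i_1}$ likewise come directly from~\eqref{e:dom bar g}, not from a metric neighborhood of $h(x)$.

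Relatedly, the uniform constants you fix at the outset do not exist in the stated generality. There is no ambient metric $d$ on a quasi-local metric space (you use one without constructing it; Theorem~\ref{t:isometrization of quasi-local metrics} would supply one, but you never invoke it), the chart metrics are comparable only locally and only in the modulus-of-continuity sense of~\eqref{e:quasi-local metric space} --- not with ``fixed bounded distortion'' on overlaps --- and ``Lebesgue numbers with respect to the chart metrics'' over a compact region are false in general. Indeed, a $d_i$-ball of any prescribed radius $\eta$ about a point of $Z_i$ can contain points that are topologically far away: take $Z_i$ a disjoint union of two intervals with $d_i$ pulled back from an embedding into $\R^2$ placing the two pieces at mutual distance $\epsilon_1<\eta$; this is perfectly compatible with~\eqref{e:quasi-local metric space}, since the comparison there is only required on small neighborhoods $U_{i,j,z}$, whose size may shrink. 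Consequently your germ radius $\eta$, the choice of $\epsilon_0$ ``against the Lebesgue numbers,'' and the concluding claim that the $\epsilon_0$-ball about $h(x)$ lies in a single $Z'_{i_1}$ are all unjustified. These are the two places where the paper's refinement of $E$ is doing genuine work that your argument has no substitute for.
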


\begin{proof}
We can assume that $\{Z_i\}_{i\in I}$ is locally finite.
Let $S$ be a symmetric set of generators of $\HH$ that is closed under
compositions and restrictions to open subsets so that the condition of strong
equicontinuity is
satisfied by
$S$ and
$\{(Z_i,d_i)\}_{i\in I}$ (Definition~\ref{d:strongly equicontinuous}). 

Observe that any system $E$ of compact generation of $\HH$ on $U$
satisfies the statement of this result if so does some other system of
compact generation of $\HH$ on $U$ whose elements are restrictions of
elements of $E$. Therefore it can be assumed that, for all $g_1,g_2\in
E$, we have
\begin{equation}\label{e:dom bar g1 cup im bar g2}
\dom g_1\cap\im g_2\neq\emptyset \Longrightarrow
\dom\bar g_1\cup\im\bar g_2\subset Z'_i
\end{equation}
for some $i,j\in I$; in particular, since $E$ is symmetric, for each
$g\in E$ there exists some
$i,j\in I$ such that
\begin{equation}\label{e:dom bar g}
\dom\bar g\subset Z'_i\;,\quad\im\bar g\subset Z'_j\;.
\end{equation}
By the same reason, we can also suppose that $\bar g\in S$ for all $g\in
E$.
 
Since $U$ is relatively compact and $\{Z_i\}_{i\in I}$ is locally
finite, $U$ meets only a finite number of the sets $Z_i$. Thus there
exists
$\epsilon>0$ such that
\begin{equation}\label{e:epsilon}
Z'_i\cap\dom g\neq\emptyset\Longrightarrow d_i\left(Z'_i\cap\dom
g,Z_i\setminus\dom\bar g\right)>\epsilon\;,
\end{equation}
for all $i\in I$ and all $g\in E$, because $Z'_i\cap\dom g$ is a
relatively compact subset of $Z_i$. Let $\delta=\delta(\epsilon)>0$
satisfy the condition of strong equicontinuity for such an
$\epsilon$; it is no restriction to assume that $\delta<\epsilon$.

Let $\VV$ be a finite family of open subsets of $Z$ whose union
contains $U$ and such that each $V\in\VV$ is contained in some $Z'_i$
and has $d_i$-diameter smaller than $\delta$. Fix any $V\in\VV$,
$x\in U\cap V$ and $h\in\HH$ with $x\in\dom h$ and $h(x)\in
U$. Since $E$ generates the restriction of $\HH$ to $U$, there exist
$g_1,\dots,g_n\in E$ so that $h=g_n\circ\dots\circ g_1$ in some
neighborhood of $x$, and let $\tilde h=\bar g_n\circ\dots\circ\bar
g_1$. If $V\subset\dom\tilde h$, then $V\subset Z'_{i_0}$ and $\tilde
h(V)\subset Z'_{i_1}$ for some
$i_0,i_1\in I$ by \eqref{e:dom bar g}. So it only remains to show that
$V\subset\dom\tilde h$, which will be done by induction on $n$.

The result is true for $n=1$. Indeed, $V\subset Z'_i$ for some $i$,
and $d_i(x,y)<\delta<\epsilon$ for all $y\in V$. Thus
$V\subset\dom\bar g_1$ by \eqref{e:epsilon}.

For $n>1$, let $f=g_{n-1}\circ\dots\circ g_1$, which is defined in some
neighborhood of $x$. By the induction hypothesis, the domain of $\tilde
f=\bar g_{n-1}\circ\dots\circ\bar g_1$ contains $V$. Then
$$
\dom g_n\cap\im g_{n-1}\neq\emptyset\;,
$$
and thus 
$$
\dom\bar g_n\cup\im\bar g_{n-1}\subset Z'_j
$$ 
for some $j\in I$ by \eqref{e:dom bar g1 cup
im bar g2}. In particular,
$$
\im\tilde f\subset\im\bar g_{n-1}\subset Z'_j\;,
$$
yielding $d_j\left(\tilde f(x),\tilde f(y)\right)<\epsilon$ for all
$y\in V$ by strong equicontinuity since $d_i(x,y)<\delta$ and $\tilde
f\in S$. Therefore $\tilde f(y)\in\dom\bar g_n$ by \eqref{e:epsilon}
because $\tilde f(x)=f(x)\in Z'_j\cap\dom g_n$; {\it i.e.}, the domain
of
$\tilde h=\bar g_n\circ\tilde f$ contains $V$ as desired.
\end{proof}

\begin{rems}
(i) With the notation of Proposition~\ref{p:equicontinuous}, given any
symmetric set $S$ of generators of $\HH$ that is closed under
compositions, we can 
choose $E$ with the extensions $\bar g$ in $S$. So $S$ contains all maps
$\tilde h$ of the statement of Proposition~\ref{p:equicontinuous}.
\newline
(ii) Note that, with the conditions of
Proposition~\ref{p:equicontinuous}, the pseudogroup $\HH$ is 
complete as defined by Haefliger in \cite{Haefliger88}. 
\end{rems}

It makes sense to consider the generalization to complete strongly
equicontinuous  pseudogroups of known results for complete pseudogroups
of local isometries of Riemannian manifolds \cite{Haefliger85},
\cite{Haefliger88}. But, for simplicity, only compactly generated
strongly equicontinuous pseudogroups will be considered in this paper.

\section{Quasi-effective pseudogroups}

Recall the following property that is
invariant by equivalences of pseudogroups; it is interesting for our
results on strongly equicontinuous pseudogroups.

\begin{defn}[{Haefliger \cite{Haefliger85}}]\label{d:quasi-analytic}
A pseudogroup $\HH$ of local transformations of a space $Z$ is called
{\em quasi-analytic\/} when the following holds for every $h\in \HH$: if
$x\in \dom h$ and $h$ is the identity on some open set whose closure
contains $x$, then $h$ is the identity on a neighborhood of $x$.
\end{defn}

\begin{example}
Pseudogroups of local isometries of
Riemannian manifolds are quasi-analytic because every such local isometry with
connected domain is determined by its differential at any point.
\end{example}

\begin{example}\label{ex:not quasi-analytic}
Let $Z$ be the compact subspace of $\R^3$ that is the union of a
horizontal $2$-dimensional euclidean disk centered at the origin with
a compact segment of the vertical axis containing the origin. Consider
the quasi-local metric on $Z$ induced by the restriction of the
euclidean metric of $\R^3$. The space
$Z$ is invariant by rotations around the vertical axis. The
pseudogroup
$\HH$ generated by any such non-trivial rotation is strongly
equicontinuous but not quasi-analytic.
\end{example}

If $\HH$ is a quasi-analytic pseudogroup on a space $Z$, then every
$h\in\HH$ with connected domain is the identity on $\dom h$ if it is
the identity on some non-trivial open set. Because of this,
quasi-analyticity is interesting for our purposes when $Z$ is locally
connected, which is a very strong condition. To remove it,
a slightly different property is defined inspired by the terminology of
group actions.

\begin{defn}\label{d:quasi-effective}
A pseudogroup $\HH$ of local transformations of a space $Z$ is said to
be \textit{quasi-effective} if it is generated by some symmetric set
$S$ that is closed under compositions, and
such that any transformation in $S$ is the identity on its domain if it
is the identity on some non-empty open subset of its domain.
\end{defn}

\begin{rems}
(i) In Definition~\ref{d:quasi-effective}, the family $S$ can be assumed
to be also closed under restrictions to open sets. So every map in
$\HH$ is a combination of maps in $S$ in this case.
\newline
(ii) If the pseudogroup $\HH$ is strongly equicontinuous and
quasi-effective, then $\HH$ is generated by a symmetric subset $S$ that
is closed under compositions and satisfies the conditions of both
Definitions~\ref{d:strongly equicontinuous} and~\ref{d:quasi-effective}.
\end{rems}

The following result can be proved with
arguments similar to those in the proof of
Lemma~\ref{l:strongly equicontinuous}.

\begin{lemma}\label{l:quasi-effective}
If $\HH,\HH'$ are equivalent pseudogroups acting on locally compact
Polish spaces $Z,Z'$, then $\HH$ is quasi-effective if and only if so is
$\HH'$.
\end{lemma}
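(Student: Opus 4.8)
The plan is to mirror the proof of Lemma~\ref{l:strongly equicontinuous}, replacing the equicontinuity bookkeeping by the quasi-effectiveness property. By symmetry it suffices to show that if $\HH$ is quasi-effective then so is $\HH'$. Fix a pseudogroup equivalence $\Phi\colon\HH\to\HH'$ and, using remark~(i) after Definition~\ref{d:quasi-effective}, choose a symmetric generating set $S$ of $\HH$ that is closed under compositions and under restrictions to open sets and that witnesses quasi-effectiveness; thus every map of $\HH$ is a combination of maps in $S$, and each $s\in S$ that is the identity on some nonempty open subset of $\dom s$ is the identity on all of $\dom s$.

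First I would reproduce the construction of Claim~\ref{cl:equicontinuous}, dropping only the metric clause ``$\dom\phi\subset Z_i$'', which plays no role here. Since $Z'$ is locally compact Polish and $\Phi$ is an equivalence, this yields $\Phi_0\subset\Phi$ with $Z'=\bigcup_{\phi\in\Phi_0}\im\phi$ and with $\phi^{-1}\circ\psi\in S$ for all $\phi,\psi\in\Phi_0$. I would then let $S'$ be the family of all composites $\phi\circ s\circ\psi^{-1}$ together with their restrictions to open sets, for $s\in S$ and $\phi,\psi\in\Phi_0$. Exactly as in Lemma~\ref{l:strongly equicontinuous}, $S'$ is symmetric, generates $\HH'$, and is closed under compositions: for $h'_1=\phi_1\circ s_1\circ\psi_1^{-1}$ and $h'_2=\phi_2\circ s_2\circ\psi_2^{-1}$ one has $\psi_1^{-1}\circ\phi_2\in S$, whence $h'_1\circ h'_2=\phi_1\circ\bigl(s_1\circ(\psi_1^{-1}\circ\phi_2)\circ s_2\bigr)\circ\psi_2^{-1}\in S'$ because $S$ is closed under compositions.

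It then remains to check that each $h'=\phi\circ s\circ\psi^{-1}\in S'$ is quasi-effective. Suppose $h'$ is the identity on a nonempty open set $W\subset\dom h'$, and write $\beta=\phi^{-1}\circ\psi\in S$. The equality $h'|_W=\id$ says precisely that $s=\beta$ on the nonempty open set $\psi^{-1}(W)$; hence $g=\beta^{-1}\circ s$, which lies in $S$ since $S$ is symmetric and closed under compositions, is the identity on $\psi^{-1}(W)$. Quasi-effectiveness of $S$ forces $g=\id$ on $\dom g$, that is, $s=\beta$ on $\dom g=\dom s\cap s^{-1}(\im\beta)$, and from $s=\beta=\phi^{-1}\circ\psi$ one computes $h'(z')=z'$ for every $z'\in\dom h'$ with $\psi^{-1}(z')\in\dom g$.

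The \emph{main obstacle} is to promote this to the full domain of $h'$. A point $z'\in\dom h'$ with $h'(z')\notin\im\psi$ has $\psi^{-1}(z')\notin\dom g$, so the argument gives no information there, and at such a point $h'$ is genuinely not the identity; were such points to occur, $h'$ would violate quasi-effectiveness even though $S$ is quasi-effective. Ruling them out amounts to arranging that the transitions $\phi\circ\psi^{-1}$ (the case $s=\id$) be quasi-effective, equivalently that $\phi$ and $\psi$ agree on the whole overlap $\dom\phi\cap\dom\psi$ as soon as they agree on a nonempty open subset of it; by the same $\beta$-argument this reduces, through quasi-effectiveness of $\phi^{-1}\circ\psi\in S$, to a compatibility condition on the cover $\{\im\phi\}_{\phi\in\Phi_0}$. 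I would secure this compatibility by strengthening the selection in Claim~\ref{cl:equicontinuous}: passing to a locally finite refinement and a shrinking in the manner of Proposition~\ref{p:equicontinuous} and Corollary~\ref{c:recurrent systems of compact generation}, one chooses the elements of $\Phi_0$ so that the relevant overlaps are exhausted by sets on which the transitions are rigid, which is exactly where local compactness and the Polish hypothesis enter. Granting this, $h'$ is the identity on all of $\dom h'$, so $S'$ witnesses quasi-effectiveness of $\HH'$; the converse implication is identical with the roles of $\HH$ and $\HH'$ interchanged.
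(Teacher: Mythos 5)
Your reconstruction of the first two steps is faithful to what the paper intends (the selection of $\Phi_0$ as in Claim~\ref{cl:equicontinuous}, the conjugated family $S'=\{\phi\circ s\circ\psi^{-1}\}$, and its symmetry, generation and closure under compositions), and you have correctly located the point where the analogy with Lemma~\ref{l:strongly equicontinuous} breaks down: quasi-effectiveness of $S$ only forces $h'=\phi\circ s\circ\psi^{-1}$ to be the identity at those $z'\in\dom h'$ with $h'(z')\in\im\psi$, while at any $z'$ with $h'(z')\notin\im\psi$ the map $h'$ cannot fix $z'$. This obstacle is genuine and structural: the equicontinuity condition of Lemma~\ref{l:strongly equicontinuous} is a pointwise estimate on pairs of points lying in prescribed charts, so a single conjugate $\phi_b^{-1}\circ h'\circ\phi_a\in S$ suffices to verify it, whereas quasi-effectiveness is a condition on the whole (typically disconnected) domain of $h'$, which conjugation only controls chart by chart.

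However, your repair does not close the gap. First, it is never actually carried out: the selection of $\Phi_0$ "so that the relevant overlaps are exhausted by sets on which the transitions are rigid" is not constructed, and your conclusion begins with "Granting this". Second, and decisively, the reduction itself is false: quasi-effectiveness of all transitions $\phi_1\circ\phi_2^{-1}$, $\phi_1,\phi_2\in\Phi_0$, does not imply quasi-effectiveness of the composites $\phi\circ s\circ\psi^{-1}$ for general $s\in S$. Indeed, let $Z=Z'$ be the disjoint union of four intervals $A,B,C,D$, let $\HH=\HH'$ be generated by the translations $T_{XY}\colon X\to Y$ between them, let $S$ be the set of all restrictions of the maps $T_{XY}$ (a symmetric, composition-closed, quasi-effective generating set), and let $\Phi=\HH$ be the identity equivalence. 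Put $\psi=\id_A$, and let $\phi\in\Phi$ have domain $B$, equal to $T_{BA}$ on one half $B_1$ of $B$ and to $T_{BC}$ on the other half $B_2$. Then $\Phi_0=\{\psi,\phi,\id_B,\id_C,\id_D\}$ satisfies all the conditions of your claim (for instance $\phi^{-1}\circ\psi=T_{AB}|_{T_{BA}(B_1)}\in S$ and $\phi^{-1}\circ\phi\in S$, the remaining transitions being empty), and every transition $\phi_1\circ\phi_2^{-1}$ is vacuously quasi-effective; in particular $\dom\phi\cap\dom\psi=\emptyset$, so your agreement condition holds trivially. Yet for $s=T_{AB}\in S$ the conjugate $h'=\phi\circ s\circ\psi^{-1}\in S'$ equals the identity on the nonempty open set $T_{BA}(B_1)$ and equals $T_{AC}\neq\id$ on $T_{BA}(B_2)\subset\dom h'$, so $S'$ is not a quasi-effectiveness witness. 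Note that this does not contradict the lemma ($\HH'=\HH$ is quasi-effective here, with witness $S$ itself, obtainable from the choice $\Phi_0=\{\id_A,\id_B,\id_C,\id_D\}$); it shows that what is missing is a criterion, and a construction, for choosing $\Phi_0$ so that the conjugates are coherent across all components of their domains, and neither transition rigidity nor the appeal to local compactness and the Polish property supplies that idea.
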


\begin{lemma}\label{l:quasi-effective implies quasi-analytic}
Any quasi-effective pseudogroup is quasi-analytic.
\end{lemma}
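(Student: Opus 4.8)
Every quasi-effective pseudogroup is quasi-analytic.

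Let me recall the two definitions so the plan is precise. A pseudogroup $\HH$ on $Z$ is \emph{quasi-effective} (Definition~\ref{d:quasi-effective}) if it is generated by a symmetric set $S$, closed under compositions, such that every $h\in S$ which is the identity on some nonempty open subset of its domain is the identity on all of $\dom h$. By remark~(i) following that definition, we may take $S$ to be closed under restrictions to open sets as well, so that every element of $\HH$ is a combination of maps from $S$. Quasi-analyticity (Definition~\ref{d:quasi-analytic}) requires: whenever $h\in\HH$, $x\in\dom h$, and $h$ equals the identity on some open set whose closure contains $x$, then $h$ is the identity near $x$.

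The plan is as follows. I would fix $S$ as in the strengthened remark~(i): symmetric, closed under compositions and under restrictions to open sets, with the stated identity property on its members, and with every map in $\HH$ a combination of maps in $S$. Now take an arbitrary $h\in\HH$, a point $x\in\dom h$, and an open set $W$ with $x\in\overline W$ on which $h$ is the identity. Because $h$ is a combination of maps in $S$, there is some $s\in S$ whose germ agrees with that of $h$ at $x$; that is, $s$ coincides with $h$ on some open neighborhood $N$ of $x$ in $\dom h$. The idea is to transfer the hypothesis from $h$ to $s$ and then invoke the defining property of $S$.

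The key step, and the one requiring a little care, is to produce a nonempty open set on which $s$ itself is the identity. Since $x\in\overline W$ and $N$ is an open neighborhood of $x$, the intersection $W\cap N$ is nonempty and open. On $W\cap N$ we have $s=h=\id$, because $s=h$ throughout $N$ and $h=\id$ throughout $W$. Thus $s$ is the identity on the nonempty open set $W\cap N$ contained in $\dom s$. If $s\in S$, the quasi-effectiveness hypothesis forces $s=\id$ on all of $\dom s$; in particular $s=\id$ on $N$, hence $h=\id$ on $N$, which is a neighborhood of $x$. This is exactly quasi-analyticity.

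The only subtlety, and the place I expect the main obstacle, is justifying that the germ of $h$ at $x$ is realized by a single $s\in S$ with $W\cap N\neq\emptyset$ lying inside $\dom s$: ``combination'' of maps in $S$ means $h$ is glued from restrictions of elements of $S$ on an open cover of $\dom h$, so near $x$ one cover-piece supplies an $s$ agreeing with $h$ on a neighborhood $N\ni x$, and we simply restrict attention to that piece. Since $S$ is closed under restriction to open sets, we may even replace $s$ by its restriction to $N$ and still apply the identity property, although this is not strictly necessary: the hypothesis of Definition~\ref{d:quasi-effective} already concerns arbitrary nonempty open subsets of the domain. Once this local realization is set up, the argument is a one-line consequence of $x\in\overline W$ and the defining property of $S$, and quasi-analyticity follows.
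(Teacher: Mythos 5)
Your proof is correct and follows the same route as the paper: fix $S$ symmetric, closed under compositions and restrictions to open sets (remark~(i) of Definition~\ref{d:quasi-effective}), note that any $h\in\HH$ is a combination of maps in $S$, and apply the identity property of $S$ to a local piece agreeing with $h$ near the point. The paper compresses exactly this argument into the word ``obviously,'' and your write-up simply supplies the details (the piece $s\in S$ with germ of $h$ at $x$, the nonempty open set $W\cap N$ on which $s=\id$, and the conclusion on $N$).
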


\begin{proof}
Let $\HH$ be a quasi-effective pseudogroup of local transformations of a
space $Z$. So $\HH$ satisfies the condition of
Definition~\ref{d:quasi-effective} with some symmetric set $S$ that
generates $\HH$ and is closed under compositions and
restrictions to open sets (remark~(i) of
Definition~\ref{d:quasi-effective}). Then
$\HH$ is obviously quasi-analytic because any $h\in \HH$ is a
combination of elements of $S$.
\end{proof}

\begin{example}
Let $r_n,s_n$ be two sequences of real numbers satisfying $0<r_n<s_n$
and $r_n,s_n\downarrow0$. For each $n\in\Z_+$, let $U_n$ denote the
(multiplicative) group of $n$th roots of $1$ in $\C$, and fix a
generator $\alpha_n$ of each $U_n$. Then let $Z$ be the compact
subspace of $\R\times\C$ that is the union of the origin and the
subspaces $\{s_n\}\times r_nU_n$, $n\in\Z_+$. Let $\HH$ be the
pseudogroup on $Z$ generated by the homeomorphism $h:Z\to Z$ that
fixes the origin and satisfies $h(s_n,z)=(s_n,\alpha_nz)$ for any
$z\in r_nU_n$ and $n\in\Z_+$. Note that $h$ is an isometry with
respect to the restriction of the metric on $\R\times\C$ induced by
the norm defined by $\|(t,z)\|=\max\{|t|,|z|\}$. So $\HH$ is strongly
equicontinuous with respect to the corresponding quasi-local metric on $Z$.
Moreover
$\HH$ is quasi-analytic because, on the one hand, the origin is the only
non-isolated point of
$Z$ and, on the other hand, if some power $h^m$ is the identity on some
open set whose closure contains the origin, then $m=0$. But $\HH$ is not
quasi-effective, as follows easily by using that, for any
neighborhood $U$ of the origin in $Z$, there is some $m\in\Z_+$ such
that $h^m$ fixes some point in $U$ different from the origin, which is
an open subset.
\end{example}

The above example shows that being quasi-effective is a
strictly stronger property than quasi-analyticity, even for equicontinuous
pseudogroups. Nevertheless, the following result shows that both
properties are equivalent when quasi-analyticity fits our needs. 

\begin{lemma}\label{l:quasi-effective iff quasi-analytic}
Let $\HH$ be a compactly generated strongly equicontinuous
pseudogroup on a locally connected and locally compact Polish space
$Z$. Then
$\HH$ is quasi-effective if and only if it is quasi-analytic.
\end{lemma}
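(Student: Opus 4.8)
The forward implication is already in hand: by Lemma~\ref{l:quasi-effective implies quasi-analytic} every quasi-effective pseudogroup is quasi-analytic, and this needs none of the present extra hypotheses. So the whole content lies in the converse, and the plan is to show that a quasi-analytic, strongly equicontinuous, compactly generated pseudogroup $\HH$ on a locally connected, locally compact Polish space $Z$ admits a symmetric set of generators $S$, closed under composition, each element of which is the identity on all of its domain as soon as it is the identity on some non-empty open subset (Definition~\ref{d:quasi-effective}).

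The guiding idea is that, for a quasi-analytic pseudogroup, a transformation with \emph{connected} domain is automatically rigid in the required sense: if $h$ has connected domain and $h=\id$ on a non-empty open subset, then the set of points at which $h$ is locally the identity is open, is closed in $\dom h$ by quasi-analyticity (Definition~\ref{d:quasi-analytic}), and is therefore all of the connected set $\dom h$. Since $Z$ is locally connected, the connected open sets form a base, so by restricting generators to connected neighbourhoods one gets a symmetric family of connected-domain maps that still generates $\HH$ and consists entirely of rigid maps. Thus symmetry, generation, and the effectiveness property come essentially for free, and the entire difficulty is concentrated in making such a family \emph{closed under composition}.

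The next step is to fix a uniform scale. Let $\{(Z_i,d_i)\}_{i\in I}\in{\mathfrak Q}$ and a symmetric, composition-closed $S$ realize strong equicontinuity (Definition~\ref{d:strongly equicontinuous}), with $\{Z_i\}$ locally finite, and fix a system $E$ of compact generation together with extensions $\bar g\in S$ as in Proposition~\ref{p:equicontinuous}. That proposition furnishes a finite cover of $U$ by open sets $V$, which, using local connectedness, may be taken connected, over each of which any composite $g_n\circ\dots\circ g_1$ representing a given germ extends to $\tilde h=\bar g_n\circ\dots\circ\bar g_1$ with $V\subset\dom\tilde h$, $V\subset Z'_{i_0}$ and $\tilde h(V)\subset Z'_{i_1}$. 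In other words $\HH$ is complete and every composite extends, at a scale independent of the composite, to a map defined over a whole connected neighbourhood. This is the pseudogroup substitute for the fact that, in the Riemannian model, a local isometry with connected domain is the restriction of a genuine isometry, and it is the only place where strong equicontinuity genuinely enters.

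The main obstacle, and the step I expect to be delicate, is ruling out \emph{mixed} composites. Closing the connected-domain family under composition produces maps $h=h_2\circ h_1$ whose domain $h_1^{-1}(\im h_1\cap\dom h_2)$ need not be connected; on such a domain quasi-analyticity propagates the identity only within a single connected component, so a priori $h$ could be the identity on one component and nowhere locally the identity on another, violating effectiveness. Local connectedness alone cannot exclude this, and indeed for a general quasi-analytic pseudogroup such mixed composites do occur; it is precisely here that the uniform extension of the previous paragraph must be invoked. The plan is to arrange that one only ever composes maps whose extensions $\tilde h$ supplied by Proposition~\ref{p:equicontinuous} have connected domains, so that a composite which is the identity on a non-empty open set is locally the restriction of a rigid extended map, and the estimate~\eqref{e:equicontinuous} forces these extensions to agree on overlaps and hence to define a single rigid map over a connected set containing the offending components; quasi-analyticity applied to that extension then yields the identity on all of $\dom h$. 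Verifying that this procedure gives honest closure under composition, rather than closure only up to restriction to connected pieces, is the crux of the argument; granting it, the resulting family is symmetric, generates $\HH$, is closed under composition, and is effective, so $\HH$ is quasi-effective and the proof is complete.
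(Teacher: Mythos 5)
You correctly dispose of the forward implication, correctly pinpoint Proposition~\ref{p:equicontinuous} plus quasi-analyticity-on-connected-sets as the engine of the converse, and you even correctly identify the real difficulty: closure of the generating family under composition, since composites of connected-domain maps have disconnected domains. But your proposal does not resolve that difficulty --- you explicitly write ``granting it'' at precisely the step you call the crux, so the proof is incomplete exactly where it matters. Worse, the mechanism you sketch for handling mixed composites cannot work: you ask the estimate~\eqref{e:equicontinuous} to ``force these extensions to agree on overlaps,'' but~\eqref{e:equicontinuous} is a purely metric (equicontinuity) statement and can never force two local transformations to coincide; everywhere in this paper, agreement of two maps with a common germ is deduced either from quasi-effectiveness (which would be circular here) or from quasi-analyticity applied over a connected set. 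So the plan stalls at the flagged step, and the tool invoked to unstick it is the wrong one.

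The paper closes the gap by \emph{not} requiring the domains of the generators to be connected. First reduce, via Lemma~\ref{l:quasi-effective}, to the restriction $\GG$ of $\HH$ to a relatively compact open $U$ meeting all orbits, with $E$ a system of compact generation and $\VV$ the finite cover from Proposition~\ref{p:equicontinuous}, its members taken connected by local connectedness. Then define $S$ to be the set of all $h\in\GG$ that are restrictions of composites of elements of $E$ and whose domain \emph{and} range are merely \emph{contained in} elements of $\VV$. Closure under composition is now trivial (the domain of $h_2\circ h_1$ sits inside $\dom h_1$, its range inside $\im h_2$), as are symmetry and generation. Effectiveness is where connectedness enters, but only through the cover: if $h=g_n\circ\dots\circ g_1:O\to P$ with $O\subset V\in\VV$ is the identity on a nonempty open set, Proposition~\ref{p:equicontinuous} says $\tilde h=\bar g_n\circ\dots\circ\bar g_1$ is defined on all of the connected set $V$; its germ at some point of $V$ is the identity, so quasi-analyticity propagates the identity over $V$, and restricting back gives $h=\tilde h|_O=\id_O$ even when $O$ is disconnected. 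The idea you were missing is that rigidity of an element of $S$ need not come from connectedness of its own domain, but from the fact that it extends, uniformly via Proposition~\ref{p:equicontinuous}, across a single connected member of $\VV$.
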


\begin{proof}
The ``only if'' part was shown in Lemma~\ref{l:quasi-effective implies
quasi-analytic}. 

Now assume that $\HH$ is quasi-analytic.
Let $U$ be
any relatively compact open subset of $Z$ that meets every
$\HH$-orbit, and let $\GG$ denote the restriction of $\HH$ to $U$. By
Lemma~\ref{l:quasi-effective}, it is enough to show that $\GG$ is
quasi-effective. Let
$E$ be any system of compact generation of $\HH$ on $U$, and let $\bar
g$ be an extension of each $g\in E$ satisfying the condition of
Definition~\ref{d:compactly generated}.  Take a family $\VV$ of
open  subsets of $Z$ satisfying the statement of
Proposition~\ref{p:equicontinuous} for the above $U$, $E$ and
extensions $\bar g$.  Since $Z$ is locally connected, we can assume
that all sets in $\VV$ are connected. 
Let $S$ be the set of maps $h\in\GG$ such that:
\begin{itemize}

\item $h$ is a restriction of some composite of elements of $E$; and

\item the domain and range of $h$ are contained in elements of $\VV$. 

\end{itemize}
Such an $S$ generates $\GG$, is symmetric, and is closed under
compositions and restrictions to open sets. Suppose that some $h\in S$
is the identity on some non-trivial open subset of its domain. 
We have
$$
h=g_n\circ\dots\circ g_1:O\to P\;,
$$
where $g_1,\dots,g_n\in E$ and $O,P$ are open subsets of $U$ that are
contained in elements of $\VV$; say $O\subset V\in\VV$. Then the domain
of $\tilde h=\bar g_n\circ\dots\circ\bar g_1$ contains $V$ by
Proposition~\ref{p:equicontinuous}. Since $h$ is the identity on some
non-trivial open subset of $O$, the germ of $\tilde h$ at some point of
$V$ is equal to the germ of the identity. So $\tilde h$ is the identity
on $V$ because $\HH$ is quasi-analytic and $V$ is connected. Thus $h$
is the identity on $O$ and the result follows.
\end{proof}

The following result combines strong equicontinuity and
quasi-effectiveness.

\begin{prop}\label{p:A,B}
Let $\HH$ be a compactly generated, strongly equicontinuous and
quasi-effective pseudogroup of local homeomorphisms of a locally
compact Polish space $Z$. Suppose that the
conditions of strong equicontinuity and quasi-effectiveness are satisfied with a
symmetric set
$S$ of generators of
$\HH$ that is closed under compositions \upn{(}Definitions~\ref{d:strongly
equicontinuous} and~\ref{d:quasi-effective}\upn{)}. Let $A,B$ be open subsets of
$Z$ such that $\overline{A}$ is compact and contained in $B$. If $x$
and $y$ are close
enough points in $Z$, then
$$
f(x)\in A\Longrightarrow f(y)\in B
$$
for all $f\in S$ whose domain contains $x$ and $y$.
\end{prop}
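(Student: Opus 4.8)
The plan is to turn the conclusion $f(y)\in B$ into a single metric estimate and then read that estimate off from strong equicontinuity. First I would fix $\{(Z_i,d_i)\}_{i\in I}\in{\mathfrak Q}$ together with the symmetric, composition-closed generating set $S$ provided by the hypotheses (Definitions~\ref{d:strongly equicontinuous} and~\ref{d:quasi-effective}). Since $Z$ is paracompact I may assume $\{Z_i\}_{i\in I}$ is locally finite and that the uniform comparison of Lemma~\ref{l:locally finite quasi-locally equal covers} is available. I would also fix a shrinking $\{Z'_i\}_{i\in I}$ with $\overline{Z'_i}\subset Z_i$ that still covers $Z$. The interpretation of ``$x$ and $y$ close enough'' is the quasi-local one: there is some $i$ with $x,y\in Z_i$ and $d_i(x,y)<\delta$, for a $\delta$ to be chosen.

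The compactness input comes next. Because $\overline A$ is compact and $\{Z_i\}$ is locally finite, only finitely many $Z_i$ meet $\overline A$ and $\overline A\subset\bigcup_i Z'_i$. For each such $i$ the set $\overline A\cap\overline{Z'_i}$ is a compact subset of the open set $Z_i\cap B$, so (using local compactness to control closures of balls) there is a uniform $\epsilon>0$ with the following \emph{slack property}: for every $a\in\overline A$ and every $j$ with $a\in Z'_j$ one has $\overline{B_j(a,\epsilon)}\subset Z_j\cap B$. This is just the quasi-local-uniformity version of ``a compact set has a uniform neighbourhood inside any open set containing it.'' I then set $\delta=\delta(\epsilon)$ from strong equicontinuity, arranging $\delta\le\epsilon$.

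Now take $x,y$ close in the above sense and $f\in S$ with $x,y\in\dom f$ and $a:=f(x)\in A$, and pick $j$ with $a\in Z'_j$, so that $\overline{B_j(a,\epsilon)}\subset Z_j\cap B$. \emph{If} I already knew $f(y)\in Z_j$, then $x,y\in Z_i\cap f^{-1}(Z_j\cap\im f)$, and strong equicontinuity~\eqref{e:equicontinuous} would give $d_j(a,f(y))<\epsilon$, whence $f(y)\in B_j(a,\epsilon)\subset B$, which is exactly the conclusion. Thus the entire difficulty reduces to the \emph{common-chart problem}: ensuring, uniformly in $f\in S$, that $f(y)$ lands in the same chart $Z_j$ as $f(x)$. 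This is the step I expect to be the main obstacle, since it is precisely the gap between ``equi-uniform continuity'' and genuine topological closeness that the text flags as subtle (the inequality of Definition~\ref{d:strongly equicontinuous} is only available once a common target chart is in hand, so the argument is mildly circular if attacked naively).

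To break the circularity I would argue at the source with a clopen argument, and secure uniformity via Proposition~\ref{p:equicontinuous}. Fix $f,x$ and consider $Y=\{y'\in Z_i : d_i(x,y')<\delta,\ y'\in\dom f,\ f(y')\in Z_j\}$. It contains $x$ and is open, as $f$ is continuous and $Z_j$ is open; it is also relatively closed in the ball $\{y' : d_i(x,y')<\delta\}$, because whenever $y'\in Y$ strong equicontinuity confines $f(y')$ to $B_j(a,\epsilon)$, whose closure lies in $Z_j$ by the slack property, so any limit of points of $Y$ still maps into $Z_j$. To conclude that $Y$ exhausts a neighbourhood of $x$ (and so contains $y$), I would invoke the uniform domain/target control of Proposition~\ref{p:equicontinuous}: writing $f$ as a composite $g_n\circ\dots\circ g_1$ of generators with extensions $\bar g_k\in S$ (remark~(i) after Proposition~\ref{p:equicontinuous}), the extended composite $\tilde h=\bar g_n\circ\dots\circ\bar g_1$ is defined on a whole neighbourhood $V\ni x$ and satisfies $\tilde h(V)\subset Z'_{i_1}$ for a single chart; comparing $\tilde h$ with $f$ near $x$ forces $f(y)\in Z_j$ for $y\in V$ close to $x$. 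With $f(y)\in Z_j$ established, the estimate of the previous paragraph applies and gives $f(y)\in B\,$. The compactness reduction to finitely many charts and the equicontinuity estimate are then routine; the common-chart matching is the crux, and quasi-effectiveness (through Lemma~\ref{l:quasi-effective iff quasi-analytic}) is what rules out the degenerate germs that would otherwise spoil the uniformity.
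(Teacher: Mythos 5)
Your effective route is the same as the paper's: a uniform $\epsilon$ of ``slack'' between $A$ and $Z\setminus B$ inside each chart (from compactness of $\overline{A}$ and local finiteness of $\{Z_i\}_{i\in I}$), then Proposition~\ref{p:equicontinuous} to replace $f$ near $x$ by a map $\tilde h\in S$ whose domain contains a whole $V\ni x$ and whose image lies in a single shrunk chart, then quasi-effectiveness to conclude $f(y)=\tilde h(y)$, and finally the estimate~\eqref{e:equicontinuous}. You also correctly isolated the common-chart problem as the crux. But two steps of your write-up do not stand as written. The clopen argument is unsound: the set $Y$ is not relatively closed in the ball $\{y'\in Z_i \mid d_i(x,y')<\delta\}$, because a limit of points of $Y$ need not belong to $\dom f$ at all (nothing places that ball inside $\dom f$); at best $Y$ is clopen in the ball intersected with $\dom f$. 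Worse, even a corrected clopen statement would buy nothing: $Z$ is only a locally compact Polish space, and in the intended applications the transversal is typically totally disconnected (Cantor-like), so ``open and relatively closed'' only yields that $Y$ contains the connected component of $x$, which may be the single point $\{x\}$. This is precisely why the paper does not argue by connectedness but goes directly through Proposition~\ref{p:equicontinuous}. Fortunately, your concluding step makes no real use of $Y$.

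The second problem is that your decisive sentence---``comparing $\tilde h$ with $f$ near $x$ forces $f(y)\in Z_j$''---is asserted rather than proved, and the mechanism you credit for it is the wrong one. The comparison is exactly where quasi-effectiveness enters, via Definition~\ref{d:quasi-effective} applied to $S$: since $f,\tilde h\in S$ have the same germ at $x$, and $S$ is symmetric and closed under composition, the two maps agree on the intersection of their domains (their ``difference'' in $S$ is the identity near $x$, hence the identity on its whole domain by quasi-effectiveness); as $y\in\dom f\cap\dom\tilde h$, this gives $f(y)=\tilde h(y)$. Lemma~\ref{l:quasi-effective iff quasi-analytic} plays no role here: it is an equivalence valid only for locally connected $Z$, an assumption absent from the proposition, and quasi-analyticity is not what is needed. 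Two smaller repairs: the chart you actually obtain is $Z'_{i_1}$, the one furnished by Proposition~\ref{p:equicontinuous} (which also contains $f(x)=\tilde h(x)$, so your slack property applies there), not your pre-chosen $Z_j$, so the $\epsilon$-estimate must be run in the chart $i_1$; and ``close enough'' has to mean, as in the paper, that $x$ and $y$ lie in a common member $V$ of the finite family $\VV$ with $d_i(x,y)<\delta(\epsilon)$---your opening interpretation (a common chart $Z_i$ plus $d_i$-closeness) is not sufficient, since Proposition~\ref{p:equicontinuous} controls $\dom\tilde h$ only over $V$.
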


\begin{proof}
Suppose that the condition of strong equicontinuity is satisfied with $S$,
some quasi-local metric $\mathfrak Q$, some $\{(Z_i,d_i)\}_{i\in
I}\in{\mathfrak Q}$ such that $\{Z_i\}_{i\in I}$ is locally finite, and some
assignment $\epsilon\mapsto\delta(\epsilon)$.  Let
$\{Z'_i\}_{i\in I}$ be a shrinking of the open covering
$\{Z_i\}_{i\in I}$. Since $A$ is relatively compact and $\{Z_i\}_{i\in I}$
locally finite, $\overline{A}$ only meets finitely many of the sets
$Z_i$. Thus, because each $Z'_i\cap A$ is relatively compact in $Z_i$,
there exists some $\epsilon>0$ such that
\begin{equation}\label{e:A,B}
Z'_i\cap A\neq\emptyset\neq Z_i\setminus B\Longrightarrow
d_i(Z'_i\cap A,Z_i\setminus B)>\epsilon
\end{equation}
for all $i\in I$. 

Fix $x,y\in Z$. Since $\HH$ is compactly
generated, $A$ and $x$ are contained in some relatively compact open $U$ that
meets all orbits. Let $\VV$ be a finite family of open sets that covers
$\overline{U}$ and satisfies the conditions of
Proposition~\ref{p:equicontinuous}. If $x$ and $y$ are close enough, then both of these
points lie in some set $V\in\VV$. Furthermore $V\subset Z_i$ for some
$i\in I$, and we have $d_i(x,y)<\delta(\epsilon)$ if $x$ and $y$ are close enough.

Now take any $f\in S$ with $x,y\in\dom f$ and $f(x)\in A$. According
to Proposition~\ref{p:equicontinuous}, there is some $f'\in S$ whose domain
contains $V$ and so that $f,f'$ have the same germ at $x$; furthermore there is
some $j\in I$ such that $f'(V)\subset Z_j$. In particular,
$f(x)=f'(x)\in Z_j$. Since $y\in\dom f\cap\dom f'$, we also get
$f(y)=f'(y)\in Z_j$ by quasi-effectiveness. Therefore $d_j(f(x),f(y))<\epsilon$ by
strong equicontinuity, and the result follows from \eqref{e:A,B}.
\end{proof}

\section{Coarse quasi-isometry type of orbits with trivial groups of germs}

To compare different orbits of pseudogroups, some connection between
them is needed; so the following terminology will be used. A
pseudogroup $\HH$ acting on a space $Z$ is called transitive when some
orbit is dense in $Z$, and it is called minimal if every orbit is
dense. A non-trivial subset $Y\subset Z$ is called minimal if it is
closed, $\HH$-invariant, and every orbit in $Y$ is dense in $Y$;
equivalently, if $Y$ is a minimal element of the family of all
non-trivial $\HH$-invariant closed subsets of $Z$.

\begin{theorem}\label{t:equicontinuous pseudogroup} 
  Let $\HH$ be a compactly generated, strongly equicontinuous and
  quasi-effective pseudogroup of local transformations of a locally
  compact Polish space $Z$. Assume the space of orbits $Z/\HH$ is
  connected \upn{(}for example, if $\HH$ is transitive\upn{)}.  Let
  $\GG$ denote the restriction of $\HH$ to some relatively compact
  open subset $U\subset Z$ that meets every orbit. Then, with respect
  to any recurrent system of compact generation of $\HH$ on $U$, all
  $\GG$-orbits with trivial group of germs are uniformly coarsely
  quasi-isometric to each other.
\end{theorem}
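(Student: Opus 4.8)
The plan is to split the argument into a \emph{local comparison} and a \emph{chaining} step. The local comparison will show that any two orbits with trivial group of germs that both meet one sufficiently small open set are coarsely quasi-isometric, with coarse distortions depending only on $\HH$, $U$ and the chosen generators; the chaining step will then propagate this across all such orbits using the connectedness of $Z/\HH$ together with the fact, recalled earlier, that the orbits with trivial group of germs form a residual (hence dense) subset of $Z$. To prepare, I would fix, via remark~(ii) after Definition~\ref{d:quasi-effective}, a symmetric generating set $S$ closed under composition that realizes both strong equicontinuity and quasi-effectiveness; choose the recurrent system $E$ of compact generation of $\HH$ on $U$ with extensions $\bar g\in S$; and pass to an enlargement $\overline U\subset U'$ carrying the recurrent system $\overline E=\{\bar g\}$ furnished by Corollary~\ref{c:recurrent systems of compact generation}. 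Proposition~\ref{p:equicontinuous} then yields a finite open cover $\VV$ of $U$ such that, whenever $x,y$ lie in a common $V\in\VV$, every word $h=g_n\circ\dots\circ g_1$ in $E$ with $h(x)\in U$ has its extension $\tilde h=\bar g_n\circ\dots\circ\bar g_1$ defined on all of $V$; after shrinking the members of $\VV$ and invoking Proposition~\ref{p:A,B} with $A=U$ and $B=U'$, I may also assume $\tilde h(y)\in U'$.

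For $x,y$ in a common $V\in\VV$, both on orbits with trivial group of germs, define $\phi\colon\HH(x)\to\HH(y)$ by $\phi(h(x))=\tilde h(y)$. The crucial point is well-definedness: if $h(x)=h'(x)$, triviality of the group of germs at $x$ forces $h$ and $h'$, hence $\tilde h$ and $\tilde h'$, to share a germ at $x$, so $G=\tilde h'\circ\tilde h^{-1}\in S$ has the identity germ at $h(x)$; since $\tilde h^{-1}(\tilde h(y))=y\in V\subseteq\dom\tilde h'$, the point $\tilde h(y)$ lies in $\dom G$, and quasi-effectiveness forces $G$ to be the identity there, giving $\tilde h(y)=\tilde h'(y)$. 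It is precisely \emph{quasi-effectiveness}, rather than mere quasi-analyticity, that makes this work without assuming $Z$ locally connected. Because $\phi(g(z))=\bar g(\phi(z))$, a single $E$-step is carried to a single $\overline E$-step, so $\phi$ is large-scale Lipschitz from $(\HH(x)\cap U,d_E)$ into $(\HH(y)\cap U',d_{\overline E})$; Corollary~\ref{c:coarsely quasi-isometric orbits}, together with Lemma~\ref{l:recurrent finite symmetric family of generators}, identifies the target, up to uniform coarse quasi-isometry, with $(\HH(y)\cap U,d_E)$. The analogous map built from $y$ to $x$ is a coarse inverse, so $\phi$ is a coarse quasi-isometry whose distortions depend only on the fixed data, not on $x,y,V$.

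For the chaining step, the finitely many images in $Z/\HH$ of the sets $V\in\VV$ form an open cover of the compact connected space $Z/\HH$, so their overlap graph is connected of diameter at most $\#\VV$. Given two orbits $\OO,\OO'$ with trivial group of germs, I would join the sets they meet by a path in this graph and, in each successive nonempty overlap, select an intermediate orbit with trivial group of germs, which is possible since such orbits are dense and the overlaps are open. Consecutive orbits in the resulting chain meet a common $V$ and are therefore uniformly coarsely quasi-isometric by the local comparison; composing the at most $\#\VV+1$ maps produces a single coarse quasi-isometry $\OO\to\OO'$ whose distortions are bounded in terms of the local constant and $\#\VV$ alone, which is exactly the asserted uniformity.

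The main obstacle is the local comparison, namely establishing simultaneously that $\phi$ is well defined, has image confined to the fixed relatively compact set $U'$, and is bi-Lipschitz, all with constants independent of $x,y$ and $V$. Well-definedness hinges on combining trivial holonomy with quasi-effectiveness in the correct order, composing as $\tilde h'\circ\tilde h^{-1}$ so that $\tilde h(y)$ automatically lands in the relevant domain, while the uniform bi-Lipschitz estimate relies on routing each generator step through the single extended generator $\bar g\in S$ and then comparing the two recurrent metrics $d_E$ and $d_{\overline E}$ via Corollary~\ref{c:coarsely quasi-isometric orbits}. Once uniformity is secured locally, the connectedness-and-density chaining is essentially bookkeeping.
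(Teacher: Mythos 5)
Your proposal is correct and follows essentially the same route as the paper's own proof: the same comparison map $\phi_{x,y}(h(x))=\tilde h(y)$ built from Proposition~\ref{p:equicontinuous}, with well-definedness resting on trivial germs combined with quasi-effectiveness of $S$, the same transfer of $E$-steps to $\overline{E}$-steps for the Lipschitz estimate, and the same appeal to recurrence, Proposition~\ref{p:A,B} and Corollary~\ref{c:coarsely quasi-isometric orbits} to control images and compare $d_E$ with $d_{\overline{E}}$, followed by the density-plus-connectedness chaining. The only differences are presentational: you package injectivity, the lower Lipschitz bound and the net property of the image as a ``coarse inverse'' $\phi_{y,x}$ (verifying it on a sub-net $A$ with $\overline{A}\subset U$ amounts to exactly the paper's Claims~\ref{cl:phix,y is injective}--\ref{cl:GG'y cap A subset phix,y(GGx)}), and you spell out the chaining over the nerve of $\VV$ in $Z/\HH$, which the paper leaves implicit in one sentence.
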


\begin{proof}
  Let $E$ be a recurrent system of compact generation of $\HH$ on $U$,
  and for each $g\in E$ let $\bar g$ denote its extension satisfying
  the conditions of Definition~\ref{d:compactly generated}.  According
  to Corollary~\ref{c:recurrent systems of compact generation} and
  Theorem~\ref{t:coarsely quasi-isometric orbits}, it may be assumed
  that $\overline{E}=\{\bar g\ |\ g\in E\}$ is also a recurrent system
  of compact generation on some relatively compact open subset
  $U'\subset Z$ with $\overline{U}\subset U'$.  Let $\GG'$ denote the
  restriction of $\HH$ to $U'$.  By considering restrictions of
  elements of $\overline E$ to open subsets of their domains, we can
  assume that $\overline E\subset S$ for some subset $S\subset\HH$
  satisfying the conditions of Definition~\ref{d:quasi-effective}.
  Take a family $\VV$ of open subsets of $Z$ satisfying the statement
  of Proposition~\ref{p:equicontinuous} for the above $U$, $E$ and
  extensions $\bar g$.

  Since $Z$ is a Polish space, the union of orbits with trivial group
  of germs is a dense subset of $Z$. Hence, because $Z/\HH$ is
  connected, it is enough to establish coarse quasi-isometries between
  the $\GG$-orbits of points $x,y\in U$ that are close enough to each
  other and have trivial group of germs; moreover the corresponding
  coarse distortions must be independent of $x$ and $y$. Thus it can
  be assumed that $x$ and $y$ are in the same element $V\in\VV$.
  Consider the map $\phi_{x,y}:\GG(x)\to\GG'(y)$ given by
  $h(x)\mapsto\tilde h(y)$, where $h\in\GG$, $\tilde h\in S$,
  $x\in\dom h$, $V\subset\dom\tilde h$, and both $h,\tilde h$ have the
  same germ at $x$. Here, the germ of $h$ at $x$ is determined by the
  value $h(x)$ because the group of germs at $x$ is trivial. There
  exists such an $\tilde h$ for any $h$ by
  Proposition~\ref{p:equicontinuous} and since $\overline{E}\subset
  S$. Moreover $\tilde h$ is unique on $V$ because $\HH$ satisfies the
  condition of Definition~\ref{d:quasi-effective} with $S$. Note also
  that $\phi_{x,y}$ takes values in $\GG'(y)$ by
  Proposition~\ref{p:equicontinuous}. Therefore $\phi_{x,y}$ is well
  defined.

\begin{claim}\label{cl:phix,y is injective}
    $\phi_{x,y}:\GG(x)\to\GG'(y)$ is injective.
\end{claim}

To prove this claim, take $f_1,f_2\in S$ whose domains contain $V$. So
$\phi_{x,y}(f_1(x))=f_1(y)$ and $\phi_{x,y}(f_2(x))=f_2(y)$. If
$f_1(y)=f_2(y)$, then $f_1,f_2$ have the same germ at $y$ because the
group of germs of $\HH$ at $y$ is trivial. It follows that $f_1=f_2$
on $V$ because both of these maps are in $S$ and their domains contain
$V$. Hence $h_1(x)=h_2(y)$ as desired.

\begin{claim}\label{cl:phix,y le}
We have 
$$
d_{\overline{E}}(\phi_{x,y}(z_1),\phi_{x,y}(z_2))\le d_E(z_1,z_2)
$$
for all $z_1,z_2\in\GG(x)$.
\end{claim}

We now show this assertion. We have $z_1=f_1(x)$, $z_2=f_2(x)$, 
$\phi_{x,y}(z_1)=f_1(y)$ and
$\phi_{x,y}(z_2)=f_2(y)$ for
some $f_1,f_2\in S$ whose domains contain $V$. Suppose
$d_E(z_1,z_2)=k\ge0$ in the orbit $\GG(x)$. This means that there
is a minimal decomposition $h_2\circ h_1^{-1}=g_k\circ\dots\circ g_1$
about $z_1$ with $g_1,\dots,g_k\in E$. Hence $f_2\circ f_1^{-1}$ and
$\bar g_k\circ\dots\circ \bar g_1$ are equal on
$f_1(V)$ because both of these maps are in $S$ and their domains
contain $f_1(V)$. This yields
$d_{\overline{E}}\left(f_1(y),f_2(y)\right)\leq k$ in $\GG'(y)$,
which finishes the proof of Claim~\ref{cl:phix,y le}.

Let $A$ be an open subset of $Z$ intersecting every $\HH$-orbit and such
that $\overline{A}\subset U$.

\begin{claim}\label{cl:le C phix,y}
There is some $C>0$, independent of $x,y$, such that 
$$
d_E(z_1,z_2)\le
C\,d_{\overline{E}}\left(\phi_{x,y}(z_1),\phi_{x,y}(z_2)\right)
$$
for all $z_1,z_2\in\GG(x)\cap A$.
\end{claim}

To prove this estimate, take any $z_1,z_2\in\GG(x)$. Again, there are
$f_1,f_2\in S$, whose domains contain
$V$, such that $z_1=f_1(x)$, $z_2=f_2(x)$,
$\phi_{x,y}(z_1)=f_1(y)$ and
$\phi_{x,y}(z_2)=f_2(y)$. Suppose
$d_{\overline{E}}(f_1(y),f_2(y))=k$. Then there is a
decomposition $f_2\circ f_1^{-1}=\bar
g_k\circ\dots\circ\bar g_1$ around
$f_1(y)$ for some $g_1,\dots,g_k\in E$. So $f_2\circ f_1^{-1}=
\bar g_k\circ\dots\circ\bar g_1$ on
$f_1(V)$ because both of these maps are in $S$ and their domains contain
$f_1(V)$. It follows that $d_{\overline{E}}(z_1,z_2)\le k$. Hence
$d_E(z_1,z_2)\le Ck$ for some $C>0$ independent of $x$ and $y$ by
Lemma~\ref{l:coarsely quasi-isometric orbits} since $z_1,z_2\in A$ and
$E,\overline{E}$ are recurrent systems of compact generation on $U,U'$.

\begin{claim}\label{cl:GG'y cap A subset phix,y(GGx)}
If $x,y$  are close enough, then $\GG'(y)\cap
A\subset\phi_{x,y}(\GG(x))$.
\end{claim}

By Proposition~\ref{p:A,B}, if $x,y$ are close enough in $V$, then 
\begin{equation}\label{e:A,U}
f(y)\in A\Longrightarrow f(x)\in U
\end{equation}
for all $f\in S$ whose domain contains $V$. 
Then $\GG'(y)\cap
A=\GG(y)\cap A$ and thus every point in $\GG'(y)\cap A$ can be written
as $f(y)$, for some
$f\in\GG$ whose domain contains $y$.  By
Proposition~\ref{p:equicontinuous}, there exists $\tilde f\in\HH$
whose domain contains $V$, and such that $f$ and $\tilde f$ have the
same germ at $y$. By~\eqref{e:A,U}, from $\tilde f(y)=f(y)\in A$,
it follows that $\tilde f(x)\in U$. Thus the restriction $h$ of
$\tilde f$ to some neighborhood of $x$ is in $\GG$, and so
$$
f(y)=\tilde f(y)=\phi_{x,y}(h(x))\in\phi_{x,y}(\GG(x))\;,
$$
which shows
Claim~\ref{cl:GG'y cap A subset phix,y(GGx)}.

Since $\overline{E}$ is recurrent, Lemma~\ref{l:recurrent finite
symmetric family of generators} implies that there exist $R,R'>0$ such
that every $d_E$-ball of radius $R$ in any $\GG$-orbit
meets $A$, as well as every $d_{\overline{E}}$-ball of radius $R'$ in
any $\GG'$-orbit. Therefore $\phi_{x,y}(\GG(x)\cap A)$ is an
$(R+R')$-net in $\left(\GG'(y),d_{\overline{E}}\right)$ by
Claims~\ref{cl:phix,y le} and~\ref{cl:GG'y cap A subset phix,y(GGx)}.
Moreover 
$$
\phi_{x,y}:(\GG(x)\cap A,d_E)\to\left(\phi_{x,y}(\GG(x)\cap
A),d_{\overline{E}}\right)
$$
is a bi-Lipschitz map whose distortion is independent of $x,y$ by
Claims~\ref{cl:phix,y is injective},~\ref{cl:phix,y le} and~\ref{cl:le
C phix,y}. Hence $(\GG(x),d_E)$ is
coarsely quasi-isometric to
$\left(\GG'(y),d_{\overline{E}}\right)$, where the coarse distortions
are independent of the choices of $x,y$. The result now follows from
Theorem~\ref{t:coarsely quasi-isometric orbits} since $E$ and
$\overline{E}$ are recurrent systems of compact generation of $\HH$ on
$U,U'$.
\end{proof}

\begin{example}
  In the pseudogroup $\HH$ on $Z$ of Example~\ref{ex:not
    quasi-analytic}, all orbits have trivial groups of germs, except
  the origin. Moreover $Z$ is locally compact and locally connected,
  and $Z/\HH$ is connected because so is $Z$. But, if $\HH$ is
  generated by an irrational rotation, the statement of
  Theorem~\ref{t:equicontinuous pseudogroup} does not hold because
  $\HH$ is not quasi-analytic.  Indeed, there are two coarse
  quasi-isometry types of orbits with trivial group of germs: the
  orbits of the points in the vertical segment are trivial, and all
  other orbits are quasi-isometric to the integers.
\end{example}

An action of a group $\Gamma$ on a space will be called {\em
quasi-effective\/} when it generates a quasi-effective pseudogroup. A
quasi-effective action may not be effective, as shown by the following
example.

\begin{example}
Let $Z$ be a finite discrete space with more than two elements, and let
$\Gamma$ be the group of all permutations of $Z$. Then the canonical
action of $\Gamma$ on $Z$ is not effective but it is quasi-effective:
the condition of Definition~\ref{d:quasi-effective} is satisfied with the
set $S$ of maps
$\{x\}\to\{y\}$ with $x,y\in Z$.
\end{example}

\begin{cor}\label{c:equicontinuous action} Let $\Gamma$ be a finitely
  generated group acting quasi-effectively and equicontinuously on a
  compact separable metric space $Z$ with connected space of orbits
  \upn{(}for example, if some orbit is dense\upn{)}. Then all orbits
  with trivial group of germs are uniformly coarsely quasi-isometric
  to each other.
\end{cor}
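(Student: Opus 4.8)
The plan is to deduce the corollary directly from Theorem~\ref{t:equicontinuous pseudogroup}. Let $\HH$ be the pseudogroup of local transformations of $Z$ generated by the action of $\Gamma$, let $G\subset\homeo(Z)$ denote the image of $\Gamma$, and choose $U=Z$ so that the restriction $\GG$ of $\HH$ to $U$ equals $\HH$ itself. Since $Z$ is a compact separable metric space it is locally compact and Polish, and $U=Z$ is trivially a relatively compact open subset meeting every orbit. The $\HH$-orbits are exactly the $\Gamma$-orbits, and the group of germs of $\HH$ at a point $x$ is the group of germs at $x$ of the elements of $G$ fixing $x$, so ``orbit with trivial group of germs'' carries the same meaning on both sides. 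It therefore suffices to check that $\HH$ satisfies the three structural hypotheses of the theorem and that $Z/\HH$ is connected, and then to exhibit a recurrent system of compact generation.

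First I would verify compact generation together with recurrence. Fix a finite symmetric generating set of $\Gamma$ and let $E$ be its image in $G$; this is a finite symmetric family of \emph{global} homeomorphisms of $Z$ that generates $\HH$. Each $g\in E$ is defined on all of the compact set $Z$, hence is its own extension $\bar g$ over a neighborhood of $\overline{\dom g}=Z$, so $E$ is a system of compact generation of $\HH$ on $U=Z$ in the sense of Definition~\ref{d:compactly generated}. It is recurrent in the sense of Definition~\ref{d:recurrent finite symmetric family of generators}: because $U=Z$ is relatively compact and contains every point, every $d_E$-ball meets $U$, so recurrence holds with $R=1$. (Existence of a recurrent system is guaranteed in general by Corollary~\ref{c:recurrent systems of compact generation}, but here the natural generating set works directly, and its induced metric $d_E$ on the orbits is exactly the word metric transported by the action.)

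Next I would establish strong equicontinuity, which is the one genuine translation step. Since $Z$ is compact, equicontinuity at every point coincides with equi-uniform continuity, so $G$ is equi-uniformly continuous with respect to the given metric $d$. By Example~\ref{ex:equicontinuous}, the pseudogroup generated by such a group is strongly equicontinuous, with $S=G$ and the one-element cover $\{(Z,d)\}$; this yields strong equicontinuity of $\HH$ with respect to the quasi-local metric induced by $d$. Quasi-effectiveness is immediate from the hypothesis: by the definition preceding the corollary, a quasi-effective action is precisely one whose generated pseudogroup is quasi-effective in the sense of Definition~\ref{d:quasi-effective}. Finally $Z/\HH=Z/\Gamma$ is connected by assumption.

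With all hypotheses in place, Theorem~\ref{t:equicontinuous pseudogroup} applies with $U=Z$, $\GG=\HH$, and the recurrent system $E$, and yields that all $\HH$-orbits---equivalently all $\Gamma$-orbits---with trivial group of germs are uniformly coarsely quasi-isometric to each other with respect to $d_E$, which is the word metric of the action. I expect the only step requiring real care to be the equicontinuity translation above; compactness of $Z$ does the essential work, both in making $\HH$ compactly generated and in collapsing the distinction between pointwise and uniform equicontinuity. Everything else is an unwinding of definitions.
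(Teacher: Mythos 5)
Your proposal is correct and is precisely the argument the paper intends: the corollary is stated as an immediate specialization of Theorem~\ref{t:equicontinuous pseudogroup}, with the group action generating the pseudogroup, $U=Z$ serving as the relatively compact open set, the image of a finite symmetric generating set giving a (trivially recurrent) system of compact generation, and Example~\ref{ex:equicontinuous} supplying strong equicontinuity via the equivalence of pointwise and uniform equicontinuity on compact spaces. All of your verifications (compact generation, recurrence, quasi-effectiveness by definition, and the identification of orbits, germ groups, and the induced metric with the Schreier-graph metric) are sound and match the paper's implicit reasoning.
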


\section{Minimality of the orbit closures}

For compactly generated pseudogroups of local isometries of a
Riemannian manifold, the closures of the orbits are manifolds, and the
restriction of the pseudogroup to each orbit closure is a minimal pseudogroup
\cite[Appendix D]{Mol88}---this is just a pseudogroup version of Molino's 
theory for Riemannian foliations. In the more general situation
considered here, at least the minimality of the orbit closures holds true.

\begin{theorem}\label{t:minimal orbit closures}
Let $\HH$ be a compactly generated strongly equicontinuous
pseudogroup of local transformations of a locally compact Polish space
$Z$. Then 
the closure of each orbit is a minimal set. In particular, such a
pseudogroup is 
minimal if it is transitive.
\end{theorem}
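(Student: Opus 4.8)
The plan is to reduce the minimality of each orbit closure to a single \emph{symmetry} property of the relation $x\preceq y\iff y\in\overline{\HH(x)}$, namely that $y\in\overline{\HH(x)}$ implies $x\in\overline{\HH(y)}$. That $\overline{\HH(x)}$ is closed is immediate, and its $\HH$-invariance is easy: if $z\in\overline{\HH(x)}$ lies in the domain of some $g\in\HH$, then, $\dom g$ being open, $z$ is a limit of orbit points eventually lying in $\dom g$, whose images under the continuous map $g$ lie in $\HH(x)$ and converge to $g(z)$. Granting symmetry, minimality follows formally: for $y\in\overline{\HH(x)}$, invariance gives $\overline{\HH(y)}\subset\overline{\HH(x)}$, while symmetry together with invariance of $\overline{\HH(y)}$ gives the reverse inclusion, so $\overline{\HH(y)}=\overline{\HH(x)}$ for every $y$ in the closure; this is exactly the statement that every orbit in $\overline{\HH(x)}$ is dense in it. The final assertion about transitivity is then immediate, taking $x$ with $\overline{\HH(x)}=Z$.

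To set up the symmetry argument I would fix, using strong equicontinuity, a locally finite $\{(Z_i,d_i)\}_{i\in I}\in\mathfrak Q$, a shrinking $\{Z'_i\}$, and a symmetric $S$ closed under composition realizing the equicontinuity modulus $\epsilon\mapsto\delta(\epsilon)$; using compact generation I fix a relatively compact open $U$ meeting all orbits, a system $E$ of compact generation on $U$, and extensions $\bar g\in S$ (remark~(i) after Proposition~\ref{p:equicontinuous}). Proposition~\ref{p:equicontinuous} then furnishes a finite family $\VV$ covering $U$, each member of small $d_i$-diameter, with the crucial extension property. Since $\overline{\HH(x)}$ depends only on the orbit and $U$ meets every orbit, and since invariance lets me push a given $y\in\overline{\HH(x)}$ into $U$ along some element of $\HH$ without changing $\overline{\HH(y)}$, I may assume $x,y\in U$.

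The heart of the proof would then run as follows. Choose $h_n\in\HH$ with $x\in\dom h_n$ and $h_n(x)\to y$; since $y\in U$ is an interior point, $w_n:=h_n(x)\in U$ and $w_n$ lies in a fixed $V_1\in\VV$ containing $y$ for all large $n$. Because $h_n^{-1}(w_n)=x\in U$, Proposition~\ref{p:equicontinuous} applied to $h_n^{-1}$ at $w_n$ (its germ there lies in the restriction of $\HH$ to $U$, hence is a composite of elements of $E$) yields an extension $k_n\in S$ whose domain contains all of $V_1$, which agrees with $h_n^{-1}$ near $w_n$ (so $k_n(w_n)=x$), with $V_1\subset Z'_{j_0}$ and $k_n(V_1)\subset Z'_{j_n}$. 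Passing to a subsequence I fix $j_n=j_1$. Now strong equicontinuity of $k_n\in S$ gives, for each $\epsilon>0$ and all large $n$, that $d_{j_0}(y,w_n)<\delta(\epsilon)$ forces $d_{j_1}(k_n(y),k_n(w_n))<\epsilon$; since $k_n(w_n)=x$ and $w_n\to y$, this yields $k_n(y)\to x$. As each $k_n\in\HH$ has $y\in V_1\subset\dom k_n$, the points $k_n(y)$ lie in $\HH(y)$ and converge to $x$, proving $x\in\overline{\HH(y)}$.

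The main obstacle is obtaining the maps $k_n$ with a \emph{common} domain $V_1$ around $y$: the naive inverses $h_n^{-1}$ are only defined near the moving points $w_n=h_n(x)$, and without uniform control their domains could shrink as $w_n\to y$. This is precisely the difficulty resolved by Proposition~\ref{p:equicontinuous}, where compact generation (through $E$ and its extensions $\bar g$) and strong equicontinuity combine to extend any $\HH$-element sending a point of $U\cap V$ back into $U$ to a map defined on the whole fixed set $V$; once the domains are uniform, equicontinuity propagates the single identity $k_n(w_n)=x$ to the convergence $k_n(y)\to x$. I do not expect quasi-effectiveness to enter anywhere, so Proposition~\ref{p:A,B} should be avoided and only strong equicontinuity together with compact generation used.
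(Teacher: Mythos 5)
Your proposal is correct and is essentially the paper's own proof: both reduce minimality to the symmetry statement that $y\in\overline{\HH(x)}$ implies $x\in\overline{\HH(y)}$, both use Proposition~\ref{p:equicontinuous} (with the extensions $\bar g$ chosen in $S$, as in its remark~(i)) to replace the germ of $h_n^{-1}$ at $w_n=h_n(x)$ by a map in $S$ defined on a fixed member of $\VV$ containing $y$, and both then apply strong equicontinuity to convert $w_n\to y$ into $k_n(y)\to x$; the paper likewise never invokes quasi-effectiveness. The only detail worth adding is the justification for fixing $j_n=j_1$ along a subsequence: since $x=k_n(w_n)\in Z'_{j_n}$ for every $n$, point-finiteness of the locally finite cover $\{Z'_i\}_{i\in I}$ leaves only finitely many possible indices.
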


\begin{proof}
It is required to show that if the orbit of a point $x\in Z$ approaches
another point $y$, then the orbit of $y$ also approaches $x$. If $U$ is
a relatively compact open subset of $Z$ that meets every
$\HH$-orbit, then it can be assumed that $x,y\in U$.

Let $\VV$ be a finite family of open subsets of $Z$ whose union covers
$U$ as in Proposition~\ref{p:equicontinuous}. Let $V,W\in\VV$ be such
that $x\in V$ and $y\in W$. If $h_n\in\HH$ is a sequence such that
$h_n(x)\to y$, then it can be assumed that $\dom h_n=V$ and $h_n(x)\in
W$ for all $n$. Moreover, there are maps $f_n\in\HH$ with $\dom
f_n=W$, and such that $f_n$ and $h_n^{-1}$ have the same germ at
$h_n(x)$ for all $n$. Strong equicontinuity of $\HH$ then implies that
$f_n(y)\to x$ as follows.  Suppose that the condition of strong
equicontinuity of $\HH$ is satisfied for a locally finite 
covering $\{(Z_i,d_i)\}_{i\in I}$ of $Z$ by quasi-locally equal metric spaces,
some symmetric set $S$ of generators of
$\HH$ that is closed under compositions, and some assignment
$\epsilon\mapsto\delta(\epsilon)$
(Definition~\ref{d:strongly equicontinuous}). It may be assumed that
$V\subset Z_i$ and $W\subset Z_j$ for some $i,j\in I$, and that $h_n,f_n\in S$ for
all $n$. Given $\epsilon>0$,  there exists an integer
$N>0$ such that $d_j(h_n(x),y)<\delta(\epsilon)$ for all $n\ge N$.
Hence 
$$
d_i(x,f_n(y))=d_i(f_n\circ h_n(x),f_n(y))<\epsilon
$$ 
for all $n\ge N$ by strong equicontinuity.
\end{proof}

\begin{cor}\label{c:partition}
  Let $\HH$ be a compactly generated and strongly equicontinuous
  pseudogroup of local transformations of a locally compact Polish
  space $Z$. Then the orbit closures of $\HH$ define a partition of
  $Z$.
\end{cor}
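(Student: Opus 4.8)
The plan is to derive the partition property directly from Theorem~\ref{t:minimal orbit closures}, which already guarantees that each orbit closure $\overline{\HH(x)}$ is a minimal set. First I would record two elementary facts. The orbit closures cover $Z$, since each point $x$ lies in $\overline{\HH(x)}$. And each orbit closure is $\HH$-invariant: if $w\in\overline{\HH(x)}$ and $g\in\HH$ satisfies $w\in\dom g$, then, choosing a net in $\HH(x)\cap\dom g$ converging to $w$ (possible since $\dom g$ is open), continuity of $g$ sends it to a net in $\HH(x)$ converging to $g(w)$, so $g(w)\in\overline{\HH(x)}$.

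The heart of the argument is to show that two orbit closures that meet must coincide. Suppose $z\in\overline{\HH(x)}\cap\overline{\HH(y)}$. By the invariance just noted, the entire orbit $\HH(z)$ is contained in $\overline{\HH(x)}$, and since $\overline{\HH(x)}$ is minimal, every orbit it contains is dense in it; hence $\overline{\HH(z)}=\overline{\HH(x)}$. Running the same reasoning inside $\overline{\HH(y)}$ gives $\overline{\HH(z)}=\overline{\HH(y)}$, and therefore $\overline{\HH(x)}=\overline{\HH(y)}$.

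Combining these observations, any two orbit closures are either disjoint or equal, and together they cover $Z$, which is exactly the assertion that they form a partition. I do not anticipate a serious obstacle here: the only point requiring genuine care is the invariance of orbit closures under the \emph{partial} transformations of the pseudogroup, which must be phrased in terms of domains of definition rather than globally defined maps, as in the first paragraph. Everything else is a formal consequence of the minimality established in Theorem~\ref{t:minimal orbit closures}.
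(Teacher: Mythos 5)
Your proof is correct and follows essentially the same route as the paper: the paper states Corollary~\ref{c:partition} as a direct consequence of Theorem~\ref{t:minimal orbit closures}, and your argument (orbit closures cover $Z$, are $\HH$-invariant, and any two that share a point $z$ must both equal $\overline{\HH(z)}$ by minimality) is precisely the deduction the paper leaves implicit. The care you take with invariance under partially defined transformations, using that domains are open, is the right way to make that implicit step rigorous.
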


\section{The closure of a strongly equicontinuous pseudogroup}

In the study of pseudogroups of local isometries of Riemannian
manifolds, an important role is played by the closure of such a
pseudogroup \cite{Haefliger88}.  It is defined by using the space
of $1$-jets, which is not available in our more general setting. But
the closure of our type of pseudogroups can be also defined by using
the compact-open topology on the spaces of local transformations
defined on small enough open subsets.

As usual, for spaces $Y,Z$, let $C(Y,Z)$ denote the set of continuous
maps $Y\to Z$, which will be denoted by $C_{\text{\rm c-o}}(Y,Z)$ when
it is endowed with the compact-open topology. For open subspaces $O,P$
of a space $Z$, the space $C_{\text{\rm c-o}}(O,P)$ will be considered
as an open subspace of $C_{\text{\rm c-o}}(O,Z)$ in the canonical way.

\begin{theorem}\label{t:closure}
  Let $\HH$ be a quasi-effective, compactly generated and strongly
  equicontinuous pseudogroup of local transformations of a locally
  compact Polish space $Z$. Let $S$ be a symmetric set of generators
  of $\HH$ that is closed under compositions and restrictions to open
  subsets, and satisfies the conditions of strong equicontinuity and
  quasi-effectiveness \upn{(}Definitions~\ref{d:strongly
    equicontinuous} and~\ref{d:quasi-effective}\upn{)}. Let
  $\widetilde{\HH}$ be the set of maps $h$ between open subsets of $Z$
  that satisfy the following property: for every $x\in\dom h$, there
  exists a neighborhood $O_x$ of $x$ in $\dom h$ so that $h|_{O_x}$
 is in the
  closure of $C(O_x,Z)\cap S$ in $C_{\text{\rm c-o}}(O_x,Z)$. Then:
\begin{itemize}
  
\item $\widetilde{\HH}$ is closed under composition, combination and
  restriction to open sets;
  
\item every map in $\widetilde{\HH}$ is a homeomorphism around every
  point of its domain;
  
\item the maps of $\widetilde{\HH}$ that are homeomorphisms form a
  pseudogroup $\overline{\HH}$ that contains $\HH$;

\item $\overline{\HH}$ is strongly equicontinuous;
  
\item the orbits of $\overline{\HH}$ are equal to the closures of the
  orbits of $\HH$; and
  
\item $\widetilde{\HH}$ and $\overline{\HH}$ are independent of the
  choice of $S$.

\end{itemize}
\end{theorem}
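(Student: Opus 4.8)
The plan is to run the Arzel\`a--Ascoli theorem as the single engine behind all six assertions, with strong equicontinuity supplying a uniform modulus and quasi-effectiveness supplying the rigidity that makes local limits unambiguous. Fix a locally finite $\{(Z_i,d_i)\}_{i\in I}\in{\mathfrak Q}$ and an assignment $\epsilon\mapsto\delta(\epsilon)$ witnessing Definition~\ref{d:strongly equicontinuous} for $S$; since $Z$ is locally compact Polish, the compact-open topology on maps into $Z$ is metrizable over any relatively compact open set, so I may argue with sequences. Two preliminary remarks are used throughout. First, the inequality~\eqref{e:equicontinuous} passes to compact-open (hence pointwise) limits: if $h_n\to h$ on an open set with $h_n\in S$, then $d_i(x,y)<\delta(\epsilon/2)$ forces $d_j(h(x),h(y))=\lim_n d_j(h_n(x),h_n(y))\le\epsilon/2<\epsilon$, so $h$ obeys~\eqref{e:equicontinuous} with the new assignment $\epsilon\mapsto\delta(\epsilon/2)$. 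Second, since $S$ is symmetric the inverses also form an equicontinuous family, which yields a two-sided modulus: the image under any $h\in S$ of a ball of definite radius contains a ball of definite radius about the image of its centre.

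The closure properties come quickly. Restriction and combination are immediate from the local form of the definition and from $S$ being closed under restriction, while composition follows from continuity of composition in the compact-open topology on a relatively compact set together with $S$ being closed under composition. The technical heart is that every $h\in\widetilde{\HH}$ is a local homeomorphism. Fixing $x\in\dom h$ and $h_n\in S$ with $h_n\to h$ uniformly on a compact neighbourhood of $x$, the two-sided modulus guarantees that the $h_n$-images of a fixed ball about $x$ all contain a common ball $B(h(x),\rho)$ for large $n$, while the inverses $h_n^{-1}$ are equicontinuous on $B(h(x),\rho)$ and take values in a compact set. Arzel\`a--Ascoli then extracts a subsequence $h_{n_k}^{-1}\to g$ on a neighbourhood of $h(x)$, with $g\in\widetilde{\HH}$; continuity of composition gives $g\circ h=\id$ and $h\circ g=\id$ near $x$ and $h(x)$, so $h$ is a homeomorphism near $x$ and its local inverse lies in $\widetilde{\HH}$.

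The remaining structural assertions are now formal. Defining $\overline{\HH}$ to be the homeomorphisms in $\widetilde{\HH}$, the pseudogroup axioms follow from the closure properties (which produce homeomorphisms when fed homeomorphisms), from the local inverse just constructed (closure under inversion, via combination), and from $\id\in S$; and $\HH\subset\overline{\HH}$ because each element of $\HH$ is locally a member of $S$. Strong equicontinuity of $\overline{\HH}$ is exactly the limit-stability of~\eqref{e:equicontinuous} recorded above, with the local limits of $S$ as the generating set, which is symmetric and closed under composition by the previous paragraph. For the orbit description, $\overline{\HH}(x)\subseteq\overline{\HH(x)}$ is clear since $h(x)=\lim h_n(x)$ with $h_n(x)\in\HH(x)$; conversely, given $y\in\overline{\HH(x)}$ and $h_n\in\HH$ with $h_n(x)\to y$, I would replace the $h_n$ by the uniform extensions $\tilde h_n\in S$ on a fixed $V\in\VV$ provided by Proposition~\ref{p:equicontinuous} (applicable once $h_n(x)$ lies in a fixed relatively compact open $U$ with $x,y\in U$ meeting all orbits), and take an Arzel\`a--Ascoli limit $h\in\widetilde{\HH}$ on $V$ with $h(x)=y$; a restriction of $h$ lies in $\overline{\HH}$, so $y\in\overline{\HH}(x)$.

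The step I expect to be the main obstacle is independence of the choice of $S$. The strategy is to show that each point has arbitrarily small neighbourhoods $O$ on which $C(O,Z)\cap S=C(O,Z)\cap S'$ as sets of maps, so that the local compact-open closures, and hence $\widetilde{\HH}$ and $\overline{\HH}$, coincide. The tool is again Proposition~\ref{p:equicontinuous}, now applied to $S'$: for a fixed $V$ in the family $\VV'$ it produces for $S'$, the extension domain it furnishes is uniform over all of $\HH$, so the germ at $x$ of any $s\in S\subset\HH$---after composing, if necessary, with a fixed element of $S'$ carrying $s(x)$ into a relatively compact open meeting every orbit---is realized by a single element of $S'$ defined on all of $V$, which quasi-effectiveness of $S'$ forces to equal $s$ on $V$; the roles of $S$ and $S'$ are then exchanged. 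The delicate part, which I would treat carefully, is the bookkeeping that keeps the intermediate images inside the relatively compact sets where Proposition~\ref{p:equicontinuous} applies as $V$ is shrunk; once that is arranged, equality of the local families, and hence independence of $\widetilde{\HH}$ and $\overline{\HH}$, follow at once.
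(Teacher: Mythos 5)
Your proposal follows the paper's overall architecture (Arzel\`a--Ascoli as the engine, the $\epsilon/3$-type stability of~\eqref{e:equicontinuous} under compact-open limits, Proposition~\ref{p:equicontinuous} for the orbit-closure statement), but the step you yourself call the technical heart contains a genuine gap: the ``two-sided modulus'' does \emph{not} follow from symmetry of $S$ plus equicontinuity, and it is false at that level of generality. Equicontinuity of the inverses $\{h^{-1}\ |\ h\in S\}$ only constrains pairs of points that already lie in $\im h$; it does give injectivity of a limit $h$ and continuity of $h^{-1}$ on $\im h$, but it says nothing about points near $h_n(x)$ that lie \emph{outside} $\im h_n$, which is exactly what the inclusion $h_n(B(x,r))\supset B(h_n(x),\rho)$ is about. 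Concretely, let $Z=(\R\times\{0\})\cup\bigcup_{n\ge1}\bigl([0,\infty)\times\{1/n\}\bigr)$, a closed (hence locally compact Polish) subset of $\R^2$, let $O=(-2,-1)\times\{0\}$, and let $h_n:O\to(0,1)\times\{1/n\}$ be $h_n(t,0)=(t+2,1/n)$; both $O$ and the images are open in $Z$. The set $S$ of all composites and restrictions of the $h_n^{\pm1}$ together with identity maps is symmetric, closed under compositions, consists of euclidean isometries (so~\eqref{e:equicontinuous} holds with the single chart $(Z,d)$ and $\delta(\epsilon)=\epsilon$), and satisfies Definition~\ref{d:quasi-effective}; yet $B(h_n(x),\rho)\subset\im h_n$ forces $\rho\le\frac{1}{n}-\frac{1}{n+1}$, and the compact-open limit $h(t,0)=(t+2,0)$ has image $(0,1)\times\{0\}$, which is \emph{not} open in $Z$, so $h$ is not a pseudogroup-type local homeomorphism. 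This pseudogroup is not compactly generated, and that is the point: since your argument for the second bullet uses neither compact generation nor quasi-effectiveness, it would apply verbatim to this example, where the conclusion fails. The paper fills exactly this hole with the extension machinery: by Proposition~\ref{p:equicontinuous} (and its remark~(i)) together with Proposition~\ref{p:A,B}, the germ of $h_n^{-1}$ at $h_n(x)$ is the germ of some $f_n\in S$ defined on a \emph{whole} uniform-size neighborhood $W$ of $h(x)$ with $f_n(W)$ inside a relatively compact $U_0$; quasi-effectiveness forces $f_n=h_n^{-1}$ on $h_n(V)\cap W$; Ascoli is then applied to $\{f_n\}\subset C(W,U_0)\cap S$, and the limit $f$ is verified (using strong equicontinuity as in Theorem~\ref{t:minimal orbit closures}) to satisfy $h\circ f=\id$ and $f\circ h=\id$ locally. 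In other words, the two-sided modulus is an \emph{output} of Propositions~\ref{p:equicontinuous} and~\ref{p:A,B}, not an input available from symmetry.

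There is a second, independent gap in your treatment of independence of $S$. To get $C(O,Z)\cap S=C(O,Z)\cap S'$ you need: if $s\in S$ and $s'\in S'$ have the same germ at $x$ and both are defined on $V$, then $s=s'$ on $V$. But the map you would feed to quasi-effectiveness, namely $s'^{-1}\circ s$ (the identity near $x$), lies in neither $S$ nor $S'$, and Definition~\ref{d:quasi-effective} constrains only maps of the distinguished set; the fallback, quasi-analyticity of $\HH$ (Lemma~\ref{l:quasi-effective implies quasi-analytic}), propagates the identity only along connected pieces, and $Z$ is not assumed locally connected --- which is precisely why the paper works with quasi-effectiveness rather than quasi-analyticity. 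The paper avoids any cross-comparison of $S$ with $S'$ by instead characterizing $\overline{\HH}$ as the pseudogroup generated by the local transformations lying in the closures of $C(O,Z)\cap\HH$ in $C_{\text{c-o}}(O,Z)$, a description in which $S$ does not appear; if you want a complete argument, prove that characterization (trading maps of $\HH$ for maps of $S$ on uniform domains via Proposition~\ref{p:equicontinuous}) rather than the set-theoretic equality of the local families, which your tools do not establish.
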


\begin{proof}
  The family $\widetilde{\HH}$ is obviously closed under combination
  of maps and restrictions to open sets. Moreover $\widetilde{\HH}$ is
  closed under composition of maps because $Z$ is locally compact
  Hausdorff (see {\it e.g.} \cite[p.~289, Exercise~4]{munkres}).
  
  Given any relatively compact open subset $U$ that meets all
  $\HH$-orbits, by Proposition~\ref{p:equicontinuous}, its remark~(i)
  and Proposition~\ref{p:A,B}, there is some finite family $\VV$ of
  open subsets of $Z$ and another relatively compact open set $U_0$
  such that:
\begin{itemize}

\item $\overline{U}$ is covered by the family $\VV$;
  
\item any germ of any map in the restriction of $\HH$ to $U$ is a germ
  of some map in $S$ whose domain belongs to $\VV$; and
  
\item $f(V)\subset U_0$ for any $V\in\VV$ and $f\in S$ with
  $V\subset\dom f$ and $f(V)\cap U\ne\emptyset$. In particular, any
  $V\in\VV$ is contained in $U_0$.

\end{itemize}

For any map $h:V\to W$ in $\widetilde{\HH}$ and any $x\in V$, we will
show that there is some neighborhood $O$ of $x$ in $V$ such that the
restriction $h:O\to h(O)$ is a homeomorphism whose inverse is also in
$\widetilde{\HH}$.  It can be assumed that there are some $U$, $U_0$
and $\VV$ as above so that $V,W\in\VV$. Furthermore we can suppose
that $h$ is the limit in $C_{\text{\rm c-o}}(V,Z)$ of some sequence of
maps $h_n\in C(V,Z)\cap S$. Take any open neighborhood $V'$ of $x$
with $\overline{V'}\subset V$. Since $\overline{V'}$ is compact, it
follows that $h_n\left(\overline{V'}\right)\subset W$ for $n$ large
enough.

The germ of each $h_n^{-1}$ at $h_n(x)$ is equal to the germ of some
$f_n\in S$ whose domain is $W$. By quasi-effectiveness, each $f_n$ is
equal to $h_n^{-1}$ on $h_n(V)\cap W$, which contains $h_n(V')$. Hence
$V'\subset\im f_n$, and $f_n^{-1}$ is equal to $h_n$ on $V'$.

By strong equicontinuity, the set $C(W,U_0)\cap S$ is equicontinuous
in the usual sense. So, by the Ascoli theorem and the compactness of
$\overline{U_0}$, we can assume that $f_n$ is convergent to some map
$f$ in $C_{\text{\rm c-o}}(W,Z)$, which is in $\widetilde{\HH}$.

We have that $h_n(x)\to y=h(x)$, yielding $f_n(y)\to x$ by strong
equicontinuity as in the proof of Theorem~\ref{t:minimal orbit
  closures}. Therefore $f(y)=x\in V'$, and there is some open
neighborhood $W'$ of $y$ with $\overline{W'}\subset W$ and
$f\left(\overline{W'}\right)\subset V'$.  Since $\overline{W'}$ is
compact, we get $f_n\left(\overline{W'}\right)\subset V'$ for $n$
large enough. So $f_n^{-1}$ is equal to $h_n$ on $f_n(W')$, yielding
that the composite $h_n\circ f_n$ is the identity on $W'$ for $n$
large enough. It follows that $h\circ f$ is the identity on $W'$
because $Z$ is locally compact Hausdorff. Similarly, for any open
neighborhood $O$ of $x$ with $\overline{O}\subset V'$ and
$h\left(\overline{O}\right)\subset W'$, we get that $f\circ h$ is the
identity on $O$. So $h:O\to h(O)$ is a homeomorphism whose inverse is
$f:h(O)\to O$, which is in $\widetilde{\HH}$ as desired.

Now, from what was proved for $\widetilde{\HH}$, it follows directly
that $\overline{\HH}$ is a pseudogroup. Moreover $\overline{\HH}$
contains $\HH$ because $\widetilde{\HH}$ contains $S$ by definition.

We now show that $\overline{\HH}$ is strongly equicontinuous. Suppose that
$\HH$ satisfies the condition of strong equicontinuity
(Definition~\ref{d:strongly equicontinuous}) with the above $S$, some
quasi-local metric $\mathfrak Q$, some $\{(Z_i,d_i)\}_{i\in
  I}\in{\mathfrak Q}$ with $\{Z_i\}_{i\in I}$ locally finite, and some
assignment $\epsilon\mapsto\delta(\epsilon)$. Let $\overline{S}$ be
the set of homeomorphisms that are in the union of the closures of
$C(O,Z)\cap S$ in $C_{\text{\rm c-o}}(O,Z)$ with $O$ running on the
open sets of $Z$. By definition, every element of $\overline{\HH}$ is
a combination of maps in $\overline{S}$. Since $S$ is closed under
restrictions to open sets, it easily follows that so is
$\overline{S}$. The set $\overline{S}$ is also closed under
compositions because so is $S$ and $Z$ is locally compact Hausdorff.
Moreover $\overline{S}$ is symmetric since it is closed under
restrictions to open sets and because $\overline{\HH}$ is a
pseudogroup whose elements are combinations of maps in $\overline{S}$.
A typical ``$\epsilon/3$-argument'' will show that $\overline{\HH}$
satisfies the strong equicontinuity condition with $\overline{S}$ and
the above family $\{(Z_i,d_i)\}_{i\in I}$. Take any $h:O\to P$ in
$\overline{S}$, $i,j\in I$ and $x,y\in Z_i\cap h^{-1}(Z_j\cap\im h)$.
Suppose that $d_i(x,y)<\delta(\epsilon/3)$ for some $\epsilon>0$. Such
an $h$ is the limit of some sequence of maps $h_n\in C(O,Z)\cap S$ in
$C_{\text{\rm c-o}}(O,Z)$. On the one hand, since the compact-open
topology is equal to the topology of uniform convergence on compact
sets, it follows that $d_j(h_n(x),h(x))<\epsilon/3$ and
$d_j(h_n(x),h(x))<\epsilon/3$ for $n$ large enough. On the other hand,
we have $d_j(h_n(x),h_n(y))<\epsilon/3$ for all $n$ since $h_n\in S$.
Therefore $d_j(h(x),h(y))<\epsilon$ as desired by the triangle
inequality.

We now show that the orbits of $\overline{\HH}$ are equal to the orbit
closures of $\HH$. Given two points $x,y$ in the same orbit closure of
$\HH$, it has to be shown that $x,y$ are in the same orbit of
$\overline{\HH}$.
There is a sequence $h_n\in\HH$ with $h_n(x)\to y$.  It can be assumed
that $x,y\in U$ for some relatively compact open set $U$ that meets
all $\HH$-orbits. As above, by Proposition~\ref{p:equicontinuous}, its
remark~(i) and Proposition~\ref{p:A,B}, we can suppose that $h_n\in
S$, $\dom h_n=V$ and $h_n(V)\subset U_0$ for some fixed open set $V$
and some relatively compact open set $U_0$. Thus $h_n$ is a sequence
in $C(V,U_0)\cap S$, which is an equicontinuous family of maps.
Therefore we can assume that $h_n$ is convergent in $C_{\text{\rm
    c-o}}(V,Z)$ by the Ascoli theorem, and let $h$ be its limit.  Then
$h(x)=y$ and $h\in\widetilde{\HH}$ by definition. Thus $x,y$ are in
the same orbit of $\overline{\HH}$ because the restriction of $h$ to
some open neighborhood of $x$ is in $\overline{\HH}$.

Finally $\overline{\HH}$ is independent of the choice of $S$ because
it is the pseudogroup generated by the local transformations of $Z$
lying in the union of closures of $C(O,Z)\cap\HH$ in $C_{\text{\rm
    c-o}}(O,Z)$ with $O$ running on the open sets of $Z$. Obviously,
$\overline{\HH}$ is independent of $S$ if and only if 
$\widetilde{\HH}$ is also.
\end{proof}

\begin{defn}\label{d;closure}
  Let $\HH$ be a quasi-effective, compactly generated and strongly
  equicontinuous pseudogroup of local transformations of a locally
  compact Polish space $Z$. With the notation of
  Theorem~\ref{t:closure}, the pseudogroup $\overline{\HH}$ is called
  the {\em closure\/} of $\HH$.
\end{defn}

As a direct consequence of Theorem~\ref{t:closure}, in the present
general setting, the orbit closures satisfy the following property of
manifolds.

\begin{defn}\label{homog}
  A topological space is \textit{homogeneous} if the pseudogroup of
  all local homeomorphisms has exactly one orbit.
\end{defn}

\begin{cor}\label{c:homogeneous orbit closures}
  Let $\HH$ be a quasi-effective, compactly generated and strongly
  equicontinuous pseudogroup of local transformations of a locally
  compact Polish space $Z$. Then the closure of each orbit is
  homogeneous.
\end{cor}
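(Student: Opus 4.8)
The plan is to combine Theorem~\ref{t:closure}, which produces the closure pseudogroup $\overline{\HH}$, with the minimality statement of Theorem~\ref{t:minimal orbit closures}. First I would fix a point $x\in Z$ and set $Y=\overline{\HH(x)}$. By Theorem~\ref{t:closure} the orbits of $\overline{\HH}$ coincide with the orbit closures of $\HH$, so $Y$ is precisely the $\overline{\HH}$-orbit of $x$; in particular $Y$ is $\overline{\HH}$-invariant and closed.

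The key observation is that $\overline{\HH}$ acts on $Y$ with a single orbit. Indeed, Theorem~\ref{t:minimal orbit closures}, whose hypotheses are met since compact generation and strong equicontinuity suffice, guarantees that $Y$ is a minimal set, so every $\HH$-orbit contained in $Y$ is dense in $Y$. Hence for every $y\in Y$ we have $\overline{\HH(y)}=Y$, and applying Theorem~\ref{t:closure} once more gives $\overline{\HH}(y)=\overline{\HH(y)}=Y$. Thus every point of $Y$ lies in the same $\overline{\HH}$-orbit, namely all of $Y$.

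It then remains to pass from $\overline{\HH}$ to the pseudogroup of all local homeomorphisms of the subspace $Y$. Since $Y$ is closed and $\overline{\HH}$-invariant, each $h\in\overline{\HH}$ restricts to a homeomorphism $h|_{Y\cap\dom h}:Y\cap\dom h\to Y\cap\im h$ between open subsets of $Y$, that is, to a local homeomorphism of $Y$. The resulting restriction pseudogroup $\overline{\HH}|_Y$ is therefore contained in the pseudogroup of all local homeomorphisms of $Y$, and by the previous paragraph it already has $Y$ as a single orbit. Consequently the full pseudogroup of local homeomorphisms of $Y$ has exactly one orbit, which is the definition of homogeneity (Definition~\ref{homog}).

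I do not expect a genuine obstacle here, as the statement is essentially a repackaging of Theorems~\ref{t:closure} and~\ref{t:minimal orbit closures}. The only point requiring a little care is verifying that elements of $\overline{\HH}$ genuinely restrict to local homeomorphisms of the subspace $Y$; this is where invariance and closedness of $Y$ are used, ensuring that both the map and its inverse preserve $Y$ and that the relevant domains remain open in the subspace topology.
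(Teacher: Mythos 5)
Your proof is correct and follows essentially the route the paper intends, since the corollary is presented there as a direct consequence of Theorem~\ref{t:closure}: the orbits of $\overline{\HH}$ are the orbit closures of $\HH$, and restricting $\overline{\HH}$ to an orbit closure $Y$ yields a transitive collection of local homeomorphisms of $Y$, whence the full pseudogroup of local homeomorphisms of $Y$ has exactly one orbit. Your middle paragraph invoking Theorem~\ref{t:minimal orbit closures} is redundant (though harmless): once Theorem~\ref{t:closure} identifies $Y$ as the $\overline{\HH}$-orbit of $x$, every $y\in Y$ automatically satisfies $\overline{\HH}(y)=\overline{\HH}(x)=Y$ because orbits are equivalence classes, with no need to appeal to minimality.
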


\section{Local metric spaces}

Pseudogroups of local isometries make sense on metric spaces but, with
more generality, this type of pseudogroup can be defined on local
metric spaces, which are introduced as follows.

\begin{defn}\label{d:local metric}
  Two metrics on the same set are said to be {\em locally equal\/}
  when they induce the same topology and each point has a neighborhood
  where both metrics are equal.  Let $\{(Z_i,d_i)\}_{i\in I}$ be a
  family of metric spaces such that $\{Z_i\}_{i\in I}$ is a covering
  of a set $Z$, each intersection $Z_i\cap Z_j$ is open in $(Z_i,d_i)$
  and $(Z_j,d_j)$, and the metrics $d_i,d_j$ are locally equal on
  $Z_i\cap Z_j$ whenever this is a non-empty set. Such a family will
  be called a {\em cover of $Z$ by locally equal metric spaces\/}. Two
  such families are called {\em locally equal\/} when their union also
  is a cover of $Z$ by locally equal metric spaces. This is an
  equivalence relation whose equivalence classes are called {\em local
    metrics\/} on $Z$. For each local metric ${\mathfrak D}$ on $Z$,
  the pair $(Z,{\mathfrak D})$ is called a {\em local metric space\/}.
\end{defn}

\begin{rems}
(i) Observe the analogy between the definitions of local metrics and
quasi-local metrics: for every local metric
$\mathfrak D$, there is a unique quasi-local metric $\mathfrak Q$ so that
${\mathfrak D}\subset{\mathfrak Q}$. In particular, all topological
properties of quasi-local metric spaces hold for local metric spaces.
\newline
(ii) In contrast with quasi-local metrics, local metrics can be also
characterized as maximal covers of $Z$ by locally equal metric spaces; 
there always exist such maximal families.
\newline
(iii) The concept of local metric has the following sheaf theoretic
description, which shows its naturality. Suppose that the set $Z$ is
endowed with a topology {\it a priori}, even though this topology will
be later determined by the local metric. Then, for each open subset
$U\subset Z$, let ${\mathcal M}(U)$ denote the set of all metrics on $U$
that induce its topology. Such an $\mathcal M$ is a presheaf on $Z$
with the usual restriction of metrics, and a local metric on $Z$ is
just a global section of the sheaf $\widetilde{\mathcal M}$ determined
by $\mathcal M$. By Example~\ref{ex:x,y} below, the presheaf
$\mathcal M$ is a sheaf only in the uninteresting case where the only
metrizable open sets contain just one point. 
\end{rems}

\begin{example}\label{ex:x,y}
  If $Z$ is metrizable and contains at least two points $x,y$, then
  there are infinitely many metrics that are locally equal to any
  given metric $d$ inducing the topology of $Z$; for instance, all the
  metrics $d_r$, $0<r<d(x,y)$, given by $d_r(z,z')=\min\{d(z,z'),r\}$
  for $z,z'\in U$.
\end{example}

\begin{example}\label{ex:Z}
  Let $B$ be any open disc in $\R^2$, $Z=\R^2\setminus\overline B$.
  Let $d$ denote the restriction of the euclidean distance of $\R^2$
  to $Z$, and $d'$ the distance map on $Z$ induced by the restriction
  of the Riemannian metric of $\R^2$. Also, let $[x,y]$ denote the
  segment that joins each pair of points $x,y\in Z$. We have
  $d(x,y)=d'(x,y)$ if $[x,y]\cap B=\emptyset$, and $d(x,y)<d'(x,y)$
  otherwise. So both metrics $d,d'$ are locally equal, and thus define
  the same local metric space $(Z,{\mathfrak D})$.
\end{example}

The proof of the following result is analogous to the proof of
Lemma~\ref{l:locally finite quasi-locally equal covers}.

\begin{lemma}\label{l:locally finite local metrics} Let
  $(Z,{\mathfrak D})$ be a local metric space. If $\{Z_i\}_{i\in I}$
  is locally finite for some $\{(Z_i,d_i)\}_{i\in I}\in{\mathfrak D}$,
  then there is some open neighborhood $U_z$ of each $z\in Z$ such
  that the metrics $d_i,d_j$ are equal on $U_z\cap Z_i\cap Z_j$ for
  all $i,j\in I$.
\end{lemma}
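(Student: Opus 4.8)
The plan is to mimic the proof of Lemma~\ref{l:locally finite quasi-locally equal covers}, replacing the assignment $\epsilon\mapsto\delta(\epsilon)$ by the sharper datum of genuine local equality of metrics. Fix a representative $\{(Z_i,d_i)\}_{i\in I}\in\mathfrak D$ with $\{Z_i\}_{i\in I}$ locally finite, and fix $z\in Z$. First I would record the two consequences of local finiteness that drive everything. The set $J=\{i\in I\mid z\in Z_i\}$ is finite: each such $Z_i$ is open in $Z$ (being a union of $d_i$-balls), hence a neighbourhood of $z$, so infinitely many of them would contradict local finiteness. Moreover the family of closures $\{\overline{Z_i}\}_{i\in I}$ is again locally finite, so the larger set $F=\{i\in I\mid z\in\overline{Z_i}\}$ is finite too. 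In particular only finitely many pairs $(i,j)$ satisfy $z\in Z_i\cap Z_j$.

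For each pair $(i,j)$ with $z\in Z_i\cap Z_j$, Definition~\ref{d:local metric} supplies a neighbourhood of $z$ in $Z_i\cap Z_j$ on which $d_i$ and $d_j$ coincide; since $Z_i\cap Z_j$ is open in $Z$, this may be taken to be an open subset $W_{i,j,z}$ of $Z$ on which $d_i=d_j$. I would then set
$$
U_z=N_z\cap\bigcap_{i,j\in I,\ z\in Z_i\cap Z_j}W_{i,j,z}\;,
$$
where $N_z$ is a neighbourhood of $z$ meeting only the $Z_i$ with $i\in F$ (shrunk so as to miss each $Z_i$ with $i\notin F$, which is possible because $z\notin\overline{Z_i}$ for those indices). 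By the finiteness recorded above this is a finite intersection of open neighbourhoods of $z$, hence an open neighbourhood of $z$. For every pair $(i,j)$ with $z\in Z_i\cap Z_j$ one has $U_z\subset W_{i,j,z}$, so $d_i=d_j$ on $U_z\cap Z_i\cap Z_j$, exactly as in the cited lemma.

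The step I expect to be the real obstacle is checking the conclusion for the remaining pairs, namely those $(i,j)$ with $U_z\cap Z_i\cap Z_j\ne\emptyset$ but $z\notin Z_i\cap Z_j$; for such a pair $z$ carries no local-equality datum, so the construction above does not directly apply. Since $U_z\subset N_z$ forces $i,j\in F$, only finitely many such pairs occur, and they can be treated one at a time. When $z\notin\overline{Z_i\cap Z_j}$ the overlap stays away from $z$, and one simply shrinks $U_z$ out of the closed set $\overline{Z_i\cap Z_j}$, rendering the intersection empty and the condition vacuous. The genuinely delicate case is $z\in\overline{Z_i\cap Z_j}\setminus(Z_i\cap Z_j)$, where points of the overlap accumulate at $z$; here the nondegeneracy of the local-equality neighbourhoods as one approaches $z$ must be extracted from the requirement that each $d_k$ induce the subspace topology of $Z_k$ together with local equality on the overlap, so that a small enough $U_z$ still satisfies $d_i=d_j$ on $U_z\cap Z_i\cap Z_j$. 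This uniformity near boundary points of the overlaps is the only point that goes beyond a purely formal transcription of the proof of Lemma~\ref{l:locally finite quasi-locally equal covers}.
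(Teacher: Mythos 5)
Your construction of $U_z$ --- intersecting, over the finitely many pairs $(i,j)$ with $z\in Z_i\cap Z_j$, open sets on which $d_i=d_j$, with local finiteness guaranteeing that the intersection is finite --- is precisely the paper's own proof: the paper disposes of this lemma by declaring it analogous to Lemma~\ref{l:locally finite quasi-locally equal covers}, whose proof is exactly the one-line intersection $U_z=\bigcap_{i,j\in I,\ z\in Z_i\cap Z_j}U_{i,j,z}$. In particular, the paper never addresses the pairs $(i,j)$ for which $U_z\cap Z_i\cap Z_j\neq\emptyset$ but $z\notin Z_i\cap Z_j$, which you single out as the real issue; your treatment of the sub-case $z\notin\overline{Z_i\cap Z_j}$ (shrink $U_z$ away from finitely many closed sets) is correct.

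The case you leave open --- $z\in\overline{Z_i\cap Z_j}\setminus(Z_i\cap Z_j)$ --- is, however, a genuine gap, and it cannot be closed, because under that literal reading the statement is false. Take $Z=\{0\}\cup\{1/n\ |\ n\ge1\}$, let $Z_1=Z$ carry the euclidean metric $d_1$, and let $Z_2=Z\setminus\{0\}$ carry the metric $d_2$ with $d_2(x,y)=1$ whenever $x\neq y$. Then $Z_1\cap Z_2=Z_2$ is open in both pieces, both metrics induce the discrete topology on $Z_2$, and both restrict to the zero metric on each singleton, so $\{(Z_1,d_1),(Z_2,d_2)\}$ is a finite (hence locally finite) cover of $Z$ by locally equal metric spaces in the sense of Definition~\ref{d:local metric}. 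Yet every neighborhood $U_0$ of $z=0$ contains distinct points $1/n,1/m$ with $d_1(1/n,1/m)$ arbitrarily small while $d_2(1/n,1/m)=1$, so $d_1\neq d_2$ on $U_0\cap Z_1\cap Z_2$ no matter how $U_0$ is chosen; and here $0\in\overline{Z_1\cap Z_2}\setminus(Z_1\cap Z_2)$, exactly your problematic configuration. So the uniformity you hoped to extract from the topological compatibility plus local equality on overlaps simply is not there. The conclusion is tenable only for the pairs $(i,j)$ with $z\in Z_i\cap Z_j$ --- that is, the content of the lemma is that the neighborhoods $U_{i,j,z}$ of Definition~\ref{d:local metric} can be chosen independently of the pair $(i,j)$ --- which is what the paper's one-line argument proves, what the quasi-local analogue literally asserts (there, condition~\eqref{e:quasi-local metric space} involves the base point itself, so only pairs whose overlap contains the base point are concerned), and what your first two paragraphs already establish. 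Your final paragraph is an attempt to prove a strengthening that is not true.
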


Each metric $d$ on a set $Z$ induces a unique local metric
$\mathfrak D$ so that $\{(Z,d)\}\in{\mathfrak
D}$. The following shows that the reciprocal holds when $(Z,{\mathfrak
D})$ is Hausdorff and paracompact. 

\begin{theorem}\label{t:local metric}
  A local metric space $(Z,{\mathfrak D})$ is induced by some metric
  on $Z$ if and only if $(Z,{\mathfrak D})$ is Hausdorff and
  paracompact.
\end{theorem}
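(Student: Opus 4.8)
The plan is to prove the two implications separately. The forward direction is immediate: if $\mathfrak D$ is induced by a metric $d$, so that $\{(Z,d)\}\in\mathfrak D$, then the underlying topology of $(Z,\mathfrak D)$ is the metric topology of $(Z,d)$, which is Hausdorff and, by Stone's theorem, paracompact. The content is in the converse, so assume from now on that $(Z,\mathfrak D)$ is Hausdorff and paracompact.

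First I would record that $Z$ is metrizable as a topological space. By the remark following Definition~\ref{d:local metric}, $\mathfrak D$ is contained in a unique quasi-local metric $\mathfrak Q$ and inherits all of its topological properties; a Hausdorff paracompact quasi-local metric space is metrizable \cite{Smirnov}. Fix once and for all a metric $\rho$ inducing the topology of $Z$, truncated so that $\rho\le1$. Next, using paracompactness I would pass to a representative $\{(Z_i,d_i)\}_{i\in I}\in\mathfrak D$ with $\{Z_i\}_{i\in I}$ locally finite and apply Lemma~\ref{l:locally finite local metrics}: each $z\in Z$ has an open neighborhood $U_z$ on which all of the $d_i$ with $z\in Z_i\cap Z_j$ coincide. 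The crucial consequence is that on each $U_z$ there is a single \emph{honest} metric $d^{(z)}$ (not merely a germ), and on overlaps $U_z\cap U_w$ the metrics $d^{(z)},d^{(w)}$ are locally equal. After shrinking I would package this as a symmetric open neighborhood $W$ of the diagonal of $Z\times Z$, chosen small enough that every pair $(u,v)\in W$ lies in a common $U_z$, together with the well-defined local distance $\hat d(u,v)=d^{(z)}(u,v)$ on $W$; thus $\hat d$ is symmetric, continuous, vanishes exactly on the diagonal, locally induces the topology, and restricts to a genuine metric on each patch $U_z$.

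The global metric would then be built by a patchwork/chain construction. For $x,y\in Z$ set
$$d(x,y)=\inf\sum_{k=1}^{n}c(x_{k-1},x_k),$$
the infimum over all finite chains $x=x_0,\dots,x_n=y$, where a step is costed by $c(u,v)=\hat d(u,v)$ if $(u,v)\in W$ and by $c(u,v)=1+\rho(u,v)$ otherwise. The single ``long'' step shows $d$ is finite; symmetry, the triangle inequality and $d(x,x)=0$ are automatic for an infimum over concatenable chains. Granting the local equality $d=\hat d$ near the diagonal, one gets positivity and concludes that $d$ induces the correct topology locally, hence globally, so that $\{(Z,d)\}$ is locally equal to $\{(Z_i,d_i)\}$; that is, $d\in\mathfrak D$, which finishes the proof.

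The main obstacle is exactly the verification of this local equality. The bound $d(x,y)\le\hat d(x,y)$ for $(x,y)\in W$ is trivial via the one-step chain. For the reverse bound near $z$, fix $r<1$ with $\overline{B_{d^{(z)}}(z,r)}\subset U_z$ and take $x,y\in B_{d^{(z)}}(z,r/3)$, so $\hat d(x,y)<2r/3<1$; any competing chain of cost below this value uses no long step (each costs $\ge1$), hence consists entirely of $W$-steps. The idea is then that a chain staying in $B_{d^{(z)}}(z,r)$ lies in $U_z$, where $\hat d=d^{(z)}$ is a metric, so its cost telescopes to at least $d^{(z)}(x,y)=\hat d(x,y)$; while a chain that leaves the ball must accumulate $d^{(z)}$-cost at least $r-r/3>\hat d(x,y)$ before escaping, and so cannot compete. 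The delicate point, and where I expect the real work to lie, is arranging $W$ small enough (via a Lebesgue-type argument using local finiteness and the shrinking of $\{U_z\}$) that admissible short steps issuing from points near $z$ cannot jump out of the single patch $U_z$, so that the escape estimate really is governed by the honest metric $d^{(z)}$. Once this local lower bound is secured, the metric axioms, finiteness, and topological compatibility are routine.
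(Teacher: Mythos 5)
Your proposal is correct and follows the same skeleton as the paper's proof: patches carrying honest metrics that agree on overlaps (your $(U_z,d^{(z)})$, obtained via Lemma~\ref{l:locally finite local metrics}, is the paper's Claim~\ref{cl:Ua,Da}), a global distance defined as an infimum of summed patch-costs over chains of short steps, and the local lower bound proved by the same telescoping-versus-escape dichotomy (the paper's Claim~\ref{cl:D(x,y)}). The delicate point you flag is resolved in the paper by building it into the definition of admissible pair---$(z_1,z_2)$ is admissible iff $z_1,z_2\in U'_a$ for some $a$ and, for \emph{every} $b$, $\{z_1,z_2\}\cap U'_b\neq\emptyset$ implies $\{z_1,z_2\}\subset U_b$, where $\{U'_a\}$ is a shrinking of a locally finite cover $\{U_a\}$---and local finiteness guarantees that such pairs fill a neighborhood of the diagonal (the sets $O_x$ and Claim~\ref{cl:Pz}); this is precisely your $W$, so your Lebesgue-type argument is viable, provided you run it against a locally finite refinement (your family $\{U_z\}_{z\in Z}$ is not locally finite) and place the escape ball inside the \emph{shrunken} patch, so that every short step leaving it still lands in the big patch where the cost is computed by the same honest metric. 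The one genuinely different ingredient is your treatment of pairs not joinable by short chains: you invoke Smirnov metrizability to fix a background metric $\rho$ and price a long step at $1+\rho(u,v)$, whereas the paper simply sets $D(x,y)=1$ whenever no admissible chain exists. Your device is not circular (metrizability of the topology is strictly weaker than what the theorem produces), and it buys a cleaner verification of the axioms: with every pair joinable, symmetry and the triangle inequality follow from concatenation alone, while under the paper's convention the triangle inequality must be checked across chain components, where as written it actually needs the truncation $\min\{D,1\}$ when some component has chain diameter exceeding $2$. What the paper's route buys in exchange is self-containedness: no metrization theorem is needed as input.
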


\begin{proof}
The ``only if'' part holds by the Stone theorem (see {\it e.g.}
\cite[Theorem~20.9]{Willard}). Now suppose that
$(Z,{\mathfrak D})$ is Hausdorff and paracompact.

\begin{claim}\label{cl:Ua,Da}
There is some $\{(U_a,D_a)\}_{a\in A}\in{\mathfrak D}$ such that
$\{U_a\}_{a\in A}$ is locally finite, and
$D_a,D_b$ are equal on $U_a\cap U_b$ for all $a,b\in A$ with
$U_a\cap U_b\neq\emptyset$.
\end{claim}

Indeed, since
$(Z,{\mathfrak D})$ is paracompact, there is some
$\{(Z_i,d_i)\}_{i\in I}\in{\mathfrak D}$ such that $\{Z_i\}_{i\in I}$
is locally finite. Let $\{Z'_i\}_{i\in I}$ be a shrinking of
$\{Z_i\}_{i\in I}$. For each $i\in I$ and $x\in Z'_i$, let $V_{i,x}$ be
an open neighborhood of $x$ that is contained in $Z'_i$ and meets only
a finite number of sets $Z_j$ for $j\in I$. Therefore, for any $y\in
V_{i,x}$, there is some open neighborhood $W_{i,x,y}$ of $y$ in
$V_{i,x}$ such that:
\begin{itemize}

\item If $y\not\in\overline{Z'_j}$ for some $j\in I$, then
$W_{i,x,y}\cap Z'_j=\emptyset$;

\item if $y\in\overline{Z'_j}$ for some $j\in I$, then
$W_{i,x,y}\subset Z_j$ and the metrics $d_i,d_j$ are equal on
$W_{i,x,y}$.

\end{itemize}
Again, because $(Z,{\mathfrak D})$ is paracompact, there is an open
locally finite refinement $\{U_a\}_{a\in A}$ of the open cover given by
all possible sets $W_{i,x,y}$ as above. For each $a\in A$, choose any
$W_{i,x,y}$ containing $U_a$, and let $D_a$ denote the restriction of
$d_i$ to $U_a$. Then Claim~\ref{cl:Ua,Da} follows easily with such a
family $\{(U_a,D_a)\}_{a\in A}$.

A metric $D$ on $Z$ is now defined as follows. With the notation
of Claim~\ref{cl:Ua,Da}, let $\{U'_a\}_{a\in A}$ be a shrinking of the
open covering $\{U_a\}_{a\in A}$. A pair $(z_1,z_2)\in
Z\times Z$ will be said to be admissible if there is some $a\in A$ such
that
$z_1,z_2\in U'_a$, and moreover 
$$
\{z_1,z_2\}\cap U'_b\neq\emptyset\Longrightarrow \{z_1,z_2\}\subset U_b
$$
for all $b\in A$. For each 
$(x,y)\in Z\times Z$, let $S_{x,y}$ denote the set of all finite
sequences $(z_0,\dots,z_n)$ in $Z$, with arbitrary length $n\in\Z_+$,
such that $z_0=x$, $z_n=y$, and $(z_{k-1},z_k)$ is an admissible pair
for every $k=1,\dots,n$. Then set
$D(x,y)=1$ if $S_{x,y}=\emptyset$, and let
$$
D(x,y)=
\inf_{(z_0,\dots,z_n)\in S_{x,y}}\sum_{k=1}^nD_{a_k}(z_{k-1},z_k)
$$
if $S_{x,y}\neq\emptyset$, where $z_{k-1},z_k\in U'_{a_k}$ with
$a_k\in A$ for each
$k=1,\dots,n$. This definition is
independent of the choices of the indices $a_k$ by
Claim~\ref{cl:Ua,Da}.

\begin{claim}\label{cl:D(x,y)}
Let $a\in A$, $x\in U'_a$ and $y\in Z$ with $S_{x,y}\neq\emptyset$. Then
$$
D(x,y)\ge
\begin{cases}
\min\{D_a(x,y),D_a(x,U_a\setminus U'_a)\}&\text{if $y\in U'_a$,}\\
D_a(x,U_a\setminus U'_a)&\text{if $y\not\in U'_a$.}
\end{cases}
$$
\end{claim}

To prove this assertion, let
$(z_0,\dots,z_n)\in S_{x,y}$ and $a_1,\dots,a_k\in A$ with
$z_{k-1},z_k\in U'_{a_k}$ for $k=1,\dots,n$.
On the one hand, if
$z_0,\dots,z_n\in U'_a$, we have
$$
\sum_{k=1}^nD_{a_k}(z_{k-1},z_k)=\sum_{k=1}^nD_a(z_{k-1},z_k)
\ge D_a(z_0,z_n)=D_a(x,y)
$$
 by Claim~\ref{cl:Ua,Da}. On the other hand, suppose
$\{z_0,\dots,z_n\}\not\subset U'_a$. Then
$n\ge1$, and let
$$
n_0=\min\{k\in\{1,\dots,n\}\ |\ z_k\not\in U'_a\}\;.
$$
Since $z_{n_0-1}\in U'_a$, we get
$z_{n_0}\in U_a$ because $(z_{n_0-1},z_{n_0})$ is an admissible pair.
So
$$
\sum_{k=1}^nD_{a_k}(z_{k-1},z_k)
\ge\sum_{k=1}^{n_0}D_{a_k}(z_{k-1},z_k)
\ge D_a(z_0,z_{n_0})\ge D_a(x,U_a\setminus U'_a)
$$
by Claim~\ref{cl:Ua,Da}, which completes the proof of
Claim~\ref{cl:D(x,y)}.

The above $D$ is a pseudometric on $Z$ because the following holds for
all $x,y,z\in Z$:
\begin{gather*}
(x,x)\in S_{x,x}\;,\\
(z_0,\dots,z_n)\in S_{x,y}\Longrightarrow(z_n,\dots,z_0)\in S_{y,x}\;,\\
\begin{array}.{r}\}
(z_0,\dots,z_m)\in S_{x,y}\\
\quad(z_m,\dots,z_{m+n})\in S_{y,z}
\end{array}
\Longrightarrow(z_0,\dots,z_{m+n})\in S_{x,z}\;.
\end{gather*}
To show that $D$ is indeed a metric, suppose
$D(x,y)=0$ for some $x,y\in Z$; thus $S_{x,y}\neq\emptyset$.
Take any $a\in A$ with $x\in U'_a$. Since $D_a(x,U_a\setminus U'_a)>0$,
it follows from Claim~\ref{cl:D(x,y)} that $y\in U'_a$ and $D_a(x,y)\le
D(x,y)=0$. So $x=y$ as desired because $D_a$ is a metric. 

It remains to check that $\{(Z,D)\}\in{\mathfrak D}$. 
Fix any $z\in Z$ and any $a_0\in A$ with $z\in U'_{a_0}$. The following
assertion follows easily because $\{U_a\}_{a\in A}$ is locally finite
and $\{U'_a\}_{a\in A}$ is a shrinking of $\{U_a\}_{a\in A}$.

\begin{claim}\label{cl:Pz}
There is some open neighborhood $P_z$ of $z$ in
$U'_{a_0}$ such that
$$
P_z\cap U'_a\neq\emptyset\Longrightarrow P_z\subset U_a
$$
for all $a,b\in A$.
\end{claim}

Since
$\{U_a\}_{a\in A}$ is locally finite and $(Z,{\mathfrak D})$ is
Hausdorff, the set
$$
O_x=\bigcap_{a\in A,\ x\in U'_a}U_a
\setminus\bigcup_{b\in A,\ x\not\in U_b}\overline{U'_b}
$$
is an open neighborhood of every $x$ in $Z$. If $x\in U'_{a_0}$, it is
easy to see that $(x,y)\in S_{x,y}$ for any $y\in U'_{a_0}\cap O_x$,
and thus $D(x,y)\le D_{a_0}(x,y)$. Since 
$$
x\in P_z\Longrightarrow P_z\subset O_x
$$
for all $x\in Z$ by Claim~\ref{cl:Pz}, it follows that $D(x,y)\le
D_{a_0}(x,y)$ for all $x,y\in P_z$.

On the other hand, we get from Claim~\ref{cl:D(x,y)} that
$D(x,y)\ge D_{a_0}(x,y)$ for all $x\in U'_{a_0}$ and all $y$ in
the open ball in $(U_{a_0},D_{a_0})$ of center $x$ and
radius $\rho(x)=D_{a_0}(x,U_{a_0}\setminus U'_{a_0})$. So $D(x,y)\ge
D_{a_0}(x,y)$ for all $x,y$ in
the open ball in $(U_{a_0},D_{a_0})$ of center $z$ and
radius $\frac{1}{2}\,\rho(z)$. Therefore the metrics
$D,D_{a_0}$ are equal on some neighborhood of $x$, and the
result follows.
\end{proof}

\begin{rems}
  (i) Theorem~\ref{t:local metric} is very similar to the Smirnov
  metrization theorem \cite{Smirnov}, \cite[pp.~260--261]{munkres}
  (see also J.~Nagata~\cite[Chapter VI.3]{nagata} for a
  stronger result), which shows that a topological space is metrizable
  if and only if it is Hausdorff, paracompact and locally metrizable:
  in Theorem~\ref{t:local metric}, the existence of a local metric is
  slightly stronger than local metrizability, and the existence of a
  metric that induces a given local metric is slightly stronger than
  metrizability.  
%
%
%
  \newline 
(ii) By the proof of Theorem~\ref{t:local metric}, any
  paracompact Hausdorff local metric $\mathfrak D$ can be considered
  as the germ of some metric on $Z$ around the diagonal of $Z\times
  Z$. But even in this case, the introduction of local metrics makes
  sense to emphasize the fact that we are only considering distances
  between ``very close'' points.  
\newline 
(iii) With the sheaf theoretic point of view given in the remark~(iii) of
  Definition~\ref{d:local metric}, even though $\MM$ is never a sheaf
  for interesting spaces, it is closer to be so for Hausdorff
  paracompact spaces: in this case, Theorem~\ref{t:local metric}
  asserts that the canonical homomorphism of presheaves, ${\mathcal
    M}\to\widetilde{\mathcal M}$, is surjective on all open sets.
\end{rems}

\begin{example}\label{ex:half-disk topology}
Let $P$ be the open upper half-plane
$\{(x,y)\in\R^2\ |\ y>0\}$, and $L$ the real axis $\{(x,0)\ |\
x\in\R\}$. Consider the half-disk topology on $Z=P\cup L$
\cite[pp.~96--97]{Counterexamples}, which has a base given
by the euclidean open sets in $P$ and the sets of the form
$\{z\}\cup(P\cap U)$, where $z\in L$ and $U$ is any euclidean open
neighborhood of $z$ in $\R^2$. This space is not metrizable
because it is not paracompact. But this topology is induced by a
local metric $\mathfrak D$ on $Z$, which is determined by the family
$$
\{(P,d_P)\}\cup\{(U_z,d_z)\ |\ z\in L\}\;,
$$
where $d_P$ is the restriction of the euclidean
metric to $P$, $U_z=\{z\}\cup P$, and $d_z$ is the restriction of the
euclidean metric to $U_z$. 
\end{example}

\begin{example}
With more generality, let $(Z,d)$ be a metric
space, let $\{Z_i\}_{i\in I}$ be a covering of $Z$, and let $d_i$ be the
restriction of $d$ to $Z_i$ for each $i\in I$. Then the metrics
$d_i,d_j$ have equal restriction to the overlap $Z_i\cap Z_j$ for all
$i,j\in I$, and thus the family $\{(Z_i,d_i)\}_{i\in I}$ defines a
local metric $\mathfrak D$ on $Z$. If the sets $Z_i$ are open in
$(Z,d)$, then $\mathfrak D$ is induced by the metric $d$, otherwise the
topology induced by $\mathfrak D$ is strictly finer than the topology
induced by $d$, and $\mathfrak D$ may not be induced by any metric, as
in Example~\ref{ex:half-disk topology}.
\end{example}

Even though we are only interested on paracompact Hausdorff spaces,
the following problem is interesting.

\begin{problem}
Is any locally metrizable topology induced by some local metric? In
particular, is there a compatible local metric on every non-paracompact
manifold? For instance, is there a compatible local metric on the Long
Line \cite[pp.~71--72]{Counterexamples}?
\end{problem}

\section{Pseudogroups of local isometries}\label{sec:local isometries}

The idea of a local metric as measuring distances between ``very
close'' points is specially appropriate to define local isometries.

\begin{defn}\label{d:local isometry}
Let $(Z,{\mathfrak D})$ be a local metric space, and
let $h$ be a homeomorphism between open subsets of $(Z,{\mathfrak D})$.
Then $h$ is called a {\em local isometry\/} of $(Z,{\mathfrak D})$ if
there is some $\{(Z_i,d_i)\}_{i\in I}\in{\mathfrak D}$ such that, for
$i,j\in I$ and $z\in Z_i\cap h^{-1}(Z_j\cap\im h)$, there is some
neighborhood $U_{h,i,j,z}$ of $z$ in $Z_i\cap h^{-1}(Z_j\cap\im h)$
so that $d_i(x,y)=d_j(h(x),h(y))$ for all $x,y\in U_{h,i,j,z}$.
\end{defn}

\begin{rems}
(i) For a map $h$ between open subsets of a local metric space
$(Z,{\mathfrak D})$, the property of being a local
isometry is completely local, and $h$ may not be isometric
for a given metric inducing $\mathfrak D$ (Examples~\ref{ex:S neq HH}
and~\ref{ex:Z, HH}).
\newline
(ii) About the condition that the
metrics $d_i,d_j$ are locally equal on $Z_i\cap Z_j$ for any
$\{(Z_i,d_i)\}_{i\in I}\in{\mathfrak D}$, it just means that the
identity map on any open subset of $(Z,{\mathfrak D})$ is a local
isometry.
\newline
(iii) A homeomorphism $h$ between open subsets of a local metric space
$(Z,{\mathfrak D})$ is a local isometry when it preserves the local
metric in the obvious sense: $h^*({\mathfrak D}_{|\im h})={\mathfrak
D}_{|\dom h}$, where the restrictions and pull-backs of
local metrics are defined in an obvious way. With the sheaf theoretic 
description of local metrics
(remark~(iii) of Definition~\ref{d:local metric}), this means that $h$
induces an isomorphism between the restrictions of $\widetilde\MM$ to
its domain and image.
\newline
(iv) The definition of local isometry is completely independent of the
choice of the family $\{(Z_i,d_i)\}_{i\in I}\in\mathfrak D$. So the same
$\{(Z_i,d_i)\}_{i\in I}$ can be chosen to verify Definition~\ref{d:local
isometry} for any family of local isometries. Therefore the concept of
pseudogroup of local isometries is completely
analogous to the concept of weakly equicontinuous pseudogroup.
\end{rems}

\begin{example}\label{ex:Z, HH}
On the local metric space $(Z,{\mathfrak D})$ of Example~\ref{ex:Z},
let $\HH$ be the restriction of the pseudogroup generated by all
translations of $\R^2$. Then
$\HH$ is a pseudogroup of local isometries of $(Z,{\mathfrak D})$. The
maps in $\HH$ with connected domain are isometries with respect to
$d$, but many of them are not isometries with respect to $d'$. For
instance, let $U$ be any relatively compact and connected open subset
of $Z$ containing points $x,y$ with $[x,y]\cap
B\neq\emptyset$. Then there is a translation $h$ of
$\R^2$ such that $h(U)\subset Z$ and $[h(x),h(y)]\cap
B=\emptyset$. So
$$
d'(h(x),h(y))=d(h(x),h(y))=d(x,y)<d'(x,y)\;,
$$
and thus the restriction $h:U\to h(U)$ is an element of $\HH$ with
connected domain that does not preserve $d'$. 
\end{example}

Arguments similar to those used in the proof of Lemma~\ref{l:weakly
  equicontinuous} prove the following lemma.

\begin{lemma}\label{l:pseudogroup of local isometries-1}
Let $\HH,\HH'$ be equivalent pseudogroups on 
spaces $Z,Z'$. Then $\HH$ is a pseudogroup of local isometries for some
local metric inducing the topology of $Z$ if and only if $\HH'$ is a
pseudogroup of local isometries for some local metric inducing the
topology of $Z'$.
\end{lemma}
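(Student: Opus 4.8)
The plan is to follow verbatim the structure of the proof of Lemma~\ref{l:weakly equicontinuous}, replacing the inequality~\eqref{e:equicontinuous} by the exact equality of distances that defines a local isometry (Definition~\ref{d:local isometry}), and to observe that every step which used ``$<\delta(\epsilon)\Rightarrow{}<\epsilon$'' goes through unchanged with ``$=\,\Rightarrow\,=$''. Since an equivalence $\Phi:\HH\to\HH'$ comes with an inverse equivalence $\Phi^{-1}:\HH'\to\HH$, it suffices to prove one implication. So I would assume $\HH$ is a pseudogroup of local isometries for some local metric $\mathfrak D$ inducing the topology of $Z$, fix a representative $\{(Z_i,d_i)\}_{i\in I}\in\mathfrak D$ verifying Definition~\ref{d:local isometry}, and fix a generating subset $\Phi_0\subset\Phi$.

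First I would cover $Z'$ by open sets $\{Z'_a\}_{a\in A}$ with $Z'_a\subset\im\phi_a$ and $\dom\phi_a\subset Z_{i_a}$ for some $\phi_a\in\Phi_0$ and $i_a\in I$, exactly as in Lemma~\ref{l:weakly equicontinuous}. Transport the metric: let $d'_a$ be the metric on $Z'_a$ obtained by carrying $d_{i_a}|_{\dom\phi_a}$ across the homeomorphism $\phi_a$, so that $\phi_a^{-1}:(Z'_a,d'_a)\to(Z_{i_a},d_{i_a})$ is an isometric embedding and $d'_a$ induces the subspace topology of $Z'_a$. For $h'\in\HH'$ with $z'\in Z'_a\cap {h'}^{-1}(Z'_b\cap\im h')$, set $h=\phi_b^{-1}\circ h'\circ\phi_a\in\HH$ and $z=\phi_a^{-1}(z')$; the local-isometry hypothesis supplies a neighborhood $U_{h,i_a,i_b,z}$ on which $d_{i_a}(x,y)=d_{i_b}(h(x),h(y))$, and I would take $U'_{h',a,b,z'}=\phi_a(U_{h,i_a,i_b,z})$. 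Writing $x=\phi_a^{-1}(x')$ and $y=\phi_a^{-1}(y')$ for $x',y'\in U'_{h',a,b,z'}$, and noting $h(x)=\phi_b^{-1}(h'(x'))$, $h(y)=\phi_b^{-1}(h'(y'))$, the exact transport of distances by $\phi_a,\phi_b$ yields the chain $d'_a(x',y')=d_{i_a}(x,y)=d_{i_b}(h(x),h(y))=d'_b(h'(x'),h'(y'))$, which is precisely the defining equality of a local isometry for $h'$.

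It then remains to check that $\{(Z'_a,d'_a)\}_{a\in A}$ is a cover of $Z'$ by locally equal metric spaces, so that it represents a local metric $\mathfrak D'$ on $Z'$ in the sense of Definition~\ref{d:local metric}. The overlaps $Z'_a\cap Z'_b$ are open because each $\phi_a$ is a homeomorphism and $d'_a$ induces the ambient topology; local equality of $d'_a$ and $d'_b$ on the overlap is the special case of the computation above with $h'$ the identity (remark~(ii) of Definition~\ref{d:local isometry}), which amounts to the fact that $\phi_b^{-1}\circ\phi_a\in\HH$ is itself a local isometry. I expect this compatibility on overlaps---confirming that the transported pieces genuinely glue into a single local metric rather than merely making each individual $h'$ look isometric---to be the only delicate point, and it is exactly where the hypothesis that identity maps are local isometries (equivalently, that the $d_i$ are pairwise locally equal) is used. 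Finally, $\mathfrak D'$ induces the topology of $Z'$ since each $d'_a$ does so on the open set $Z'_a$, and $\HH'$ is a pseudogroup of local isometries of $(Z',\mathfrak D')$. Applying the same argument to $\Phi^{-1}$ gives the reverse implication, completing the equivalence.
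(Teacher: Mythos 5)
Your proposal is correct and is exactly the approach the paper intends: the paper gives no separate argument for this lemma, stating only that ``arguments similar to those used in the proof of Lemma~\ref{l:weakly equicontinuous}'' prove it, and your write-up is precisely that adaptation (transporting the metrics $d_{i_a}$ through the $\phi_a$, conjugating $h'$ to $h=\phi_b^{-1}\circ h'\circ\phi_a\in\HH$, and noting that the $h'=\mathrm{id}$ case gives the locally-equal-on-overlaps condition). The only change from the model proof is replacing the implication~\eqref{e:equicontinuous} by exact equality of distances, which you handle correctly.
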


Unlike the concept of equicontinuity, it is not necessary to
introduce weak and strong versions of the concept of pseudogroup
of local isometries by the following result.
%
%

\begin{lemma}\label{l:pseudogroup of local isometries-2}
Let $\HH$ be a pseudogroup of local transformations of a paracompact
local metric space $(Z,{\mathfrak D})$. Then $\HH$ is a pseudogroup of
local isometries of $(Z,{\mathfrak D})$ if and only if there is some
$\{(Z_i,d_i)\}_{i\in I}\in{\mathfrak D}$ and some symmetric set $S$ of
generators of $\HH$ that is closed under compositions and such that
$d_i(x,y)=d_j(h(x),h(y))$
for all $h\in S$, $i,j\in I$ and $x,y\in Z_i\cap h^{-1}(Z_j\cap\im h)$.
\end{lemma}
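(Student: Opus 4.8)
The plan is to prove the two implications separately, with the forward (``if'') direction being routine and the reverse (``only if'') direction carrying the real content, namely the upgrade to a generating set that is closed under composition.

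For the ``if'' direction, suppose such a family $\{(Z_i,d_i)\}_{i\in I}\in{\mathfrak D}$ and such an $S$ exist. Since $S$ is symmetric, closed under composition, and generates $\HH$, every element of $\HH$ is a combination of restrictions of maps in $S$; hence for each $h\in\HH$ and each $z\in\dom h$ there is some $g\in S$ having the same germ as $h$ at $z$, i.e. $h=g$ on a neighborhood $O$ of $z$. Given indices $i,j$ and a point $z\in Z_i\cap h^{-1}(Z_j\cap\im h)$, I would take $U_{h,i,j,z}$ to be any neighborhood of $z$ contained in $O\cap Z_i\cap h^{-1}(Z_j\cap\im h)$; on it $h=g$, and the hypothesis on $S$ gives $d_i(x,y)=d_j(g(x),g(y))=d_j(h(x),h(y))$. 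This is exactly the local condition of Definition~\ref{d:local isometry}, so $\HH$ is a pseudogroup of local isometries.

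For the ``only if'' direction, assume $\HH$ is a pseudogroup of local isometries. Using paracompactness, I would first choose $\{(Z_i,d_i)\}_{i\in I}\in{\mathfrak D}$ with $\{Z_i\}_{i\in I}$ locally finite, and apply Lemma~\ref{l:locally finite local metrics} to obtain, for each $z\in Z$, an open neighborhood $U_z$ on which $d_i=d_j$ on $U_z\cap Z_i\cap Z_j$ for all $i,j$ (call these \emph{flat} neighborhoods). Let $S_0$ denote the set of all $h\in\HH$ such that $\dom h$ is contained in a single chart $Z_a$ and in a single flat neighborhood, $\im h$ is likewise contained in a single chart $Z_b$ and a single flat neighborhood, and the equality $d_a(x,y)=d_b(h(x),h(y))$ holds for all $x,y\in\dom h$. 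Combining Definition~\ref{d:local isometry} (applied with the chosen family), local finiteness, and the flatness of the $U_z$, one checks---just as in the proof of Lemma~\ref{l:weakly equicontinuous'}---that every germ of every map in $\HH$ is realized by a restriction lying in $S_0$; thus $S_0$ generates $\HH$. Moreover $S_0$ is symmetric (the defining equality is symmetric in $h$ and $h^{-1}$) and closed under restriction to open subsets. Finally I would set $S$ to be the collection of all finite composites of elements of $S_0$, which by construction is symmetric, closed under composition, and still generates $\HH$.

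The main point---and the step I expect to be the crux---is to verify that every $g=h_n\circ\dots\circ h_1\in S$ (with $h_1,\dots,h_n\in S_0$) satisfies the \emph{exact} isometry identity on all overlaps. The key structural observation is that, because each $h_m\in S_0$ has domain and image each contained in a single chart, the composite $g$ has $\dom g\subset Z_{a_0}$ and $\im g\subset Z_{b_0}$ for single charts $Z_{a_0}$ (the domain chart of $h_1$) and $Z_{b_0}$ (the image chart of $h_n$), with $\dom g$ inside one flat neighborhood and $\im g$ inside another. I would then prove $d_{a_0}(x,y)=d_{b_0}(g(x),g(y))$ for all $x,y\in\dom g$ by induction on $n$, chaining the isometry equalities: at the $m$-th stage the intermediate images $u=h_m\circ\dots\circ h_1(x)$ and $v=h_m\circ\dots\circ h_1(y)$ lie in the image chart of $h_m$ and in the domain chart of $h_{m+1}$ simultaneously, as well as in the flat neighborhood containing $\im h_m$; flatness gives that the two relevant chart-metrics agree on $u,v$, so the isometry equality of $h_{m+1}$ can be applied. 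The fact that $u,v$ need not be close is handled precisely because the single-chart-plus-flat-neighborhood constraints on $\im h_m$ force the metric agreement on all of $\im h_m$, not merely near a point. It then remains to pass from $a_0,b_0$ to arbitrary $i,j$: given $x,y\in Z_i\cap g^{-1}(Z_j\cap\im g)$, I note $x,y\in Z_i\cap Z_{a_0}$ inside the flat neighborhood containing $\dom g$, so $d_i(x,y)=d_{a_0}(x,y)$, and symmetrically $d_j(g(x),g(y))=d_{b_0}(g(x),g(y))$ using the flat neighborhood containing $\im g$. Combined with the identity just proved, this yields $d_i(x,y)=d_j(g(x),g(y))$, so $S$ and $\{(Z_i,d_i)\}_{i\in I}$ witness the characterization, completing the reverse direction.
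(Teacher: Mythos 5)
Your proof is correct and takes essentially the same approach as the paper's: pick a locally finite family in $\mathfrak D$ with its flat neighborhoods from Lemma~\ref{l:locally finite local metrics}, restrict the maps of $\HH$ to neighborhoods small enough to lie in single charts and single flat neighborhoods (where the exact isometry identity holds), and let $S$ be all composites of these restrictions, the crux being that exact isometries compose---precisely the point the paper emphasizes in contrast with equicontinuity. The paper's own proof is just a terser rendering of this, taking $S$ to be the compositions of the restrictions $h\colon U_{h,z}\to h(U_{h,z})$ with $U_{h,z}=U_z\cap\bigcap_{i,j}U_{h,i,j,z}$ and their inverses, leaving implicit the chart-and-flatness bookkeeping that you carry out explicitly.
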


%

\begin{proof}
Take any $\{(Z_i,d_i)\}_{i\in I}\in{\mathfrak D}$ such that
$\{Z_i\}_{i\in I}$ is locally finite. With the notation of
Lemma~\ref{l:locally finite local metrics} and
Definition~\ref{d:local isometry}, for each $h\in\HH$ and $z\in\dom h$,
let 
$$
U_{h,z}=U_z\cap\bigcap_{i,j\in I,\ z\in Z_i\cap Z_j}U_{h,i,j,z}\;,
$$
which is an open neighborhood of $z$. Then the result holds with $S$
equal to the set of compositions of all restrictions of the form
$h:U_{h,z}\to h(U_{h,z})$ and their inverses. We have used that
composition of isometries is an isometry, which fails for the
equicontinuous condition~\eqref{e:equicontinuous} with a fixed
assignment $\epsilon\mapsto\delta(\epsilon)$.
\end{proof}

\section{Isometrization of strongly equicontinuous pseudogroups}

On the type of spaces we are considering, it will be
shown that compactly generated quasi-effective strongly equicontinuous
pseudogroups are pseudogroups of local isometries for some local
metric. We begin 
with the following version of Theorem~\ref{t:local metric} for quasi-local
metric spaces. Most of its proof is also similar to the proof of
Theorem~\ref{t:local metric}, but there are some new difficulties.

\begin{theorem}\label{t:isometrization of quasi-local metrics}
A quasi-local metric space $(Z,{\mathfrak Q})$ is induced by some metric on
$Z$ if and only if $(Z,{\mathfrak Q})$ is Hausdorff and paracompact.
\end{theorem}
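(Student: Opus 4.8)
The plan is to follow the architecture of the proof of Theorem~\ref{t:local metric} as closely as possible, replacing every appeal to \emph{exact} equality of metrics on overlaps by the uniform modulus supplied by the quasi-local condition. The \emph{only if} direction is immediate: a metrizable space is Hausdorff and paracompact by the Stone theorem, and this passes to $(Z,{\mathfrak Q})$ since $\mathfrak Q$ and any inducing metric share the same underlying topology. So I would concentrate on the \emph{if} direction. (One cannot shortcut this by metrizing the uniformity induced by $\mathfrak Q$: its base $\UU_r$ unions over all charts through a point, so its topology is in general strictly coarser than the ball topology of $\mathfrak Q$. This is exactly why paracompactness is genuinely needed and why a hands-on construction is unavoidable.)

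First I would set up the combinatorial data as in Claim~\ref{cl:Ua,Da}. Using paracompactness, choose $\{(Z_i,d_i)\}_{i\in I}\in{\mathfrak Q}$ with $\{Z_i\}_{i\in I}$ locally finite, and invoke Lemma~\ref{l:locally finite quasi-locally equal covers} to get, for each $x$, a single neighborhood $U_x$ and a single assignment $\epsilon\mapsto\delta(\epsilon)$ for which \eqref{e:quasi-local metric space} holds for all $i,j$ and all $y\in U_x\cap Z_i\cap Z_j$. Refining and shrinking as in Claim~\ref{cl:Ua,Da}, I would produce a locally finite family $\{(U_a,D_a)\}_{a\in A}$ and a shrinking $\{U'_a\}$, the crucial change being that $D_a,D_b$ are now only \emph{quasi-locally equal} on $U_a\cap U_b$, governed by that one modulus $\delta$. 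Keeping the notion of admissible pair and the sets $S_{x,y}$ verbatim, define the chain pseudometric $D(x,y)=\inf\sum_k D_{a_k}(z_{k-1},z_k)$ (and $D(x,y)=1$ when $S_{x,y}=\emptyset$). That $D$ is a pseudometric is formal, depending only on concatenation and reversal of chains, so this part is unchanged.

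The verification that $\{(Z,D)\}\in{\mathfrak Q}$ splits, as before, into an upper and a lower estimate relative to a fixed chart metric $D_{a_0}$ near a point $z\in U'_{a_0}$, and I would reuse Claim~\ref{cl:Pz} to keep the chart bookkeeping local. The upper estimate is the easy half: the one-term chain gives $D(x,y)\le D_{a_0}(x,y)$, whence $D_{a_0}(x,y)<\delta(\epsilon)\Rightarrow D(x,y)<\epsilon$, one of the two implications of quasi-local equality. The lower estimate is the analogue of Claim~\ref{cl:D(x,y)}, and this is where the new difficulties concentrate.

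The hard part will be this lower bound. In Theorem~\ref{t:local metric} one replaces each $D_{a_k}(z_{k-1},z_k)$ by $D_{a_0}(z_{k-1},z_k)$ using exact equality and then applies the triangle inequality for the single metric $D_{a_0}$; both moves are now unavailable. Chains that leave $U'_{a_0}$ are still controlled by the positive constant $D_{a_0}(z,U_{a_0}\setminus U'_{a_0})$, since the first-exit argument needs only a lower bound on one initial segment. The genuine obstacle is a chain that \emph{stays} in $U'_{a_0}$: converting its steps back to $D_{a_0}$ through $\delta$ is an $\epsilon$-$\delta$ comparison, not a Lipschitz one, so the distortion could a priori accumulate over many short steps and manufacture an illegitimate short-cut — I expect defeating this accumulation to be the crux. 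The line of attack I would pursue is to exploit that quasi-local equality is demanded only on a \emph{fixed} neighborhood with a \emph{single} modulus: take the cover fine enough that the finitely many metrics $D_a$ meeting a small $O_z$ are simultaneously comparable through one $\delta$, note that a chain of total length $<\eta$ confines all its vertices to $O_z$, and thereby reduce a staying-inside chain to a comparison against the one metric $D_{a_0}$ on $O_z$, where the triangle inequality is again at hand and the modulus is applied only to the already-small total. If this reduction can be made to yield $D(x,y)\ge\eta(\epsilon)$ whenever $D_{a_0}(x,y)\ge\epsilon$, it furnishes the second implication $D(x,y)<\eta(\epsilon)\Rightarrow D_{a_0}(x,y)<\epsilon$ and completes $\{(Z,D)\}\in{\mathfrak Q}$; controlling that step-by-step conversion rigorously is, I believe, the main technical content the authors allude to.
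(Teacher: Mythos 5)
Your setup, your division into an easy upper estimate and a hard lower estimate, and your identification of the accumulation problem are all accurate, but the proposal does not actually contain a proof of the hard step, and the mechanism you sketch for it cannot be made to work as described. The confinement step is circular: to know that a chain of small total weight $\sum_k D_{a_k}(z_{k-1},z_k)$ keeps its vertices inside a small set $O_z$, you must bound $D_{a_0}(x,z_m)\le\sum_{k\le m}D_{a_0}(z_{k-1},z_k)$, and each summand can only be compared with its own weight $D_{a_k}(z_{k-1},z_k)$ through the modulus $\delta$, which is an $\epsilon$--$\delta$ modulus rather than a Lipschitz one; the per-step errors therefore accumulate over an unbounded number of steps, which is precisely the obstruction you were trying to defeat. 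The same objection undermines your claim that chains leaving $U'_{a_0}$ are harmless: lower-bounding the initial segment by $D_{a_0}(x,z_{n_0})$ already requires converting every mixed-metric step to $D_{a_0}$, so the first-exit case is not easier than the staying-inside case. (There is also a minor definitional wrinkle: with weights $D_{a_k}$, the sum depends on the choice of admissible indices $a_k$, so one must at least take the infimum over those choices as well.) In short, with the chart metrics themselves as chain weights, nothing in your sketch yields the analogue of Claim~\ref{cl:D(x,y)}.

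The paper's proof removes the difficulty by changing the weights rather than by estimating harder. After normalizing so that each $U_a$ has $D_a$-diameter $<1$, it sets
$$
\overline{D}(x,y)=\sup\{D_a(x,y)\ |\ a\in A,\ x,y\in U_a\}
$$
and defines $D$ as the chain infimum of $\sum_k\overline{D}(z_{k-1},z_k)$ over admissible chains. The point is that $\overline{D}$ dominates \emph{every} chart metric on every admissible step: if $z_{k-1},z_k\in U_a$, then $\overline{D}(z_{k-1},z_k)\ge D_a(z_{k-1},z_k)$ by the very definition of the supremum. Hence, for a chain staying in $U'_a$, and equally for the initial segment up to the first exit (whose exit vertex still lies in $U_a$ by admissibility), one passes to $D_a$ term by term with no modulus at all and applies the triangle inequality of the single metric $D_a$; this is Claim~\ref{cl:D(x,y); equicontinuous}, and the accumulation problem never arises. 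The modulus enters exactly once, in the easy direction: the one-term chain gives $D\le\overline{D}$, and Claim~\ref{cl:overline D} gives $D_{a_0}(x,y)<\delta(\epsilon)\Rightarrow\overline{D}(x,y)<\epsilon$, hence $D(x,y)<\epsilon$. The price, which the paper points out, is that $\overline{D}$ need not satisfy the triangle inequality on any open set, which is why it is fed into the chain construction instead of being used directly and why Theorem~\ref{t:local metric} cannot simply be quoted. This sup-trick is the single missing idea in your proposal; the rest of what you wrote matches the paper's architecture.
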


\begin{proof}
As in the proof of Theorem~\ref{t:local metric}, the ``only if'' part
holds by the Stone theorem. Now suppose that
$(Z,{\mathfrak Q})$ is Hausdorff and paracompact. The following
assertion can be proved in the same way as Claim~\ref{cl:Ua,Da}.

\begin{claim}\label{cl:Ua,Da; equicontinuous}
There is some $\{(U_a,D_a)\}_{a\in A}\in{\mathfrak Q}$ and some
$\delta(\epsilon)>0$ for each $\epsilon>0$ such that
$\{U_a\}_{a\in A}$ is locally finite, and
$$
D_a(x,y)<\delta(\epsilon)\Longrightarrow D_b(x,y)<\epsilon
$$
for all $\epsilon>0$, $a,b\in A$ and $x,y\in U_a\cap U_b$.
\end{claim}

We can also assume that the family
$\{(U_a,D_a)\}_{a\in A}$ given by Claim~\ref{cl:Ua,Da} satisfies that
the $D_a$-diameter of each $U_a$ is smaller than
$1$. If $x,y\in U_{a_0}$ for some $a_0\in A$,
let
$$
\overline{D}(x,y)=\sup_{a\in A,\ x,y\in U_a}D_a(x,y)\;.
$$
Note that $\overline{D}(x,y)\le1$ by the condition on the
$D_a$-diameter of each $U_a$, and that $\overline{D}(x,y)$ is independent
of $a_0$. Moreover
$\overline{D}$ is obviously symmetric, we have
$\overline{D}(x,y)=0$ if and only if
$x=y$, and the following assertion
follows directly from
Claim~\ref{cl:Ua,Da; equicontinuous}.

\begin{claim}\label{cl:overline D}
We have
$$
D_a(x,y)<\delta(\epsilon)\Longrightarrow
\overline{D}(x,y)<\epsilon
$$
for all $a\in A$ and $x,y\in U_a$.
\end{claim}

But $\overline{D}$ may not satisfy the triangle inequality
on any open set because there may be points $x,y,z\in U_{a_0}$ so that
$x,y\in U_a$ and $z\not\in U_a$ for some $a\in A$. So $\overline{D}$ may 
not be a metric on the sets of any open covering of
$Z$; otherwise, Theorem~\ref{t:local metric} could be used to conclude.
Yet $\overline{D}$ is used to define a metric on $Z$ with the idea of the
proof of Theorem~\ref{t:local metric}. 

Let $\{U'_a\}_{a\in A}$ be a shrinking of
$\{U_a\}_{a\in A}$. A pair
$(z_1,z_2)\in Z\times Z$ will be said to be admissible if there is some
$a\in A$ such that $z_1,z_2\in U'_a$, and moreover 
$$
\{z_1,z_2\}\cap U'_b\neq\emptyset\Longrightarrow \{z_1,z_2\}\subset U_b
$$
for any $b\in A$. 
For each 
$(x,y)\in Z\times Z$, let $S_{x,y}$ denote the set of all finite
sequences $(z_0,\dots,z_n)$ in $Z$, with arbitrary length $n\in\Z_+$,
such that $z_0=x$, $z_n=y$, and $(z_{k-1},z_k)$ is an admissible pair
for every $k=1,\dots,n$.
Now set
$D(x,y)=1$ if $S_{x,y}=\emptyset$, and 
$$
D(x,y)=
\inf_{(z_0,\dots,z_n)\in S_{x,y}}\sum_{k=1}^n\overline{D}(z_{k-1},z_k)
$$
if $S_{x,y}\neq\emptyset$.

\begin{claim}\label{cl:D(x,y); equicontinuous}
Let $a\in A$, $x\in U'_a$ and $y\in Z$ with $S_{x,y}\neq\emptyset$. Then
$$
D(x,y)\ge
\begin{cases}
\min\{D_a(x,y),D_a(x,U_a\setminus U'_a)\}&\text{if $y\in U'_a$,}\\
D_a(x,U_a\setminus U'_a)&\text{if $y\not\in U'_a$.}
\end{cases}
$$
\end{claim}

To prove this assertion, let
$(z_0,\dots,z_n)\in S_{x,y}$ and $a_1,\dots,a_k\in A$ with
$z_{k-1},z_k\in U'_{a_k}$ for $k=1,\dots,n$.
On the one hand, if
$z_0,\dots,z_n\in U'_a$, we have
$$
\sum_{k=1}^n\overline{D}(z_{k-1},z_k)\ge
\sum_{k=1}^nD_a(z_{k-1},z_k)
\ge D_a(z_0,z_n)=D_a(x,y)\;.
$$
On the other hand, suppose
$\{z_0,\dots,z_n\}\not\subset U'_a$. Then
$n\ge1$, and let
$$
n_0=\min\{k\in\{1,\dots,n\}\ |\ z_k\not\in U'_a\}\;.
$$
Since $z_{n_0-1}\in U'_a$, we get
$z_{n_0}\in U_a$ because $(z_{n_0-1},z_{n_0})$ is an admissible pair.
So
$$
\sum_{k=1}^n\overline{D}(z_{k-1},z_k)
\ge\sum_{k=1}^{n_0}D_a(z_{k-1},z_k)
\ge D_a(z_0,z_{n_0})\ge D_a(x,U_a\setminus U'_a)\;,
$$
which completes the proof of
Claim~\ref{cl:D(x,y)}.

With the same arguments as in the proof of Theorem~\ref{t:local
metric}, it follows that $D$ is a metric on $Z$ by using
Claim~\ref{cl:D(x,y); equicontinuous}.

It remains to check that $\{(Z,D)\}\in{\mathfrak Q}$. Fix any $z\in Z$
and any $a_0\in A$ with $z\in U'_{a_0}$. We get the following assertion
as in the proof of Theorem~\ref{t:local metric}.

\begin{claim}\label{cl:Pz; equicontinuous}
There is some open neighborhood $P_z$ of $z$ in
$U'_{a_0}$ such that
$$
P_z\cap U'_a\neq\emptyset\Longrightarrow P_z\subset U_a
$$
for all $a,b\in A$.
\end{claim}

Also, as in the proof of Theorem~\ref{t:local metric}, the set
$$
O_x=\bigcap_{x\in U'_a,\ a\in A}U_a
\setminus\bigcup_{x\not\in U_b,\ b\in A}\overline{U'_b}
$$
is an open neighborhood of every $x$ in $Z$, and we have $(x,y)\in
S_{x,y}$ for any $x\in U'_{a_0}$ and $y\in
U'_{a_0}\cap O_x$. So
$D(x,y)\le\overline{D}(x,y)$ for all $y\in U'_{a_0}\cap O_x$, yielding
\begin{equation}\label{e:Da0}
D_{a_0}(x,y)<\delta(\epsilon)\Longrightarrow D(x,y)<\epsilon
\end{equation}
by Claim~\ref{cl:overline D}. Since 
$$
x\in P_z\Longrightarrow P_z\subset O_x
$$
for all $x\in Z$ by Claim~\ref{cl:Pz}, it follows that~\eqref{e:Da0}
holds for all $x,y\in P_z$.

On the other hand, as in the proof of Theorem~\ref{t:local metric}, we
get from Claim~\ref{cl:D(x,y); equicontinuous} that $D(x,y)\ge
D_{a_0}(x,y)$ for all $x,y$ in the open ball in $(U_{a_0},D_{a_0})$ of
center $z$ and radius $\frac{1}{2}\,D_{a_0}(z,U_{a_0}\setminus
U'_{a_0})$. Therefore the families of metric spaces $\{(Z,D)\}$ and
$\{(U_a,D_a)\}_{a\in A}$ are quasi-locally equal;  {\it i.e.}, $\mathfrak
Q$ is induced by $D$.
\end{proof}

\begin{rems}
(i) Theorem~\ref{t:isometrization of quasi-local metrics} can be also
compared with 
the Smirnov metrization theorem.
\newline
(ii) By Theorem~\ref{t:isometrization of quasi-local metrics}, in the 
paracompact Hausdorff case, a quasi-local metric is almost the same concept
as a local metric; the only different being that different local metrics
may induce the same quasi-local metric (Example~\ref{ex:quasi-local metric
on R2}).
\end{rems}

Our ``isometrization'' result for pseudogroups can be stated as follows.

\begin{theorem}\label{t:isometrization}
Let $\HH$ be a compactly generated, quasi-effective and strongly
equicontinuous pseudogroup of local transformations of a locally compact
Polish space $Z$. Then $\HH$ is a pseudogroup of local isometries with
respect to some local metric inducing the topology of $Z$.
\end{theorem}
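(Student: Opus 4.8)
The plan is to manufacture, out of the equicontinuity data, a ``symmetrized'' metric for which the generators act by honest isometries, and then to recognize it as a local metric via Lemma~\ref{l:pseudogroup of local isometries-2}. First I would reduce the bookkeeping to a single metric. Since $Z$ is locally compact Polish it is Hausdorff and paracompact, so Theorem~\ref{t:isometrization of quasi-local metrics} lets me assume that the quasi-local metric $\mathfrak Q$ inducing the topology is represented by a genuine metric $d$ on $Z$, i.e. $\{(Z,d)\}\in\mathfrak Q$. Because strong equicontinuity is independent of the chosen representative of $\mathfrak Q$ (remark~(iii) of Definition~\ref{d:strongly equicontinuous}), I may fix a symmetric set $S$ of generators of $\HH$, closed under composition and under restriction to open sets and containing the relevant identity restrictions (cf. Example~\ref{ex:identity}), together with a modulus $\epsilon\mapsto\delta(\epsilon)$ such that $d(x,y)<\delta(\epsilon)$ implies $d(h(x),h(y))<\epsilon$ for every $h\in S$ and all $x,y\in\dom h$.

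Next I would define $\bar d(x,y)=\sup\{\,d(h(x),h(y)) : h\in S,\ x,y\in\dom h\,\}$. Strong equicontinuity makes this supremum finite and small on small sets: $d(x,y)<\delta(\epsilon)$ forces $\bar d(x,y)\le\epsilon$, while $\id\in S$ gives $\bar d\ge d$. Thus on any neighborhood $U_z$ of $d$-diameter $<\delta(1)$ the function $\bar d$ is a genuine metric: symmetry is clear, positivity follows from $\bar d\ge d$, and the triangle inequality is obtained by taking suprema in the pointwise inequalities $d(h(x),h(z))\le d(h(x),h(y))+d(h(y),h(z))$. On overlaps the two restrictions of $\bar d$ literally coincide, since the defining supremum does not depend on which chart one computes it in, so $\{(U_z,\bar d|_{U_z})\}$ is a cover of $Z$ by locally equal metric spaces and determines a local metric $\mathfrak D$ (Definition~\ref{d:local metric}). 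The sandwich $d\le\bar d$ together with $\bar d\le\epsilon$ on $\{d<\delta(\epsilon)\}$ shows that $\bar d$ and $d$ induce the same topology, so $\mathfrak D$ induces the topology of $Z$.

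The isometry property is where composition-closure pays off. For $g\in S$ and points $x,y$ in a common $U_z$ I would rewrite the supremum defining $\bar d(g(x),g(y))$ through the substitution $k=h\circ g$, whose inverse substitution $h=k\circ g^{-1}$ is legitimate precisely because $S$ is symmetric and closed under composition; since $g$ is a homeomorphism onto its image one has $h(g(x))=k(x)$ and $h(g(y))=k(y)$, and the two index sets match exactly, yielding $\bar d(g(x),g(y))=\bar d(x,y)$. As every germ of a map in $\HH$ is the germ of some map in $S$, this is exactly the hypothesis of Lemma~\ref{l:pseudogroup of local isometries-2} for the family $\{(U_z,\bar d|_{U_z})\}\in\mathfrak D$, so $\HH$ is a pseudogroup of local isometries of $(Z,\mathfrak D)$, as required.

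I expect the main obstacle to lie in the second step: confirming that the supremum genuinely yields a well-defined \emph{local} metric inducing the correct topology. The triangle inequality comes for free, but the finiteness and local boundedness of $\bar d$ rest entirely on the equicontinuity modulus $\delta(\epsilon)$, and the careful matching of domains in the reindexing of the supremum is where quasi-effectiveness enters: it guarantees that the elements of $S$ are determined by their germs, so that the closure $\overline{\HH}$ of Theorem~\ref{t:closure} is a genuine pseudogroup and the supremum over $S$ may equivalently, and unambiguously, be taken over this compact closure.
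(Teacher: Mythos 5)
Your reduction to a single metric $d$ and the reindexing $k=h\circ g$ are fine, but the argument breaks at the step you say ``comes for free'': the triangle inequality for $\bar d(x,y)=\sup\{d(h(x),h(y)) : h\in S,\ x,y\in\dom h\}$. The pointwise inequality $d(h(x),h(z))\le d(h(x),h(y))+d(h(y),h(z))$ is only available when $y\in\dom h$, whereas your supremum ranges over maps defined merely at the two endpoints; a map contributing to $\bar d(x,z)$ may be undefined at $y$, and then nothing compares it to $\bar d(x,y)+\bar d(y,z)$. This is not a removable technicality. Take $Z=(K\times\{0\})\cup(K\times\{\eta\})\subset\R^2$ with the Euclidean metric, where $K\subset[0,\frac12]$ is the Cantor set identified with $\{0,1\}^{\N}$ by ternary coding, and $\eta$ is small; let $\HH$ be generated by the single homeomorphism $g:K\times\{0\}\to K\times\{\eta\}$ induced by swapping the symbolic coordinates $1$ and $m$, and let $S$ consist of all restrictions of $g$, $g^{-1}$ and of identity maps (this $S$ is symmetric, closed under composition and restriction, generates $\HH$, and witnesses both strong equicontinuity and quasi-effectiveness; $\HH$ is compactly generated since $\dom g$ is clopen). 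Since $\diam Z<1$ one may take $\delta(1)=1$, so your charts $U_z$ impose no constraint. Now let $\alpha,\beta$ differ only in coordinate $m$, and set $x=(e(\alpha),0)$, $z=(e(\beta),0)$, $y=(e(\alpha),\eta)$. The only elements of $S$ defined at both $x,y$, or at both $y,z$, are identities, so $\bar d(x,y)=\eta$ and $\bar d(y,z)\le 3^{-m}+\eta$; but $g$ is defined at $x$ and $z$ and $d(g(x),g(z))=\tfrac13$, so $\bar d(x,z)\ge\tfrac13>\bar d(x,y)+\bar d(y,z)$ once $m$ is large and $\eta$ small. Thus $\bar d$ is not a metric on any chart containing this triple, and your family $\{(U_z,\bar d|_{U_z})\}$ is not a cover by locally equal metric spaces.

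This failure is exactly what the paper's construction is engineered to avoid, and it explains the two hypotheses your argument never genuinely uses. The paper's $d_V(x,y)=\sup\{d(h(x),h(y)) : h\in\overline{R},\ V\subset\dom h\}$ takes the supremum only over maps defined on the \emph{whole} chart $V$, so every competing map is defined at all three vertices of any triangle and the triangle inequality is honest. The price is twofold: one must know that this restricted family still realizes every germ (so that $d_V$ is consistent across charts, Claim~\ref{cl:dV}) and that the restricted suprema still transform isometrically under $\HH$ (Claim~\ref{cl:f}); both rest on Proposition~\ref{p:equicontinuous} together with quasi-effectiveness, and Proposition~\ref{p:equicontinuous} is precisely where compact generation enters --- note also that its condition~\eqref{e:epsilon} forces the charts in the example above to have diameter less than $\eta$, i.e.\ the correct chart size is adapted to the pseudogroup rather than read off from $\delta(1)$. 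Your closing appeal to quasi-effectiveness and to the closure $\overline{\HH}$ of Theorem~\ref{t:closure} addresses none of this; moreover, once the supremum is restricted to whole-chart domains, your substitution $k=h\circ g$ no longer runs over the correct index set (the domain of $h\circ g$ need not contain the target chart), which is exactly the difficulty Claim~\ref{cl:f} resolves. Repairing these points essentially forces you to rebuild the paper's proof.
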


\begin{proof}
The pseudogroup $\HH$ is strongly equicontinuous with respect to some
quasi-local metric $\mathfrak Q$ that induces the topology of $Z$.
Such a $\mathfrak Q$ is induced by some metric $d$ on $Z$ according to
Theorem~\ref{t:isometrization of quasi-local metrics}. So, by
remark~(iv) of Definition~\ref{d:strongly equicontinuous}, the
condition of strong equicontinuity is satisfied by the family
$\{(Z,d)\}$ with some assignment $\epsilon\mapsto\delta(\epsilon)$ and
some symmetric set $S$ of generators of $\HH$ that is closed under
compositions. We can also suppose that $S$ is closed under
restrictions to open sets by remark~(iii) of
Definition~\ref{d:strongly equicontinuous}. Furthermore we can assume
that the condition of quasi-effectiveness is also satisfied with $S$
(remarks of Definition~\ref{d:quasi-effective}). This means that any
element of $S$ is equal to the identity on its domain if it is equal
to the identity on some non-trivial open subset; so two elements of
$S$ are equal on the intersection of their domains if they have the
same germ at some point.

Let $U$ be any relatively compact open subset of $Z$ that meets every
$\HH$-orbit, and $E$ any symmetric system of compact generation of $\HH$
on
$U$. For each $g\in E$, let
$\bar g$ be its extension satisfying the conditions of
Definition~\ref{d:compactly generated}, and let
$\overline{E}=\{\bar g\ |\ g\in E\}$. We can choose $S$, $E$
and the extensions $\bar g$ so that 
$\overline{E}\subset S$. 

Let $\VV$ be a finite family of open subsets of $Z$ given by
Proposition~\ref{p:equicontinuous} for the above $d$, $S$, $U$, $E$ and
extensions $\bar g$. We can suppose that the $d$-diameter of every
$V\in V$ is smaller than $\delta(1)$. Let $R\subset\HH$ be
the set of all compositions of elements in $E$, and
$\overline{R}\subset\HH$ the set of all compositions of elements in
$\overline{E}$; so $R,\overline{R}\subset S$. For each
$V\in\VV$, and
$x,y\in V$, let
$$
d_V(x,y)=\sup_{h\in\overline{R},\ V\subset\dom h} d(h(x),h(y))\;,
$$
Such $d_V$ is well defined by
Proposition~\ref{p:equicontinuous}, and we have
$d_V(x,y)\le1$ by the condition on the diameter of $V$ and because
$\overline{R}\subset S$. It is easy to check that $d_V$ is a
metric on $V$. Moreover we have the
following fact.

\begin{claim}\label{cl:dV}
The metrics $d_V,d_W$ are equal on $V\cap W$ for all $V,W\in\VV$.
\end{claim}

Take sets $V,W\in\WW$ with $V\cap W\neq\emptyset$ to verify this
assertion. It suffices to show that, for all $h\in\overline{R}$ whose
domain contains $V$, there is some
$h'\in\overline{R}$ whose domain contains $W$ and so that $h,h'$ are
equal on $V\cap W$: this clearly yields $d_V(x,y)\le d_W(x,y)$ for all
$x,y\in V\cap W$, and the reverse inequality is similarly obtained.
Thus let $h\in\overline{R}$ with $V\subset\dom h$. The germ of $h$ at
any $x\in V\cap W$ is equal to the germ of some $f\in R$ at $x$. By
Proposition~\ref{p:equicontinuous}, there is some $h'\in\overline{R}$
whose domain contains $W$ and equal to $f$ around $x$. Since $h,h'\in S$
and have the same germ at $x$, these maps are equal on $V\cap W$
by quasi-effectiveness, and the claim follows.

Therefore the collection $\{(V,d_V)\ |\ V\in\VV\}$
defines a local metric ${\mathfrak D}_0$ on the union $U_0$ of the sets
$V\in\VV$. Moreover, on the one hand, we obviously have $d_V(x,y)\ge
d(x,y)$ for all $V\in\VV$ and $x,y\in V$. On the other hand,
$$
d(x,y)<\delta(\epsilon)\Longrightarrow d_V(x,y)<\epsilon
$$
for all $\epsilon>0$, $V\in\VV$ and $x,y\in V$ by strong
equicontinuity since $\overline{R}\subset S$. Thus ${\mathfrak D}_0$
induces the restriction ${\mathfrak Q}_0$ of $\mathfrak Q$ to
$U_0$. 

\begin{claim}\label{cl:f}
We have 
$$
d_W(f(x),f(y))=d_V(x,y)
$$ 
for all $V,W\in\VV$,
$f\in R$ and $x,y\in V\cap f^{-1}(W\cap\im h)$.
\end{claim}

To prove this equality, let $V,W,f,x,y$ be as in the statement of this
claim. Then we have $f=g_m\circ\dots\circ g_1$ for $g_1,\dots,g_m\in E$. Let
$\tilde f=\bar g_m\circ\dots\circ\bar g_1\in\overline{R}$. Then
$V\subset\dom\tilde f$ by Proposition~\ref{p:equicontinuous}. For any
$h\in\overline{R}$ with
$W\subset\dom h$, the germ of $h$ at any fixed point 
$z\in W\cap\im f$ is equal to the germ at $z$ of some element of $R$;
say $g_{m+n}\circ\dots\circ g_{m+1}$ for some 
$g_{m+n},\dots,g_{m+1}\in E$. Hence $\bar g_{m+n}\circ\dots\circ\bar
g_{m+1}\in\overline{R}$ has the same germ at $z$ as $h$ and its domain
contains $W$ again by Proposition~\ref{p:equicontinuous}. It follows
that $h=\bar g_{m+n}\circ\dots\circ\bar g_{m+1}$ on $W$ by
quasi-effectiveness. Since $g_{m+n}\circ\dots\circ g_1$is
defined around $f^{-1}(z)\in V$, the domain of $\bar
g_{m+n}\circ\dots\circ\bar g_1$ contains $V$ by
Proposition~\ref{p:equicontinuous} once more, and we have
$h\circ\tilde f=\bar g_{m+n}\circ\dots\circ\bar g_1$ on $V\cap\tilde
f^{-1}(W\cap\im\tilde f)$ by quasi-effectiveness. So $h\circ f$ is
equal to some element of $\overline{R}$ on $V\cap f^{-1}(W\cap\im f)$,
which yields
\begin{align*}
d_W(f(x),f(y))
&=\sup_{h\in\overline{R},\ W\subset\dom h} 
d(h\circ f(x),h\circ f(y))\\
&\le\sup_{h'\in\overline{R},\ V\subset\dom h'} 
d(h'(x),h'(y))\\
&=d_V(x,y)\;.
\end{align*}
We also get $d_W(f(x),f(y))\ge d_V(x,y)$ by applying the above argument
to $f^{-1}$ instead of $f$, and Claim~\ref{cl:f} follows.

Claim~\ref{cl:f} shows that the restriction of $\HH$ to $U$ is a
pseudogroup of local isometries with respect to the restriction of
${\mathfrak D}_0$ to $U$, and therefore the theorem follows by
Lemma~\ref{l:pseudogroup of local isometries-1}.
\end{proof}

According to Theorem~\ref{t:isometrization}, the following problem may be
difficult and interesting.

\begin{problem}
Find an example of a strongly equicontinuous pseudogroup that is not a
pseudogroup of local isometries for any local metric.
\end{problem}

\section{A non-standard description of weak equicontinuity}

The following simple non-standard description of weak equicontinuity
shows the naturality of this condition, even though strong
equicontinuity is what is mainly used in our study. 
 The reference for
non-standard analysis is Robinson~\cite{robinson}; we do not use any
technique particular to non-standard analysis, only the concept of
monad, which is now defined.

%
%

Fix a non-principal ultrafilter $\FF$ on the set $\N$ of positive
integers; {\it i.e.}, $\FF$ defines a point in the corona of the
Stone-\v{C}ech compactification of $\N$. Let $(Z,d)$ be any metric
space. For any $x\in Z$, the {\em monad\/} of $x$ in $(Z,d)$, denoted
by $M(x,Z,d)$ or simply $M(x)$, is the quotient set of the set of
sequences $y_n$ in $Z$ such that
$$
\{n\in\N\ |\ d(x,y_n)<r\}\in\FF
$$ 
for all $r>0$, where two such sequences
$y_n,z_n$ are identified when 
$$
\{n\in\N\ |\ y_n=z_n\}\in\FF\;.
$$ 
If $(Z',d')$ is another metric space, any continuous map $f:(Z,d)\to(Z',d')$
induces a map $f_*:M(x,Z,d)\to M(f(x),Z',d')$ for every $x\in Z$, which is
defined as follows: if ${\mathbf y}\in M(x,Z,d)$ is represented by the sequence
$y_n$, then $f_*({\mathbf y})$ is represented by the sequence $f(y_n)$.

The monad of $0$ in $\R$ with the euclidean metric is the set $\mathbf I$
of infinitesimal numbers.  The infinitesimal number represented by the zero
constant sequence will be denoted by $\boldsymbol0$.
For ${\boldsymbol\epsilon},{\boldsymbol\delta}\in{\mathbf I}$, represented by
sequences $\epsilon_n,\delta_n$, the inequality
${\boldsymbol\epsilon}<{\boldsymbol\delta}$ means that
$$
\{n\in\N\ |\ \epsilon_n<\delta_n\}\in\FF\;.
$$
Moreover the metric $d$ on $Z$ defines a map
$d_*:M(x)\to{\mathbf I}$ for every $x\in Z$ in the following way: if ${\mathbf
y}\in M(x)$ is represented by the sequence
$y_n$, then $d_*({\mathbf y})$ is represented by the
sequence $d(x,y_n)$.

Now suppose that $\mathfrak Q$ is a quasi-local metric on $Z$ and
$\{(Z_i,d_i)\}_{i\in I}\in{\mathfrak Q}$. If $x\in Z_i\cap Z_j$ for $i,j\in I$,
then $M(x,Z_i,d_i)\equiv M(x,Z_j,d_j)$ canonically. Thus
$M(x,Z_i,d_i)$ can be called the monad of $x$ in $(Z,{\mathfrak Q})$, and
denoted by $M(x,Z,{\mathfrak Q})$ or simply $M(x)$. It also follows that any
continuous map between quasi-local metric spaces, $f:(Z,{\mathfrak
Q})\to(Z',{\mathfrak Q}')$, induces a map $f_*:M(x,Z,{\mathfrak Q})\to
M(f(x),Z',{\mathfrak Q}')$ for each
$x\in Z$.

\begin{theorem}\label{t:monada}
Let $\HH$ be a pseudogroup of local homeomorphisms of a quasi-local
metric space $(Z,{\mathfrak Q})$, and let $\{(Z_i,d_i)\}_{i\in
I}\in{\mathfrak Q}$. Then $\HH$ is weakly equicontinuous if and only
if for every ${\boldsymbol\epsilon}\in{\mathbf I}$,
${\boldsymbol\epsilon}>{\boldsymbol0}$, there is some
${\boldsymbol\delta}({\boldsymbol\epsilon})\in{\mathbf I}$,
${\boldsymbol\delta}({\boldsymbol\epsilon})>{\boldsymbol0}$, such that
\begin{equation}\label{e:monada}
d_{i*}({\mathbf y})<{\boldsymbol\delta}({\boldsymbol\epsilon})
\Longrightarrow 
d_{j*}(h_*({\mathbf y}))<{\boldsymbol\epsilon}
\end{equation}
for all $h\in\HH$, $i,j\in I$, $x\in Z_i\cap h^{-1}(Z_j\cap\im h)$ and 
${\mathbf y}\in M(x)$.
\end{theorem}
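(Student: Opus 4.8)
The plan is to prove the two implications separately, using throughout that, by remark~(iv) of Definition~\ref{d:weakly equicontinuous}, weak equicontinuity is independent of the chosen representative of $\mathfrak Q$; hence it suffices to test it against the very family $\{(Z_i,d_i)\}_{i\in I}$ fixed in the statement.

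For the forward implication, suppose $\HH$ is weakly equicontinuous, so that there is an assignment $\epsilon\mapsto\delta(\epsilon)$ and, for each $h,i,j$ and each $z\in Z_i\cap h^{-1}(Z_j\cap\im h)$, a neighborhood $U_{h,i,j,z}$ on which \eqref{e:equicontinuous} holds. Replacing $\delta(\epsilon)$ by $\min\{\delta(\epsilon),\epsilon\}$, I may assume $\delta(\epsilon)\le\epsilon$, so that $\delta(\epsilon)\to0$ as $\epsilon\to0$. Given ${\boldsymbol\epsilon}\in{\mathbf I}$ with ${\boldsymbol\epsilon}>{\boldsymbol0}$, represented by a sequence $\epsilon_n>0$ that tends to $0$ along $\FF$, I would let ${\boldsymbol\delta}({\boldsymbol\epsilon})$ be the infinitesimal represented by $\delta(\epsilon_n)$; it is positive and infinitesimal because $0<\delta(\epsilon_n)\le\epsilon_n$. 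The verification of \eqref{e:monada} is then a direct unravelling of representatives: for $\mathbf y\in M(x)$ represented by $y_n$, the defining property of the monad forces $y_n\in U_{h,i,j,x}$ (taking $z=x$) for $\FF$-almost all $n$, and on the $\FF$-set where moreover $d_i(x,y_n)<\delta(\epsilon_n)$ one applies \eqref{e:equicontinuous} with $\epsilon=\epsilon_n$ to obtain $d_j(h(x),h(y_n))<\epsilon_n$; intersecting the two $\FF$-sets yields $d_{j*}(h_*(\mathbf y))<{\boldsymbol\epsilon}$.

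The reverse implication is the substantive one, and I would argue by contraposition: assuming $\HH$ is \emph{not} weakly equicontinuous with respect to $\{(Z_i,d_i)\}_{i\in I}$, I would produce a single positive infinitesimal ${\boldsymbol\epsilon}$ admitting no infinitesimal ${\boldsymbol\delta}({\boldsymbol\epsilon})$, contradicting \eqref{e:monada}. Failure of weak equicontinuity means that no standard assignment can work; the plan is to make this quantitative by selecting, for each $n\in\N$ and at ever smaller standard scales, data $h_n\in\HH$, indices $i_n,j_n$, a base point in $Z_{i_n}\cap h_n^{-1}(Z_{j_n}\cap\im h_n)$ and a nearby point witnessing that a $\delta$-closeness in $d_{i_n}$ coexists with images at distance $\ge\epsilon_0$ in $d_{j_n}$, where the pairs may be taken inside prescribed neighborhoods of the base point. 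Feeding these data to the ultrafilter $\FF$, the aim is to exhibit a standard base point $x$, a single $h\in\HH$, and a monad element $\mathbf y=[y_n]\in M(x)$ for which $d_{i*}(\mathbf y)$ is infinitesimal while $d_{j*}(h_*(\mathbf y))$ remains $\ge{\boldsymbol\epsilon}$, so that \eqref{e:monada} fails for every candidate ${\boldsymbol\delta}$.

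I expect the heart of the difficulty to lie precisely in this quantifier reversal. Two points are delicate. First, there is a genuine transfer between the standard scale of weak equicontinuity and the infinitesimal scale of \eqref{e:monada}, of overspill or countable-saturation type for the ultrapower $Z^{\N}/\FF$. Second, and more seriously, \eqref{e:monada} quantifies only over \emph{standard} points $x$ and \emph{standard} maps $h\in\HH$, whereas the failure data $(h_n,i_n,j_n)$ vary with $n$; the whole construction must therefore concentrate these $n$-dependent witnesses at a single standard base point and a single $h$, and it must recover along $\FF$ the per-point neighborhoods $U_{h,i,j,z}$ that are absent from \eqref{e:monada} but implicit in the infinitesimal closeness $y_n\to x$. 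Controlling both simultaneously is exactly where the structural hypotheses on $\HH$ have to enter, and it is the step I would develop most carefully; by contrast, the forward implication above requires no such work.
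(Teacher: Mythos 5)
Your forward implication is correct and is essentially the paper's own argument: after normalizing $\delta(\epsilon)\le\epsilon$, represent $\boldsymbol\epsilon$ by positive numbers $\epsilon_n$, let $\boldsymbol\delta(\boldsymbol\epsilon)$ be the infinitesimal represented by $\delta(\epsilon_n)$, and use that any representative of $\mathbf y\in M(x)$ lies $\FF$-almost everywhere in the neighborhood $U_{h,i,j,x}$; nothing more is needed there.

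The reverse implication is where your proposal has a genuine gap, and the obstacle you single out is not the actual obstacle. You plan to extract failure data $(h_n,i_n,j_n)$ varying with $n$ and then ``concentrate'' them at a single standard base point and a single standard $h$ by countable saturation or overspill, and you even expect structural hypotheses on $\HH$ to be needed; that step is left open in your proposal. In fact no concentration is required and the theorem has no structural hypotheses on $\HH$: negating Definition~\ref{d:weakly equicontinuous} already produces a \emph{single} standard witness. Precisely --- and this is how the paper's proof begins --- if $\HH$ is not weakly equicontinuous, then there are $\epsilon>0$, one $h\in\HH$, indices $i,j\in I$ and one point $z\in Z_i\cap h^{-1}(Z_j\cap\im h)$ such that every neighborhood of $z$ in $Z_i\cap h^{-1}(Z_j\cap\im h)$ contains points $x,y$ with $d_j(h(x),h(y))\ge\epsilon$; otherwise, for every $h,i,j,z$ and every $\epsilon>0$ there would be a neighborhood of $z$ all of whose pairs of points have $d_j$-images within $\epsilon$ of each other, so \eqref{e:equicontinuous} would hold on it for \emph{any} assignment $\epsilon\mapsto\delta(\epsilon)$; this pointwise statement is exactly the negation the paper works with. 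From this single witness the contradiction with \eqref{e:monada} is elementary: choose $x_n,y_n$ with $d_i(x_n,z),d_i(y_n,z)<1/n$ and $d_j(h(x_n),h(y_n))\ge\epsilon$; continuity of the one map $h$ at the one point $z$ gives $N$ with $d_j(h(z),h(x_n))<\epsilon/2$, hence $d_j(h(z),h(y_n))>\epsilon/2$, for all $n\ge N$. Now, given any infinitesimal $\boldsymbol\epsilon>\boldsymbol0$ and any candidate $\boldsymbol\delta>\boldsymbol0$ represented by $\delta_n>0$, pick $k_n\ge N$ with $1/k_n<\delta_n$; the sequence $y'_n=y_{k_n}$ represents an element $\mathbf y'\in M(z)$ with $d_{i*}(\mathbf y')<\boldsymbol\delta$, while $d_j(h(z),h(y'_n))>\epsilon/2$ for every $n$ forbids $d_{j*}(h_*(\mathbf y'))<\boldsymbol\epsilon$, since $\boldsymbol\epsilon$ is infinitesimal and $\epsilon/2$ is a standard positive number. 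Thus \eqref{e:monada} fails for every candidate assignment. The base point and the map are standard from the outset --- exactly the two things you feared could not be arranged --- and the per-point neighborhoods $U_{h,i,j,z}$ play no role in this direction; the ``quantifier reversal'' costs nothing beyond writing out the negation of the definition carefully.
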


\begin{proof}
Suppose first that $\HH$ is weakly equicontinuous. So the condition of weak
equicontinuity is satisfied with $\{(Z_i,d_i)\}_{i\in I}$, some
assignment $\epsilon\mapsto\delta(\epsilon)$ and neighborhoods $U_{h,i,j,z}$
(Definition~\ref{d:weakly equicontinuous}). We can assume that
$\delta(\epsilon)<\epsilon$ for all $\epsilon>0$. Given any
${\boldsymbol\epsilon}\in{\mathbf I}$,
${\boldsymbol\epsilon}>{\boldsymbol0}$, take some sequence
$\epsilon_n$ representing ${\boldsymbol\epsilon}$. We can assume that
$\epsilon_n>0$ for all $n$. Then the sequence
$\delta(\epsilon_n)$ also represents some infinitesimal number, which is
denoted by ${\boldsymbol\delta}({\boldsymbol\epsilon})$. Now take $h\in\HH$, 
$i,j\in I$, $x\in Z_i\cap h^{-1}(Z_j\cap\im h)$ and 
${\mathbf y}\in M(x)$ with $d_{i*}({\mathbf
y})<{\boldsymbol\delta}({\boldsymbol\epsilon})$. So
$$
\{n\in\N\ |\ d_i(x,y_n)<\delta(\epsilon_n)\}\in\FF\;.
$$
Moreover
$$
\{n\in\N\ |\ y_n\in U_{h,i,j,x}\}\in\FF
$$
because ${\mathbf y}\in M(x)$. Therefore
$$
\{n\in\N\ |\ y_n\in Z_i\cap h^{-1}(Z_j\cap\im h),\
d_j(h(x),h(y_n))<\delta(\epsilon_n)\}\in\FF
$$
by weak equicontinuity, yielding 
$d_{j*}(h_*({\mathbf y}))<{\boldsymbol\epsilon}$,
and~\eqref{e:monada} follows.

Now suppose that~\eqref{e:monada} holds for some assignment
${\boldsymbol\epsilon}\mapsto{\boldsymbol\delta}({\boldsymbol\epsilon})$
and all $h,i,j,x,{\mathbf y}$ as in the statement. According to
Definition~\ref{d:weakly equicontinuous}, if $\HH$ is not weakly
equicontinuous, then there exists some $\epsilon>0$, $h\in\HH$,
$i,j\in I$ and $z\in Z_i\cap h^{-1}(Z_j\cap\im h)$ so that, in every
neighborhood $U$ of $z$ in $Z_i\cap h^{-1}(Z_j\cap\im h)$, there are
points $x_U,y_U$ with $d_j(h(x_U),h(y_U))\ge\epsilon$.  So, for every
$n\in\NN$, there are points $x_n,y_n\in Z_i\cap h^{-1}(Z_j\cap\im h)$
with
$$
d_i(x_n,z)\,,\,d_i(y_n,z)<1/n\;,\quad d_j(x_n,y_n)\ge\epsilon\;.
$$
On the other hand, there is some $N\in\N$ so that
$d_j(h(z),h(x_n))<\epsilon/2$ for $n\ge N$. Thus
\begin{equation}\label{e:>epsilon/2}
d_j(h(z),h(y_n))\ge d_j(h(x_n),h(y_n))-d_j(h(z),h(x_n))>\epsilon/2
\end{equation}
for $n\ge N$. Given any ${\boldsymbol\epsilon}\in{\mathbf I}$,
${\boldsymbol\epsilon}>{\boldsymbol0}$, let
${\boldsymbol\delta}({\boldsymbol\epsilon})$ be represented by a sequence
$\delta_n$. We can suppose that $\delta_n>0$ for all $n$. Then there
is some $k_n\ge N$ with $1/k_n<\delta_n$ for every $n$, the sequence
$y'_n=y_{k_n}$ represents an element ${\mathbf y'}\in M(z)$, and we have
$d_{i*}({\mathbf y'})<{\boldsymbol\delta}({\boldsymbol\epsilon})$. So
$d_{j*}(h_*({\mathbf y'}))<{\boldsymbol\epsilon}$, which
contradicts~\eqref{e:>epsilon/2}.
\end{proof}

\section{Strongly equicontinuous foliated spaces}

Let $(X,\FF)$ be a compact foliated space.  Compact generation and
recurrence are properties satisfied by its holonomy pseudogroup with
the generators given by a finite defining cocycle. To see this, fix a
finite defining cocycle $(U_i,p_i,h_{i,j})$ of $\FF$ with $p_i:U_i\to
Z_i$ and $h_{i,j}:Z_{i,j}\to Z_{j,i}$, where $Z_{i,j}=p_i(U_i\cap
U_j)$. Let $\HH$ be the representative of the holonomy pseudogroup of
$\FF$ induced by $(U_i,p_i,h_{i,j})$ on $Z=\bigsqcup_i Z_i$.  Suppose
that $(U_i,p_i,h_{i,j})$ is a shrinking of another defining cocycle
$\left(\widetilde U_i,\tilde p_i,\tilde h_{i,j}\right)$ with $\tilde
p_i:\widetilde U_i\to\widetilde Z_i$ and $\tilde h_{i,j}:\widetilde
Z_{i,j}\to\widetilde Z_{j,i}$, where $\widetilde Z_{i,j}=\tilde
p_i\left(\widetilde U_i\cap\widetilde U_j\right)$. This means that,
for each $i$, $\overline{U_i}\subset\widetilde U_i$, $Z_i=\tilde
p_i(U_i)$, and $p_i$ is the restriction of $\tilde p_i$. Thus
$\overline{Z_i}\subset\widetilde Z_i$,
$\overline{Z_{i,j}}\subset\widetilde Z_{i,j}$, and $h_{i,j}$ is the
restriction of $\tilde h_{i,j}$. Let $\widetilde\HH$ be the
representative of the holonomy pseudogroup of $\FF$ induced by
$\left(\widetilde U_i,\tilde p_i,\tilde h_{i,j}\right)$ on $\widetilde
Z=\bigsqcup_i\widetilde Z_i$. It easily follows that $Z$ is relatively
compact in $\widetilde Z$, $\HH$ is the restriction of $\widetilde\HH$
to $Z$, $Z$ meets all $\widetilde\HH$-orbits, and the transformations
$h_{i,j}$ form a system of compact generation of $\widetilde\HH$ on
$Z$. This system is proved to be recurrent as follows. Fix any point
$x$ in the closure of some $Z_i$ in $\widetilde Z_i$. Then $\tilde
p_i^{-1}(x)$ cuts some $\widetilde U_j$. Thus $V_{i,j}=\tilde
p_i\left(\widetilde U_i\cap U_j\right)$ is a neighborhood of $x$ in
$\widetilde Z_i$ such that $V_{i,j}\subset\widetilde Z_{i,j}$,
$V_{i,j}\cap Z_i=Z_{i,j}$ and $\tilde h_{i,j}(V_{i,j})\subset Z_j$.
Hence the transformations $h_{i,j}$ form a recurrent system of compact
generation by Lemma~\ref{l:recurrent systems of compact generation}.
Therefore, according to Theorem~\ref{t:coarsely quasi-isometric
  orbits}, the coarse quasi-isometry type of the $\HH$-orbits with the
metric induced by the generators $h_{i,j}$ is kept uniformly fixed
when varying the defining cocycle. So each leaf of $\FF$ determines a
coarse quasi-isometry type of metric spaces.  See {\it e.g.\/}
\cite{Hector-Hirsch} for a more detailed description of the coarse
quasi-isometry between the leaves and the orbits of the holonomy
pseudogroup, which is already implicit in \cite{Plante}. The bornotopy
type of leaves is studied in \cite{Hurder94}, and most of its
discussion also applies to the coarse quasi-isometry type.

If $\FF$ is at least of class $C^1$, then there exists a
metric tensor on the leaves that is continuous on $X$. In this case, it
is well known that the already explained embedding of $Z$ into $X$, as a
complete transversal of $\FF$, defines a uniform collection of
coarse quasi-isometries between the $\HH$-orbits and the corresponding leaves.
This is just a consequence of having a uniform upper bound of the
diameter of the plaques for the given finite defining cocycle.
Therefore the coarse quasi-isometry type determined by each leaf is
given by just itself with such a  metric tensor.

The foliated space $(X,\FF)$ will be called {\em weakly
  equicontinuous\/}, {\em strongly equicontinuous\/}, {\em
  quasi-analytic\/}, or {\em quasi-effective\/}, respectively, if any
representative of its holonomy pseudogroup is such. This is well
defined because all of these properties on pseudogroups are invariant
by equivalences (Lemmas~\ref{l:weakly equicontinuous},~\ref{l:strongly
  equicontinuous} and~\ref{l:quasi-effective}). Then
Theorem~\ref{t:equicontinuous pseudogroup} has the following
consequence.

\begin{theorem}\label{t:equicontinuous foliated space}
  Let $(X,\FF)$ be an equicontinuous, compact and quasi-effective
  foliated space. Assume that the space of leaves is connected
  \upn{(}for example, if $\FF$ is transitive, or if $X$ is
  connected\upn{)}. Then all leaves with trivial holonomy group
  determine the same coarse quasi-isometry type.
\end{theorem}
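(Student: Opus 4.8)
The plan is to reduce the statement to Theorem~\ref{t:equicontinuous pseudogroup}, applied to a holonomy pseudogroup of $(X,\FF)$, using the identifications set up at the beginning of this section. First I would fix a finite defining cocycle $(U_i,p_i,h_{i,j})$ of $\FF$ that is a shrinking of a second one $(\widetilde U_i,\tilde p_i,\tilde h_{i,j})$, and form the associated representatives $\HH$ on $Z=\bigsqcup_i Z_i$ and $\widetilde\HH$ on $\widetilde Z=\bigsqcup_i\widetilde Z_i$. Both $Z$ and $\widetilde Z$ are locally compact Polish, being finite disjoint unions of open subsets of the Polish transversals. The discussion preceding the theorem already shows that $Z$ is relatively compact in $\widetilde Z$, meets every $\widetilde\HH$-orbit, and that the cocycle maps $E=\{h_{i,j}\}$ form a recurrent system of compact generation of $\widetilde\HH$ on $Z$ (via Lemma~\ref{l:recurrent systems of compact generation}); in particular $\widetilde\HH$ is compactly generated with $U=Z$ and $\GG:=\widetilde\HH|_Z=\HH$.

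Next I would verify the remaining hypotheses of Theorem~\ref{t:equicontinuous pseudogroup} for $\widetilde\HH$. Since $(X,\FF)$ is equicontinuous (in the strong sense) and quasi-effective, \emph{any} representative of its holonomy pseudogroup carries these properties by definition; as $\widetilde\HH$ is such a representative and both properties are invariant by equivalence (Lemmas~\ref{l:strongly equicontinuous} and~\ref{l:quasi-effective}), $\widetilde\HH$ is strongly equicontinuous and quasi-effective. Moreover the orbit space $\widetilde Z/\widetilde\HH$ is canonically homeomorphic to the leaf space of $(X,\FF)$ through the bijection leaves $\leftrightarrow$ orbits, and hence is connected by hypothesis. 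Thus all the hypotheses of Theorem~\ref{t:equicontinuous pseudogroup} are met, and it yields that, relative to the recurrent system $E=\{h_{i,j}\}$, all $\GG=\HH$-orbits with trivial group of germs are uniformly coarsely quasi-isometric to one another.

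Finally I would translate back to leaves. Under the canonical correspondence an $\HH$-orbit has trivial group of germs exactly when the associated leaf has trivial holonomy group, so the assertion above says precisely that the orbits attached to all such leaves share one coarse quasi-isometry type. By the opening paragraphs of this section, the coarse quasi-isometry type of an $\HH$-orbit with the metric induced by $E$ is the cocycle-independent type attached to the corresponding leaf (and, when $\FF$ is at least $C^1$, is realized by the leaf itself with a continuous leafwise metric tensor through the embedding of $Z$ as a complete transversal). Combining these gives the claim. I would emphasize that essentially all of the real difficulty is already absorbed into Theorem~\ref{t:equicontinuous pseudogroup} and into these preliminary identifications; the only genuine care required here is bookkeeping — confirming that ``equicontinuous foliated space'' is read in the strong sense demanded by that theorem, that the concrete representative $\widetilde\HH$ inherits the needed properties by equivalence-invariance, and that ``trivial group of germs'' matches ``trivial holonomy group.'' No new estimate is needed.
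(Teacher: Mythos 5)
Your proposal is correct and follows exactly the route the paper intends: the paper states this theorem as a direct consequence of Theorem~\ref{t:equicontinuous pseudogroup}, relying on the cocycle construction at the start of the section (recurrent system of compact generation $\{h_{i,j}\}$ of $\widetilde\HH$ on $Z$, equivalence-invariance of strong equicontinuity and quasi-effectiveness, and the correspondence between leaves with trivial holonomy and orbits with trivial germ groups). Your write-up simply makes explicit the bookkeeping the paper leaves implicit, with no deviation in substance.
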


When $(X,\FF)$ is at least of class $C^2$, the leaves can be endowed
with a metric tensor which is continuous on $X$, and the above result
can be improved to obtain quasi-isometries via diffeomorphisms between
covers of leaves. 

This will be done with the help of the normal
quasi-foliated bundles of Section~\ref{fol.space}. As the previous
results make evident, the problem with the holonomy and topological
structure of the transversal makes it difficult to effectively use the
equicontinuity to push whole leaves onto others. In any case, the
following weaker solution to this problem can be provided.

\begin{theorem}\label{t:equicontinuous foliated spaces}
  Let $(X,\FF)$ be a strongly equicontinuous, compact foliated space
  of class $C^2$ with connected space of leaves \upn{(}for example, if
  $\FF$ is transitive, or if $X$ is connected\upn{)}. Then the
  universal covers of all the leaves are uniformly quasi-isometric via
  diffeomorphisms.
\end{theorem}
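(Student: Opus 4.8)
The plan is to combine the normal quasi-foliated bundles of Section~\ref{fol.space} with strong equicontinuity, exploiting the fact that the universal cover of any leaf carries \emph{trivial} holonomy. Fix a representative $\HH$ of the holonomy pseudogroup on a complete transversal $Z\subset X$ and a relatively compact open $U\subset Z$ meeting every orbit; by hypothesis $\HH$ is strongly equicontinuous and, by the remark after Proposition~\ref{p:equicontinuous}, complete. For $x\in Z$ write $\widetilde{L_x}$ for the universal cover of the leaf $L_x$ with the lifted leaf metric. Since the holonomy representation of the immersion $\widetilde{L_x}\to L_x\hookrightarrow\E$ factors through $\pi_1(L_x)$ and $\pi_1(\widetilde{L_x})=1$, the leaf $\widetilde{L_x}$ has trivial holonomy as a leaf of its associated normal foliated space. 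Being quasi-isometric via a diffeomorphism is an $\HH$-invariant equivalence relation on leaves, so it descends to $Z/\HH$; as $Z/\HH$ is connected and the quotient map is open, it suffices to prove that $\widetilde{L_x}$ and $\widetilde{L_y}$ are quasi-isometric via a diffeomorphism whenever $x,y\in U$ are close enough, with distortion bounded by a constant depending only on $X$. This makes the equivalence classes open, hence all of $Z/\HH$. The uniformity of the constant along the connected leaf space will be recovered from the homogeneity of the orbit closures (Corollary~\ref{c:homogeneous orbit closures}) together with completeness, so that points lying in a common orbit closure are compared in a single step rather than by an unbounded chain.

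First I would apply Theorem~\ref{quasibundle} and the remark following Proposition~\ref{nor.1} to the covering $\widetilde{L_x}\to L_x$. This produces a disc bundle $W\to\widetilde{L_x}$ carrying a foliated space $Y$ transverse to the fibres, with $\widetilde{L_x}$ as the zero-section leaf, and a uniform constant $K$ (independent of the leaf, by compactness of $X$) bounding the distortion of the projection of each leaf of $Y$ onto $L_x$. A point $y$ close to $x$ in $Z$ determines a point $y_1$ in the fibre of $W$ over the base point $\tilde x\in\widetilde{L_x}$; let $S$ be the leaf of $Y$ through $y_1$, which under $\varphi$ corresponds to a piece of the leaf $L_y$ of $X$.

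The heart of the argument is to show that, for $y$ uniformly close to $x$, the leaf $S$ is a \emph{global graph} over $\widetilde{L_x}$, i.e. the bundle projection $\pi\colon S\to\widetilde{L_x}$ is a diffeomorphism. Here the absence of holonomy of $\widetilde{L_x}$ makes the transverse holonomy transport from $\tilde x$ to any $\tilde z\in\widetilde{L_x}$ a well-defined germ, independent of the path; completeness (Proposition~\ref{p:equicontinuous}) extends each such transport to a neighbourhood of fixed size, and strong equicontinuity keeps the transported point $\tau_{\tilde z}(y_1)$ within any prescribed $\epsilon$ of the base over $\tilde z$, \emph{uniformly in $\tilde z$}, once $d(x,y)<\delta(\epsilon)$. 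Thus for a uniform $\delta$ the transported fibre point never leaves $W$ and defines a section over all of $\widetilde{L_x}$, while the Reeb-stability estimates following Proposition~\ref{reeb} identify this section, over each metric ball $B(\tilde x,R)$, with the corresponding leaf of $Y$ and give a product parametrization whose leaf metric is uniformly comparable to that of the ball. Gluing these local graphs over a ball-cover of $\widetilde{L_x}$ yields $S$ as a graph, with $\pi\colon S\to\widetilde{L_x}$ a diffeomorphism of distortion $\le K$. \emph{This globalization — upgrading the ball-by-ball Reeb stability, whose effective range degrades as $R\to\infty$, to uniform control over the entire non-compact universal cover — is the main obstacle, and it is precisely here that equicontinuity, rather than mere continuity of the holonomy, is indispensable.}

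Finally, $S$ is simply connected, being diffeomorphic to $\widetilde{L_x}$ via $\pi$, and $\varphi|_S\colon S\to L_y$ is a local diffeomorphism of bounded distortion between complete manifolds (by the uniform estimates of Theorem~\ref{quasibundle} and Proposition~\ref{nor.1}); a bounded-distortion local diffeomorphism out of a complete manifold lifts paths and is therefore a covering map onto its open and closed, hence full, image $L_y$. So $S\to L_y$ is the universal covering, $S=\widetilde{L_y}$, and the leaf metric of $S$ is bi-Lipschitz to the lifted metric of $\widetilde{L_y}$ with constant $\le K$. Composing, $\pi^{-1}\colon\widetilde{L_x}\to S=\widetilde{L_y}$ is a diffeomorphism that is bi-Lipschitz with constant bounded by a power of $K$ depending only on $X$; hence $\widetilde{L_x}$ and $\widetilde{L_y}$ are uniformly quasi-isometric via a diffeomorphism for nearby $x,y$. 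Propagating along the connected leaf space, and using the homogeneity of the orbit closures (Corollary~\ref{c:homogeneous orbit closures}) to keep the distortion bounded within each orbit closure, gives the statement for all leaves with a uniform constant.
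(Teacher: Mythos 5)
Your local comparison is sound and is essentially the paper's own argument: form the normal bundle of the immersion $\widetilde{L_x}\to L_x\hookrightarrow\E$ as in Theorem~\ref{quasibundle}, Proposition~\ref{nor.1} and the remark following it; use strong equicontinuity to keep each leaf $S$ of $Y$ within a uniform $\epsilon$ of the zero section over every fibre, so that the projection $p\colon S\to\widetilde{L_x}$ has the path/holonomy-transport lifting property and is a covering map, hence a diffeomorphism of bounded distortion since $\widetilde{L_x}$ is simply connected (your section-gluing via path-independent holonomy transport is the same mechanism as the paper's path-lifting argument); and then identify $S$ with $\widetilde{L_y}$ by observing that $S$ is complete and $S\to L_y$ is a bounded-distortion local diffeomorphism, hence a covering map --- a step the paper leaves implicit and you rightly make explicit.

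The genuine gap is in your final globalization. Openness of the equivalence classes in the connected leaf space only yields that any two universal covers are quasi-isometric via diffeomorphisms with a constant depending on the pair, since it is obtained from a chain of overlapping neighborhoods of a priori unbounded length. Your proposed remedy, Corollary~\ref{c:homogeneous orbit closures}, is both unavailable and insufficient: first, that corollary requires quasi-effectiveness, which is a hypothesis of Theorem~\ref{t:equicontinuous quasi-effective foliated spaces} but \emph{not} of the present theorem; second, homogeneity of an orbit closure is a purely topological statement (transitivity of the pseudogroup of \emph{all} local homeomorphisms of the closure) and carries no distortion information, so it cannot compare leaves inside one closure ``in a single step'' with a bounded constant; third, even granting such control within each orbit closure, leaves lying in distinct orbit closures must still be compared, and the unbounded-chain problem reappears there. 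What actually delivers the uniform constant --- and what the paper uses --- is compactness of the leaf space: cover the compact leaf space by finitely many open sets, say $m$ of them, inside each of which the local comparison gives distortion at most $K$; connectedness then joins any two leaves by a chain of at most $m$ of these sets, and composing at most $m$ diffeomorphisms of distortion at most $K$ gives a diffeomorphism of distortion at most $K^m$, a bound depending only on $X$. Your propagation step never invokes this compactness, and without it the uniformity asserted in the statement is not established.
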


\begin{proof}
Let $L'$ be the universal cover of a leaf $L$, and let $N(L')$ be the
normal bundle described in Section~\ref{fol.space}. With respect to
some metric on $N(L')$, which is boundedly distorted via the
embedding, there is a product neighborhood $N(L',\epsilon_0)$ of the
zero section in $N(L')$ carrying a lamination $Y$, as described in
Theorem~\ref{quasibundle}. The projection of the leaves in this
neighborhood onto the zero section $L'$ is locally a diffeomorphism
with bounded distortion, by Proposition~\ref{nor.1}. On the other
hand, the strong equicontinuity of the pseudogroup readily implies that these
projections are actually covering maps.  More precisely, with the
notation of Section~\ref{fol.space}, strong equicontinuity implies that given
$\epsilon>0$ there exists $\delta >0$ such that if $S$ is a leaf of
$Y$ and meets the fiber $p^{-1}(x)$ of some point $x\in L'$ at a point
at distance $<\delta$ from $x$, then it meets every fiber $p^{-1}(y)$
at points at distance $<\epsilon$ from the base $y$. This implies that
the projection $p:S\to L'$ is a covering map, for it is a local
homeomorphism which has the path covering property, the only
obstruction to lifting a path being that such leaf $S$ runs off the
neighborhood $N(L',\epsilon_0)$.  Since $L'$ is simply connected, it
follows that $p$ is a diffeomorphism. That $p$ has bounded distortion
was already discussed in Proposition~\ref{nor.1}.

The above paragraph shows that universal covers of pairs of leaves are
uniformly quasi-isometric if both leaves are close enough. Then the
result follows since the space of leaves is compact and connected.
\end{proof}

\begin{theorem}\label{t:equicontinuous quasi-effective foliated spaces}
  Let $(X,\FF)$ be a strongly equicontinuous, compact and
  quasi-effective
  foliated space of class $C^2$ with connected space of leaves
  \upn{(}for example, if $\FF$ is transitive, or if $X$ is
  connected\upn{)}. Then the holonomy covers of all the leaves are
  uniformly quasi-isometric via diffeomorphisms.
\end{theorem}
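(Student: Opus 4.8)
The plan is to follow the proof of Theorem~\ref{t:equicontinuous foliated spaces} verbatim up to the point where simple connectedness is invoked, replacing the universal cover by the holonomy cover $\widehat L$ of a leaf $L$, and replacing the simple-connectedness step by an argument based on quasi-effectiveness. First I would fix a symmetric set $\Sigma$ of generators of the holonomy pseudogroup $\HH$, closed under composition and under restriction to open sets, realizing both Definitions~\ref{d:strongly equicontinuous} and~\ref{d:quasi-effective}. Applying the construction of Theorem~\ref{quasibundle} to the immersion $\widehat L\to L\hookrightarrow\E$ (as in the remark following Proposition~\ref{reeb}), I obtain a disc bundle $W\to\widehat L$, a transverse lamination $Y\subset N(\widehat L,\epsilon_0)$ having $\widehat L$ as its zero-section leaf, and the bundle projection $p$, whose restriction to each leaf $S$ of $Y$ is a local diffeomorphism of distortion in $[1/K,K]$ by Proposition~\ref{nor.1}. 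Exactly as in Theorem~\ref{t:equicontinuous foliated spaces}, strong equicontinuity guarantees that a leaf $S$ through a fiber point $y$ within distance $\delta$ of the zero section stays within distance $\epsilon<\epsilon_0$, so $p\colon S\to\widehat L$ has the path-lifting property and is a covering map.

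The decisive new step is to show that this covering is one-sheeted. Since $\widehat L$ is the holonomy cover of $L$, it has trivial holonomy as a leaf of $Y$; but this is only an infinitesimal statement, namely that the holonomy transport $h_\gamma$ of $Y$ along any loop $\gamma\subset\widehat L$ has trivial germ at the zero-section point $0$. The monodromy of $p\colon S\to\widehat L$ along $\gamma$ is the value $h_\gamma(y)$, so I must upgrade germ-triviality at $0$ into the fixed-point equality $h_\gamma(y)=y$ at the finite-distance point $y$. For this I would invoke Proposition~\ref{p:equicontinuous}: since $h_\gamma(0)=0\in U$, there is an extension $\widetilde h_\gamma\in\Sigma$ whose domain contains a fixed set $V\in\VV$, and, after shrinking $\delta$ below a Lebesgue number of $\VV$, both $0$ and $y$ lie in this $V$. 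As $\widetilde h_\gamma$ has trivial germ at $0$, quasi-effectiveness forces $\widetilde h_\gamma=\id$ on $V$; since $h_\gamma$ and $\widetilde h_\gamma$ share the same germ at $0$ and both lie in $\Sigma$, they agree on their common domain, whence $h_\gamma(y)=\widetilde h_\gamma(y)=y$. Thus for $y$ close enough to the zero section the monodromy is trivial and $p\colon S\to\widehat L$ is a diffeomorphism of distortion in $[1/K,K]$.

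The same propagation identifies $S$ with the holonomy cover of the leaf $L'=\varphi(S)$ that it covers in $X$. On one hand, the argument just given shows that $S$ itself has trivial holonomy as a leaf of $Y$, so the covering $\varphi\colon S\to L'$ factors through $\widehat{L'}$. On the other hand, if $\tau\subset L'$ is a loop in the kernel of the holonomy representation of $L'$, then the holonomy transport of $X$ along $\tau$ has trivial germ at the transverse position $y'$ of $L'$; since $\varphi$ is a local isomorphism of foliated spaces carrying $y$ to $y'$, the monodromy of $\varphi\colon S\to L'$ along $\tau$ equals this germ-trivial transport evaluated at $y$, hence fixes $y$, so $\tau$ lifts to a loop in $S$. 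Therefore $\varphi\colon S\to L'$ is exactly the holonomy cover, and $\widehat{L'}\cong S$ with $p\colon S\to\widehat L$ a diffeomorphism of distortion in $[1/K,K]$, i.e.\ a uniform quasi-isometry. Since the space of leaves is compact and connected, chaining these comparisons over a finite path of nearby leaves shows that the holonomy covers of all leaves are uniformly quasi-isometric via diffeomorphisms.

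I expect the main obstacle to be precisely the passage from infinitesimal to finite triviality of holonomy carried out in the second paragraph. The fact that $\widehat L$ is a holonomy cover yields triviality of holonomy only as a statement about germs at the zero section, whereas the covering $p\colon S\to\widehat L$ is governed by the values of the holonomy transport at the finite-distance points $y$ through which the nearby leaves pass. Converting one into the other is exactly what quasi-effectiveness---applied through the uniform extension of Proposition~\ref{p:equicontinuous}---accomplishes, and it is the ingredient with no counterpart in Theorem~\ref{t:equicontinuous foliated spaces}, where simple connectedness of the universal cover makes the monodromy question vacuous. Care will also be needed to check that all the constants ($\delta$, the Lebesgue number of $\VV$, and the distortion $K$) can be chosen independently of the leaf, which is what makes the resulting family of quasi-isometries uniform.
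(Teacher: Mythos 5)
Your first two paragraphs reproduce, in essence, the paper's own proof: the same normal bundle $N(\widehat L,\epsilon_0)$ over the holonomy cover with its transverse lamination $Y$, the same use of strong equicontinuity to make $p|_S$ a covering map, and the same decisive step. The paper phrases that step as a ``key property'': any holonomy map of $Y$ defined near $x$ on the transversal $Z(x,\delta)$ with image in $Z(x,\epsilon)$ extends to a holonomy transformation defined on all of $Z(x,\delta)$, the extension is unique by quasi-effectiveness, and it must be the identity because $\widehat L$, being the holonomy cover, has trivial holonomy group in $Y$. This is exactly your upgrade of germ-triviality at $0$ to $h_\gamma(y)=y$ via Proposition~\ref{p:equicontinuous} and quasi-effectiveness, so the heart of your proposal coincides with the paper's argument.

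Your third paragraph is where you genuinely depart from the paper: the paper never carries out the identification of the leaves $S$ of $Y$ with the holonomy covers of the leaves $L_1=\varphi(S)$ of $X$ they map into, and simply appeals to compactness and connectedness of the leaf space ``as in'' Theorem~\ref{t:equicontinuous foliated spaces}. You are right that this identification is needed for the chaining, and your first inclusion ($\varphi_*\pi_1(S)\subseteq\ker$ of the holonomy representation of $L_1$, via triviality of the holonomy of $S$ as a leaf of $Y$) is sound, granted the product structure of the saturation of $Z(x,\delta)$ that your argument produces. But two steps there are weaker than the rest. First, you assume that $\varphi|_S\colon S\to L_1$ is a covering map onto the whole leaf; this needs its own path-lifting argument (bounded distortion of $p$ and $\varphi$ together with completeness of the leaves, or equicontinuity once more, to rule out lifts escaping $N(\widehat L,\epsilon_0)$). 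Second, and more seriously, the claim that ``the monodromy of $\varphi\colon S\to L_1$ along $\tau$ equals this germ-trivial transport evaluated at $y$'' commits precisely the germ-versus-finite-distance conflation that you correctly isolate as the crux in your second paragraph: triviality of the germ at $y'$ of the holonomy of $X$ along $\tau$ does not by itself control the monodromy of $\varphi|_S$, which is governed by the transport along the ``shadow'' of $\tau$ in $L$ evaluated at the zero-section position, a point at finite transverse distance from $y'$. The repair is to run the same Proposition~\ref{p:equicontinuous}-plus-quasi-effectiveness propagation once more for that transport (or to argue by symmetry, performing the whole construction over $\widehat{L_1}$ as well and composing the two comparison maps). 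With those two repairs your proposal is complete --- and on this particular point it is more careful than the published proof.
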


\begin{proof}
The proof is similar to the proof of Theorem~\ref{t:equicontinuous
foliated spaces}, but using the holonomy cover $L''$ of a given leaf
$L$ instead of using the universal cover $L'$. As $N(L',\epsilon_0)$
in the above proof, the neighborhood $N(L'',\epsilon_0)$ of the zero
section in the normal bundle $p:N(L'')\to L''$, carrying a lamination
$Y$, satisfies the following property: given $0<\epsilon<\epsilon_0$,
there exists $\delta>0$ such that, if $S$ is a leaf of $Y$ meeting
some fiber $p^{-1}(x)$ of a point $x\in L''$ at a point at distance
$<\delta$ from $x$, then $S$ meets every fiber $p^{-1}(y)$ at a
distance $<\epsilon$ from the base point $y\in L''$, and it follows
that $p:S\to L''$ is a covering map, whose triviality has to be
proved. This would finish the proof because the distortion of $p:S\to
L''$ is uniformly bounded for the leaves $S$ of $Y$, as in the proof of
Theorem~\ref{t:equicontinuous foliated spaces}.

Observe that $Z(x,r)=Y\cap p^{-1}(x)\cap N(L'',\delta)$ is a
transversal of $Y$ through $x$ for any $r\le\epsilon_0$. Then the key
property of the above statement can be stated as follows: if $h$ is
any holonomy map of $Y$ defined on some neighborhood of $x$ in
$Z(x,\delta)$, and whose image is contained in $Z(x,\epsilon)$, then
$h$ can be extended to a holonomy transformation with domain
$Z(x,\delta)$ and image contained in $Z(x,\epsilon)$. Moreover, under
the present hypothesis, this extension can be assumed to be unique.
The only such holonomy transformation is the identity on $Z(x,\delta)$
because $L''$ has trivial holonomy group in $Y$ since it is the
holonomy cover of $L$.  Therefore any such a leaf $S$ meets every
fiber of $p$ at just one point; {\it i.e.}, $p:S\to L''$ is a
diffeomorphism, as desired.
\end{proof}

\begin{rem}
There are versions of Theorems~\ref{t:equicontinuous foliated spaces}
and~\ref{t:equicontinuous quasi-effective foliated spaces} for the
coarse quasi-isometry type of the universal coverings or the holonomy
covers of all leaves when the foliated space is not of class $C^2$; in
particular, this generalizes Theorem~\ref{t:equicontinuous foliated
space}. The coarse quasi-isometry types of these covers can be defined
again via the generators of a representative of the holonomy
pseudogroup induced by a finite defining cocycle: the orbits can be
thought as graphs in an obvious way, and thus the corresponding covers
can be constructed. The coarse quasi-isometry types of such covers can
be proved to be invariant by equivalences when the metrics are induced
by recurrent systems of compact generation. Hence, versions of
Theorems~\ref{t:coarsely quasi-isometric orbits}
and~\ref{t:equicontinuous pseudogroup} for covers of the orbits need
to be proved first. These generalizations are easy to make, but the
required notation becomes complicated; thus they are left to the reader.
\end{rem}

One of the fundamental results of Molino's theory of Riemannian
foliations is the following. The closure of the leaves partition the
manifold into the leaves of a larger singular foliation
\cite{Mol88}. In the general situation considered here, the following
weaker results are available; they follow directly by applying
Theorem~\ref{t:minimal orbit closures}, Corollary~\ref{c:partition}
and Corollary~\ref{c:homogeneous orbit closures} to the holonomy
pseudogroup.

\begin{theorem}\label{t:minimal leaf closures}
Let $(X,\FF)$ be a strongly equicontinuous, compact foliated space. Then the
closure of each leaf is a minimal set. In particular, $\FF$ is minimal
if it is transitive.
\end{theorem}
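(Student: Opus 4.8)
The plan is to reduce the statement to its pseudogroup version, Theorem~\ref{t:minimal orbit closures}, through the standard correspondence between leaves and holonomy orbits. First I would fix a finite defining cocycle $(U_i,p_i,h_{i,j})$ of $\FF$ and let $\HH$ be the representative of the holonomy pseudogroup it induces on the locally compact Polish space $Z=\bigsqcup_i Z_i$, which I regard as a complete transversal embedded in $X$. By the discussion opening this section, $\HH$ is compactly generated, with a system of compact generation given by the transition maps $h_{i,j}$; and, by the very definition of a strongly equicontinuous foliated space, $\HH$ is strongly equicontinuous. Hence $\HH$ satisfies the hypotheses of Theorem~\ref{t:minimal orbit closures}, which gives that the closure in $Z$ of every $\HH$-orbit is a minimal set.

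It then remains to transport this conclusion back to $X$. Recall the canonical bijection $L\mapsto\HH(p_i(x))$ between leaves and orbits, where $x\in L\cap U_i$; so, fixing a leaf $L$, a point $x\in L\cap U_i$, and writing $\OO=\HH(z)$ with $z=p_i(x)$, one has $L\cap Z=\OO$. I would verify that the assignment $W\mapsto W\cap Z$ is an inclusion-preserving bijection from the closed $\FF$-saturated subsets of $X$ onto the closed $\HH$-invariant subsets of $Z$, with inverse given by $\FF$-saturation $\mathrm{Sat}$; this rests on $Z$ being a complete transversal, so that every plaque meets $Z$, together with the continuity of the $p_j$ (which is what makes $\mathrm{Sat}(Y)$ closed again when $Y$ is). The crucial point is that this bijection preserves minimality: since the ambient topology of $X$ is locally a product of a plaque and a piece of transversal, a leaf $L'$ accumulates at a point of $X$ exactly when the corresponding orbit accumulates at the corresponding point of $Z$, so ``every orbit in $Y$ is dense in $Y$'' translates into ``every leaf in $\mathrm{Sat}(Y)$ is dense in $\mathrm{Sat}(Y)$.'' Applying this with $Y=\overline{\OO}^Z$, which is minimal by the previous paragraph, the set $M=\mathrm{Sat}\!\left(\overline{\OO}^Z\right)$ is a minimal saturated closed subset of $X$ with $M\cap Z=\overline{\OO}^Z$. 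As $z\in\OO\subset\overline{\OO}^Z$ forces $L\subset M$, hence $\overline L\subset M$, while minimality of $M$ together with the fact that $\overline L$ is a non-trivial closed saturated subset of $M$ gives $\overline L=M$, I conclude that $\overline L$ is a minimal set.

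The single step demanding genuine care is this density correspondence in the transfer, i.e.\ matching up closure on the transversal with closure in $X$ under saturation; everything else is formal bookkeeping and follows from the local product structure of flow boxes and the completeness of $Z$. Finally, the ``in particular'' is immediate: if $\FF$ is transitive there is a dense leaf $L$, so $\overline L=X$ is a minimal set, whence every leaf is dense in $X$ and $\FF$ is minimal.
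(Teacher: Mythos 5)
Your proposal is correct and takes essentially the same route as the paper: the paper's proof consists precisely of applying Theorem~\ref{t:minimal orbit closures} to the holonomy pseudogroup, whose compact generation and strong equicontinuity are established exactly as you note. The only difference is that you spell out the leaf--orbit transfer (the correspondence between closed saturated subsets of $X$ and closed $\HH$-invariant subsets of the complete transversal $Z$, and its compatibility with minimality), a step the paper treats as following ``directly.''
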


\begin{cor}\label{c:minimal leaf closures}
  The leaf closures define a partition of any strongly equicontinuous,
  compact foliated space.
\end{cor}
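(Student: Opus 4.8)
The plan is to deduce the statement from Theorem~\ref{t:minimal leaf closures}, exactly in the way that its pseudogroup analogue Corollary~\ref{c:partition} is deduced from Theorem~\ref{t:minimal orbit closures}. First I would recall that a strongly equicontinuous compact foliated space has, by definition, a strongly equicontinuous holonomy pseudogroup, and that (as established at the beginning of this section) this pseudogroup is compactly generated and acts on a locally compact Polish space $Z=\bigsqcup_i Z_i$. Hence Theorem~\ref{t:minimal leaf closures} applies and guarantees that the closure $\overline{L}$ of every leaf $L$ is a minimal set of $\FF$, that is, a nonempty closed saturated subset of $X$ (a union of leaves) that is minimal among such subsets, equivalently one in which every leaf is dense.

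The second step is the standard dichotomy that two minimal sets are either equal or disjoint. Given minimal sets $Y_1,Y_2$ with a common point $z$, the closure $\overline{L_z}$ of the leaf through $z$ is a nonempty closed saturated subset contained in both $Y_1$ and $Y_2$, since each $Y_k$ is closed and saturated and contains $z$. Minimality of $Y_1$ and of $Y_2$ then forces $\overline{L_z}=Y_1$ and $\overline{L_z}=Y_2$, whence $Y_1=Y_2$. Applying this with $Y_1=\overline{L_x}$ and $Y_2=\overline{L_y}$ shows that any two leaf closures that intersect must coincide.

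Finally, since every point $x\in X$ lies in $\overline{L_x}$, the leaf closures cover $X$; combined with the dichotomy just proved, the distinct leaf closures form a partition of $X$, as required. The only point requiring a little care is the disjoint-or-equal dichotomy, and even that reduces immediately to the defining property of a minimal set, so no new estimate is needed. An alternative route would be to apply Corollary~\ref{c:partition} directly to the holonomy pseudogroup and then saturate the resulting partition of $Z$ by orbit closures to a partition of $X$ by leaf closures; this is equally valid but requires checking that a leaf closure meets the complete transversal $Z$ exactly in the corresponding orbit closure, so the direct argument above via Theorem~\ref{t:minimal leaf closures} is the cleaner one.
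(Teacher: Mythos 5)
Your proof is correct. It differs only in organization from the paper's own: the paper obtains this corollary in one line, by applying Corollary~\ref{c:partition} (the orbit-closure partition) to the holonomy pseudogroup, which implicitly requires identifying leaf closures in $X$ with orbit closures in the complete transversal $Z$. You instead invoke Theorem~\ref{t:minimal leaf closures} and run the disjoint-or-equal dichotomy for minimal sets directly in $X$: a common point $z$ of two leaf closures forces both to equal $\overline{L_z}$, since each is minimal, closed and saturated. This is the same mathematics --- the dichotomy you spell out is precisely what makes Corollary~\ref{c:partition} follow from Theorem~\ref{t:minimal orbit closures} in the pseudogroup setting --- but your route trades the transversal bookkeeping (that a leaf closure meets $Z$ exactly in the corresponding orbit closure) for the equally routine fact that a leaf closure is a closed saturated subset of $X$, which is anyway contained in the assertion of Theorem~\ref{t:minimal leaf closures} that it is a minimal set. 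Both arguments are valid; yours makes explicit the elementary step the paper leaves to the reader, and your closing remark correctly identifies the one verification that the paper's shorter route would need.
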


\begin{theorem}\label{t:homogeneus leaf closures}
  Let $(X,\FF)$ be a strongly equicontinuous, compact and
  quasi-effective
  foliated space.  Then the closure of each leaf is a homogeneous
  space
\end{theorem}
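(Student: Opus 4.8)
The plan is to reduce the statement to its pseudogroup counterpart, Corollary~\ref{c:homogeneous orbit closures}, and then to transport homogeneity from the transversal back to the leaf closure through the flow-box product structure. First I would fix a finite defining cocycle $(U_i,p_i,h_{i,j})$ of $\FF$ and let $\HH$ be the induced representative of the holonomy pseudogroup on $Z=\bigsqcup_i Z_i$. As recalled at the beginning of the present section, compactness of $X$ makes $\HH$ compactly generated, and by hypothesis (together with the invariance Lemmas~\ref{l:strongly equicontinuous} and~\ref{l:quasi-effective}) the pseudogroup $\HH$ is strongly equicontinuous and quasi-effective. Hence $\HH$ satisfies all the hypotheses of Corollary~\ref{c:homogeneous orbit closures}, so the closure $\overline{\HH(z)}$ of every $\HH$-orbit is a homogeneous space.

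Next I would translate this into a statement about $\overline{L}$. Under the canonical bijection between leaves and orbits, the leaf closure $\overline{L}$ corresponds to the orbit closure $\overline{\HH(z)}$, and in each flow box one has the product description
$$
\overline{L}\cap U_i=p_i^{-1}\left(\overline{\HH(z)}\cap Z_i\right)\cong B_i\times C_i,\qquad C_i=\overline{\HH(z)}\cap Z_i.
$$
Since $Z_i$ is open in $Z$, the set $C_i$ is an open subset of the homogeneous space $\overline{\HH(z)}$; restricting germs of local homeomorphisms shows that an open subspace of a homogeneous space is again homogeneous, so $C_i$ is homogeneous. The ball $B_i$ is homogeneous as well, and a product of local homeomorphisms of the two factors is a local homeomorphism of the product; therefore each $\overline{L}\cap U_i\cong B_i\times C_i$ is homogeneous, and the relevant local homeomorphisms, being defined on open subsets of $\overline{L}$, belong to the pseudogroup of all local homeomorphisms of $\overline{L}$.

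Finally I would patch these local homogeneities together. Because $L$ is connected, so is $\overline{L}$; the sets $\overline{L}\cap U_i$ form an open cover, and within each of them any two points lie in a single orbit of the pseudogroup of local homeomorphisms of $\overline{L}$. Declaring two points of $\overline{L}$ equivalent when they lie in a common such orbit yields an equivalence relation whose classes are open (each flow-box trace sits inside one class); by connectedness there is a single class, that is, $\overline{L}$ has exactly one orbit under its pseudogroup of local homeomorphisms. By Definition~\ref{homog} this says precisely that $\overline{L}$ is homogeneous.

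The main obstacle is the passage from the transversal to the whole leaf closure: Corollary~\ref{c:homogeneous orbit closures} controls only the transverse model $\overline{\HH(z)}$, and the work lies in checking that the flow-box product structure $B_i\times C_i$ genuinely describes $\overline{L}$ locally, that homogeneity is inherited by the open pieces $C_i$ and by the products, and that these local homogeneities assemble into global homogeneity via the connectedness of $\overline{L}$. Once these verifications are in place the result is, as indicated, a direct consequence of the pseudogroup statement.
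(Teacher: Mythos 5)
Your proposal is correct and follows essentially the same route as the paper, which proves this theorem simply by applying Corollary~\ref{c:homogeneous orbit closures} to the holonomy pseudogroup of $(X,\FF)$. The flow-box transfer you spell out (the product description $B_i\times C_i$, homogeneity of open subspaces and products, and the connectedness patching) is exactly the step the paper treats as immediate, and your verification of it is sound.
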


The next result shows another geometric aspect of the structure of a strongly
equicontinuous foliated space.  It was shown by
H. Winkelnkemper~\cite{win1} for the holonomy groupoid or graph, and
by F. Alcalde Cuesta \cite{alc} for the homotopy groupoid of a
Riemannian foliation.

\begin{cor}
Let $(X,\FF)$ be a strongly equicontinuous, compact foliated space of class
$C^2$ with connected leaf space. Then the homotopy groupoid of
$(X,\FF)$ is, canonically, a fiber bundle whose structural group can
be reduced to the group of differentiable quasi-isometries of the
typical fiber, this being the universal cover of a leaf. If $\FF$ is
quasi-effective, then the same is true for the holonomy groupoid, the
fiber now being the holonomy cover of a leaf.
\end{cor}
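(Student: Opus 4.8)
The plan is to exhibit the source map of the groupoid as the projection of a locally trivial fibre bundle and then to identify its transition functions. Write $\Pi=\Pi_1(X,\FF)$ for the homotopy groupoid, whose elements are leafwise homotopy classes of paths relative to endpoints, with source and target maps $s,t:\Pi\to X$. By construction the fibre $s^{-1}(x)$ is the universal cover $L'_x$ of the leaf $L_x$ based at $x$; likewise, for the holonomy groupoid the source fibre over $x$ is the holonomy cover $L''_x$. I would take the bundle projection to be $s$, so that the base is $X$ and the typical fibre is the universal cover $L'$ (resp. the holonomy cover $L''$), and I would build the local trivialisations from the normal--bundle description of Section~\ref{fol.space}.

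First I would trivialise over a flow box $U\cong B\times T$, with $B$ a plaque and $T$ a transversal. In the plaque direction the trivialisation is canonical: moving the base point along the straight path inside a plaque gives a natural (indeed isometric) identification of $s^{-1}(b,z)$ with $s^{-1}(b',z)$, namely left translation in the groupoid, so $s^{-1}(U)$ is canonically a product $B\times s^{-1}(\{b_0\}\times T)$. In the transversal direction I would invoke the construction in the proof of Theorem~\ref{t:equicontinuous foliated spaces}: applying Theorem~\ref{quasibundle} and its extension to covers to $L'=L'_{x_0}$ produces the neighbourhood $N(L',\epsilon_0)$ carrying the lamination $Y$, and strong equicontinuity forces the projection $p\colon S\to L'$ of each nearby leaf $S$ of $Y$ to be a covering map, hence, as $L'$ is simply connected, a diffeomorphism of distortion in $[1/K,K]$ by Proposition~\ref{nor.1}. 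As $z$ ranges over $T$ the leaves $S_z$ are precisely the universal covers $L'_z$ of the nearby leaves (each being simply connected and covering the corresponding leaf of $X$), and $z\mapsto p_z^{-1}$ assembles into a fibrewise quasi-isometric diffeomorphism $s^{-1}(\{b_0\}\times T)\cong T\times L'$ with uniform bounds. Combining the two directions gives $s^{-1}(U)\cong U\times L'$.

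It then remains to compute the transition functions and to reduce the structural group. On an overlap $U\cap U'$ the two trivialisations differ by a map $\tau\colon U\cap U'\to \mathrm{QI}(L')$, where $\mathrm{QI}(L')$ denotes the group of differentiable quasi-isometries of the typical fibre. These transition maps are of two kinds, both landing in $\mathrm{QI}(L')$: the longitudinal ones, arising from change of base point and deck transformations of $L'$, which are isometries; and the transverse ones, given by comparing the projections $p_z$ for the two normal--bundle charts, which are composites of quasi-isometric diffeomorphisms of uniformly bounded distortion by Theorem~\ref{t:equicontinuous foliated spaces} and Proposition~\ref{nor.1}. That all fibres are diffeomorphic and quasi-isometric to a single typical fibre $L'$ follows from Theorem~\ref{t:equicontinuous foliated spaces} together with the connectedness of the leaf space. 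For the holonomy groupoid the identical argument applies with $L'$ replaced by the holonomy cover $L''$, using Theorem~\ref{t:equicontinuous quasi-effective foliated spaces} in place of Theorem~\ref{t:equicontinuous foliated spaces}; quasi-effectiveness is what guarantees both that $p\colon S\to L''$ is a diffeomorphism, via the uniqueness of holonomy extensions as in that proof, and that the transverse transition maps are quasi-isometric diffeomorphisms.

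The main obstacle I anticipate is the bookkeeping that makes these trivialisations compatible with the natural topology and leafwise--smooth structure on the groupoid, and the verification that the transition maps $\tau$ are continuous into $\mathrm{QI}(L')$ for an appropriate topology, say $C^\infty$ on compact sets together with uniform control of the quasi-isometry constants. The essential geometric content, however---that nearby source fibres are identified by diffeomorphisms whose distortion is bounded independently of the fibre---is exactly what is supplied by Proposition~\ref{nor.1} and Theorems~\ref{t:equicontinuous foliated spaces} and~\ref{t:equicontinuous quasi-effective foliated spaces}, so the reduction of the structural group to differentiable quasi-isometries is automatic once local triviality is established.
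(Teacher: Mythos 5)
Your proposal is correct and follows essentially the same route as the paper's proof: the source map is taken as the bundle projection, trivialized over a flow box by combining the canonical plaque-direction identification (the paper uses unique geodesics in convex plaques, your ``straight paths'') with the transverse identification of source fibers supplied by the normal-bundle construction and Theorems~\ref{t:equicontinuous foliated spaces} and~\ref{t:equicontinuous quasi-effective foliated spaces}, with uniform distortion coming from Proposition~\ref{nor.1}. The paper is terser about the transition functions and the structural group (it just exhibits the commutative diagram and says the result ``readily follows''), but the geometric content is the same as yours.
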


\begin{proof}
The homotopy groupoid $G$ of $(X, \FF)$ consists of equivalence
classes of paths on leaves, two paths $\alpha$ and $\beta$ being
equivalent if they have the same endpoints and the closed loop
$\alpha\beta^{-1}$ is homotopically trivial in the leaf which contains
it. If $[\alpha]$ is a point of $G$, then let $s[\alpha]=\alpha(0)$
denote the source map $s:G\to X$.  The fiber of $s$ over a point $x\in
X$ is $G_x$, and is canonically identified with the universal cover of
the leaf $L_x$. The foliated space being of class $C^1$ means that its
leaves can be endowed with a continuous metric tensor. Let $U$ be a
flow box for $X$ with leaf space $Z$ and such that its plaques are
convex subsets of the leaves with respect to the chosen metric
tensor. Let $x\in U$, and let $P$ be the plaque containing $x$, so
that $U$ is of the form $P\times Z$. Then, if $G_U$ denotes the
restriction of $G$ to $U$, $G_U=s^{-1}U$, there is a map $G_U\to
Z\times G_P$ which sends a point $[\alpha]\in U$ to $(q(\alpha(0)),
[p\o\alpha(0)])$, where $q:U\to Z$ is the projection into the space of
leaves of $U$. Since the plaque $P$ is convex, there is a unique
geodesic path in $P$ joining $x$ to any given point of $P$, so there
is a well defined map $G_P\to P\times G_x$ obtaining by precomposing a
path starting at some $y\in P$ with the unique geodesic in $P$ from
$x$ to $y$. Let $\phi:G_U\to U\times G_x$ denote the composition of
this two maps.  This map is a homeomorphism, and from the previous
work (Theorem~\ref{t:equicontinuous foliated spaces}), it follows that
this map is a quasi-isometry on the fibers, that is, it sends the
fiber $s^{-1}(y)=G_y$ quasi-isometrically onto $G_x$. The
quasi-isometry distortion is bounded, and there is a commutative
diagram $$
\begin{CD}
G_U @>{\phi}>> U\times G_x \\
@VVV @VVV \\
U @= U
\end{CD}
$$
from which the result readily follows.

If the foliated space is quasi-effective, then the same property for the
holonomy groupoid is proved similarly by using Theorem~\ref{t:equicontinuous
quasi-effective foliated spaces}.
\end{proof}

To conclude, it is quite reasonable to expect that the theory
presented in this paper can be extended to include larger classes of
foliated spaces which have some sort of transverse uniformity,
paralleling certain well-known structures of classical topological
dynamics \cite{vee}, {\it e.g.}, distal actions.


\begin{thebibliography}{00}

\bibitem{alc}
F. Alcalde Cuesta, {\em
Groupo\"\i de d'homotopie d'un feuilletage riemannien et r\'ealisation
symplectique de certaines vari\'et\'es de Poisson},
Publ. Mat. {\bf 33} (1989), 395--410.

\bibitem{Alv-Candel:Riemannian}
J.A.~\'Alvarez L\'opez and A. Candel, {\em Topological description of
Riemannian 
foliations with dense leaves}, preprint.

\bibitem{jes} J. A. \'Alvarez L\'opez and A. Candel, {\em Generic
geometry of leaves}, (Forthcomming preprint).

\bibitem{Block-Weinberger}
J. Block and S. Weinberger, {\em Aperiodic tilings, positive scalar
  curvature, and amenability of spaces\/}, J. Amer. Math. Soc. {\bf 5} (1992), 907--918.

\bibitem{CC2K}
A. Candel and L. Conlon, {\em Foliations I\/}, Graduate Studies in
Mathematics, American Mathematical Society, Providence, 2000.

\bibitem{Goodman-Plante} S.E.~Goodman and J.F.~Plante, {\em Holonomy
    and averaging in foliated sets\/}, J.  Diff. Geom. {\bf 14},
  401--407.

\bibitem{Gromov93}
M. Gromov, {\em Asymptotic invariants of infinite groups\/},
Geometric Group Theory, Volume 2, G.A. Niblo, M.A. Roller, eds., 
Cambridge University Press, Cambridge, 1993.

\bibitem{Haefliger85} 
A. Haefliger, {\em Pseudogroups of local
    isometries\/}, Differential Geometry (Santiago de Compostela 1984),
  L.A. Cordero, ed., Research Notes in Math. {\bf 131}, Pitman
  Advanced Pub. Program, Boston, 1985, pp.~174--197.

\bibitem{Haefliger88}
A. Haefliger, {\em Leaf closures in Riemannian foliations}, 
A F\^{e}te on Topology, Y.~Matsumoto, T.~Mizutani and S.~Morita, eds.,
   Academic Press, New York,
1988, pp.~3--32.

\bibitem{Haefliger00}
A. Haefliger, {\em Foliations and compactly generated pseudogroups\/}, preprint, 2001.


\bibitem{Hector-Hirsch}
G. Hector and U. Hirsch, {\em Introduction to the Geometry of
Foliations, Parts~{A} and~{B}\/}, Aspects of Mathematics, vols. {\bf
E1} and {\bf E3},  Friedr. Vieweg and Sohn, Braunschweig, 1981 and 1983.

\bibitem{hirsch} 
M. Hirsch, {\em Differential Topology\/}, Graduate
  Texts in Mathematics {\bf 33}, Springer-Verlag, New York, 1976.

\bibitem{Hurder94}
S. Hurder, {\em Coarse geometry of foliations},
Geometric Study of Foliations (Tokyo 1993), T.~Mizutani, K.~Masuda,
S.~Matsumoto, T.~Inaba, T.~Tsuboi, Y.~Mitsumatsu, eds., 
World scientific Publishing Co. Pte. Ltd., Singapore, 1994, pp.~35--96.

\bibitem{HK1}
S. Hurder and A. Katok, {\em Ergodic theory and Weil measures for
foliations}, Annals of Math. {\bf 126} (1987), 221--275.

\bibitem{Kanai85}
M. Kanai, {\em Rough isometries, and combinatorial approximations of
  geometries of non-compact manifolds\/}, J. Math. Soc. Japan {\bf 37} (1985), 391--413.

\bibitem{kellum} 
M. Kellum,
{\em Uniformly quasi-isometric foliations},
Ergodic Theory Dynamical Systems {\bf 13} (1993), 101--122.

\bibitem{Mol88}
P. Molino, {\em Riemannian Foliations} (with appendices by G.~Cairns,
Y.~Carri\`ere, E.~Ghys, E.~Salem and V.~Sergiescu), Progress in
Mathematics, vol. {\bf 73}, Birkh\"auser, Boston, 1988.

\bibitem{MS}
C.C. Moore and C. Schochet, {\em Global Analysis on Foliated Spaces},
MSRI Publications, vol.~{\bf 9},  Springer-Verlag, New York, 1988.

\bibitem{munkres}
J.R. Munkres, {\em Topology: a First Course\/}, Prentice-Hall, Inc., Englewood
Cliffs, N.J., 1975.



\bibitem{nagata} 
J. Nagata, {\em Modern General Topology}, (Second,
revised edition), Noth-Holland Publishing Company, Amsterdam, 1974.

\bibitem{robinson}
A. Robinson, {\em Non-standard Analysis}, (Reprint of the 1974
Edition), Princeton University Press, Princeton, N.J., 1996.

\bibitem{Plante}
J.F.~Plante, {\em Foliations with measure preserving holonomy\/},
Ann. of Math. {\bf 
102} (1975), 327--361.


\bibitem{Roe93}
J. Roe, {\em Coarse Cohomology and Index Theory on Complete Riemannian
  Manifolds\/}, 
Memoirs of the Amer. Math. Soc., Number 497, 1993.

\bibitem{Sak} R. Sacksteder, {\em Foliations and pseudogroups},
American J. of Math. {\bf 87} (1965), 79--102.

\bibitem{Smirnov}
Y.M. Smirnov, {\em On metrization of topological spaces\/}, Amer. Math. Soc.
Translations. Series One {\bf 8} (1953), 62--77.

\bibitem{Counterexamples}
L.A. Steen and J.A. Seebach, Jr., {\em Counterexamples in Topology\/}, (Second
Edition), Springer-Verlag, New York-Heidelberg, 1978. 

\bibitem{Tarquini} C.~Tarquini, {\em Feuilletages conformes\/}, Ann.
  Inst. Fourier {\bf 54} (2004),  453--480.


\bibitem{vee} 
W. Veech, {\em Topological dynamics},
Bull. Amer. Math. Soc. {\bf 83} (1977), 775--830.

\bibitem{weil} 
A. Weil, {\em L'Integration dans les Groupes
Topologiques et ses Applications\/} (Second Edition),  Actualit\'{e}s
Scientifiques et Industrielles, no. 1145. Publications de l'Institut de
Mathematique de l'Universite de  Strasbourg  4, Hermann, Paris,  1951.

\bibitem{Willard}
S. Willard, {\em General Topology\/}, Addison-Wesley Publishing Co.,
Reading, Mass. 
1970.


\bibitem{win1}
H. E.  Winkelnkemper, {\em The graph of a
 foliation}, Ann. Global Anal. Geom. {\bf 1} (1983), 51--75.



\end{thebibliography}
\end{document}